\newcommand{\IR}{\mathbb{R}}
\newcommand{\IN}{\mathbb{N}}
\newcommand{\LL}{\mathcal{L}}
\newcommand{\RR}{\mathcal{R}}
\newcommand{\XX}{\mathcal{X}}
\newcommand{\eps}{\varepsilon}
\newcommand{\ov}[1]{\overline{#1}}
\newcommand{\td}[1]{\widetilde{#1}}
\newcommand{\textQQqq}[1]{\qquad \text{#1} \qquad}
\newcommand{\textQQq}[1]{\qquad \text{#1} \quad}
\newcommand{\textQq}[1]{\quad \text{#1} \quad}
\DeclareMathOperator{\Ric}{Ric}
\DeclareMathOperator{\Rm}{Rm}
\DeclareMathOperator{\length}{length}
\newcommand{\rrm}{r_{\Rm}}
\newcommand{\rrmm}{r_{\Rm, 1}}
\newcommand{\EMPTY}[1]{}
\newtheorem*{Question1}{Question 1}
\newtheorem*{Question2}{Question 2}
\newtheorem*{HTConj}{Hamilton-Tian Conjecture}
\newtheorem*{Claim}{Claim}
\newtheorem{Theorem}{Theorem}[section]
\newtheorem{Lemma}[Theorem]{Lemma}
\newtheorem{Corollary}[Theorem]{Corollary}
\newtheorem{Proposition}[Theorem]{Proposition}
\newtheorem{Definition}[Theorem]{Definition}
\numberwithin{equation}{section}
\title{Convergence of Ricci flows with bounded scalar curvature}
\author{Richard H Bamler}
\address{Department of Mathematics, UC Berkeley, CA 94720, USA}
\email{rbamler@math.berkeley.edu}
\date{\today}
\begin{document}

\begin{abstract}
In this paper we prove convergence and compactness results for Ricci flows with bounded scalar curvature and entropy.
More specifically, we show that Ricci flows with bounded scalar curvature converge smoothly away from a singular set of codimension $\geq 4$.
We also establish a general form of the Hamilton-Tian Conjecture, which is even true in the Riemannian case.

These results are based on a compactness theorem for Ricci flows with bounded scalar curvature, which states that any sequence of such Ricci flows converges, after passing to a subsequence, to a metric space that is smooth away from a set of codimension $\geq 4$.
In the course of the proof, we will also establish $L^{p < 2}$-curvature bounds on time-slices of such flows.
\end{abstract}

\maketitle
\tableofcontents

\section{Introduction and statement of the main results}
\subsection{Introduction} \label{subsec:Introduction}
There has lately been a lot of progress in the study of compactness and degeneration behaviors of solutions to geometric equations, such as Einstein metrics, minimal surfaces or mean curvature flow (see for example \cite{MR999661,MR1074481, Cheeger-Colding-Cone, MR1937830, Cheeger-Naber-quantitative, Cheeger-Naber-Codim4,MR684900,MR1465365, MR3043387, MR3061773}).
These studies usually follow a common approach, which can be summarized as follows: One first specifies a topology, in which such solutions converge to a possibly singular space, after passing to a subsequence.
Secondly, one devises a partial regularity and structure theory for the limiting space.
This theory usually implies that the limiting space is smooth away from a singular set of small codimension and provides a characterizations of the tangent cones at the singular points.
Unfortunately, a theory of this kind has not been available for the Ricci flows, despite several interesting attempts or partial results (see \cite{ MR2846384, MR3373942, MR2666905, MR2572247, Sturm-super-RF}).
The goal of this paper is to carry out such a partial regularity and structure theory for Ricci flows that satisfy an additional scalar curvature bound.
Using this theory, we will characterize the formation of finite-time singularities of such flows.
As a corollary, we will obtain a general, Riemannian form of the Hamilton-Tian Conjecture.

Understanding the formation of finite-time singularities is an important goal in Ricci flow.
In dimensions 2 and 3, finite-time singularities are reasonably well understood.
In these dimensions, the maximum of the scalar curvature diverges at a singular time (see \cite{MR664497,MR1375255}) and the geometry of the singularity can be analyzed by a blow-up procedure.
More specifically, after normalizing the scalar curvature at a sequence of basepoints via parabolic rescaling, the flow subsequentially converges to a smooth singularity model.
In dimension 2, Hamilton and Chow showed (see \cite{MR954419,MR1094458}) that the only such singularity models are the round sphere and projective space, which is equivalent to saying that the flow becomes asymptotically round at a finite-time singularity.
In dimension~3, Perelman proved (see \cite{PerelmanI}) that the singularity models are $\kappa$-solutions, which he then classified in a qualitative way.
This classification was the basis of the construction of Ricci flows with surgery, which led to the resolution of the Poincar\'e and Geometrization Conjectures (see also \cite{PerelmanII}).
In higher dimensions, similar characterizations have only been obtained in relatively restrictive settings.
For example, it was shown by Sesum and Enders, M\"uller, Topping (see \cite{MR2255013, MR2886712}) that under a Type I bound on the entire Riemannian curvature tensor --- of the form $|{\Rm}_t| < C (T-t)^{-1}$ --- singularity models exist and are gradient shrinking solitons.
Apart from these results, the characterization and classification of finite-time singularities in Ricci flows has been largely open.

A main difficulty in the analysis of finite-time singularities of higher dimensional Ricci flows comes from the fact that --- at least a priori --- we may observe different types of singularity formation at different scales, leading to a bubble-tree-like structure.
At the smallest scale, such singularities can be described by smooth singularity models.
These models arise as blow-up limits due to a compactness theorem of Hamilton (see \cite{MR1333936}), which requires uniform bounds on the Riemannian curvature.
By contrast, singularity models describing the flow at larger scales are expected to be singular, as they may arise as limits of flows with unbounded curvature.
The analysis of the flow at such larger scales requires a reasonable compactness, partial regularity and structure theory, which has been missing so far.
The theory developed in this paper will partially fill this void and therefore allow the analysis of singularities at any scale below the scalar curvature scale (i.e., the scale that results in blow-up sequences with bounded scalar curvature).

The main result of this paper is a compactness and partial regularity theorem, which states that every non-collapsed sequence of Ricci flows with uniformly bounded scalar curvature converges, after passing to a subsequence, to a space that is smooth away from a singular set of codimension at least 4.
We refer to subsection~\ref{subsec:furtherresults} for a precise statement.
We will also derive various other structural properties of this limiting space, which, combined with earlier results (see \cite{Bamler-CGN}), imply that all tangent cones of the limiting space are metric cones.
As such, our characterizations of the limiting space are comparable to the partial regularity and structure theorems obtained for non-collapsed limits of Einstein manifolds or spaces with bounded Ricci curvature (see \cite{Cheeger-Colding-Cone, MR1937830, Cheeger-Naber-Codim4}).
In fact, as Einstein metrics can be trivially evolved into a Ricci flow with bounded scalar curvature, our theorem in some way generalizes these results; however, it does not provide an alternative proof, since our methods rely on generalizations of these results to the singular setting.

Let us now discuss in some more detail the structural results on the formation of finite-time singularities that follow from our main theorem.
Consider a Ricci flow $(g_t)_{t \in [0,T)}$
\[ \partial_t g_t = - 2 \Ric_{g_t} \]
on a manifold $M$, which possibly develops a singularity at some finite $T < \infty$.
We will answer the following questions:

\begin{Question1}
Suppose that the scalar curvature satisfies the bound $R < C$ on $M \times [0,T)$ for some constant $C < \infty$.
What can be said about the behavior of the metric $g_t$ as $t \nearrow T$?
\end{Question1}

\begin{Question2}
Suppose that there is some constant $C < \infty$ such that the scalar curvature satisfies $R(\cdot, t) < C (T-t)^{-1}$ for all $t \in [0,T)$.
What can be said about the behavior of the rescaled metric $(T-t)^{-1} g_t$ as $t \nearrow T$?
\end{Question2}

We will show that in the settings of both questions, the (rescaled) metric converges (subsequentially) to a singular space that possesses the partial regularity and structural properties as explained earlier.
More specifically, we obtain that this space is smooth away from a singular set of codimension at least 4 and that all its tangent cones are metric cones.
In the setting of Question~2, we moreover obtain that the limit is a gradient shrinking soliton on its regular part.
We refer to subsection~\ref{subsec:statement-main-results} for the precise statements of these results.

Question 1 is related to a famous conjecture that the scalar curvature near any finite-time singularity in Ricci flow must blow up.
This conjecture is equivalent to the conjecture that in the setting of Question 1 the metric $g_t$ converges to a smooth metric $g_T$ as $t \nearrow T$ and hence that the flow $(g_t)_{t \in [0,T)}$ can be extended past time $T$.
The conjecture is true in dimensions $n = 2, 3$, as pointed out earlier, and in the case in which $(g_t)_{t \in [0,T)}$ is a K\"ahler-Ricci flow (see \cite{ZZhang:2010}).
Moreover, if we replace the assumption that $R < C$ on $M \times [0,T)$ by the stronger assumption that $|{\Ric}| < C < \infty$ on $M \times [0,T)$, then the conjecture holds (see \cite{Sesum:2005}).
Even in dimension 4, a complete answer to Question 1 is still unknown.
In this dimension, it has, however, been proven recently that the $L^2$-norm of the Riemannian curvature tensor, $\int_M |{\Rm}|^2 dg_t$, remains uniformly bounded as $t \nearrow T$ and that the metric $g_t$ converges to a $C^0$-orbifold (see \cite{Bamler-Zhang-Part1, Simon:2015-bounds, Simon:2015-extending}).
The techniques used to obtain these results are very specific to dimension 4 and cannot be generalized to higher dimensions.
It was moreover shown in \cite{Simon:2015-extending} that in dimension 4, despite possible singularities, the flow $(g_t)_{t \in [0,T)}$ can still be extended past time $T$ by a Ricci flow on the limiting orbifold.
In other words, the flow can be continued if we allow the underlying manifold to change its topology.
This insight raises the question of whether a similar extension can be constructed in higher dimensions.
So one may wonder whether in the context of Question~1, the singular time-$T$-slice is ``regular enough'' such that the flow can be continued past time $T$, possibly via a singular flow.

The setting of Question 2 is a generalization of the Type I condition and it occurs naturally in the study of K\"ahler-Ricci flows on Fano manifolds (see \cite{Sesum-Tian:2008}).
By the recent resolution of the Yau-Tian-Donaldson Conjecture (see \cite{SDC2015-I, SDC2015-II, SDC2015-III, Tian:2012mq}), Fano manifolds admit K\"ahler-Einstein metrics under certain algebro-geometric conditions.
If these conditions are fulfilled, then the rescaled metric $(T-t)^{-1} g_t$ smoothly converges to one of the predicted K\"ahler-Einstein metrics as $t \nearrow T$ (see \cite{Tian-Zhu:2013}).
If these conditions are not assumed, then we have the following conjecture:

\begin{HTConj}
If $(g_t)_{t \in [0,T)}$ is a K\"ahler-Ricci flow on a Fano manifold $M$, then $(T-t)^{-1} g_t$ subsequentially converges to a compact K\"ahler-Ricci soliton, possibly away from a singular set of codimension $\geq 4$ as $t \nearrow T$.
\end{HTConj}

Progress towards the Hamilton-Tian Conjecture has been made in \cite{Tian-ZLZhang:2013-announcement, Tian-Zhang:2013, Chen-Wang-II}.
The approach in \cite{Chen-Wang-II} uses the Bergman kernel in a crucial way, which is only available in the K\"ahler setting.
This technique is non-standard in proving compactness of geometric equations and very different from the techniques used in this paper.

Our answer to Question~2, restricted to the K\"ahler case, implies the Hamilton-Tian Conjecture.
Our proof is purely Riemannian and does not require any tools from K\"ahler geometry.

Our structure theorem will follow by analyzing Ricci flows with bounded scalar curvature at different scales, using a blow-up argument.
Due to an earlier estimate of Zhang (see \cite{MR2923189}) on the volumes of geodesic balls in such flows, it is known that such blow-up sequences converge to a possibly singular metric space in the Gromov-Hausdorff topology.
Our goal in this paper will be to derive several analytic and geometric properties of these blow-up limits, which will imply that the blow-up limits are Ricci flat away from a well-behaved singular set.
These properties will then allow us to obtain further structural information, using a generalization of the theory of Cheeger, Colding and Naber to the singular setting, which was developed by the author in \cite{Bamler-CGN}.
The application of this theory is quite subtle, because we do not obtain a (synthetic) Ricci curvature bound on the singular set.
The proofs of the following two properties of the blow-up limit will occupy the greater part of this paper:
\begin{itemize}
\item A type of \emph{weak convexity property} of the set of regular points (called \emph{mildness of the singular set}), stating that almost every pair of regular points can be connected by a minimizing geodesic consisting only of regular points.
This property will be a consequence of a new regularity result for Ricci flows with bounded scalar curvature, which states that, under certain assumptions, almost every pair of points in a time-slice can be connected by an almost geodesic that avoids high curvature regions.
This regularity result will follow from a combination of heat kernel estimates and integral estimates along families $\mathcal{L}$-geodesics.
For more details see section~\ref{sec:firstconvsing}.
\item An \emph{$\eps$-regularity theorem}, asserting a curvature bound at centers of balls \emph{in the same time-slice} whose volume is sufficiently close to the corresponding Euclidean volume.
This theorem will follow from an analogous and new $\eps$-regularity theorem for Ricci flows with bounded scalar curvature.
We remark that similar $\eps$-regularity theorems for Ricci flows have been deduced in \cite{MR3245102,MR2328895}.
However, these results are not applicable here, as they impose other geometric assumptions and they don't hold in a single time-slice.
The proof of our $\eps$-regularity theorem relies, among other things, on a segment inequality in the context of Ricci flows.
We refer to section~\ref{sec:epsregularity} for more details.
\end{itemize}
Throughout the entire paper, we will moreover use a number of analytic and geometric estimates for Ricci flows with bounded scalar curvature that were developed by the author and Zhang in \cite{Bamler-Zhang-Part1, Bamler-Zhang-Part2}.
For further details on the proof see subsection~\ref{subsec:outline}

The main results of this paper motivate a number of interesting questions.

First, it seems desirable to obtain a more detailed characterization of the limiting flows of sequences of Ricci flows with bounded scalar curvature (not only of limits of time-slices).
By an earlier result by Zhang and the author in \cite{Bamler-Zhang-Part2}, such limiting flows evolve continuously with respect to the Gromov-Hausdorff distance.
However, the set of singular points may, a priori, vary in time, and even change its topology.
Due to the availability of heat kernel and distance distortion estimates, it is likely that such flows offer a useful model case for a synthetic definition of Ricci flows --- perhaps via optimal transport, generalizing the approach of Sturm (see \cite{Sturm-super-RF}).

Second, our results can be seen as a first step towards a general partial regularity and structure theory for non-collapsed Ricci flows.
Hence, it is an interesting question whether our theory can be generalized to a broader class of Ricci flows.
Even though our proof seems to rely on the scalar curvature bound at a number of steps, it is likely that this bound is crucial in only two steps: in the proofs of a lower (Gaussian) heat kernel bound and an upper bound on the distance distortion.
So one may wonder whether a similar structure theory can be developed under weaker assumptions that still guarantee these two bounds, or similar estimates.
In addition, our arguments seem to offer a certain amount of flexibility.
For example, several steps in our proof produce stronger estimates than what is needed in subsequent steps.
It is also interesting to observe that the generalization of the theory of Cheeger, Colding and Naber (see \cite{Bamler-CGN}), which is used in our proof, does not require a (synthetic) curvature characterization on the singular set.
Therefore, it may be possible that, in a more general setting, our arguments are  robust towards the loss of certain geometric or analytic control, as long as this loss occurs in an almost singular region of the flow.

\subsection{Statement of the main results --- Structure of singularities} \label{subsec:statement-main-results}
We will now state in detail the main results of this paper that describe the structure of finite-time singularities of Ricci flows with bounded scalar curvature.
That is, we will answer Questions 1 and 2 from the previous subsection.

Let us first consider Question 1.
In the setting of this question, the author showed previously in collaboration with  Zhang (see \cite{Bamler-Zhang-Part2}) that the induced length metric $d_t : M \times M \to [0, \infty)$ converges uniformly to a pseudometric $d_T : M \times M \to [0,\infty)$ as $t \nearrow T$, which is a metric with the exception that the distance between some distinct points is allowed to be zero.
In dimension $n=4$, the limiting metric is a $C^0$-orbifold. Away from the singular points, the metric and the convergence to this metric is smooth (see \cite{Bamler-Zhang-Part1, Simon:2015-extending}).
The first main result of this paper generalizes this fact to higher dimensions.
More specifically, it states that the convergence to the limiting metric is smooth away from a small set of Minkowski dimension $\leq n-4$:

\begin{Theorem}[Evolution of the flow under a uniform scalar curvature bound] \label{Thm:AnswerQuestion1}
Suppose that $R < C$ on $M \times [0, T)$ for some $C < \infty$ and let $d_T : M \times M \to [0, \infty)$ be the limiting pseudometric on $M$.

Then there is an open subset $\RR \subset M$ on which the metric $g_t$ smoothly converges to a smooth Riemannian metric $g_T$.
The complement of this subset, $M \setminus \RR$, has Minkowski dimension $\leq n-4$ with respect to $d_T$.
Moreover, the induced metric space $(M / \sim, d_T)$ is isometric to the completion of the length metric of the (incomplete) Riemannian manifold $(\RR, g_T)$.
\end{Theorem}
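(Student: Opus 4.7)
I plan to deduce Theorem~\ref{Thm:AnswerQuestion1} from the general compactness and partial regularity result of Subsection~\ref{subsec:furtherresults}, combined with the length-metric convergence $d_t\to d_T$ from \cite{Bamler-Zhang-Part2}. The latter is already available, so three things remain: identifying the open regular set $\RR$; proving smooth convergence of $g_t$ on it; and establishing both the Minkowski codimension bound on $M \setminus \RR$ and the isometric identification of $(M/\sim, d_T)$ with the completion of $(\RR, g_T)$.

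First I would define $\RR \subset M$ as the set of points $x$ whose parabolic curvature scale stays bounded below as $t \nearrow T$; equivalently, there exist $r>0$ and $t_0<T$ such that $|{\Rm}| \leq r^{-2}$ on $B_{g_s}(x, r) \times [s - r^2, s]$ for all $s \in [t_0, T)$. Standard Shi-type derivative estimates then bound all covariant derivatives of curvature uniformly on $\RR$, so the family $(g_t)$ is $C^\infty_{\mathrm{loc}}$-precompact there; since $d_t \to d_T$ uniformly, any such limit $g_T$ is unique and the convergence is smooth along the full family.

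The heart of the argument is the codimension bound. Fix $x \in M \setminus \RR$ and let $r_i \searrow 0$ be scales realizing the failure of the curvature bound. The parabolic rescalings about $(x, T^-)$ at scales $r_i$ have scalar curvature bounded by $C r_i^2 \to 0$, are non-collapsed by Perelman's no-local-collapsing theorem, and so pointed-subconverge to a singular model that is smooth and Ricci-flat away from a set of codimension $\geq 4$. Combined with the quantitative stratification estimates derived in the course of the main theorem --- which rest on the $\eps$-regularity theorem of Section~\ref{sec:epsregularity} and the mildness property of Section~\ref{sec:firstconvsing} --- a covering/contradiction scheme upgrades this pointwise picture to the Minkowski content estimate $\vol_{d_T}(B_\delta(M \setminus \RR)) \leq C_\eta \delta^{4-\eta}$ for every $\eta>0$, and thus $\dim_{\mathrm{Mink}}(M\setminus\RR) \leq n-4$.

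For the final identification, the positive-codimension bound forces $\RR$ to be dense in $M/\sim$, and the mildness property guarantees that $d_T$-almost every pair of points in $\RR$ is joined by a minimizing $d_T$-geodesic lying entirely in $\RR$; smooth convergence makes the $d_T$- and $g_T$-lengths of such a curve agree, so $d_T|_{\RR}$ coincides with the length metric of $(\RR, g_T)$ and the identification follows by taking completions. The main obstacle is the codimension bound: it requires essentially the full toolkit of the paper --- the heat kernel and distance distortion estimates of \cite{Bamler-Zhang-Part1, Bamler-Zhang-Part2}, the $\eps$-regularity theorem, and the generalized Cheeger--Colding--Naber theory of \cite{Bamler-CGN}.
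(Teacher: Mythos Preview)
Your plan is broadly correct and uses the same ingredients as the paper, but the paper's proof is organized more economically. Rather than defining $\RR$ directly via a curvature-scale condition and then separately establishing smooth convergence, the codimension bound, and the length-metric identification, the paper simply applies Theorem~\ref{Thm:maincompactness} (equivalently Propositions~\ref{Prop:convtosingspaceLpasspt} and \ref{Prop:Mainsublevelrrmbound}) to a sequence $t_i \nearrow T$ to obtain a singular limit $(\XX, q_\infty)$ with singularities of codimension~$4$, then observes that the uniform convergence $d_t \to d_T$ forces $(X,d)$ to be isometric to $(M/{\sim}, d_T)$. The regular set $\RR\subset M$ is then the preimage of $\RR^*\subset X$ under the projection $\pi:M\to M/{\sim}$, and forward pseudolocality (Proposition~\ref{Prop:Pseudoloc}) upgrades the subsequential convergence on $\RR$ to full smooth convergence $g_t\to g_T$. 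A short injectivity argument shows $\pi|_\RR$ is a bijection onto $\RR^*$ with $\pi^*g=g_T$, and all three conclusions drop out at once. In particular the Minkowski bound does not require a separate blow-up/covering argument at this stage: it is already contained in the ``codimension~$4$'' assertion of Theorem~\ref{Thm:maincompactness}.

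One point to flag: your invocation of the \emph{mildness} property for the length-metric identification is not quite right. Mildness (Definition~\ref{Def:mild}) is only established in Theorem~\ref{Thm:maincompactness} when $\rho_i\to 0$, i.e.\ for genuine blow-up limits; for the unrescaled sequence $(M,g_{t_i})$ the scalar curvature is merely bounded. What you actually need---and what Theorem~\ref{Thm:maincompactness} does deliver unconditionally---is that the limit is a singular space in the sense of Definition~\ref{Def:singlimitspace}, whose item~(4) already states that $d|_\RR$ coincides with the length metric of $(\RR,g)$. That property is secured in the proof of Proposition~\ref{Prop:convtosingspaceLpasspt} via Corollary~\ref{Cor:shortcurveboundedrrm} (property~(C) of \cite{Bamler-CGN}), not via mildness. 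With that correction your outline goes through.
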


Similarly, in the setting of Question 2, we obtain smooth subconvergence of the rescaled flow to a singular gradient shrinking Ricci soliton:

\begin{Theorem}[Degeneration towards a singular gradient shrinking soliton] \label{Thm:AnswerQuestion2}
Assume that for some constant $C < \infty$ we have
\[ R(\cdot, t) < C (T-t)^{-1} \textQQqq{for all} t \in [0,T). \]
Then for any point $q \in M$ and any sequence of times $t_i \nearrow T$ we can choose a subsequence such that $(M, (T-t_i)^{-1} g_{t_i}, q)$ converges to a pointed, singular space $(\mathcal{X}, q_\infty)  = (X, d, \RR, g, q_\infty)$ that has singularities of codimension~4 in the sense of Definition \ref{Def:codimensionsingularities} (this implies that the singular set $X \setminus \RR$ has Minkowski dimension $\leq n-4$) and that is $Y$-regular at scale $1$ in the sense of Definition \ref{Def:Yregularity} for some $Y < \infty$ which only depends on $g_0$ and $C$.

Moreover, $\mathcal{X}$ is a shrinking gradient soliton in the following sense:
There is a smooth and bounded function $f_\infty \in C^\infty (\RR)$ that satisfies the shrinking soliton equation
\[ \Ric_g + \nabla^2 f_\infty = \tfrac{1}2 g \textQQqq{on} \RR. \]
\end{Theorem}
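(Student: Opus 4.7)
The plan is to first apply the main compactness theorem of this paper to the rescaled sequence of time-slices, and then to extract the gradient shrinking soliton structure on the limit by a Perelman $\mathcal{W}$-entropy argument applied to conjugate heat kernels.

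\textbf{Step 1 (convergence).} Consider the parabolic rescalings $\td g^{(i)}_s := (T-t_i)^{-1} g_{t_i + s(T-t_i)}$, defined for $s \in (-t_i/(T-t_i), 1)$. The hypothesis $R(\cdot,t) < C(T-t)^{-1}$ becomes $R_{\td g^{(i)}_s} < C/(1-s)$, which is uniformly bounded on every compact sub-interval $s \in [-A, 0]$. The monotonicity of Perelman's $\mu$-invariant gives a uniform lower bound depending only on $g_0$, which provides uniform non-collapsing. The main compactness theorem announced in the introduction then applies to $(M, \td g^{(i)}_0, q)$ and yields a subsequential limit $\XX = (X, d, \RR, g, q_\infty)$ that has singularities of codimension $4$ and is $Y$-regular at scale $1$, with $Y = Y(g_0, C) < \infty$.

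\textbf{Step 2 (soliton structure).} Fix a base point $q_* \in M$ and let $K = K(\cdot,t; q_*, T)$ be the conjugate heat kernel based at the singular time; write $K = (4\pi\tau)^{-n/2} e^{-f}$ with $\tau = T-t$. Perelman's entropy monotonicity reads
\[
\frac{d}{d\tau} \mathcal{W}(g_t, f(\cdot,t), \tau) = -2\tau \int_M \Big| \Ric_{g_t} + \nabla^2 f - \tfrac{1}{2\tau} g_t \Big|^2 K \, dV_{g_t}.
\]
Under the Type I scalar bound and the $\mu$-lower bound, $\mathcal{W}(g_t, f, \tau)$ is monotone and bounded, hence convergent as $t \nearrow T$. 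Parabolically rescaling by $(T-t_i)^{-1}$ and denoting the rescaled potential and kernel by $\td f_i$ and $\td K^{(i)}$, one obtains for every $A > 1$
\[
\int_{-A}^{-A^{-1}} \int_M \Big| \Ric_{\td g^{(i)}_s} + \nabla^2 \td f_i - \tfrac{1}{2|s|} \td g^{(i)}_s \Big|^2 \td K^{(i)} \, dV_{\td g^{(i)}_s} \, ds \longrightarrow 0.
\]
Combining the smooth convergence $\td g^{(i)}_0 \to g$ on $\RR$ with the two-sided Gaussian heat kernel estimates available under bounded scalar curvature (cf.\ \cite{Bamler-Zhang-Part1, Bamler-Zhang-Part2}), $\td f_i$ subconverges in $C^\infty_{\mathrm{loc}}(\RR)$ to a smooth, bounded $f_\infty$; evaluating the rescaled identity at $s = -1$ forces $\Ric_g + \nabla^2 f_\infty = \tfrac{1}{2} g$ on $\RR$. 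Boundedness of $f_\infty$ is inherited from the two-sided Gaussian bound on $\td K^{(i)}$.

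\textbf{Main obstacle.} The essential difficulty is passing the soliton identity to the limit. The convergence in Step 1 is only smooth on $\RR$, while the Perelman integrals are global on $M$, so one must (i) extract uniform local $C^{k,\alpha}$-control on $\td f_i$ in compact subsets of $\RR$ via two-sided Gaussian bounds on $\td K^{(i)}$, and (ii) rule out concentration of the Gaussian weight on the singular set of $\XX$. Both tasks hinge on the codimension-$4$ bound, the $Y$-regularity of the limit, and the heat kernel estimates cited above; this is also precisely where the argument becomes intrinsically more delicate than in the smooth Type I setting of Enders--M\"uller--Topping, where no singular set is present.
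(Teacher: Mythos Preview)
Your outline is essentially the same strategy as the paper's, and Step~1 is fine.  The genuine gap is in Step~2: you invoke ``the conjugate heat kernel $K(\cdot,t;q_*,T)$ based at the singular time'' as if it were a given object with two-sided Gaussian bounds.  No such kernel exists a priori.  The Gaussian estimates you cite (Proposition~\ref{Prop:GaussianHKbounds}) apply to $K(x,t;y,s)$ with both $s<t$ finite and with scalar curvature bounded on $[s,t]$; they say nothing about a kernel based at $T$, where the scalar curvature is allowed to blow up.  The paper therefore first constructs this object: one takes kernels $K_j(x,t)=K(q,t_j^*;x,t)$ based at finite times $t_j^*\nearrow T$, uses Perelman's differential Harnack to get a lower bound at the basepoint $q$, and then combines the reproduction formula at the intermediate time $\tfrac12(T+t)$ with the finite-time Gaussian bounds to obtain $j$-independent two-sided bounds of the form
\[
\frac{c_1}{(T-t)^{n/2}}\exp\Big(-\frac{d_t^2(q,x)}{c_1(T-t)}\Big)\le K_j(x,t)\le \frac{C'}{(T-t)^{n/2}}.
\]
Only after this can one pass to a limit $u$, define $f$ by $u=(4\pi(T-t))^{-n/2}e^{-f}$, and run the $\mathcal W$-monotonicity argument.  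Without this construction your $\td f_i$ are not defined and the claimed local $C^\infty$ compactness has no starting point.

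Two smaller points.  First, your ``obstacle (ii)'' (ruling out concentration of the Gaussian weight on the singular set) is not actually needed: once you have the lower Gaussian bound on $u$, the argument is purely local on $\RR$.  If the soliton equation failed at some $x_0\in\RR$, then by pseudolocality and local parabolic estimates the quantity $(T-t)^2|\Ric+\nabla^2 f-\tfrac{1}{2(T-t)}g|^2$ would be bounded below on a parabolic neighborhood of definite rescaled size, and since $u$ is bounded below there too, this contradicts $\int_{t_i-\theta(T-t_i)}^{t_i}\int_M\cdots\to 0$.  Second, ``evaluating at $s=-1$'' is not quite right: the monotonicity gives you smallness of a \emph{spacetime} integral near $s=0$, not a pointwise identity at a fixed time, and one closes the argument by the contradiction just described.
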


A precise definition of a singular space $\XX$ can be found in subsection \ref{subsec:terminology} (see Definition \ref{Def:singlimitspace}).
In a nutshell, a singular space is a metric space whose geometry is given by a smooth Riemannian metric on a generic subset.
The notion of ``convergence to a singular space'' is also made more concrete in subsection \ref{subsec:terminology} (see Definition \ref{Def:convergencescheme}).

In the K\"ahler-Fano case, Theorem \ref{Thm:AnswerQuestion2} implies the Hamilton-Tian Conjecture:

\begin{Corollary} \label{Cor:AnswerHT}
Let $(M^{2n}, (g_t)_{t \in [0,T)})$ be a K\"ahler-Ricci flow on a Fano manifold $M^{2n}$.
Then for any sequence of times $t_i \nearrow T$ we can find a subsequence such that $(M, (T-t_i)^{-1} g_{t_i})$ converges to a compact singular space $\mathcal{X}  = (X, d, \RR, g)$ that has singularities of codimension 4, that is $Y$-regular at scale $1$ for some $Y < \infty$ and that is a shrinking soliton in the sense of Theorem~\ref{Thm:AnswerQuestion2}.
\end{Corollary}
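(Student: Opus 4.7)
The plan is to derive Corollary~\ref{Cor:AnswerHT} directly from Theorem~\ref{Thm:AnswerQuestion2} by (i) verifying the scalar curvature hypothesis of that theorem in the K\"ahler--Fano setting and (ii) upgrading the pointed convergence produced there to compact convergence. Both ingredients will be supplied by Perelman's a priori estimates for the K\"ahler--Ricci flow on a Fano manifold, as written up by Sesum and Tian \cite{Sesum-Tian:2008}.

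Recall that for an unnormalized K\"ahler--Ricci flow $(g_t)_{t \in [0,T)}$ on a Fano manifold, the rescaled family $(T-t)^{-1} g_t$ is, up to a time reparametrization, a normalized K\"ahler--Ricci flow on $M$. Perelman's scalar curvature estimate asserts that the scalar curvature of the normalized flow is uniformly bounded; unravelling the rescaling yields $R(\cdot, t) < C (T-t)^{-1}$ on $M \times [0,T)$, so the hypothesis of Theorem~\ref{Thm:AnswerQuestion2} is satisfied. Fixing any basepoint $q \in M$ and applying Theorem~\ref{Thm:AnswerQuestion2} to the given sequence $t_i \nearrow T$, I can pass to a subsequence along which $(M, (T-t_i)^{-1} g_{t_i}, q)$ converges to a pointed singular space $(\XX, q_\infty) = (X, d, \RR, g, q_\infty)$ that has singularities of codimension~$4$, is $Y$-regular at scale~$1$ for some $Y < \infty$ depending only on $g_0$ and $C$, and carries a smooth bounded function $f_\infty \in C^\infty(\RR)$ satisfying $\Ric_g + \nabla^2 f_\infty = \tfrac12 g$ on $\RR$. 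These are exactly the structural properties demanded by the corollary.

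To remove the basepoint and obtain compactness of $\XX$, I would invoke Perelman's diameter estimate: the normalized K\"ahler--Ricci flow has uniformly bounded diameter, so $\diam(M, (T-t_i)^{-1} g_{t_i}) \leq D$ for some $D < \infty$ independent of $i$. Under the notion of convergence to a singular space from Definition~\ref{Def:convergencescheme}, this diameter bound is inherited by the limit: $(X,d)$ is compact, and in particular the basepoint $q_\infty$ plays no further role and can be dropped from the statement. The ``main obstacle'' of this corollary is, in truth, essentially the entire paper --- once Theorem~\ref{Thm:AnswerQuestion2} has been established in the purely Riemannian setting, the K\"ahler--Fano specialization reduces to quoting Perelman's classical scalar curvature and diameter bounds; no input from K\"ahler geometry (such as the Bergman kernel methods used in \cite{Chen-Wang-II}) is required beyond the existence of these well-known a priori estimates.
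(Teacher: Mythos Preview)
Your proposal is correct and follows essentially the same approach as the paper: apply Theorem~\ref{Thm:AnswerQuestion2} after invoking Perelman's uniform scalar curvature bound from \cite{Sesum-Tian:2008}, and then use Perelman's diameter bound (also in \cite{Sesum-Tian:2008}) to conclude compactness of the limit. The paper's own proof is in fact a single sentence citing exactly these two ingredients.
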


\subsection{Statement of the main results --- Compactness Theorem and further results} \label{subsec:furtherresults}
Theorems \ref{Thm:AnswerQuestion1}, \ref{Thm:AnswerQuestion2}, and by proxy Corollary \ref{Cor:AnswerHT}, will follow from a unifying compactness and partial regularity result for sequences of Ricci flows with bounded scalar curvature.
This compactness result states that for \emph{any} sequence of Ricci flows with bounded scalar curvature and uniformly bounded entropy, the final time-slices subconverge to a singular space, which is smooth away from a set of Minkowski dimension $\leq n-4$.

\begin{Theorem}[Compactness of Ricci flows with bounded scalar curvature] \label{Thm:maincompactness}
Let $(M_i, (g^i_t)_{t \in [-2,0]})$ be a sequence of Ricci flows on compact, $n$-dimensional manifolds $M_i$ and assume that there is a uniform constant $C < \infty$ such that the following holds:
\begin{enumerate}[label=(\roman*)]
\item The scalar curvature satisfies the uniform bound
\[ |R| < \rho_i < C \textQQqq{on}  M_i \times [-2,0] \]
for some sequence $\rho_i$.
\item Perelman's entropy satisfies the uniform lower bound
\[ \nu [g_{-2}^i, 4] := \inf_{0 < \tau < 4} \mu [g_{-2}^i, \tau] > - C. \]
(For a definition of $\mu [ g^i_{-2}, \tau]$ see subsection \ref{subsec:terminology}.)
\end{enumerate}
Let $q_i \in M_i$ be a sequence of basepoints.
Then, after passing to a subsequence, there is an $n$-dimensional, pointed singular space $(\mathcal{X}, q_\infty)$ (in the sense of Definition \ref{Def:singlimitspace}) such that the sequence of pointed Riemannian manifolds $(M_i, g_0^i, q_i)$ converges to $(\mathcal{X}, q_\infty)$ (in the sense of Definition \ref{Def:convergencescheme}).
Moreover, the singular space $\mathcal{X}$ has singularities of codimension $4$ (in the sense of Definition \ref{Def:codimensionsingularities}) and $\mathcal{X}$ is $Y$-regular at scale $1$ for some $Y = Y(n, C) < \infty$, which only depends on $n$ and $C$.
Lastly, if $\rho_i \to 0$, then $\mathcal{X}$ is Ricci flat (in the sense of Definition \ref{Def:singlimitspace}) and has mild singularities (in the sense of Definition \ref{Def:mild}).
\end{Theorem}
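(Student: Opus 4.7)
The plan is to build the limiting singular space $\XX$ in stages---first produce a pointed Gromov--Hausdorff limit from non-collapsing and volume bounds, then identify a regular set on which smooth convergence holds, and finally upgrade to a singular space satisfying codimension-$4$ and $Y$-regularity by invoking the $\eps$-regularity theorem of section~\ref{sec:epsregularity} together with the singular version of Cheeger--Colding--Naber theory from \cite{Bamler-CGN}. The main obstacle, by far, lies in the two technical inputs advertised in the introduction: the single-time-slice $\eps$-regularity theorem and the mildness property of section~\ref{sec:firstconvsing}; my sketch defers all of the real analytic work to them and concatenates the outputs via standard compactness machinery.

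First I would extract the Gromov--Hausdorff limit. The entropy bound (ii), via Perelman's $\mu$-functional and no-local-collapsing, gives a uniform lower bound $\vol_{g_0^i} B(x,r) \geq \kappa(n,C)\, r^n$ for $r \leq 1$; Zhang's upper volume estimate (see \cite{MR2923189}) applied to flows with bounded scalar curvature gives the matching upper bound $\vol_{g_0^i} B(x,r) \leq C'(n,C)\, r^n$. Standard covering arguments then yield pointed Gromov--Hausdorff precompactness, so after passing to a subsequence $(M_i, g_0^i, q_i) \to (X, d, q_\infty)$ in the pointed GH topology, with both volume bounds persisting on the limit. Define the regular set $\RR \subset X$ to consist of those points admitting a neighborhood along which $|{\Rm}_{g_0^i}|$ is uniformly bounded on the GH-approximating balls; on $\RR$ the Cheeger--Gromov--Hamilton compactness theorem produces $C^\infty_{\text{loc}}$-subconvergence to a smooth Riemannian metric $g$. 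Parabolic local regularity from \cite{Bamler-Zhang-Part1} upgrades this to smooth convergence on small parabolic neighborhoods backward in time, so that $g$ is realized as the time-$0$ slice of a smooth limiting Ricci flow. This produces the candidate pointed singular space $\XX = (X, d, \RR, g, q_\infty)$ in the sense of Definition~\ref{Def:singlimitspace} and verifies convergence in the sense of Definition~\ref{Def:convergencescheme}.

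The structural claims---the Minkowski bound $\dim_{\mathcal{M}}(X \setminus \RR) \leq n-4$ and $Y$-regularity at scale~$1$---are then outputs of the two deferred theorems. The $\eps$-regularity theorem asserts that in a time-slice of a flow as in (i)--(ii), if $\vol_{g_0^i} B(x,r) \geq (1-\eps(n,C))\omega_n r^n$ then $|{\Rm}_{g_0^i}|(x) \leq r^{-2}$; the mildness/weak-convexity property supplies the cone-splitting input needed to run quantitative stratification in the singular setting. Plugging both into the machinery of \cite{Bamler-CGN}---which, crucially, does not require a synthetic Ricci bound across the singular set---one obtains the desired Minkowski estimate and effective curvature control at scale~$1$, with all constants depending only on $n$ and $C$. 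This is precisely $Y$-regularity for an appropriate $Y = Y(n, C)$.

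Finally, suppose $\rho_i \to 0$. Smooth parabolic convergence on neighborhoods of regular points gives that the limiting Ricci flow on $\RR$ has $R \equiv 0$; together with the identity $\partial_t R = \Delta R + 2|{\Ric}|^2$ this forces $\Ric_g \equiv 0$ on $\RR$, so $\XX$ is Ricci flat in the sense of Definition~\ref{Def:singlimitspace}. For mildness, one passes the pre-limit assertion---that almost every pair of points in a time-slice is connected by an almost-geodesic avoiding the high-curvature region---through the limit $\rho_i \to 0$: as the high-curvature region collapses into $X \setminus \RR$, ``avoiding high curvature'' becomes ``contained in $\RR$'', and the almost-geodesics converge to minimizing geodesics joining almost every pair of regular points. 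This last step rests on the uniform heat kernel and $\LL$-geodesic integral estimates highlighted in the introduction, which, together with the $\eps$-regularity theorem, carry essentially the entire technical burden of the paper.
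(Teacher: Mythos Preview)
Your sketch contains a genuine logical gap: you are applying the $\eps$-regularity theorem and the mildness result as if they hold under assumptions (i)--(ii) of Theorem~\ref{Thm:maincompactness} alone, but they do not. Proposition~\ref{Prop:epsreg} requires, in addition to the entropy bound, both a \emph{small} scalar curvature bound $|R|\leq\rho(A,E,\mathbf{p})$ and the a priori weak $L^{\mathbf{p}}$-curvature assumption~(iii); likewise the mildness conclusion in Proposition~\ref{Prop:convtosingspaceLpasspt}(b) needs assumption~(iii) and $\rho_i\to 0$. So you cannot ``plug both into the machinery of \cite{Bamler-CGN}'' for the raw sequence $(M_i,g_0^i)$: the hypotheses of \cite{Bamler-CGN} on the limit (codimension $>2$ singularities, mildness, tameness, $Y$-regularity) are precisely what you are trying to establish, and your argument is circular at this point.

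The paper breaks this circularity via the induction-on-scales argument of section~\ref{sec:mainproof}, which you have omitted entirely. The crucial feature of Proposition~\ref{Prop:epsreg} is that $\eps_0$ and $\sigma_0$ depend only on $A$, \emph{not} on the a priori constant $E$. This allows one to assume the weak $L^{\mathbf{p}}$-bound at scales $<r_0$, pass to blow-up limits (where the rescaled scalar curvature is small), apply \cite{Bamler-CGN} to those limits, and deduce a bound at scales $\leq 10r_0$ whose constant is independent of the assumed $E$ (Lemmas~\ref{Lem:rrmsmallcoverbyballs}--\ref{Lem:findEprime}). Iterating removes the a priori assumption and yields Proposition~\ref{Prop:Mainsublevelrrmbound} unconditionally. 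Only then is assumption~(iii) of Proposition~\ref{Prop:convtosingspaceLpasspt} verified for the original sequence, and Theorem~\ref{Thm:maincompactness} follows in one line. Your ``standard compactness machinery'' concatenation skips this bootstrap, which is the analytic heart of the proof.
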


We remark that Theorem~\ref{Thm:maincompactness} shows that in the case $\rho_i \to 0$ the limiting space $\XX$ is regular enough to apply the generalization of the theory of Cheeger, Colding and Naber from \cite{Bamler-CGN}.
This theory gives us further structural characterizations on $\XX$ and its blow-up limits, and it also holds in the settings of Theorems~\ref{Thm:AnswerQuestion1} and \ref{Thm:AnswerQuestion2}.
For example, Theorem~\ref{Thm:maincompactness} combined with \cite[Theorems~1.5, Proposition~4.1]{Bamler-CGN} implies:

\begin{Corollary}
In the settings of Theorem~\ref{Thm:maincompactness}, Theorems~\ref{Thm:AnswerQuestion1} and \ref{Thm:AnswerQuestion2}, all tangent cones of the limiting space $\XX$ are metric cones.
\end{Corollary}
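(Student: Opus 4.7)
The plan is to reduce each of the three settings to the $\rho_i \to 0$ case of Theorem~\ref{Thm:maincompactness} by a parabolic rescaling argument, and then invoke \cite[Theorems~1.5, Proposition~4.1]{Bamler-CGN}, which establish a Bishop-Gromov-type monotonicity on singular spaces satisfying the structural conclusions of Theorem~\ref{Thm:maincompactness} (Ricci flat on the regular part, mild singularities, codimension $\geq 4$, $Y$-regular at scale $1$) together with the rigidity case identifying constant volume ratios with metric cones.

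First I would fix $x \in \XX$ and a tangent cone $\XX'$ at $x$, realized as the pointed Gromov-Hausdorff limit of $(\XX, \lambda_j d, x)$ along some sequence $\lambda_j \to \infty$, with basepoint $x_\infty \in \XX'$. In each of the three settings, $\XX$ itself arises as a pointed limit of time-slices of Ricci flows on compact manifolds with uniform scalar curvature bound $|R| \leq C$ (or $|R| \leq C(T-t)^{-1}$ in the setting of Theorem~\ref{Thm:AnswerQuestion2}). Parabolically rescaling these approximating flows by $\lambda_j$ produces Ricci flows on $[-2, 0]$ whose scalar curvature satisfies $|R| \leq \lambda_j^{-2} C \to 0$; Perelman's $\nu$-functional is scale invariant and its lower bound is propagated under Perelman's monotonicity, so the entropy hypothesis of Theorem~\ref{Thm:maincompactness} is preserved. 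A diagonal selection along the two-parameter family then yields a sequence of rescaled Ricci flows whose time-$0$ slices pointed-converge to $(\XX', x_\infty)$ and to which Theorem~\ref{Thm:maincompactness} applies with $\rho_i \to 0$. The last assertion of that theorem therefore shows that $\XX'$ is a Ricci flat singular space with mild singularities of codimension $\geq 4$ that is $Y$-regular at scale $1$.

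Second, with $\XX'$ in this class, \cite[Theorem~1.5]{Bamler-CGN} furnishes a Bishop-Gromov-type monotonicity for the volume ratios $r \mapsto V_g(B_r(x_\infty))/r^n$ on the regular part of $\XX'$ centered at $x_\infty$. A standard argument, adapted to the singular setting by \cite{Bamler-CGN}, uses that $\XX'$ is itself a tangent cone of $\XX$ at $x$ to show that these volume ratios are constant in $r$: they arise as limits along $\lambda_j \to \infty$ of the rescaled monotone volume ratios on $\XX$ and so stabilize at a common value, the density $\Theta(x)$. The rigidity statement of \cite[Proposition~4.1]{Bamler-CGN} then applies and identifies $\XX'$ as a metric cone centered at $x_\infty$, completing the proof since $\XX'$ was an arbitrary tangent cone.

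The main obstacle is the diagonal rescaling in the first step. In Theorem~\ref{Thm:AnswerQuestion1}, where $\XX$ arises from time slices $g_{t_k}$ of a single Ricci flow rather than from a sequence of flows, the diagonal selection must simultaneously move $t_k \nearrow T$ and $\lambda_j \to \infty$ in a compatible way, and the convergence of the parabolically rescaled flows in the sense required by Theorem~\ref{Thm:maincompactness} relies on the heat kernel and distance distortion estimates of \cite{Bamler-Zhang-Part1, Bamler-Zhang-Part2}. In the soliton setting of Theorem~\ref{Thm:AnswerQuestion2}, one must additionally verify that the shrinking potential $f_\infty$ scales away in the limit $\lambda_j \to \infty$, leaving a genuinely Ricci flat tangent cone rather than a nontrivial soliton.
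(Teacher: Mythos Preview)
The paper does not give a detailed proof of this corollary; it simply records that it follows from Theorem~\ref{Thm:maincompactness} together with \cite[Theorem~1.5, Proposition~4.1]{Bamler-CGN}, treating the latter as a black box. Your proposal attempts to unpack what that black box does, which is reasonable, and your diagonal rescaling in step one (realizing any tangent cone $\XX'$ as a pointed limit of rescaled flows with $\rho_i\to 0$) is correct and is indeed the mechanism that connects the non-Ricci-flat settings of Theorems~\ref{Thm:AnswerQuestion1}, \ref{Thm:AnswerQuestion2} and the general case of Theorem~\ref{Thm:maincompactness} to the $\rho_i\to 0$ case.

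There is, however, a genuine gap in your second step. To deduce that the volume ratios on $\XX'$ are constant you write that ``they arise as limits along $\lambda_j\to\infty$ of the rescaled \emph{monotone} volume ratios on $\XX$''. But you have only established Bishop--Gromov monotonicity on $\XX'$, not on $\XX$. In the settings of Theorems~\ref{Thm:AnswerQuestion1}, \ref{Thm:AnswerQuestion2} and the general case of Theorem~\ref{Thm:maincompactness} (i.e.\ $\rho_i\not\to 0$), the limit $\XX$ is only known to have singularities of codimension~$4$ and to be $Y$-regular at scale~$1$; it is \emph{not} known to be Ricci flat on $\RR$ or to have mild singularities, so \cite[Proposition~4.1]{Bamler-CGN} does not apply to $\XX$ and you have no monotonicity of $r\mapsto |B^{\XX}(x,r)|/r^n$ to feed into the limit. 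Without this, the existence of the density $\Theta(x)$ and hence the constancy of the volume ratio on $\XX'$ are unjustified. (Note also a minor misattribution: based on the paper's other citations, \cite[Proposition~4.1]{Bamler-CGN} is the volume comparison result, not the rigidity statement; you have the roles of Theorem~1.5 and Proposition~4.1 interchanged.)

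The cleaner route, and the one the paper has in mind, is to apply \cite[Theorem~1.5]{Bamler-CGN} directly to a space that already satisfies all the hypotheses (Ricci flat on $\RR$, mild singularities, $Y$-tame and $Y$-regular), rather than trying to run the Cheeger--Colding constancy-plus-rigidity argument by hand. In the $\rho_i\to 0$ case of Theorem~\ref{Thm:maincompactness} this space is $\XX$ itself; in the other three settings one uses your diagonal rescaling to place the problem back into this case.
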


In the setting of Theorem~\ref{Thm:maincompactness}, we also obtain further characterizations of the behavior of the flow as $\rho_i \to 0$, which will be discussed in a subsequent paper.


Theorem \ref{Thm:maincompactness} and its proof imply an important geometric bound for Ricci flows with bounded scalar curvature.
Before introducing this bound, we need to recall the following terminology:

\begin{Definition}[Curvature radius] \label{Def:curvradius}
Let $(M, g)$ be a (not necessarily complete) Riemannian manifold and let $x \in M$ be a point.
Then we define the \emph{curvature radius $\rrm (x)$ at $x$} to be the supremum over all $r > 0$ such that the ball $B(x,r)$ is relatively compact in $M$ and such that $|{\Rm}| < r^{-2}$ on $B(x,r)$.

If $(g_t)_{t \in I}$ is a Ricci flow on $M$, then we denote by $\rrm (x,t)$ the curvature radius $\rrm(x)$ with respect to the metric $g_t$.

We will often denote by $\{ a < \rrm < b \}$ (in the static case) or $\{ a < \rrm (\cdot, t) < b \}$ (in the dynamic case) the set of all points $x \in M$ such that $a < \rrm (x) < b$ or $a < \rrm (x,t) < b$, respectively.
\end{Definition}

In the next result, we control the inverse of the curvature radius, $(\rrm (\cdot, t))^{-1}$, in the local $L^{p < 4}$ sense on each time-slice of a Ricci flow, in terms of a constant that only depends on an upper bound on the scalar curvature and a lower bound on the entropy.
We furthermore obtain $L^{p<2}$-bounds on the Riemannian curvature tensor.

\begin{Theorem}[Bounds on the curvature radius] \label{Thm:mainrrmbound}
For any $A < \infty$, $\eps > 0$ and $n$ there is an $C = C(A, \eps, n) < \infty$ such that the following holds:

Let $(M, (g_t)_{t \in [-2, 0]})$ be a Ricci flow on a compact $n$-dimensional manifold such that the following holds:
\begin{enumerate}[label=(\roman*)]
\item \label{(i)-Thm:mainrrmbound} The scalar curvature satisfies the uniform upper bound
\[ R < A \textQQqq{on}  M \times [-2,0]. \]
\item \label{(ii)-Thm:mainrrmbound} Perelman's entropy satisfies the uniform lower bound
\[ \nu [g_{-2}, 4] = \inf_{0 < \tau \leq 4} \mu [g_{-2}, \tau] > - A. \]
\end{enumerate}
Then for any $0 < r < 1$, $t \in [-1,0]$ and $x \in M$ we have
\[ \int_{B(x,t,r)} |{\Rm} (\cdot , t)|^{2-\eps} dg_t  \leq \int_{B(x,t,r)} \big( \rrm(\cdot, t) \big)^{-4+2\eps}  dg_t < C r^{n-4 + 2\eps}. \]
\end{Theorem}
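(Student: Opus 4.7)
The first of the two inequalities is immediate: by Definition \ref{Def:curvradius}, for every $r < \rrm(y, t)$ one has $|{\Rm}|(y, t) < r^{-2}$; sending $r \nearrow \rrm(y, t)$ yields the pointwise bound $|{\Rm}|(y, t) \leq \rrm(y, t)^{-2}$, and raising both sides to the $(2-\eps)$-th power gives the first integral inequality. Thus the substance of the theorem lies in the second inequality, a uniform $L^{4-2\eps}$ bound for the inverse curvature radius.

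The plan is first to reduce to the normalized case $r = 1$, $t = 0$ by parabolic rescaling. Setting $\td{g}_s := r^{-2} g_{t + r^2 s}$, the rescaled scalar curvature bound becomes $\td{R} < A r^2 \leq A$, the entropy bound passes to the rescaled flow by Perelman's monotonicity, and $\rrm$ scales with degree $1$; a direct computation shows that the conclusion is equivalent to the normalized estimate
\[ \int_{B(x, 0, 1)} \rrm(\cdot, 0)^{-(4-2\eps)}\, dg_0 \leq C(A, \eps, n) \]
for flows satisfying (i), (ii) on $[-2, 0]$. By the layer-cake identity
\[ \int_B \rrm^{-(4-2\eps)}\, dg_0 = (4 - 2\eps) \int_0^\infty s^{-(5-2\eps)}\, \vol_{g_0}\big(\{\rrm(\cdot, 0) < s\} \cap B\big)\, ds, \]
this reduces further to a quantitative Minkowski content bound: for each $\delta \in (0, 2\eps)$,
\[ \vol_{g_0}\big(\{\rrm(\cdot, 0) < s\} \cap B(x, 0, 1)\big) \leq C(\delta, A, n)\, s^{4-\delta} \quad \text{for all } s \in (0, 1]. \]
The tail $s > 1$ in the layer-cake integral is absorbed using the non-collapsing bound coming from the entropy assumption.

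The Minkowski content bound is established by contradiction using Theorem \ref{Thm:maincompactness}. Suppose it fails: there exist $\delta_0 > 0$ and a sequence $(M_i, g^i_t)$, basepoints $x_i \in M_i$, and scales $s_i \in (0, 1]$ with $\vol_{g^i_0}(\{\rrm(\cdot, 0) < s_i\} \cap B(x_i, 0, 1)) > i \cdot s_i^{4-\delta_0}$. Since the unit-ball volume is uniformly bounded, necessarily $s_i \to 0$. Pass to a subsequential limit via Theorem \ref{Thm:maincompactness}: a pointed singular space $(\XX, x_\infty)$ with singularities of codimension $4$ (so that $X \setminus \RR$ has Minkowski dimension $\leq n-4$) and $Y$-regularity at scale $1$ for some $Y = Y(n, A)$. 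The codimension-$4$ and $Y$-regularity properties together supply a uniform Minkowski content bound $\vol(\{y \in B^{\XX}(x_\infty, 1) : d(y, X \setminus \RR) < s\}) \leq C(\delta_0/2, Y)\, s^{4-\delta_0/2}$. Smooth convergence on the regular part in the sense of Definition \ref{Def:convergencescheme}, together with stability of $\rrm$ under smooth Cheeger-Gromov convergence, then shows that for $i$ large, the set $\{\rrm(\cdot, 0) < s_i\} \cap B(x_i, 0, 1)$ is, up to volume error $o(1)$, contained in an $O(s_i)$-neighborhood of $X \setminus \RR$ in $\XX$; volume continuity on balls forces $\vol_{g^i_0}(\{\rrm(\cdot, 0) < s_i\} \cap B(x_i, 0, 1)) \leq 2 C(\delta_0/2, Y)\, s_i^{4-\delta_0/2}$ for large $i$, contradicting the lower bound $> i \cdot s_i^{4-\delta_0}$.

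The principal obstacle is the stability step: justifying that $\{\rrm(\cdot, 0) < s_i\}$ in the approximating sequence is essentially contained in a tubular neighborhood of the singular set of the limit. This requires the precise form of smooth convergence on the regular part provided by Definition \ref{Def:convergencescheme} and continuity of $\rrm$ under smooth Cheeger-Gromov convergence on that part. In addition, the uniform Minkowski content bound for the singular set of $\XX$, with constants depending only on $Y(n, A)$, is itself nontrivial: it ultimately rests on the $\eps$-regularity theorem for Ricci flows with bounded scalar curvature emphasized in the introduction as one of the main technical contributions of the paper, combined with a Cheeger-Naber-style quantitative stratification argument. Once these ingredients are in hand, the rescaling reduction, the layer-cake integration, and the contradiction argument combine cleanly to give the stated $L^p$ bounds.
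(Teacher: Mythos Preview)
Your argument is circular. You invoke Theorem~\ref{Thm:maincompactness} as a black box, but in this paper Theorems~\ref{Thm:maincompactness} and~\ref{Thm:mainrrmbound} are proved simultaneously (see subsection~\ref{subsec:outline}): both follow from the sublevel set bound Proposition~\ref{Prop:Mainsublevelrrmbound}, and Theorem~\ref{Thm:maincompactness} cannot be established without it. Concretely, the compactness result that is proved \emph{directly} is Proposition~\ref{Prop:convtosingspaceLpasspt}, which requires as assumption~(iii) precisely the a priori Minkowski content bound you are trying to prove; Theorem~\ref{Thm:maincompactness} in its assumption-free form only follows once Proposition~\ref{Prop:Mainsublevelrrmbound} supplies that bound. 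And Proposition~\ref{Prop:Mainsublevelrrmbound} plus the layer-cake computation you wrote down \emph{is} the paper's proof of Theorem~\ref{Thm:mainrrmbound}. So your contradiction step is using the conclusion to derive itself.

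What you are missing is the induction over scales (Lemmas~\ref{Lem:rrmsmallcoverbyballs}--\ref{Lem:findEprime}). One assumes the sublevel set bound at scales $< r_0$ with some constant $E'$; after parabolic rescaling so that $r_0$ becomes of order $1$, the scalar curvature tends to $0$, and one can pass to a blow-up limit via Proposition~\ref{Prop:convtosingspaceLpasspt} (which needs the a priori bound as a hypothesis). On that limit, the $Y$-regularity of Corollary~\ref{Cor:checkregularityasspt}, with $Y = Y(A)$ independent of $E'$, feeds into \cite[Theorem~1.6]{Bamler-CGN} to give a covering bound with constant $H(A,\mathbf{p})$ independent of $E'$. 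Iterating (Lemma~\ref{Lem:rrmEpboundonecondition}) propagates the bound to scale $10 r_0$, and a rescaling/continuity argument (Lemma~\ref{Lem:findEprime}) removes the a priori hypothesis. Your ``principal obstacle'' is exactly where this bootstrap lives; it cannot be finessed by quoting Theorem~\ref{Thm:maincompactness}, because that theorem is downstream of the very bound you need.
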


Note that Theorem \ref{Thm:mainrrmbound} can be seen as a generalization of the main result of \cite{Cheeger-Naber-Codim4}.

Lastly, we mention that the size of the time-interval $[-2,0]$ in Theorem \ref{Thm:mainrrmbound} was chosen for technical reasons and can be adjusted to any other size via parabolic rescaling (see subsection \ref{subsec:terminology} for more details).
Likewise, the interval $[-1,0]$ in the assertion of Theorem \ref{Thm:mainrrmbound} can be chosen to be larger than half of the time-interval $[-2,0]$.

\subsection{Outline of the proof} \label{subsec:outline}
In the following, we give a brief outline of the proofs of our main results.
As mentioned before, Theorems \ref{Thm:AnswerQuestion1}, \ref{Thm:AnswerQuestion2} and Corollary \ref{Cor:AnswerHT} are deduced as a consequence of the compactness result, Theorem~\ref{Thm:maincompactness}.
This compactness result and the curvature bound, Theorem~\ref{Thm:mainrrmbound}, will be proven virtually simultaneously.

Before explaining the strategy of proofs of Theorems \ref{Thm:maincompactness} and \ref{Thm:mainrrmbound}, we first make the following observation:
In order to obtain the (strong) $L^{p<4}$-bound in Theorem~\ref{Thm:mainrrmbound}, it suffices to establish a similar weak $L^{p^*}$-bound for some $p^* > p$.
For technical convenience, we will often work with this weak $L^{p^*}$-bound in this paper and for brevity, we will refer to a weak $L^{p}$-bound simply as an ``$L^{p}$-bound'' in this outline.

The strategy of our proof is the following.
Let us fix some constants $E, p, p', p''$ such that $3 < p < p' < p'' < 4$.
We first introduce an a priori assumption, which states that the inverse of the curvature radius, $\rrm^{-1}$, is locally bounded by $E$ in a certain $L^p$-sense (similar as in the statement of Theorem \ref{Thm:mainrrmbound}).
Assuming this a priori assumption, we show that $\rrm^{-1}$ is bounded in an $L^{p'}$-sense, at sufficiently small scales, by a constant $C$, whose value is independent of $E$.
It is therefore possible to choose $E \gg C$ in our a priori assumption.
Using this conclusion, we carry out an induction argument over scales, which will imply that the  $L^{p'}$-bound from above holds even if we don't impose the a priori assumption.
More specifically, we can arrange things in such a way that the $L^{p}$-bound in the a priori assumption (involving $E$) follows from the $L^{p'}$-bound (involving $C$) at smaller scales.
Therefore, if the $L^p$-bound holds at scales below some $r < 1$, then the a priori assumption, and thus also the $L^p$-bound, hold at scales $< 10 r$.
Iterating this conclusion, will then allow us to derive an $L^p$-bound at all scales.

In order to derive the $L^p$-bound on $\rrm^{-1}$ under the a priori assumption, we prove a compactness and blow-up result, which is similar to Theorem~\ref{Thm:maincompactness}.
More specifically, we show that sequences of Ricci flows that satisfy the a priori assumption and whose scalar curvature is bounded by a constant that goes to $0$ converge, after passing to a subsequence, to a singular space $\XX$ that is Ricci flat away from a singular set of codimension $> 3$.
It will turn out that these limit spaces can be analyzed using the results of \cite{Bamler-CGN}, which generalize the theory of Cheeger, Colding and Naber (see \cite{Colding-vol-conv, Cheeger-Colding-Cone, Cheeger-Naber-quantitative, Cheeger-Naber-Codim4}) to the singular setting.
As a result, we obtain an $L^{p''}$-bound on $\rrm^{-1}$ on $\XX$.
From this bound we can deduce an $L^{p'}$-bound on Ricci flows with sufficiently small scalar curvature via a covering argument.

It is important to note that the limiting space $\XX$ can only be shown to be Ricci flat on its regular part.
Unfortunately, a (synthetic) characterization of the curvature on the singular points seems to be unavailable.
Therefore, the generalization of the theory of Cheeger, Colding and Naber to the singular setting becomes quite subtle.
In lieu of a curvature condition on the singular points of $\XX$, we have to show that $\XX$ possesses a number of geometric and analytic properties, which allow us to carry out this theory whatsoever.
A sufficient set of such properties has been worked out in \cite{Bamler-CGN}.
Luckily, several of these properties follow more or less naturally from earlier work of the author and Zhang (see \cite{Bamler-Zhang-Part1, Bamler-Zhang-Part2}).
However, as discussed in subsection~\ref{subsec:Introduction}, there are two properties --- the weak convexity property of the regular set (aka ``mildness of the singular set'') and an $\eps$-regularity property --- that require new regularity results for Ricci flows with bounded scalar curvature.
The proofs of these regularity results occupy most of this paper.

The general inductive multi-scale approach of our proof is quite common in the analysis of geometric PDEs and Ricci flows in particular.
The effectiveness of this approach in Ricci flows was first demonstrated by Perelman (cf \cite{PerelmanI, PerelmanII}).
In the setting of Ricci flows with bounded scalar curvature, the approach was also used in \cite{Bamler-Zhang-Part1}.
In the K\"ahler case, it was furthermore used by Chen and Wang (cf \cite{Chen-Wang-II}).
However, in their work, the authors needed to impose several strong additional a priori assumptions that they had to verify subsequently.
In our proof, these additional properties will be derived directly, making the structure of our argument more linear and transparent.

The paper is structured as follows.
Section \ref{sec:terminologyandconventions} contains the most important definitions and conventions used throughout this paper.

In section \ref{sec:PreliminariesRF}, we review the results from \cite{Bamler-Zhang-Part1} and \cite{Bamler-Zhang-Part2} that will be needed subsequently.

In section \ref{sec:firstconvsing}, we prove a result similar to the compactness statement of Theorem~\ref{Thm:maincompactness} under the a priori $L^p$-bound on $\rrm^{-1}$.
The main part of this section is devoted to the proof of the mildness of the singularities in the limit (see Definition~\ref{Def:mild}), which is needed in order to apply \cite{Bamler-CGN}.

In section \ref{sec:epsregularity} we show that the limiting space from the previous section is $Y$-regular (in the sense of Definition \ref{Def:Yregularity}), meaning that any ball with $Y^{-1}$-almost maximal volume has bounded curvature at its center.

In section \ref{sec:mainproof}, we combine the compactness statement under the a priori assumption (from section \ref{sec:firstconvsing}) with the $Y$-regularity of the limit $\XX$ (from section \ref{sec:epsregularity}) and the $L^p$ bound on $\rrm^{-1}$ (from \cite{Bamler-CGN}) to deduce the main Theorems~\ref{Thm:maincompactness} and \ref{Thm:mainrrmbound}.
Theorems \ref{Thm:AnswerQuestion1}, \ref{Thm:AnswerQuestion2} and Corollary \ref{Cor:AnswerHT} will follow immediately.

\section{Important terminology and conventions} \label{sec:terminologyandconventions}
\subsection{Terminology} \label{subsec:terminology}
We now give a precise definition of the terminology that was used in the theorems of corollaries of the previous subsection and which will be used throughout this paper.
Let us first introduce the following notion:
Given a measurable subset $S \subset M$ of a Riemannian manifold $(M,g)$ we will denote by $|S| = |S|_g$ the Riemannian measure of $S$ with respect to the metric $g$.
If $(g_t)_{t \in M}$ is a Ricci flow, then we often write $|S|_t := |S|_{g_t}$.

Next, we review Perelman's entropy formulas (cf \cite{PerelmanI}).
For any compact, $n$-dimensional Riemannian manifold $(M, g)$, any function $C^1 (M)$ and any $\tau > 0$ we define
\[ \mathcal{W} [g, f, \tau] := \int_M \big( \tau ( |\nabla f|^2 + R) + f - n \big) (4 \pi \tau)^{-n/2} e^{-f} dg. \]
We can then derive the following functionals:
For $\tau > 0$, we define
\[ \mu [ g, \tau ] := \inf_{\substack{f \in C^1 (M) \\ \int_M (4 \pi \tau)^{-n/2} e^{-f} dg = 1}} \mathcal{W} [ g, f, \tau] \]
and
\[ \nu [ g, \tau ] := \inf_{0 < \tau' < \tau} \mu [ g, \tau' ]. \]
Note that $\nu[g, \tau] \leq 0$. 
If $(g_t)_{t \in [0,\tau)}$ is a Ricci flow, then the functions $t \mapsto \mu [g_t, \tau - t]$, $t \mapsto \nu[g_t, \tau - t]$ and $t \mapsto \nu[g_t]$ are non-decreasing.
Moreover, by replacing $f \leftarrow f  - \frac{n}2 \log \tau_1 + \frac{n}2 \log \tau_2$, we can deduce the following estimate for any $0 < \tau_1 \leq \tau_2$:
\[ \mu [ g,  \tau_2 ] \geq \mu [ g, \tau_1] + \frac{n}2 \log \tau_1 - \frac{n}2 \log \tau_2, \]
and similarly
\[ \nu [ g, \tau_1] \geq \nu [ g,  \tau_2 ] \geq \nu [ g, \tau_1] + \frac{n}2 \log \tau_1 - \frac{n}2 \log \tau_2. \]
Lastly, note that the choice $\tau = 4$ in the conditions on the entropy in the theorems and corollaries of subsection \ref{subsec:Introduction} only serves our convenience and can be modified to any other constant.

We now define what we mean by the singular spaces that appeared in Theorem~\ref{Thm:maincompactness}.
The following definition comprises the most basic notions of a metric space that is smooth on a generic subset.

\begin{Definition}[singular space] \label{Def:singlimitspace}
A tuple $\mathcal{X} = (X, d, \RR, g)$ is called an \emph{($n$-dimensional) singular space} if the following holds:
\begin{enumerate}[label=(\arabic*)]
\item $(X,d)$ is a locally compact, complete metric length space.
\item $\RR \subset X$ is an open and dense subset that is equipped with the structure of a differentiable $n$-manifold whose topology is equal to the topology induced by $X$.
\item $g$ is a smooth Riemannian metric on $\RR$.
\item The length metric of $(\RR, g)$ is equal to the restriction of $d$ to $\RR$.
In other words, $(X,d)$ is the completion of the length metric on $(\RR, g)$.
\item There are constants $0 < \kappa_1 < \kappa_2 < \infty$ such that for all $0 < r < 1$
\[ \kappa_1 r^n < | B(x,r) \cap \RR | < \kappa_2 r^n. \]
Here $| \cdot |$ denotes the Riemannian volume with respect to the metric $g$ and distance balls $B(x,r)$ are measured with respect to the metric $d$.
\end{enumerate}
If moreover $\Ric_g = 0$ everywhere on $\RR$, then $\mathcal{X}$ is said to be \emph{Ricci flat}.
Also, if $q \in X$ is a point, then the tuple $(\mathcal{X}, q)$ or $(X, d, \RR, g, q)$ is called \emph{pointed singular space}.

The subset $\RR$ is called the \emph{regular part of $\XX$} and its complement $X \setminus \RR$ the \emph{singular part of $\XX$}.
\end{Definition}

We remark that this definition is similar to the corresponding definition in \cite{Bamler-CGN} with the only difference that $g$ is assumed to be smooth in this paper.
We furthermore emphasize that a the metric $d$ on $X$ is induced by the length metric of the Riemannian metric $g$ on $\RR$ (see item (4)).
So the distance between any two points in $\RR$ can be approximated arbitrarily well by the length of a smooth connecting curve in $\RR$.
This is an important property, which will take us some effort to establish.

We can generalize the concept of curvature radius from Definition \ref{Def:curvradius} to singular spaces $\mathcal{X} = (X, d, \RR, g)$ by defining the function $\rrm : X \to [0, \infty]$ as follows: we define $\rrm |_{X \setminus \RR} \equiv 0$ and for any $x \in \RR$ we let $\rrm (x)$ be the supremum over all $r > 0$ such that $B(x,r) \subset \RR$ and $|{\Rm}| < r^{-2}$ on $B(x,r)$. 

We will now define the following properties of singular spaces:

\begin{Definition}[singularities of codimension $\mathbf{p}_0$]  \label{Def:codimensionsingularities}
A singular space $\mathcal{X} = (X, \linebreak[1] d, \linebreak[1] \RR, \linebreak[1] g)$ is said to have \emph{singularities of codimension $\mathbf{p}_0$}, for some $\mathbf{p}_0 > 0$, if for any $0 < \mathbf{p} < \mathbf{p}_0$, $x \in X$ and $r_0 > 0$ there is an $\mathbf{E}_{\mathbf{p},x,r_0} < \infty$ (which may depend on $\XX$) such that the following holds:
For any $0 < r < r_0$ and $0 < s < 1$ we have
\[
 | \{ \rrm < s r \} \cap B(x, r) \cap \RR | \leq \mathbf{E}_{\mathbf{p},x,r} s^{\mathbf{p}} r^n. 
\]
\end{Definition}

It can be seen easily that if an $n$-dimensional singular space $\XX$ has singularities of codimension $\mathbf{p}_0$ in the sense of Definition \ref{Def:codimensionsingularities}, then its singular set $X \setminus \RR$ has Minkowski dimension $\leq n - \mathbf{p}_0$.

\begin{Definition}[mild singularities] \label{Def:mild}
A singular space $\mathcal{X} = (X, d, \RR, g)$ is said to have \emph{mild singularities} if for any $p \in X$ there is a closed subset $Q_p \subset \RR$ of measure zero such that for any $x \in \RR \setminus Q_p$ there is a minimizing geodesic between $p$ and $x$ that lies in $\RR$.
\end{Definition}

The idea behind the notion of mild singularities also occurs in the work of Cheeger and Colding (see \cite[Theorem~3.9]{Cheeger-Colding-structure-II} and Chen and Wang (see \cite[Definition 2.1]{Chen-Wang-II}).

\begin{Definition}[$Y$-regularity] \label{Def:Yregularity}
A singular space $\XX$ is called \emph{$Y$-regular at scales less than $a$}, for some $a, Y > 0$, if for any $p \in X$ and $0 < r < a$ the following holds:
If 
\[ | B(p, r) \cap \RR| > (\omega_n - Y^{-1}) r^n, \]
then $p \in \RR$ and $\rrm (y) > Y^{-1} r$.
Here $\omega_n$ denotes the volume of the standard $n$-dimensional ball in Euclidean space.
The space $\XX$ is said to be \emph{$Y$-regular at all scales}, if it is $Y$-regular at scale $a$ for all $a > 0$.
\end{Definition}

The notion of $Y$-regularity is standard in the study of Einstein metrics.
A similar notion has been used in \cite{Cheeger-Colding-structure-II} and \cite{MR999661} and, in the setting of Ricci flows with bounded scalar curvature, in \cite[Definition 3.3]{Chen-Wang-II} and \cite[Theorem~2.35]{Tian-Zhang:2013}.
We also remark that Definitions \ref{Def:codimensionsingularities} and \ref{Def:Yregularity} are similar to the corresponding definitions in \cite{Bamler-CGN} with the only difference that the curvature radius, as defined in Definition \ref{Def:curvradius}, that is used in these definitions here does not involve higher derivatives of the curvature tensor.
Due to parabolic regularity and backwards pseudolocality (see Proposition \ref{Prop:Pseudoloc}), this difference will turn out to be inessential.

It can be shown that in a $Y$-regular space with singularities of codimension $\mathbf{p}_0$ (for some $\mathbf{p}_0 > 0$), any point $p \in X$ whose tangent cone is isometric to $\IR^n$, is actually contained in $\RR$.
Therefore, the regular set $\RR$ and the metric $g$ in such a space is uniquely characterized by the metric $d$.

Next, we define what we understand by convergence towards a singular space.

\begin{Definition}[convergence and convergence scheme] \label{Def:convergencescheme}
Consider a sequence $(M_i, g_i, q_i)$ of pointed $n$-dimensional Riemannian manifolds and a pointed, $n$-dimensional singular space $(\mathcal{X}, q_\infty) = (X, d, \RR, g,q_\infty)$.
Let $U_i \subset \RR$ and $V_i \subset M_i$ be open subsets and $\Phi_i : U_i \to V_i$ be (bijective) diffeomorphisms such that the following holds:
\begin{enumerate}[label=(\arabic*)]
\item $U_1 \subset U_2 \subset \ldots$
\item $\bigcup_{i=1}^\infty U_i = \RR$.
\item For any open and relatively compact $W \subset \RR$ and any $m \geq 1$ we have $\Phi_i^* g_i \to g$ on $W$ in the $C^m$-sense.
\item There exists a sequence $q^*_i \in U_i$ such that
\[ d^{M_i} ( \Phi_i(q^*_i), q_i) \to 0. \]
\item For any $R < \infty$ and $\eps > 0$ there is an $i_{R, \eps} < \infty$ such that for all $i > i_{R, \eps}$ and $x, y \in B^X (q^\infty, R) \cap U_i$ we have
\[ \big| d^{M_i} (\Phi_i(x), \Phi_i(y)) - d^X (x,y) \big| < \eps \]
and such that for any $i > i_{R, \eps}$ and $x \in B^{M_i} (q_i, R)$ there is a $y \in V_i$ such that $d^{M_i} (x,y) < \eps$.
\end{enumerate}
Then the sequence $\{ (U_i, V_i, \Phi_i) \}_{i =1}^\infty$ is called a \emph{convergence scheme for the sequence of pointed Riemannian manifolds $(M_i, g_i, q_i)$ and the pointed singular space $(\mathcal{X}, q_\infty)$}.
We say that \emph{$(M_i, g_i, q_i)$ converges to $(\mathcal{X}, q_\infty)$} if such a convergence scheme exists.
\end{Definition}

\subsection{Conventions}
In the following we will fix a dimension $n \geq 3$ and we will omit the dependence of our constants on $n$.
Note that all the theorems above trivially hold in dimension $2$.

\section{Preliminaries on Ricci flows} \label{sec:PreliminariesRF}
\subsection{Ricci flows with bounded scalar curvature}
In this subsection we review some of the previous results on Ricci flows with bounded scalar curvature that we will need in the following.

We first recall the following volume bounds for distance balls.

\begin{Proposition}[volume bound] \label{Prop:VolumeBound}
For any $A < \infty$ there is a constant $C = C(A) < \infty$ such that the following holds:

Let $(M, (g_t)_{t \in [-2,0]})$ be a Ricci flow on a compact, $n$-dimensional manifold that satisfies 
\begin{enumerate}[label=(\roman*)]
\item $\nu [ g_{-2}, 4 ] \geq - A$.
\item $|R| \leq A$ on $M \times [-2, 0]$.
\end{enumerate}
Then for any $(x,t) \in M \times [-1,0]$ and $r > 0$ we have
\[ C^{-1} \big( \min \{ 1, r \} \big)^n \leq | B(x,t,r)|_t \leq C r^n e^{Cr}. \]
\end{Proposition}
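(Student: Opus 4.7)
The proposition splits naturally into a lower (non-collapsing) and an upper (non-inflation) bound, and I would treat the two independently.

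For the lower bound, my plan is to apply Perelman's no-local-collapsing argument, adapted to the bounded scalar curvature setting. Monotonicity of $\nu$ along the flow together with the logarithmic correction in the $\tau$-shift recalled in Section~\ref{subsec:terminology} gives a uniform lower bound $\mu[g_t, \tau] \geq -A'$ for all $t \in [-1, 0]$ and $\tau \in (0, 1]$, where $A'$ depends only on $A$. Given a ball $B(x,t,r)$ with $r \leq 1$, I would test $\mathcal{W}[g_t, f, r^2]$ with a standard bump function $f$ for which $(4\pi r^2)^{-n/2} e^{-f}$ is, up to normalization, concentrated in $B(x,t,r)$. The gradient term $\tau \int |\nabla f|^2 (4\pi\tau)^{-n/2} e^{-f} dg_t$ is then $O(1)$, the scalar term $\tau \int R (4\pi\tau)^{-n/2} e^{-f} dg_t$ is at most $A r^2 \leq A$, and the $f$-term reduces to $-\log( |B(x,t,r)|_t / r^n)$ up to an additive constant. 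Comparing against $\mu \geq -A'$ yields $|B(x,t,r)|_t \geq c r^n$, and the case $r > 1$ is trivial from $B(x,t,1) \subset B(x,t,r)$.

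For the upper bound at small scales, I would use a Gaussian lower bound on the conjugate heat kernel $K(x,t;\,\cdot\,, s)$, which under the hypotheses \ref{(i)-Thm:mainrrmbound}, \ref{(ii)-Thm:mainrrmbound} is available from \cite{Bamler-Zhang-Part1, Bamler-Zhang-Part2}. Choosing $s = t - r^2$ (permissible since $t \in [-1,0]$ and $r \leq 1$), one has $K(x,t;y,s) \geq c_1 r^{-n} e^{-c_2 d_t(x,y)^2/r^2}$ for $y$ in $B(x,t,r)$, up to the distance-distortion corrections also controlled in \cite{Bamler-Zhang-Part1}. Integrating over $B(x,t,r)$ and using $\int_M K(x,t;\,\cdot\,,s) dg_s = 1$ gives $|B(x,t,r)|_s \leq C r^n$. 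Converting to the time-$t$ volume costs only a multiplicative $e^{A r^2} \leq e^A$ via $\partial_t dg_t = - R\, dg_t$ and $|R| \leq A$. This produces $|B(x,t,r)|_t \leq C r^n$ for all $r \leq 1$.

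For $r > 1$, the plan is to iterate. Covering $B(x,t,r)$ by unit balls $B(x_i, t, 1)$ centered on a maximal $1$-separated net, and using the already established small-scale lower bound $|B(x_i,t, \tfrac{1}2)|_t \geq c$ to control the packing number, one bootstraps to estimates on shells of thickness $1$. A Gr\"onwall-type iteration then yields $|B(x,t,r)|_t \leq C r^n e^{Cr}$; the exponential factor is essentially forced here, since in contrast to the Bishop-Gromov situation one has no Ricci lower bound to exploit and the only mechanism for propagating the unit-scale bound is multiplicative growth per unit radius. The chief technical obstacle in the whole argument is the heat kernel Gaussian lower bound feeding into the small-scale upper bound, whose proof encapsulates nontrivial analysis from \cite{Bamler-Zhang-Part1, Bamler-Zhang-Part2}; once this is in hand, both the non-collapsing and the large-scale iteration are standard.
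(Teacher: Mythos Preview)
Your approach is correct and matches what the paper does: the paper gives no detailed argument but simply cites Perelman's no-local-collapsing theorem for the lower bound and Zhang's non-inflation result \cite{MR2923189} (see also \cite{MR3061942}, \cite[Lemma~2.1]{Bamler-Zhang-Part2}) for the upper bound, which are exactly the two mechanisms you sketch.

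One point of care concerns the source of the heat kernel lower bound you invoke for the small-scale upper volume estimate. You cite the Gaussian bounds of \cite{Bamler-Zhang-Part1,Bamler-Zhang-Part2}, which in this paper appear as Proposition~\ref{Prop:GaussianHKbounds}, \emph{after} Proposition~\ref{Prop:VolumeBound}, and whose proofs may themselves consume volume bounds. To avoid circularity you should instead use the more primitive on-diagonal lower bound coming from Perelman's Harnack inequality for the conjugate heat kernel, $K(x,t;y,s)\geq (4\pi(t-s))^{-n/2}e^{-l(y,s)}$, together with an elementary upper bound on $l$ along the constant curve using $|R|\leq A$. This yields $K(x,t;x,t-r^2)\geq c\,r^{-n}$ without any Gaussian machinery and is precisely the input in Zhang's original non-inflation argument \cite{MR2923189} that the paper cites. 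With this substitution your sketch goes through as written; the large-$r$ covering iteration producing the $e^{Cr}$ factor is standard and matches the stated conclusion.
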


The lower volume bound is due to Perelman's No Local Collapsing Theorem (cf \cite{PerelmanI}) and the upper bound is a consequence of the non-inflating property from \cite{MR2923189} or \cite{MR3061942}, see also \cite[Lemma 2.1]{Bamler-Zhang-Part2}.

Before we move on to the next result, we recall the definition of the curvature radius from Definition~\ref{Def:curvradius} in a Ricci flow:
\[ \rrm (x,t) = \sup \big\{ r > 0 \;\; : \;\; |{\Rm}| < r^{-2} \textQq{on} B(x,t,r) \big\}. \]
Note that by definition $\rrm (\cdot, t)$ is $1$-Lipschitz, which can be seen easily by contradiction:
If $\rrm (x,t) - \rrm (y,t) > d_t (x,y)$ for two points $x, y$, then for $r := \rrm (x,t) - d_t (x,y)$ we would have $B(y,t,r) \subset B(x,t, \rrm (x,t))$, and therefore $|{\Rm}| \leq \rrm^{-2} (x,t) \leq r^{-2}$ on $B(y,t,r)$, in contradiction to $r > \rrm (y,t)$.

We now recall the Backwards and Forward Pseudolocality Theorems for Ricci flows with bounded scalar curvature.

\begin{Proposition}[Pseudolocality, cf {\cite{PerelmanI}, \cite[Theorem 1.5]{Bamler-Zhang-Part1}}] \label{Prop:Pseudoloc}
For any $A < \infty$ there is a constant $\eps = \eps (A) > 0$ such that the following holds:

Let $(M, (g_t)_{t \in [-2,0]})$ be a Ricci flow on a compact, $n$-dimensional manifold that satisfies:
\begin{enumerate}[label=(\roman*)]
\item $\nu [ g_{-2}, 4 ] \geq - A$.
\item $|R| \leq A$ on $M \times [-2, 0]$.
\end{enumerate}
Then for any $(x,t) \in M \times [-1, 0]$ and $r := \min \{ 1, \rrm (x,t) \}$ we have
\begin{equation} \label{eq:pseudolocstatement}
 \rrm > \eps r \textQQqq{on} P (x,t, \eps r, - (\eps r)^2) \cup P (x,t, \eps r, \min\{  (\eps r)^2, -t \}). 
\end{equation}
\end{Proposition}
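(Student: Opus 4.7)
The plan is to establish the two halves of (\ref{eq:pseudolocstatement}) --- the forward and the backward parabolic neighborhoods --- separately, and then set $\eps$ to be the minimum of the two resulting constants, which by construction depends only on $A$.

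For the forward parabolic neighborhood $P(x,t, \eps r, \min\{(\eps r)^2, -t\})$, I would deduce the bound from Perelman's classical pseudolocality theorem in \cite{PerelmanI}. Perelman's theorem requires a scalar curvature lower bound and a near-Euclidean isoperimetric constant on the initial ball; the former is automatic from $\rrm(x,t) \geq r$, since $|{\Rm}| < r^{-2}$ on $B(x,t,r)$. To produce the isoperimetric condition, I would rescale: at any small scale $\sigma r$ with $\sigma = \sigma(A) > 0$, the local curvature bound $|{\Rm}| < r^{-2}$ together with Perelman's $\nu$-non-collapsing (a consequence of assumption (i)) forces, by a standard Cheeger-Gromov-Hamilton type compactness argument in harmonic coordinates, the rescaled ball $(B(x,t, \sigma r), (\sigma r)^{-2} g_t)$ to be arbitrarily $C^k$-close to the Euclidean unit ball. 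In particular, its isoperimetric constant is within any prescribed tolerance of the Euclidean value for $\sigma$ small, and Perelman's pseudolocality applied at scale $\sigma r$ produces a constant $\eps_1 = \eps_1(A) > 0$ with $\rrm > \eps_1 r$ throughout $P(x,t, \eps_1 r, (\eps_1 r)^2)$ (truncated at $t = 0$).

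For the backward parabolic neighborhood $P(x,t, \eps r, -(\eps r)^2)$, I would directly invoke the backwards pseudolocality theorem \cite[Theorem 1.5]{Bamler-Zhang-Part1}. Under precisely the hypotheses (i) and (ii), that theorem provides an $\eps_2 = \eps_2(A) > 0$ such that $\rrm > \eps_2 r$ holds on this past parabolic neighborhood. Taking $\eps := \min\{\eps_1, \eps_2\}$ yields the proposition.

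The genuinely nontrivial input is the backward direction: forward pseudolocality under a local curvature bound, upgraded via entropy-based non-collapsing, is essentially classical, whereas extending pseudolocality-type control into the past under only a scalar curvature bound is the content of the new regularity theory in \cite{Bamler-Zhang-Part1}, which leverages Gaussian heat kernel upper and lower bounds, the non-inflating volume estimate, and distance distortion control --- all features specific to Ricci flows with bounded scalar curvature. The forward part is essentially bookkeeping around Perelman's original argument, while the backward part is where the main analytic machinery of this program is consumed.
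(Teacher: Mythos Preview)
Your proposal is correct and matches the paper's treatment: the paper does not supply a proof of this proposition at all, since it is stated as a recalled result from the literature (the forward half from Perelman's pseudolocality \cite{PerelmanI}, the backward half from \cite[Theorem~1.5]{Bamler-Zhang-Part1}). Your sketch accurately identifies these two inputs and how they are combined, including the use of the entropy bound to obtain the non-collapsing needed for Perelman's forward argument.
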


Here, $P(x,t,r,a)$ denotes the parabolic neighborhood $B(x,t,r) \times [t,t+a]$ or $B(x,t,r) \times [t+a,t]$, depending on whether $a$ is positive or negative.

Note that the minimum in (\ref{eq:pseudolocstatement}) is placed in the second parabolic neighborhood to ensure that the parabolic neighborhood does not reach past time $0$, where the flow is not defined.

Next, we recall the distance distortion bound from \cite{Bamler-Zhang-Part2}.

\begin{Proposition}[Distance distortion, cf {\cite[Theorem 1.1]{Bamler-Zhang-Part2}}] \label{Prop:Distdistortion}
For any $A, D < \infty$ there is a constant $C = C(A,D) < \infty$ such that the following holds:

Let $(M, (g_t)_{t \in [-2,0]})$ be a Ricci flow on a compact, $n$-dimensional manifold that satisfies 
\begin{enumerate}[label=(\roman*)]
\item $\nu [ g_{-2}, 4 ] \geq - A$.
\item $|R| \leq A$ on $M \times [-2, 0]$.
\end{enumerate}
Let $t_1, t_2 \in [-2,0]$ and $x,y \in M$ such that $d_{t_1}(x,y) \leq D$.
Then
\[ \big| d_{t_1} (x,y) - d_{t_2} (x,y) \big| \leq C  \sqrt{|t_1 - t_2|}. \]
\end{Proposition}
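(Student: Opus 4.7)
The plan is to avoid the classical Hamilton--Perelman argument (which would require an integrated Ricci bound along a minimizing geodesic, unavailable here) and to instead compare $d_{t_i}(x,y)$ to an \emph{effective heat-kernel distance} at an auxiliary parabolic scale $\tau\sim|t_1-t_2|$, on which $d_{t_1}(x,y)$ and $d_{t_2}(x,y)$ can be related using only the scalar curvature bound and the entropy lower bound.

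The first step is to establish two-sided Gaussian bounds on the fundamental solution $K(y,s;x,t)$ of the conjugate heat equation along the flow (here $s<t$). Assumption~(ii), via Perelman's $\mu$-entropy monotonicity, yields a log-Sobolev inequality along the flow; combined with a Davies-type argument and $R\leq A$ from~(i), this produces an upper bound
\[ K(y,s;x,t)\leq C(t-s)^{-n/2}\exp\bigl(-c\,d_t^2(x,y)/(t-s)\bigr). \]
A matching lower bound
\[ K(y,s;x,t)\geq C^{-1}(t-s)^{-n/2}\exp\bigl(-C\,d_t^2(x,y)/(t-s)\bigr) \]
follows from Perelman's differential Harnack inequality applied to $K$, with $R\leq A$ controlling the error terms (in the spirit of the Hein--Naber analysis of heat kernels under Ricci flow).

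The second step extracts distance information from these bounds. Setting $\td{d}_\tau(x,y;t):=\sqrt{\max\bigl(0,-4\tau\log((4\pi\tau)^{n/2}K(y,t-\tau;x,t))\bigr)}$ and taking logarithms of the bounds of Step~1 gives
\[ \bigl|d_t(x,y)-\td{d}_\tau(x,y;t)\bigr|\leq C\sqrt{\tau} \]
for all $0<\tau\leq 1$, with $C=C(A,D)$. Next I would show that $\td{d}_\tau(x,y;t)$ itself is stable in $t$: for $t_1\leq t_2$ and $\tau:=4|t_1-t_2|$, the semigroup identity $K(y,t_2-\tau;x,t_2)=\int_M K(y,t_2-\tau;z,s)\,K(z,s;x,t_2)\,dg_s(z)$ at an intermediate time $s$, combined with the Gaussian bounds of Step~1, yields a controlled ratio of the two kernels $K(y,t_i-\tau;x,t_i)$, and hence $|\td{d}_\tau(x,y;t_1)^2-\td{d}_\tau(x,y;t_2)^2|\leq C\tau$. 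Chaining this with the previous estimate and choosing $\tau\asymp|t_1-t_2|$ produces the asserted $C\sqrt{|t_1-t_2|}$ bound.

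The main obstacle is the Gaussian lower bound in Step~1 with the sharp exponent $d_t^2/(t-s)$: any weaker exponent would force a H\"older exponent strictly below $1/2$ in the final estimate. Establishing it requires a careful use of Perelman's reduced-length machinery under only a scalar curvature bound, with the error terms in the Harnack inequality controlled via $|R|\leq A$ rather than a full curvature bound, and with the correct leading constants tracked using the entropy hypothesis~(ii). Once this sharp two-sided Gaussian estimate is in place, Steps~2 and 3 amount to a relatively mechanical computation.
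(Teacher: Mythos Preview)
The paper does not prove this proposition; it is quoted as \cite[Theorem~1.1]{Bamler-Zhang-Part2}. So the comparison is with the Bamler--Zhang argument, a version of which is reproduced in this paper in the proof of Lemma~\ref{Lem:LLlowerbound}.

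Your proposal has a genuine gap. For Step~2 to yield $|d_t(x,y)-\td{d}_\tau(x,y;t)|\leq C\sqrt{\tau}$ you need the upper and lower Gaussian bounds to carry the \emph{sharp} coefficient in the exponent, i.e.\ essentially $\exp(-d_t^2/4\tau)$ on both sides up to $O(\sqrt{\tau})$ corrections. Under only $|R|\leq A$ and an entropy bound, what is actually available (and what Proposition~\ref{Prop:GaussianHKbounds} records) is
\[
C^{-1}\tau^{-n/2}\exp\!\big(-C\,d^2/\tau\big)\;\leq\;K\;\leq\;C\,\tau^{-n/2}\exp\!\big(-d^2/(C\tau)\big)
\]
with some $C=C(A)>1$. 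Taking logarithms, this gives only
\[
c_1\,d_t - C'\sqrt{\tau}\;\leq\;\td d_\tau\;\leq\;c_2\,d_t + C'\sqrt{\tau}
\]
with $c_1<1<c_2$, i.e.\ a bi-Lipschitz comparison, not an additive one. Plugged into your chaining argument this yields $|d_{t_1}-d_{t_2}|\leq (1-c_1/c_2)\,D + C\sqrt{|t_1-t_2|}$, which is useless. You correctly flag the sharp lower exponent as ``the main obstacle,'' but obtaining it from Perelman's Harnack under only a scalar curvature bound is not known; the $\LL$-length is not controlled by $d_t^2$ without further input (indeed, deriving that comparison is essentially equivalent to the distance-distortion estimate you are trying to prove).

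The actual argument bypasses this entirely. One solves the heat equation $\partial_t u=\triangle u$ with initial datum $u(\cdot,t_1)=\min\{d_{t_1}(x,\cdot),D\}$. By Kato's inequality, $|\nabla u|\leq 1$ is preserved for all later times, so $|u(x,t_2)-u(y,t_2)|\leq d_{t_2}(x,y)$. On the other hand, the \emph{non-sharp} Gaussian upper bound together with Corollary~\ref{Cor:HKintegrals} shows $u(x,t_2)=O(\sqrt{|t_1-t_2|})$ and $u(y,t_2)=d_{t_1}(x,y)+O(\sqrt{|t_1-t_2|})$. Combining gives $d_{t_1}(x,y)\leq d_{t_2}(x,y)+C\sqrt{|t_1-t_2|}$; the reverse inequality is obtained symmetrically. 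The point is that the Lipschitz preservation under heat flow replaces the need for sharp exponents in the Gaussian bounds. You can see this mechanism explicitly in the proof of Lemma~\ref{Lem:LLlowerbound} above.
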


We will also need the Gaussian heat kernel bounds from \cite{Bamler-Zhang-Part1}.

\begin{Proposition}[Gaussian heat kernel bounds, cf {\cite[Theorem 1.4]{Bamler-Zhang-Part1}}] \label{Prop:GaussianHKbounds}
For any $A < \infty$ there is a constant $C = C(A) < \infty$ such that the following holds:

Let $(M, (g_t)_{t \in [-2,0]})$ be a Ricci flow on a compact, $n$-dimensional manifold that satisfies:
\begin{enumerate}[label=(\roman*)]
\item $\nu [ g_{-2}, 4 ] \geq - A$.
\item $|R| \leq A$ on $M \times [-2, 0]$.
\end{enumerate}
Let $K(x,t;y,s)$ be the fundamental solution of the heat equation coupled with the Ricci flow and let $-2 + A^{-1} \leq s < t \leq 0$.
Then
\[ \frac{1}{C (t-s)^{n/2}} \exp \Big( { - \frac{C d^2_s(x,y)}{t-s} } \Big) < K(x,t;y,s) < \frac{C}{(t-s)^{n/2}} \exp \Big({ - \frac{d^2_s(x,y)}{C (t-s)}} \Big). \]
With the help of Proposition \ref{Prop:Distdistortion}, the time-$s$ distance $d_s(x,y)$ can also be replaced by the time-$t$ distance $d_t(x,y)$ in the formula above.
\end{Proposition}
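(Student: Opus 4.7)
The plan is to follow the standard route to two-sided Gaussian heat kernel bounds on manifolds, adapted to the evolving metric: first pass from the entropy hypothesis to a uniform log-Sobolev (equivalently, Sobolev) inequality on every time-slice, then bootstrap this to an on-diagonal upper bound on $K$, upgrade to the full Gaussian tail via Davies' exponential perturbation method, and finally obtain the matching lower bound by combining volume non-collapsing with a parabolic Harnack propagation. The scalar curvature hypothesis $|R|\leq A$ enters decisively at two points: it absorbs the $R$-term in Perelman's $\mathcal{W}$-functional so that $\mu[g_t,\tau]\geq -A'$ truly behaves as a classical log-Sobolev constant, and it keeps the evolution of $dg_t$ (and all integral quantities depending on it) uniformly under control.

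The first step is to upgrade $\nu[g_{-2},4]\geq -A$ to a time-uniform log-Sobolev inequality. Monotonicity of $\nu$ along Ricci flow together with the scaling inequality $\nu[g,\tau_1]\geq \nu[g,\tau_2]+\tfrac{n}{2}\log(\tau_1/\tau_2)$ recalled in subsection \ref{subsec:terminology} forces $\mu[g_t,\tau]\geq -A'(A)$ uniformly for $t\in[-2+A^{-1},0]$ and $0<\tau\leq 4$. Using $|R|\leq A$ in the definition of $\mathcal{W}$, this is equivalent to a uniform logarithmic Sobolev inequality on $(M,g_t)$ with constants depending only on $A$, from which a standard Rothaus/Davies argument yields a uniform Euclidean-type Sobolev inequality on each slice.

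For the upper bound I would run the Nash--Moser--Davies machinery slice-by-slice. The Sobolev inequality, with the bounded $R$-term treated as a lower-order perturbation, gives ultracontractivity of the forward heat semigroup, hence the on-diagonal bound $K(x,t;y,s)\leq C(t-s)^{-n/2}$. To install the Gaussian tail I would apply Davies' integrated maximum principle: for a $1$-Lipschitz function $\varphi$ on $(M,g_s)$, the quantity $e^{\alpha\varphi}u(\cdot,t)$ satisfies a weighted $L^2$ energy estimate whose $\alpha^2$-rate is controlled by the log-Sobolev constant; choosing $\varphi=d_s(\cdot,y)$, optimizing in $\alpha$, and invoking Proposition \ref{Prop:Distdistortion} to allow $d_s$ to be swapped with $d_t$ at the end produces the stated upper Gaussian estimate.

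For the lower bound I would first extract an on-diagonal lower bound from the normalization $\int K(\cdot,t;y,s)\,dg_t=1$: the upper Gaussian estimate just proved shows that only a controlled fraction of the mass escapes $B(y,s,C\sqrt{t-s})$, while Proposition \ref{Prop:VolumeBound} bounds the volume of that ball above, forcing $K(y,t;y,s)\geq c(t-s)^{-n/2}$ (up to enlarging the radius and using continuity). A parabolic Harnack inequality--derived by Moser iteration from the same uniform Sobolev inequality, carrying $R$ as a bounded perturbation--then propagates this bound along a chain of parabolic cylinders joining $(y,s)$ to $(x,t)$, producing the Gaussian factor $\exp\bigl(-Cd_s^2(x,y)/(t-s)\bigr)$ via the standard geometric counting of chain length. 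The main difficulty I expect is ensuring that the log-Sobolev, Sobolev, and Harnack constants depend only on $A$ and not on the particular slice or on the distance scale: this is exactly what Perelman's $\nu$-monotonicity delivers, while Proposition \ref{Prop:Distdistortion} controls the geometric distortion when transferring balls across times. An alternative, more Ricci-flow-native route to the lower bound proceeds through Perelman's reduced distance and the monotonicity of reduced volume--both of which behave well under a scalar curvature bound--and could replace the Moser/Harnack step if the chaining estimate becomes delicate.
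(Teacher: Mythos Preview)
The paper does not contain a proof of this proposition at all: it is quoted verbatim from \cite[Theorem~1.4]{Bamler-Zhang-Part1} as a preliminary result, with no argument given beyond the citation. So there is no ``paper's own proof'' to compare against here.

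That said, your outline is broadly consistent with how the cited reference actually proceeds. The upper bound in \cite{Bamler-Zhang-Part1} does go through a uniform Sobolev inequality derived from the $\nu$-functional bound (with the scalar curvature term absorbed), followed by Moser iteration and a Davies-type exponential weight argument adapted to the evolving metric. For the lower bound, the cited paper uses a somewhat more Ricci-flow-specific route than your primary suggestion: rather than a classical parabolic Harnack chain, it exploits Perelman's differential Harnack inequality for the conjugate heat kernel together with estimates on the reduced distance $l$, which under the scalar curvature bound can be compared to $d_s^2/(t-s)$. This is closer to the ``alternative route'' you mention at the end. Your Moser--Harnack chaining approach is plausible in principle, but making the chain argument uniform across time-slices with an evolving metric (and ensuring the Harnack constants on each cylinder depend only on $A$) requires the distance distortion bound of Proposition~\ref{Prop:Distdistortion}, which in \cite{Bamler-Zhang-Part1} is itself proved \emph{using} the heat kernel bounds---so you would need to be careful about circularity if you tried to carry this out in detail.
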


We will sometimes use the following corollary from Propositions \ref{Prop:VolumeBound} and \ref{Prop:GaussianHKbounds}.

\begin{Corollary} \label{Cor:HKintegrals}
For any $a > 0$ and $A < \infty$ there is a constant $C = C(a, A) < \infty$ such that the following holds:

Let $(M, (g_t)_{t \in [-2,0]})$ be a Ricci flow on a compact, $n$-dimensional manifold that satisfies:
\begin{enumerate}[label=(\roman*)]
\item $\nu [ g_{-2}, 4 ] \geq - A$.
\item $|R| \leq A$ on $M \times [-2, 0]$.
\end{enumerate}
Let $K(x,t;y,s)$ be the fundamental solution of the heat equation coupled with the Ricci flow and let $-1 \leq s < t \leq 0$.
Then for all $r \geq 0$ and $x_0 \in M$
\begin{equation} \label{eq:integralKoutsideB}
  \int_{M \setminus B(x_0, t, r)} K(x_0, t; y, s)  dg_{t} (y) < C \exp \Big( {- \frac{r^2}{C (t-s)} } \Big).
\end{equation}
Moreover,
\begin{equation} \label{eq:integralKda1}
 \int_{M} K(x_0, t; y, s) \big( d_t (x_0, y) \big)^a dg_{t} (y) < C (t-s)^{a/2}
\end{equation}
and
\begin{equation} \label{eq:integralKda2}
 \int_{M} K(x_0, t; y, s) \big( d_s (x_0, y) \big)^a dg_{s} (y) < C (t-s)^{a/2}. 
\end{equation}
Similarly as in Proposition \ref{Prop:GaussianHKbounds}, the time-$t$ balls $B(x_0, t,r)$ can be replaced by the time-$s$ balls $B(x_0, s, r)$ and the time-$t$ measure $dg_t$ can be replaced by the time-$s$ measure $dg_s$ and vice versa.
\end{Corollary}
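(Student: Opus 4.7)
The plan is to derive all three estimates from a single dyadic decomposition argument, combining the Gaussian upper bound of Proposition \ref{Prop:GaussianHKbounds} with the volume upper bound of Proposition \ref{Prop:VolumeBound}. I will freely use the Gaussian upper bound in either of the forms
\[ K(x_0,t;y,s) \leq \frac{C}{(t-s)^{n/2}} \exp\Bigl(-\tfrac{d_t^2(x_0,y)}{C(t-s)}\Bigr) \quad \text{or} \quad K(x_0,t;y,s) \leq \frac{C}{(t-s)^{n/2}} \exp\Bigl(-\tfrac{d_s^2(x_0,y)}{C(t-s)}\Bigr), \]
whose equivalence is recorded in the final sentence of Proposition \ref{Prop:GaussianHKbounds} (via the distance distortion estimate of Proposition \ref{Prop:Distdistortion}).

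To prove (\ref{eq:integralKoutsideB}), I would decompose $M \setminus B(x_0,t,r) = \bigsqcup_{k=0}^\infty A_k$ with $A_k := B(x_0,t,2^{k+1}r) \setminus B(x_0,t,2^k r)$. On $A_k$ the $d_t$-form of the Gaussian bound gives $K \leq C(t-s)^{-n/2} \exp(-4^k r^2/(C(t-s)))$, while Proposition \ref{Prop:VolumeBound} gives $|A_k|_t \leq C(2^{k+1}r)^n e^{C \cdot 2^{k+1} r}$. Multiplying and summing over $k$ yields the stated bound: the exponential volume prefactor $e^{Cr}$ is harmlessly absorbed into the Gaussian exponent since $t-s \leq 1$, so $Cr \leq \tfrac{1}{2} r^2/(C(t-s))$ as soon as $r$ exceeds a universal constant; in the remaining bounded range of $r$ the claim holds trivially after enlarging $C$, since then the right-hand side is bounded below by a positive constant while the left-hand side is controlled by the bounded total integral $\int_M K \, dg_t$ (which is controlled by $A$ through the evolution $\partial_t \int_M K \, dg_t = -\int_M K R \, dg_t$).

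Having established (\ref{eq:integralKoutsideB}), I would deduce (\ref{eq:integralKda1}) from the layer-cake formula
\[ \int_M K(x_0,t;y,s) \, d_t(x_0,y)^a \, dg_t(y) = a \int_0^\infty r^{a-1} \int_{M \setminus B(x_0,t,r)} K(x_0,t;y,s) \, dg_t(y) \, dr, \]
since, after substituting (\ref{eq:integralKoutsideB}), the remaining one-dimensional integral $\int_0^\infty r^{a-1} \exp(-r^2/(C(t-s))) \, dr$ is a standard Gaussian moment equal to a multiple of $(t-s)^{a/2}$ depending only on $a$ and $C$. The proof of (\ref{eq:integralKda2}) proceeds identically, starting from the $d_s$-analogue of (\ref{eq:integralKoutsideB}) obtained by the same dyadic decomposition, now using $d_s$-balls, the time-$s$ Gaussian bound, and the time-$s$ volume bound of Proposition \ref{Prop:VolumeBound}.

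The only mildly delicate point is the absorption of the exponential volume prefactor $e^{Cr}$ from Proposition \ref{Prop:VolumeBound} into the Gaussian decay, but this is routine book-keeping: since $t-s$ is uniformly bounded above by $1$ and the Gaussian exponent is quadratic in $r$, it dominates any linear term in $r$ outside a bounded range, and within that bounded range all quantities are controlled by constants depending only on $a$ and $A$. No serious obstacle arises.
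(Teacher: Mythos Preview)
Your approach is essentially the paper's: dyadic annular decomposition of $M\setminus B(x_0,t,r)$ combining Propositions~\ref{Prop:GaussianHKbounds} and~\ref{Prop:VolumeBound} for (\ref{eq:integralKoutsideB}), then the layer-cake formula to deduce (\ref{eq:integralKda1}) and (\ref{eq:integralKda2}).

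There is, however, a real slip in your case split for (\ref{eq:integralKoutsideB}). You split according to whether $r$ exceeds a \emph{universal} constant $R_0=R_0(A)$, and assert that for $r\leq R_0$ the right-hand side $C\exp\bigl(-r^2/(C\tau)\bigr)$ is bounded below by a positive constant. This is false: with $r\in(0,R_0]$ fixed and $\tau=t-s\to 0$, the right-hand side tends to $0$, so the inequality cannot be read off from the mere boundedness of $\int_M K\,dg_t$. The correct threshold is scale-dependent: the paper splits at $r^2\leq\tau$ versus $r^2>\tau$. When $r^2\leq\tau$ the right-hand side is genuinely bounded below (by $Ce^{-1/C}$), so the trivial argument applies; when $r^2>\tau$ the dyadic sum converges after absorbing each factor $e^{C\cdot 2^{k+1}r}$ into the Gaussian. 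Note also that you must absorb this exponential prefactor for \emph{every} $k$, not just $k=0$; the relevant inequality $C\cdot 2^{k+1}r \leq \tfrac12 (2^k r)^2/(C\tau)$ does hold uniformly in $k\geq 0$ once $r\geq 4C^2\tau$, which is certainly implied by $r^2>\tau$ after adjusting constants. With this corrected threshold your argument goes through and coincides with the paper's.
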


\begin{proof}
Set $\tau := t-s \leq 1$.
Let us first check (\ref{eq:integralKoutsideB}).
This inequality holds trivially for $r^2 \leq \tau$ if $C$ is large enough, as its left-hand side is bounded by $1$.
So assume that $r^2 > \tau$.
Then we have, using Propositions \ref{Prop:VolumeBound} and \ref{Prop:GaussianHKbounds}, for some generic constant $C = C(A) < \infty$
\begin{align*}
 \int_{M \setminus B(x_0, t, r)} & K(x_0, t; y, s)  dg_{t} (y) 
\leq \int_M \frac{C}{\tau^{n/2}} \exp \Big({ - \frac{d^2_t(x,y)}{C \tau}} \Big) dg_t(y) \\
&=  \frac{C}{\tau^{n/2}} \sum_{k=0}^\infty \int_{B(x_0, t, 2^{k+1} r) \setminus B(x_0, t, 2^k r)} \exp \Big({ - \frac{d^2_t(x,y)}{C \tau}} \Big) dg_t(y) \displaybreak[1] \\
&\leq \frac{C}{\tau^{n/2}} \sum_{k=0}^\infty | B(x_0, t, 2^{k+1} r) |_t \cdot \exp \Big({ - \frac{(2^k r)^2}{C \tau}} \Big) \displaybreak[1]  \\
&\leq \frac{C}{\tau^{n/2}} \sum_{k=0}^\infty  (2^{k+1} r)^n \exp \Big( C 2^{k+1} r \Big) \cdot \exp \Big({ - \frac{(2^k r)^2}{C \tau}} \Big) \displaybreak[1]  \\
&\leq \frac{C r^n}{\tau^{n/2}} \sum_{k=0}^\infty  2^{n(k+1)} \exp \Big( {C^3 2^{2(k+1)} \tau + \frac{r^2}{4 C \tau}} \Big) \cdot \exp \Big({ - \frac{ r^2}{2C \tau}  - \frac{2^{2k}}{2C }} \Big) \displaybreak[1]  \\
&\leq \frac{C r^n}{\tau^{n/2}} \exp \Big( { - \frac{r^2}{4C \tau}} \Big) \sum_{k=0}^\infty  2^{n(k+1)}  \exp \Big( {C^3 2^{2(k+1)} \cdot 1  - \frac{2^{2k}}{2C}} \Big) \displaybreak[1]  \\
&\leq \frac{C r^n}{\tau^{n/2}} \exp \Big( { - \frac{r^2}{4C \tau}} \Big) \leq C \exp \Big( n \cdot \frac{r}{\tau^{1/2}}  \Big) \cdot \exp \Big( { - \frac{r^2}{4C \tau}} \Big) \displaybreak[1]  \\
&= C \exp \Big({ n \cdot  \frac{r}{\tau^{1/2}}   - \frac{r^2}{4C \tau}} \Big) \leq C \exp \Big( {- \frac{ r^2}{8C \tau}} \Big) .
\end{align*}
(Note that in the fourth inequality we have used $\sqrt{ab} \leq \frac12 (a + b)$ for $a = 2C^3 2^{2(k+1)} \tau$ and $b = \frac{r^2}{2C \tau}$.
In the fifth inequality, we have used $(2^k r)^2 \geq \frac12 2^{2k} r^2 + \frac12 2^{2k} r^2 \geq \frac12 r^2 + \frac12 2^{2k} \tau$.)
This shows (\ref{eq:integralKoutsideB}).
The bounds (\ref{eq:integralKda1}) and (\ref{eq:integralKda2}) follow using (\ref{eq:integralKoutsideB}) and Fubini's Theorem for some generic $C = C(a,A) < \infty$.
To see (\ref{eq:integralKda1}), we argue as follows:
\begin{align*}
 \int_{M} K(x_0, t; y, s) \big( d_t (x_0, y) \big)^a dg_{t} (y) 
 &= \int_M \int_0^{d_t(x_0,y)} K(x_0, t; y,s) \cdot a r^{a-1} dr dg_t(y) \displaybreak[1] \\
 &= \int_M \int_0^\infty \chi_{r \leq d(y,x_0)} K(x_0, t; y, s) a r^{a-1} dr dg_t (y) \displaybreak[1] \\
 &= \int_0^{\infty} a r^{a-1} \int_{M \setminus B(x_0, t, r)} K(x_0, t; y,s) dg_t (y) dr \displaybreak[1] \\
 &\leq \int_0^{\infty} C \exp \Big( {- \frac{r^2}{C \tau} }\Big)  r^{a-1} dr \\
 &= \sqrt{\tau} \int_0^\infty C \exp \Big({ - \frac{u^2}{C} }\Big)  ({u \sqrt{\tau}})^{a-1} dr
 \leq C \tau^{a/2}.
\end{align*}
The bound (\ref{eq:integralKda2}) can be derived similarly.
This finishes the proof.
\end{proof}

\begin{Proposition} \label{Prop:Ricsmall}
There are constants $C_0, C_1, \ldots < \infty$, which only depend on the dimension $n$ such that the following holds:
Let $(M, (g_t)_{t \in [-2,0]})$ be a Ricci flow on an $n$-dimensional manifold .
Let $(x,t) \in M \times [-1,0]$ and $0 < r < 1$ and assume that the ball $B(x, t, r)$ is relatively compact in $M$.
Assume that $|{\Rm}| < r^{-2}$ and $|R| \leq \rho$ on the parabolic neighborhood $P(x,t,r,-r^2)$ for some $0< \rho<1$.
Then for all $m \geq 0$
\begin{equation} \label{eq:Ricboundrless}
 |{\nabla^m \Ric}|(x,t) < C_m \rho^{1/2} r^{-m-1} 
\end{equation}
and
\begin{equation} \label{eq:nabRdtRmbound}
 |\nabla R|(x,t) < C_0 \rho^{3/4} r^{-1.5} \textQQqq{and} |{\partial_t \Rm}|(x,t) < C_0 \rho^{1/2} r^{-3}. 
\end{equation}
\end{Proposition}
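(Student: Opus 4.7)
The plan is to rescale parabolically so that $r=1$, reducing the hypothesis to $|{\Rm}|<1$ and $|R|\le\rho$ on $P(x,t,1,-1)$, and the conclusions to $|\nabla^m\Ric|(x,t)\le C_m\rho^{1/2}$, $|\nabla R|(x,t)\le C_0\rho^{3/4}$, and $|\partial_t\Rm|(x,t)\le C_0\rho^{1/2}$. Shi's local derivative estimates, applied to the Riemann bound, immediately yield $|\nabla^m\Rm|\le C'_m$ on $P(x,t,\tfrac12,-\tfrac14)$ for all $m$; these control the $\Rm$-factors in the evolution equations used below.

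The heart of the argument is the pointwise bound $|\Ric|(x,t)\le C\rho^{1/2}$. I would exploit the identity $\partial_t R-\Delta R=2|\Ric|^2$. Multiplying by a spatial--temporal cutoff $\phi^2$ supported in $P(x,t,\tfrac12,-\tfrac14)$, integrating by parts (with the correction $\partial_s\,dg_s=-R\,dg_s$), and using $|R|\le\rho$, one obtains the integral estimate
\[ \int\!\!\int \phi^2 |\Ric|^2\,dg_s\,ds \;\le\; C\rho. \]
A direct computation gives $\partial_t|\Ric|^2\le\Delta|\Ric|^2+C|{\Rm}|\,|\Ric|^2$, so under $|{\Rm}|\le 1$ the function $|\Ric|^2$ is a nonnegative subsolution of a parabolic equation with bounded zeroth-order coefficient. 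A standard parabolic mean value inequality (Moser iteration on a background of bounded geometry) converts the $L^1$ bound above into the pointwise estimate $|\Ric|^2(x,t)\le C\rho$, hence $|\Ric|(x,t)\le C\rho^{1/2}$; running the same argument at nearby points upgrades this to an $L^\infty$ bound on a slightly smaller parabolic cylinder.

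Higher derivatives of $\Ric$ follow by bootstrapping the Lichnerowicz-type evolution $(\partial_t-\Delta)\nabla^m\Ric=\sum \nabla^{a_1}\Rm*\cdots*\nabla^{a_k}\Rm*\nabla^b\Ric$: Shi controls the $\Rm$-factors, the inductive hypothesis gives $|\nabla^b\Ric|\le C_b\rho^{1/2}$ for $b<m$, and parabolic Schauder-type regularity upgrades this to $|\nabla^m\Ric|(x,t)\le C_m\rho^{1/2}$. The $\rho^{3/4}$ bound on $|\nabla R|$ is then a pointwise interpolation: since $R=g^{ij}R_{ij}$ with $\nabla g=0$, the $m=2$ case gives $|\nabla^2 R|\le C\rho^{1/2}$, and the standard interpolation $|\nabla R|^2\le C\|R\|_\infty\|\nabla^2 R\|_\infty$ yields $|\nabla R|^2\le C\rho^{3/2}$. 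Finally, for $|\partial_t\Rm|$ I would use the identity $\partial_t\Gamma^k_{ij}=-g^{kl}(\nabla_iR_{jl}+\nabla_jR_{il}-\nabla_lR_{ij})$ and differentiate $R^l{}_{ijk}=\partial_i\Gamma^l{}_{jk}-\partial_j\Gamma^l{}_{ik}+\Gamma*\Gamma$ in $t$, obtaining schematically $\partial_t R^l{}_{ijk}=\nabla^2\Ric+\Rm*\Ric=O(\rho^{1/2})$; restoring the lowered index costs at most an additional $\Ric*\Rm=O(\rho^{1/2})$ correction. Undoing the parabolic rescaling restores the stated $r^{-k}$ factors. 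The main technical point is the integral-to-pointwise step for $|\Ric|^2$, which requires a parabolic mean value inequality for nonnegative subsolutions with a bounded zeroth-order term on a Ricci flow background with $|{\Rm}|\le 1$; this is routine given Shi's bounds and the local Sobolev inequality available in bounded geometry, after which everything reduces to standard parabolic regularity and interpolation.
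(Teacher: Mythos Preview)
Your proposal is correct and is essentially the argument underlying the references the paper cites: the paper defers (\ref{eq:Ricboundrless}) and the $\partial_t\Rm$ bound to \cite[Lemma~6.1]{Bamler-Zhang-Part1} and \cite{MR2946223}, whose proofs proceed exactly as you describe (integrate $\partial_t R-\Delta R=2|\Ric|^2$ against a cutoff to get a spacetime $L^1$ bound on $|\Ric|^2$, then Moser-iterate the subsolution inequality for $|\Ric|^2$), and it derives $|\nabla R|\le C_0\rho^{3/4}r^{-1.5}$ by the same interpolation you give, using $|\nabla^2 R|\lesssim|\nabla^2\Ric|<C\rho^{1/2}r^{-3}$ and $|R|\le\rho$. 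Your direct computation of $\partial_t\Rm$ via $\partial_t\Gamma=\nabla\Ric$, giving the schematic form $\nabla^2\Ric+\Rm*\Ric$, is the right way to see the $\rho^{1/2}$ gain (note that the alternative evolution $\partial_t\Rm=\Delta\Rm+\Rm*\Rm$ would only give $O(1)$).
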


\begin{proof}
For (\ref{eq:Ricboundrless}) see \cite[Lemma 6.1]{Bamler-Zhang-Part1} or \cite{MR2946223}.
The second bound in (\ref{eq:nabRdtRmbound}) can also be found in \cite[Lemma 6.1]{Bamler-Zhang-Part1}.
To see the first bound in (\ref{eq:nabRdtRmbound}), use that $|\nabla^2 R|  \leq |{\nabla^2 \Ric}| < 2C_2 \rho^{1/2} r^{-3}$ and $|R| \leq \rho$ in a parabolic neighborhood of $(x,t)$ and the interpolation inequality at scale $\rho^{1/4} r^{1.5}$.
\end{proof}

\subsection{$\LL$-geometry} \label{subsec:LLgeometry}
We now recall some of the basic definitions and facts of $\LL$-geometry, as introduced in \cite{PerelmanI}.
For more detailed proofs and explanations we also refer to \cite[sec 17-23]{MR2460872}.

Let $(M, (g_t)_{[a,b]})$ be a Ricci flow, $t_0 \in (a,b]$ and $0 \leq \tau_1 < \tau_2 \leq t_0 - a$.
Consider a smooth curve $\gamma : (\tau_1, \tau_2] \to M$.
We define its \emph{$\LL$-length} $\LL_{t_0} (\gamma)$ based at time $t_0$ as follows
\[ \LL_{t_0} (\gamma) := \int_{\tau_1}^{\tau_2} \sqrt{\tau} \big( |\gamma'(\tau)|^2_{t_0 - \tau} + R(\gamma(\tau), t_0 - \tau) \big) d\tau. \]
In the following, we will often consider the case $\tau_1 = 0$.
We will also frequently omit the index $t_0$ if the base time is clear.

The $\mathcal{L}$-length functional can be viewed as a Ricci flow analogue of the Riemannian energy functional for curves.
However, while the Riemannian energy functional is applied to curves in a static Riemannian manifold, the $\LL$-length should be seen as a functional for spacetime curves, whose endpoints lie in different time-slices.
Nevertheless, many of the notions in Riemannian geometry that are related to the energy functional --- such as (minimizing) geodesics, distances, the exponential map and the cut locus --- can be translated to equivalent notions related to the $\LL$-length functional.
In the following we will briefly recall these notions and compare them with their Riemannian counterparts.

Curves that minimize of the $\LL$-length for fixed endpoints are called \emph{minimizing $\LL$-geodesics}.
Note that, unlike in the Riemannian case, such minimizing curves may not necessarily be parameterized by constant speed.
The Euler-Lagrange equation for the $\LL$-functional (i.e. the equation satisfied by curves that are critical points of the $\LL$-functional) is the \emph{$\LL$-geodesic equation}:
\begin{equation} \label{eq_LL_geod_eq}
 \nabla^{g_{t_0- \tau}}_{\gamma' (\tau)} \gamma'(\tau) - \frac12 \nabla R (\gamma(\tau), t_0 - \tau) + \frac1{2\tau} \gamma' (\tau) +2 \Ric_{g_{t_0 - \tau}} (\gamma' (\tau)) =  0. 
\end{equation}
Solutions of the $\LL$-geodesic equation are called \emph{$\LL$-geodesics}.
It follows that minimizing $\LL$-geodesics are $\LL$-geodesics and it can be shown that, as in the Riemannian case, any $\LL$-geodesic is locally minimizing.
Moreover, it is known that if the underlying manifold $M$ is compact, then any two points $(x_1, t_1)$ and $(x_2, t_2)$ with $a \leq t_2 < t_1 \leq t_0$, $x_1, x_2 \in M$ can be connected by a minimizing $\LL$-geodesic $\gamma : [\tau_1, \tau_2 ] \to M$ with $\tau_1 = t_0 - t_1$ and $\tau_2 = t_0 - t_2$.
This $\LL$-geodesic is continuous for all $\tau$ and smooth whenever $\tau > 0$.

From now on we will only consider $\LL$-geodesics based at time $t_0$ and we will omit the index $t_0$.
For any basepoint $(x_0, t_0) \in M \times [a,b]$ and $(x,t) \in M \times [a,t_0)$ we define
\[ L(x,t) = L_{(x_0,t_0)} (x,t) := \inf \big\{ \LL(\gamma) \;\, : \;\, \gamma : [0, t_0 - t] \to M, \gamma(0) = x_0, \gamma(t_0-t) = x \big\} \]
to be the \emph{$\LL$-distance between $(x,t)$ and $(x_0, t_0)$}.
Note that if $M$ is compact, then the infimum in the definition above is attained.
We also set
\[ \ov{L} (x,t) = \ov{L}_{(x_0, t_0)} (x,t) := 2 \sqrt{t_0 - t} L_{(x_0, t_0)} (x,t) \]
and
\[ l (x,t) = l_{(x_0, t_0)} (x,t) := \frac1{2 \sqrt{t_0 - t}} L_{(x_0, t_0)} (x,t). \]
It was shown by Perelman that if $M$ is compact, then
\begin{equation} \label{eq_ov_L_equation}
 \partial_t \ov{L} \geq \triangle \ov{L} - 2n 
\end{equation}
in the barrier sense (meaning that for any $(x,t) \in M \times (a, t_0)$ and $\eps > 0$ there is an open neighborhood $U \subset M$ of $x$, $\delta > 0$ and a smooth function $\phi : U \times (t - \delta, t] \to \IR$ satisfying $\partial_t \phi \geq \triangle \phi - 2n - \eps$ such that $\ov{L} \leq \phi$ with equality at $(x,t)$).
This bound can be seen as a parabolic analogue of the Laplacian comparison theorem for Riemannian manifolds with lower Ricci curvature bounds.

Next, recall the $\LL$-geodesic equation (\ref{eq_LL_geod_eq}).
Given some $x \in M$ and $\tau > 0$, this equation admits a unique solution $\gamma : (0, t_0 - a] \to M$.
Moreover, it can be seen that $x_0 := \lim_{\tau \searrow 0} \gamma (\tau)$ and $\lim_{\tau \searrow 0} \sqrt{\tau} \gamma' (\tau) \in T_{x_0} M$ exist.
Vice versa, for any point $x_0 \in M$ and vector $v \in T_{x_0}M$ there is a unique $\LL$-geodesic with these properties.
We can therefore define for any $\tau > 0$ the \emph{$\LL$-exponential map}
\[ \LL \exp_{\tau} = \LL \exp_{(x_0, t_0), \tau} : T_{x_0} M \to M \]
such that $\gamma(\tau) = \LL \exp_\tau (v)$ is an $\LL$-geodesic with $\lim_{\tau \searrow 0} \sqrt{\tau} \gamma (\tau) =x_0$ and $\lim_{\tau \searrow 0} \sqrt{\tau} \gamma' (\tau) =v$.
The $\LL$-exponential map can be viewed as a Ricci flow version of the Riemannian exponential map.
The Jacobian of the $\LL$-exponential map, with respect to the measures induced by $g_{t_0} |_{x_0}$ on $T_{x_0} M$ and $d g_{t_0 - \tau}$ on $M$ is denoted by
\[ J^\LL (\cdot, \tau) = J^\LL_{(x_0, t_0)} (\cdot, \tau) : T_{x_0} M \to \IR. \]

Observe that if $M$ is compact, then $\LL \exp_\tau$ is surjective and, more specifically, for every $x \in M$ and $\tau > 0$ there is a $v \in T_{x_0} M$ such that $\LL \exp_\tau (v) = x$ and such that $\tau' \mapsto \LL\exp_{\tau'} (v)$ is minimizing on $[0, \tau]$.
This observation motivates the definition of an analogue of the segment domain in Riemannian geometry:
\begin{multline*}
\mathcal{D}^\LL_{\tau} = \mathcal{D}^\LL_{(x_0, t_0), \tau} := \big\{ v \in T_{x_0} M \;\; : \;\; \tau' \mapsto \LL\exp_{\tau'} (v) 
\textQq{is minimizing on} \\ [0, (1+\lambda) \tau] \textQq{for some} \lambda > 0 \big\}
\end{multline*}
and
\[ \mathcal{G}^\LL_{\tau} = \mathcal{G}^\LL_{(x_0, t_0), \tau} := \LL\exp_{(x_0,t_0), \tau} \big( \mathcal{D}^\LL_{(x_0, t_0), \tau}  \big) . \]
It is known that $\mathcal{D}^\LL_\tau$ and $\mathcal{G}^\LL_\tau$ are open and that
\[ \LL\exp_\tau : \mathcal{D}^\LL_\tau \to \mathcal{G}^\LL_\tau \]
is a diffeomorphism and that the complement $M \setminus \mathcal{G}^\LL_\tau$ has measure zero.
This complement can be viewed as an analogue of the Riemannian cut locus.

Finally, we mention an important result due to Perelman's, which states that along any minimizing geodesic $\gamma : [0, \tau] \to M$, $\gamma(\tau') = \LL \exp_{\tau'} (v)$, the quantity $\tau^{-n/2} e^{-l(\gamma(v), t_0-\tau)} J^\LL (v, \tau)$ is non-increasing in $\tau$.
Therefore, if $M$ is compact, then the \emph{reduced volume}
\begin{multline*} 
 \td{V} (\tau) = \td{V}_{(x_0, t_0)} (\tau) := \int_M (4 \pi \tau)^{-n/2} e^{- l (\cdot, t_0 -\tau)} dg_{t_0 - \tau} \\
 = \int_{\mathcal{D}^\LL_\tau} (4 \pi \tau)^{-n/2} e^{- l (\LL\exp_{\tau} (v), t_0- \tau)} J^\LL (v, \tau) dv 
\end{multline*}
is non-increasing in $\tau$ as well.
This monotonicity and (\ref{eq_ov_L_equation}) were used by Perelman to give an alternative proof of the no local collapsing Theorem.

\section{Compactness of Ricci flows under a priori assumptions} \label{sec:firstconvsing}
\subsection{Statement of the main result}
The goal of this section is to prove a compactness result for sequences of Ricci flows with bounded scalar curvature that satisfy an additional a priori uniform $L^p$-curvature bound.
We will show that such a sequence subconverges towards a singular space.
Moreover, if the sequence is obtained via a blow-up process, then the limiting singular space is Ricci flat and has certain regularity properties.
For example, it has mild singularities and is $Y$-tame at all scales, where $Y$ does not depend on the a priori $L^p$-curvature bound.
This fact will become important in the proof of Theorem \ref{Thm:mainrrmbound}.
The regularity conditions and the $Y$-tameness property will enable us later to carry out some of the steps in the theory of Cheeger, Colding and Naber (cf \cite{Colding-vol-conv,Cheeger-Colding-Cone,Cheeger-Naber-quantitative,Cheeger-Naber-Codim4}) on the limiting singular space in such a way that the constants involved in this theory do not depend on the a priori $L^p$-curvature bound.

The theory of Cheeger, Colding and Naber adapted to the singular setting is described in \cite{Bamler-CGN}.
Note that in the following result, we only obtain Ricci flatness of the limiting singular space on their regular part and we won't characterize the curvature on the singular part.
This will not create any issues for us, as the results in \cite{Bamler-CGN} surprisingly don't depend on such a curvature characterization.

Lastly, we mention that there is a further regularity property of singular spaces, namely $Y$-regularity, whose proof we will postpone to section \ref{sec:epsregularity}, as it relies on the compactness result from this section.
This $Y$-regularity property, will then allow us to carry out all the necessary steps in the theory of Cheeger, Colding and Naber, as described in \cite{Bamler-CGN},  in section \ref{sec:mainproof} and deduce $L^p$-curvature bounds that are independent of the a priori a priori $L^p$-curvature bounds.

The main compactness result of this section is the following:

\begin{Proposition}[compactness assuming a priori curvature bounds] \label{Prop:convtosingspaceLpasspt}
Let $(M_i, \linebreak[1] (g^i_t)_{t \in [-T_i, 0]})$, $T_i \geq 2$, be a sequence of Ricci flows on compact, $n$-dimensional manifolds $M_i$ such that
\begin{enumerate}[label=(\roman*)]
\item \label{(i)-Prop:convtosingspaceLpasspt} $\nu[g_{-T_i}^i, 2 T_i] \geq - A$ for some uniform $A < \infty$.
\item \label{(ii)-Prop:convtosingspaceLpasspt} $|R| \leq \rho_i$ on $M_i \times [-T_i, 0]$ for some sequence $0 < \rho_i < 1$.
\item \label{(iii)-Prop:convtosingspaceLpasspt} There is a constant $\mathbf{p}_0 > 2$ such that for any $0 < \mathbf{p} < \mathbf{p}_0$ there is a constant $\mathbf{E}_{\mathbf{p}} < \infty$ such that for all $(x,t) \in M_i \times [-T_i,0]$ and $0 < r, s < 1$ we have
\[ \big| \{ \rrm (\cdot, t) < s \} \cap B^{M_i} (x,t, r) \big|_t \leq \mathbf{E}_{\mathbf{p}} s^{\mathbf{p}} r^n . \]
\end{enumerate}
Let $q_i \in M_i$ be a sequence of basepoints.
Then, after passing to a subsequence, the pointed Riemannian manifolds $(M_i, g^i_0, q_i)$ converge (in the sense of Definition \ref{Def:convergencescheme}) to a pointed singular space $(\mathcal{X}, q_\infty) = (X, d, \RR, g, q_\infty)$ (in the sense of Definition \ref{Def:singlimitspace}) with singularities of codimension $\mathbf{p}_0$ (in the sense of Definition \ref{Def:codimensionsingularities}).
Moreover:
\begin{enumerate}[label=(\alph*)]
\item \label{(a)-Prop:convtosingspaceLpasspt} For all $0 < \mathbf{p} < \mathbf{p}_0$ and all $x \in X$ and $0 < r, s < 1$ we have
\[ \big| \{ \rrm^\infty < s \} \cap B^X (x,r) \cap \RR \big| \leq \mathbf{E}_{\mathbf{p}} s^{\mathbf{p}} r^n. \]
Here $\rrm^\infty$ denotes the curvature radius on $\XX$.
\item \label{(b)-Prop:convtosingspaceLpasspt} If $\rho_i \to 0$, then $\Ric \equiv 0$ on $\RR$ and $\XX$ has mild singularities (in the sense of Definition \ref{Def:mild}).
If furthermore $\mathbf{p}_0 > 1$, then $\XX$ is $Y(A)$-tame at scales $c(A) \sqrt{T_\infty}$, where $T_\infty := \limsup_{i \to \infty} T_i$ and $Y(A, \mathbf{p}_0) < \infty$ can be chosen only depending on $A$ and $\mathbf{p}_0$ and $c(A) > 0$ can be chosen only depending on $A$ (in particular, both of these constants are independent of $\mathbf{E}_{\mathbf{p}}$).
\end{enumerate}
The convergence $(M_i, g^i_0, q_i)$ to $(\XX, q_\infty)$ can be understood as follows:
there is a convergence scheme $\{ (U_i, V_i, \Phi_i) \}_{i = 1}^\infty$ such that:
\begin{enumerate}[label=(\alph*), start=3]
\item \label{(c)-Prop:convtosingspaceLpasspt} For any $x \in \RR$ and $r > 0$ we have
\begin{multline*}
\qquad\qquad \big| B^{X} (x, r) \cap \RR \big| \leq \liminf_{i \to \infty} \big| B^{M_i} (\Phi_i(x), 0, r) \big|_0 \\
 \leq \limsup_{i \to \infty} \big| B^{M_i} (\Phi_i(x), 0, r) \big|_0 \leq  \big| \ov{B^{X} (x,r)} \cap \RR \big|. 
\end{multline*}
\item \label{(d)-Prop:convtosingspaceLpasspt} For any $x \in \RR$ we have
\[ \rrm^\infty (x) = \lim_{i \to \infty} \rrm (\Phi_i ( x),0). \]
\item \label{(e)-Prop:convtosingspaceLpasspt} For any $D < \infty$ and $\sigma > 0$ and sufficiently large $i$ (depending on $D$ and $\sigma$) we have
\[ \rrm (\cdot, 0) < \sigma \textQQqq{on} B^{M_i} (q_i, 0, D) \setminus V_i \]
and
\[ \rrm^\infty < \sigma \textQQqq{on} B^{X} (q_\infty, D) \setminus U_i. \]
\end{enumerate}
\end{Proposition}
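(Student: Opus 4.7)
The plan is to build the limit in three stages: first extract a metric limit $(X,d)$, then carve out a regular part $\RR \subset X$ on which we get smooth subconvergence, and finally verify the detailed properties (a)--(e). Using the volume bound (Proposition \ref{Prop:VolumeBound}) and distance distortion (Proposition \ref{Prop:Distdistortion}) together with \ref{(i)-Prop:convtosingspaceLpasspt}--\ref{(ii)-Prop:convtosingspaceLpasspt}, standard Gromov precompactness yields a subsequential pointed Gromov-Hausdorff limit $(X,d,q_\infty)$ of the time-$0$ slices. Define $\RR \subset X$ as the set of points admitting approximating sequences $x_i \in M_i$ with $\liminf \rrm(x_i,0) > 0$. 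On each sublevel set $\{\rrm(\cdot,0) \ge \eps\}$, backward pseudolocality (Proposition \ref{Prop:Pseudoloc}) extends the curvature bound to a parabolic neighborhood, and Proposition \ref{Prop:Ricsmall} supplies bounds on all higher derivatives of $\Rm$, so that Cheeger--Gromov $C^\infty$-compactness applies on the regular part. A diagonal argument produces exhausting open subsets $U_i \subset \RR$ and diffeomorphisms $\Phi_i : U_i \to V_i \subset M_i$ satisfying $\Phi_i^* g^i_0 \to g$ smoothly and $\rrm(\Phi_i(x),0) \to \rrm^\infty(x)$, giving (d), as well as the volume pinching (c) by smooth convergence combined with Fatou for the inner inequality and an upper-semicontinuity argument (using smallness of neighborhoods of the singular set via \ref{(iii)-Prop:convtosingspaceLpasspt}) for the outer. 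Conditions (1)--(4) of Definition \ref{Def:convergencescheme} are then immediate, and the codimension bound in Definition \ref{Def:codimensionsingularities} along with conclusion (a) transfers from \ref{(iii)-Prop:convtosingspaceLpasspt} via the pointwise convergence of $\rrm$. Condition (e) follows from a standard covering argument combined with the codimension bound.

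The part of the singular-space axioms requiring more care is property (4) in Definition \ref{Def:singlimitspace} — that $d$ restricted to $\RR$ is the Riemannian length metric of $g$ — together with the mildness assertion in \ref{(b)-Prop:convtosingspaceLpasspt}; these really are the core content of the proposition. For property (4), the issue is that a minimizing curve between two regular points, a priori, might be forced to pass close to the singular set; I would show instead that nearby curves staying in $\RR$ achieve distance arbitrarily close to the infimum, by approximating any candidate path in $M_i$ by one avoiding the sublevel set $\{\rrm(\cdot,0) < s\}$ with $s$ small. The measure estimate \ref{(iii)-Prop:convtosingspaceLpasspt} with $\mathbf{p} > 1$ ensures such a perturbation exists with length controlled by Proposition \ref{Prop:Distdistortion}. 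Passing to the limit via the convergence scheme gives the desired approximating curves inside $\RR$.

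The main obstacle is the \emph{mildness of singularities} in \ref{(b)-Prop:convtosingspaceLpasspt}, which requires showing that for a fixed $p_\infty \in X$ and almost every $x \in \RR$, there exists a \emph{minimizing} geodesic in $\RR$ from $p_\infty$ to $x$. My plan here is to exploit Perelman's $\LL$-geometry from subsection \ref{subsec:LLgeometry} on long time intervals. Fix $p_i \to p_\infty$ and consider the $\LL$-exponential map $\LL \exp_{(p_i, 2)}$ based at time $2$ in the past (using $T_i \geq 2$). Since $\rho_i \to 0$, scalar curvature contributions to $\LL$ vanish and $\LL$-geodesics on the interval $[0,2]$ become asymptotically time-$0$ geodesics as $\rho_i \to 0$ (using Propositions \ref{Prop:Distdistortion} and \ref{Prop:Ricsmall}). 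The key estimate is to show that the measure of the set of endpoints reached by an $\LL$-geodesic that spends substantial $\LL$-length in the region $\{\rrm(\cdot,t) < s\}$ is small in $s$; this combines the weighted $\LL$-Jacobian bound with the heat-kernel-based integral estimates of Corollary \ref{Cor:HKintegrals} (reduced-volume comparison pairs naturally with Gaussian heat kernel weights via Perelman's identification), together with the $L^p$-bound from \ref{(iii)-Prop:convtosingspaceLpasspt}. Passing $i \to \infty$ through the convergence scheme transfers these almost-everywhere-good $\LL$-geodesics to minimizing curves in $\RR$ on $\XX$, proving mildness.

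Finally, for $Y$-tameness at scales $c(A)\sqrt{T_\infty}$ in \ref{(b)-Prop:convtosingspaceLpasspt}, I plan to argue by contradiction from monotonicity of Perelman's reduced volume $\td V$ along the flows, which by construction depends only on $A$ through \ref{(i)-Prop:convtosingspaceLpasspt} and \emph{not} on the a priori bounds $\mathbf{E}_{\mathbf{p}}$; the limiting reduced-volume estimate on $\XX$ (valid since $\Ric = 0$ on $\RR$ and the singular set has small capacity by the codimension bound with $\mathbf{p}_0 > 1$) then yields the quantitative tameness constant $Y(A,\mathbf{p}_0)$ independent of $\mathbf{E}_{\mathbf{p}}$. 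This independence is what will make the iterative scale-induction argument in section \ref{sec:mainproof} close.
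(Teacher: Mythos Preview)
Your overall architecture is right in spirit---the heart of the proposition is indeed property (4) of Definition~\ref{Def:singlimitspace} and the mildness assertion, and $\LL$-geometry is the right tool. But there is a genuine gap in your plan for mildness, and a closely related vagueness in your argument for property (4).

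You propose to use $\LL$-geodesics on the \emph{long} interval $[0,2]$ and claim that ``since $\rho_i \to 0$, $\LL$-geodesics on $[0,2]$ become asymptotically time-$0$ geodesics.'' This is circular. The $\LL$-geodesic equation (\ref{eq_LL_geod_eq}) contains the term $2\Ric(\gamma')$, and $|{\Ric}|$ is only small (via Proposition~\ref{Prop:Ricsmall}) at points where $\rrm$ is bounded below---which is precisely what you are trying to establish along the curve. Small scalar curvature alone does not force $\LL$-geodesics to resemble Riemannian geodesics, nor does it prevent them from plunging into high-curvature regions. The paper avoids this circularity by working on \emph{short} intervals $[0,\theta]$ with $\theta \to 0$, where one can prove $\ov{L}(y,-\theta) \approx d_0^2(x_0,y)$ \emph{without any curvature control along the geodesic}, using only the barrier inequality $\partial_t \ov{L} \geq \triangle \ov{L} - 2n$ and the heat kernel bounds of Proposition~\ref{Prop:GaussianHKbounds} (this is Lemma~\ref{Lem:LLlowerbound} and the Claim in Lemma~\ref{Lem:shortLLLprrm}). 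From there the argument bootstraps: first an \emph{integral} bound $\int_0^\theta \rrm^{-1.5}(\gamma(\tau),-\tau)\,d\tau < C$ via $\LL$-Jacobian monotonicity (Lemma~\ref{Lem:shortLLLprrm}), then speed control by integrating the $\LL$-geodesic equation against this integral bound when $\rho$ is small (Lemma~\ref{Lem:LLcontrolledspeed}), then a refined integral bound whose constant is \emph{independent of $\theta$} (Lemma~\ref{Lem:firstsetS}), and finally a pointwise lower bound $\rrm(\gamma(\tau),0) > \sigma$ with $\sigma$ independent of the precision $\eta$ (Proposition~\ref{Prop:shortcurveuniformsigma}). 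The independence of $\sigma$ from $\eta$ is what lets you pass to the limit and get genuine minimizing geodesics in $\RR$; your proposal does not isolate this decoupling.

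Your argument for property (4) has the same weakness: ``the measure estimate \ref{(iii)-Prop:convtosingspaceLpasspt} with $\mathbf{p} > 1$ ensures such a perturbation exists'' is not a proof. A set of small volume, even of codimension $>1$, can separate points (think of a hypersurface minus a small disk). The paper obtains the required almost-geodesics avoiding $\{\rrm < \sigma\}$ (Corollary~\ref{Cor:shortcurveboundedrrm}) by chaining together rescaled applications of Proposition~\ref{Prop:shortcurveuniformsigma} along a minimizing geodesic---again relying on the short-interval $\LL$-machinery, not a direct perturbation argument.
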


A more effective version of Proposition \ref{Prop:convtosingspaceLpasspt} is the following:

\begin{Proposition} \label{Prop:effectivelimitXX}
For any $A, E < \infty$ and $\eta > 0$ and $\mathbf{p}_0 > 2$ there is a $\rho = \rho (A, E, \eta, \mathbf{p}_0) > 0$ such that the following holds:
Let $(M, (g_t)_{t \in [-2,0]} )$ be a Ricci flow on a compact, $n$-dimensional manifold $M$, $x_0 \in M$ a point and $0 < r_0 < 1$ a scale and assume that
\begin{enumerate}[label=(\roman*)]
\item \label{(i)-Prop:effectivelimitXX} $\nu [ g_{-2}, 4] \geq - A$.
\item \label{(ii)-Prop:effectivelimitXX} $|R| \leq \rho$ on $M \times [-2,0]$.
\item \label{(iii)-Prop:effectivelimitXX} For all $(x,t) \in M \times [-1,0]$ and $0 < r, s < 1$ we have
\[ | \{ \rrm (\cdot, t) < sr \} \cap B(x,t, r) |_t \leq E s^{\mathbf{p}_0} r^n. \]
\end{enumerate}
Then for any $(q, t) \in M \times [-1/2,0]$ there is a pointed singular space $(\XX, q_\infty) = (X, d, \RR, g, q_\infty)$ with $\Ric \equiv 0$ on $\RR$ and mild singularities (in the sense of Definition \ref{Def:mild}), subsets $U \subset \RR$ and $V \subset M$ and a diffeomorphism $\Phi : U \to V$ such that the following holds:
\begin{enumerate}[label=(\alph*)]
\item \label{(a)-Prop:effectivelimitXX} $q_\infty \in U$ and $d_t (\Phi(q_\infty), q) < \eta$.
\item \label{(b)-Prop:effectivelimitXX} $\Vert \Phi^* g_t - g \Vert_{C^{[\mu^{-1}]} (U)} < \eta$.
\item \label{(c)-Prop:effectivelimitXX} $\rrm^{\infty} < \eta$ on $B^X (q_\infty, \eta^{-1} ) \setminus U$, where $\rrm^\infty$ denotes the curvature radius on $\XX$.
\item \label{(d)-Prop:effectivelimitXX} $\rrm (\cdot, t) < \eta$ on $B^M (q, t, \eta^{-1} ) \setminus V$ and
\[ |B^X (q, t, \eta^{-1} ) \setminus V |_t < \eta. \]
\item \label{(e)-Prop:effectivelimitXX} For any $y_1, y_2 \in U$ we have
\[ | d(y_1, y_2) - d_t ( \Phi (y_1), \Phi (y_2)) | < \eta. \]
\item \label{(f)-Prop:effectivelimitXX} For any $0 < r < \eta^{-1}$ we have
\[ \qquad (1- \eta) \big| B^{X} (q,r) \cap \RR \big| - \eta < \big| B^M (q, t, r) \big|_t < (1+ \eta) \big|B^X (q_\infty, r) \cap \RR \big| + \eta. \]
\item \label{(g)-Prop:effectivelimitXX} For all $x \in X$ and $0 < r, s < 1$ we have
\[ \big| \{ \rrm^\infty < sr \} \cap B^X (x,r) \cap \RR \big| \leq E s^{\mathbf{p}_0} r^n. \]
\end{enumerate}
\end{Proposition}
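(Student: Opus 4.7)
The plan is to derive this effective statement from Proposition~\ref{Prop:convtosingspaceLpasspt} by a standard contradiction-and-compactness argument. Suppose the claim fails for some fixed $A, E, \eta, \mathbf{p}_0$. Then one obtains a sequence of Ricci flows $(M_i, (g^i_t)_{t \in [-2,0]})$ with $\rho_i \to 0$ satisfying \ref{(i)-Prop:effectivelimitXX}--\ref{(iii)-Prop:effectivelimitXX}, and a sequence of points $(q_i, t_i) \in M_i \times [-1/2, 0]$, such that no pair $((\XX, q_\infty), (U, V, \Phi))$ satisfies all of \ref{(a)-Prop:effectivelimitXX}--\ref{(g)-Prop:effectivelimitXX}. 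Passing to a subsequence I may assume $t_i \to t_\infty \in [-1/2, 0]$ and then time-shift each flow by $t_i$ so that the slice of interest becomes the new final slice; the shifted flows live on intervals containing $[-3/2, 0]$. The entropy-monotonicity inequality recalled in subsection~\ref{subsec:terminology} absorbs the time-shift into a slightly enlarged entropy constant $A' = A'(A)$, and the a priori bound \ref{(iii)-Prop:effectivelimitXX} translates, after a trivial unit-scale covering, into the form of the corresponding hypothesis of Proposition~\ref{Prop:convtosingspaceLpasspt} (with $\mathbf{E}_\mathbf{p}$ depending only on $E$ and $\mathbf{p}_0$). A further subsequence then yields a pointed singular space $(\XX, q_\infty) = (X, d, \RR, g, q_\infty)$ together with a convergence scheme $\{(U_k, V_k, \Phi_k)\}_{k \geq 1}$; since $\rho_i \to 0$, part \ref{(b)-Prop:convtosingspaceLpasspt} of Proposition~\ref{Prop:convtosingspaceLpasspt} gives $\Ric \equiv 0$ on $\RR$ together with mild singularities for $\XX$.

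For all sufficiently large $i$ I would then set $U := U_i$, $V := V_i$ and $\Phi := \Phi_i$, if necessary relocating the basepoint $q_\infty$ within $X$ to a nearby point of $\RR$ whose $\Phi_i$-image lies near $q_i$ (which is possible by the density clause of Definition~\ref{Def:convergencescheme} combined with the density of $\RR$ in $X$). Each of the seven conclusions is then a direct readout from the convergence scheme and from the conclusions of Proposition~\ref{Prop:convtosingspaceLpasspt}: \ref{(a)-Prop:effectivelimitXX} and \ref{(e)-Prop:effectivelimitXX} follow from the basepoint- and distance-approximation clauses of Definition~\ref{Def:convergencescheme}; \ref{(b)-Prop:effectivelimitXX} follows from the smooth local convergence $\Phi_i^* g^i_0 \to g$ on compact subsets; \ref{(c)-Prop:effectivelimitXX} and \ref{(d)-Prop:effectivelimitXX} follow from clauses \ref{(d)-Prop:convtosingspaceLpasspt} and \ref{(e)-Prop:convtosingspaceLpasspt} of Proposition~\ref{Prop:convtosingspaceLpasspt} (together with the observation that the volume of the singular region vanishes because the codimension bound passes to the limit); \ref{(f)-Prop:effectivelimitXX} follows from the two-sided volume comparison \ref{(c)-Prop:convtosingspaceLpasspt}; and \ref{(g)-Prop:effectivelimitXX} follows from \ref{(a)-Prop:convtosingspaceLpasspt} after translating into the scale-invariant formulation. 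Since each of these properties holds for all sufficiently large $i$, the assumed counterexample collapses.

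The serious mathematical content lives entirely in Proposition~\ref{Prop:convtosingspaceLpasspt}; the argument here only quantifies its output. The two genuine pieces of technical bookkeeping to watch are the time-shift (the shifted interval $[-3/2, 0]$ is slightly shorter than the $[-2, 0]$ demanded in the hypothesis of Proposition~\ref{Prop:convtosingspaceLpasspt}, so one either pre-composes with a mild parabolic rescaling or verifies that the proof of that proposition depends only on a uniform positive lower bound on the length of the interval) and the conversion between the two weak $L^{\mathbf{p}_0}$-curvature formulations ($\{\rrm<s\}$ in Proposition~\ref{Prop:convtosingspaceLpasspt} versus the scale-invariant $\{\rrm<sr\}$ used here). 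Neither obstructs the argument, and I do not expect any genuine obstacle beyond making sure the convergence scheme is invoked uniformly in the basepoint $(q,t) \in M \times [-1/2, 0]$.
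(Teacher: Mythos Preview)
Your proposal is correct and follows the same contradiction-and-compactness argument as the paper; the only cosmetic differences are that the paper reduces to $t=0$ by parabolic rescaling at the outset (avoiding your time-shift bookkeeping), and it takes $U := U_i \cap B^X(q_\infty, \eta^{-1} - \eta/2)$ rather than all of $U_i$ so that the uniform $C^{[\eta^{-1}]}$ and distance estimates in \ref{(b)-Prop:effectivelimitXX} and \ref{(e)-Prop:effectivelimitXX} hold on a fixed compact set. For \ref{(f)-Prop:effectivelimitXX} the paper also notes that continuity of $r \mapsto |B^X(q_\infty,r)\cap\RR|$ (from volume comparison on $\XX$, cf.\ \cite[Proposition~4.1]{Bamler-CGN}) is needed to pass from the $\liminf/\limsup$ bounds in Proposition~\ref{Prop:convtosingspaceLpasspt}\ref{(c)-Prop:convtosingspaceLpasspt} to the stated two-sided inequality.
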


The major difficulty in the proof of Proposition \ref{Prop:convtosingspaceLpasspt} lies in verifying properties (C) and (E) of \cite[subsec 1.2]{Bamler-CGN} for the sequence of pointed Riemannian manifolds $(M_i, g^i_0, q_i)$.
These properties will imply that the length metric on the smooth part $\RR$ of the limiting space $\XX$ is equal to the restriction of its metric to $\RR$, as well as the mildness of singularities of $\XX$.
This will be achieved by constructing minimal time-$0$ geodesics as limits of certain $\LL$-geodesics, as explained in subsection \ref{subsec:MiLLgeodesic}.
The other statements will follow more or less using standard techniques and the results of \cite{Bamler-CGN}.

\subsection{Existence of short $\LL$-geodesics} \label{subsec:MiLLgeodesic}
In this subsection we will show that, in a Ricci flow with bounded scalar curvature, between almost any two points we can find an $\LL$-geodesic on a short time-interval that approximates a minimizing geodesic at time $0$.

We first show that the $\LL$-length is bounded from below in terms of the distance.
This proof is similar to one direction of the distance distortion bound of \cite{Bamler-Zhang-Part2}.

\begin{Lemma}[$\LL$-length is almost bounded from below by distance] \label{Lem:LLlowerbound}
For any $A, D < \infty$ there is a $C = C (A,D) < \infty$ such that the following holds:

Let $(M, (g_t)_{t \in [-2,0]} )$ be a Ricci flow on a compact, $n$-dimensional manifold $M$ and $x_0 \in M$ with the property that
\begin{enumerate}[label=(\roman*)]
\item $\nu [ g_{-2}, 4] \geq - A$.
\item $|R| \leq 1$ on $M \times [-2,0]$.
\end{enumerate}

Let $0 < \theta \leq \frac18$ and let $\gamma : [0, \theta] \to M$ be a smooth curve and denote by $\LL (\gamma)$ its $\LL$-length based at time $0$.
Assume that $d_0 (\gamma (0), \gamma(\theta)) \leq D$.
Then
\[ 2\sqrt{\theta} \LL(\gamma) > d_0^2 (\gamma(0), \gamma(\theta)) - C \theta^{1/3}. \]
\end{Lemma}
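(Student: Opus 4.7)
The plan is a three-step reduction: isolate the scalar curvature contribution, apply Cauchy--Schwarz to reduce $\LL(\gamma)$ to a ``time-varying length'' integral, and then use the distance-distortion bound (Proposition~\ref{Prop:Distdistortion}) to compare this length to $d_0(\gamma(0),\gamma(\theta))$.

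First I would split $\LL(\gamma) = \int_0^\theta \sqrt{\tau}|\gamma'(\tau)|_{-\tau}^2\,d\tau + \int_0^\theta \sqrt{\tau}\,R(\gamma(\tau),-\tau)\,d\tau$ and use $|R|\le 1$ to bound the second integral below by $-\tfrac{2}{3}\theta^{3/2}$. Cauchy--Schwarz with the factorization $|\gamma'|_{-\tau} = (\tau^{1/4}|\gamma'|_{-\tau})\cdot\tau^{-1/4}$ gives
\[
 \Bigl(\int_0^\theta |\gamma'(\tau)|_{-\tau}\,d\tau\Bigr)^{\!2} \le 2\sqrt{\theta}\int_0^\theta \sqrt{\tau}\,|\gamma'(\tau)|_{-\tau}^2\,d\tau,
\]
so that, writing $\tilde{\ell} := \int_0^\theta |\gamma'(\tau)|_{-\tau}\,d\tau$,
\[
 2\sqrt{\theta}\,\LL(\gamma) \ge \tilde{\ell}^{\,2} - \tfrac{4}{3}\theta^{2}.
\]
Since $\theta\le 1/8$ gives $\theta^2\le \theta^{1/3}$, it suffices to establish the linear estimate $\tilde{\ell} \ge d_0(\gamma(0),\gamma(\theta)) - C'(A,D)\sqrt{\theta}$; squaring then yields $\tilde{\ell}^{\,2}\ge d_0^2 - 2DC'\sqrt{\theta} \ge d_0^2 - C\theta^{1/3}$, using $d_0\le D$ and $\sqrt{\theta}\le \theta^{1/3}$.

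For this linear estimate I would partition $[0,\theta]$ into $N$ equal sub-intervals $I_k=[\tau_{k-1},\tau_k]$, set $x_k=\gamma(\tau_k)$, and bound
\[
 d_0(\gamma(0),\gamma(\theta)) \le \sum_k d_0(x_{k-1},x_k) \le \sum_k\Bigl(d_{-\tau_{k-1}}(x_{k-1},x_k) + C(A,D)\sqrt{\theta/N}\Bigr)
\]
using the triangle inequality and Proposition~\ref{Prop:Distdistortion}. Each $d_{-\tau_{k-1}}(x_{k-1},x_k)$ is at most the time-$(-\tau_{k-1})$ length $\int_{I_k}|\gamma'(\tau)|_{-\tau_{k-1}}\,d\tau$, which one then compares to the mixed integral $\int_{I_k}|\gamma'(\tau)|_{-\tau}\,d\tau$ appearing in $\tilde\ell$. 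Choosing $N$ to balance the summed $\sqrt{N\theta}$ distance-distortion error against the metric-change error accumulated per sub-interval should yield a total error of size $\sqrt{\theta}$, as required.

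The main obstacle is precisely this last replacement of $|\gamma'(\tau)|_{-\tau_{k-1}}$ by $|\gamma'(\tau)|_{-\tau}$ on each sub-interval: under only a scalar curvature bound there is no pointwise Ricci control, so the comparison cannot proceed by the naive tangent-space inequality. The preamble notes that the proof is \emph{similar to one direction of the distance distortion bound of~\cite{Bamler-Zhang-Part2}}, which indicates that the replacement should proceed via the Gaussian heat kernel bounds of Proposition~\ref{Prop:GaussianHKbounds}: one introduces a heat-kernel-regularized distance function, uses the parabolic equation it satisfies to absorb the metric variation into a quantity controlled by $R$ rather than $|\Ric|$, and integrates this control along the spacetime curve $\tau\mapsto(\gamma(\tau),-\tau)$. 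With this in hand, summing over $k$ and optimizing $N$ gives the linear estimate on $\tilde\ell$, and the Cauchy--Schwarz reduction of Step~1 then completes the proof.
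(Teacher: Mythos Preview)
Your Cauchy--Schwarz reduction and scalar curvature separation are correct and match the paper exactly: the whole lemma reduces to a linear lower bound on $\tilde\ell=\int_0^\theta|\gamma'(\tau)|_{-\tau}\,d\tau$.

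The partition argument, however, has a genuine gap at exactly the step you flag. The partition's only purpose is to make the sub-intervals short so that the replacement $|\gamma'(\tau)|_{-\tau_{k-1}}\leadsto|\gamma'(\tau)|_{-\tau}$ is over a short time; but without pointwise Ricci control this replacement is no easier on short intervals than on long ones, so the partition buys nothing. Meanwhile the distance-distortion error you accumulate is $\sum_k C\sqrt{\theta/N}=C\sqrt{N\theta}$, which grows with $N$; optimizing over $N$ therefore forces $N=1$, and you are back to the original problem. The heat-kernel idea you sketch at the end is correct, but it is not a patch for the partition step: it \emph{replaces} the partition entirely.

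What the paper does is choose an auxiliary scale $\eta=\theta^{2/3}\gg\theta$ and solve the forward heat equation $\partial_t u=\triangle u$ on $[-\eta,0]$ with initial data $u(\cdot,-\eta)=\min\{d_{-\eta}(x_0,\cdot),D\}$. The two key facts are: (i) $|\nabla u|_{g_t}\le 1$ for \emph{all} $t\in[-\eta,0]$, by Kato's inequality and the maximum principle (this is what absorbs the metric variation without using $\Ric$); and (ii) $|\partial_t u|=|\triangle u|\le C/\eta$ on $M\times[-\theta,0]$ by standard parabolic regularity, since $\theta\le\eta/2$. Integrating $u$ along the spacetime curve $\tau\mapsto(\gamma(\tau),-\tau)$ then gives
\[
 u(y_0,-\theta)-u(x_0,0)\le\int_0^\theta\big(|\gamma'(\tau)|_{-\tau}+C/\eta\big)\,d\tau=\tilde\ell+C\theta/\eta,
\]
while the Gaussian heat-kernel bounds (Corollary~\ref{Cor:HKintegrals}) yield $u(y_0,-\theta)-u(x_0,0)\ge d_0(x_0,y_0)-C\sqrt{\eta}$. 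The two errors $\sqrt{\eta}$ and $\theta/\eta$ are balanced at $\eta=\theta^{2/3}$, giving $\tilde\ell\ge d_0-C\theta^{1/3}$ (not $\sqrt{\theta}$; $\theta^{1/3}$ is what this method produces and is what the lemma states). Squaring and combining with your Step~1 finishes the proof.
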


\begin{proof}
Let $x_0 := \gamma(0)$, $y_0 := \gamma(\theta)$ and $\eta := \theta^{2/3} \geq 2 \theta$.
Observe first that by Proposition \ref{Prop:Distdistortion}, we have
\begin{equation} \label{eq:distdistortioneta}
 | d_{t_1} (x_0, y_0) - d_{t_2} (x_0, y_0) | < C_1 \sqrt{\eta} \qquad \text{for all} \qquad t_1, t_2 \in [-\eta, 0],
\end{equation}
where $C_1 = C_1 (A,D) < \infty$.

Next consider the solution $u \in C^0 (M \times [-\eta, 0]) \cap C^\infty (M \times (- \eta, 0])$ to the heat equation coupled with the Ricci flow
\[ \partial_t u = \triangle_t u, \qquad u(\cdot, - \eta) = \min \{ d_{-\eta} (x_0, \cdot ), D \}. \]
Then by Corollary \ref{Cor:HKintegrals} we have for some constant $C_2 = C_2 (A) < \infty$
\begin{multline*}
 u(x_0,0) = \int_M K(x_0,0; z,-\eta ) \min \{ d_{-\eta} (x_0, z) , D \}dg_{-\eta} (z) \\
  \leq \int_M K(x_0,0; z,-\eta ) d_{-\eta} (x_0, z) dg_{-\eta} (z) \leq C_2 \sqrt{\eta}.
\end{multline*}

Next, by (\ref{eq:distdistortioneta}) we have
\[ D \geq d_0 (x_0, y_0) \geq d_{- \eta} (x_0, y_0) - C_1 \sqrt{\eta}. \]
So by the triangle inequality we have for all $z \in M$,
\begin{multline*}
d_{-\eta} (x_0, y_0 ) - \min \big\{ d_{-\eta} (x_0, z), D \big\} \\
\leq d_{-\eta} (x_0, y_0) - \min \big\{ d_{-\eta} (x_0, z), d_{-\eta} (x_0, y_0) - C_1 \sqrt{\eta} \big\} 
\leq d_{-\eta} (y_0, z) + C_1 \sqrt{\eta}.
\end{multline*}
We now estimate $u(y_0, -\theta)$.
Recall that $\theta \leq \frac12 \theta^{2/3} = \frac12 \eta$.
So we obtain, similarly as before,
\begin{alignat*}{1}
 u (y_0, -\theta) &= d_{-\eta} (x_0, y_0)  - \int_M K(y_0, -\theta; z, - \eta) \\
 &\qquad\qquad\qquad\qquad\qquad \cdot \big( d_{-\eta} (x_0, y_0) - \min \{ d_{-\eta} (x_0,z), D \} \big) dg_{-\eta} (z) \\
&\geq d_0 (x_0, y_0) - C_1 \sqrt{\eta} - \int_M K(y_0, -\theta; z, - \eta) d_{-\eta} (y_0, z) dg_{-\eta} (z) \\
&\geq d_0 (x_0, y_0) - C_3 \sqrt{\eta},
\end{alignat*}
where $C_3 = C_3 (A) < \infty$.
So, in conclusion,
\begin{equation} \label{eq:ux0y0}
u(y_0, -\theta) - u(x_0, 0) \geq d_0 (x_0, y_0) - C_4 \sqrt{\eta},
\end{equation}
where $C_4 = C_4 (A) < \infty$.

Consider now the quantity $|\nabla u |$ on $M \times [-\eta, 0]$.
We claim that in the barrier sense,
\[ \partial_t |\nabla u | \leq \triangle |\nabla u |. \]
Recall that this means the following: For any $(x,t) \in M \times (-\eta,0]$ and $\eps > 0$ there is an open neighborhood $U \subset M$ of $x$, $\delta > 0$ and a smooth function $\phi : U \times (t - \delta, t] \to \IR$ satisfying $\partial_t \phi \leq \triangle \phi + \eps$ such that $\phi \leq |\nabla u |$ with equality at $(x,t)$.
In fact, whenever $|\nabla u | > 0$, we have by Kato's inequality
\begin{multline*}
 \partial_t |\nabla u| = \frac{\partial_t |\nabla u |^2}{2|\nabla u|}  = \frac{2\langle \nabla \triangle u, \nabla u \rangle + 2\Ric (\nabla u, \nabla u)}{2|\nabla u|} =
\frac{ \triangle  \langle \nabla u, \nabla u \rangle - 2 |\nabla^2 u |^2 }{2|\nabla u|} \\
= \frac{ 2 \triangle |\nabla u| \cdot |\nabla u| + 2 |\nabla |\nabla u||^2 - 2 |\nabla^2 u |}{2|\nabla u|} 
\leq \triangle |\nabla u| 
\end{multline*}
So if $|\nabla u | (x,t) > 0$, then $|\nabla u|$ is smooth in a neighborhood of $(x,t)$ and we can set $\phi = |\nabla u|$.
On the other hand, if $|\nabla u |(x,t) = 0$, then we can set $\phi \equiv 0$.

Since $|\nabla u | (\cdot, -\eta) \leq 1$, we have by the maximum principle that
\begin{equation} \label{eq:nablau1}
 |\nabla u | \leq 1 \qquad \text{on} \qquad M \times [- \eta , 0].
\end{equation}
We can control the time-derivative of $u$ using \cite[Lemma 3.1(a)]{Bamler-Zhang-Part1}.
We obtain that there is a constant $C_5  = C_5(D)< \infty$ such that for all $t \in (-\eta, 0]$
\[ |\partial_t u| = |\triangle u| \leq \frac{C_5}{\eta + t} \qquad \text{on} \qquad M. \]
So, since $\theta \leq  \frac12 \eta$, we get
\begin{equation} \label{eq:dtubound}
 |\partial_t u| \leq \frac{2C_5}{\eta} \qquad \text{on} \qquad M \times [-\theta, 0]. 
\end{equation}
It follows using (\ref{eq:nablau1}) and (\ref{eq:dtubound}), that for any $\tau \in [0,\theta]$
\[ \frac{d}{d\tau} u (\gamma (\tau), -\tau) = \big\langle \nabla u (\gamma(\tau), -\tau), \gamma' (\tau) \big\rangle_{-\tau} - \partial_t u (\gamma(\tau), - \tau) \leq |\gamma' (\tau)|_{-\tau} + \frac{2C_5}{\eta} \]
So, using (\ref{eq:ux0y0}),
\begin{multline*}
 d_0 (x_0, y_0) - C_4 \sqrt{\eta} - \frac{2C_5}{\eta}\theta \leq u(y_0, - \theta) - u(x_0, 0) - \frac{2C_5}{\eta}\theta \\
 \leq \int_0^\theta |\gamma' (\tau)|_{-\tau} d\tau  
 \leq \bigg( \int_0^\theta 2 \sqrt{\tau} |\gamma' (\tau) |^2_{-\tau} d\tau \bigg)^{1/2} \bigg( \int_0^\theta \frac{1}{2\sqrt{\tau}} d\tau \bigg)^{1/2} \\ = \bigg( 2 \sqrt{\theta} \int_0^\theta  \sqrt{\tau} |\gamma' (\tau)|^2_{-\tau} d\tau \bigg)^{1/2}  . 
\end{multline*}
It follows that for some $C_6 = C_6 (A, D) < \infty$
\begin{multline*}
 2 \sqrt{\theta} \LL(\gamma) = 2 \sqrt{\theta} \int_0^\theta \sqrt{\tau} \big(  |\gamma' (\tau) |^2 + R(\gamma(\tau), - \tau) \big) d\tau \\
  \geq 2 \sqrt{\theta} \int_0^\theta \sqrt{\tau}  |\gamma'(\tau)|^2 d\tau - 2 \sqrt{\theta} \int_0^\theta \sqrt{\tau} \geq 2 \sqrt{\theta} \int_0^\theta \sqrt{\tau}  |\gamma' (\tau) |^2 d\tau - \frac{4}3 \theta^2 \\\geq \bigg( d_0 (x_0, y_0) - C_6 \sqrt{\eta} - 2C_7 \frac{\theta}{\eta} \bigg)^2 - \frac43 \theta^2 \geq d^2_0 (x_0, y_0) - C_6  \theta^{1/3} . 
\end{multline*}
This finishes the proof.
\end{proof}

Next we show that between almost every pair of points we can find a minimizing $\LL$-geodesic whose $\LL$-length is bounded from above by the distance at time $0$ and that satisfies the further condition that the inverse of the curvature radius along this $\LL$-geodesic is bounded from above in the $L^{1.5}$-norm.

\begin{Lemma}[short $\LL$-geodesics along which curvature is bounded in $L^{1.5}$] \label{Lem:shortLLLprrm}
For any $A, E, D < \infty$ and $\sigma_0, \theta_0, \delta > 0$ there are constants $\theta = \theta (A,  D, \theta_0,  \delta) \in (0, \theta_0)$ and $C = C(A, E, D, \theta_0, \sigma_0, \delta) < \infty$ such that the following holds:

Let $(M, (g_t)_{t \in [-2,0]} )$ be a Ricci flow on a compact, $n$-dimensional manifold $M$ and $x_0 \in M$ with the property that
\begin{enumerate}[label=(\roman*)]
\item \label{(i)-Lem:shortLLLprrm} $\nu [ g_{-2}, 4] \geq - A$.
\item \label{(ii)-Lem:shortLLLprrm} $|R| \leq 1$ on $M \times [-2,0]$.
\item \label{(iii)-Lem:shortLLLprrm} For all $(x,t) \in M \times [-1,0]$ and $0 < r, s < 1$ we have
\[ | \{ \rrm (\cdot, t) < sr \} \cap B(x,t, r) |_t \leq E s^2 r^n. \]
\item $\rrm (x_0, 0) > \sigma_0$.
\end{enumerate}

Then there is an open subset $S \subset B(x_0, 0, D)$ such that
\begin{enumerate}[label=(\alph*)]
\item  \label{(a)-Lem:shortLLLprrm} $|B(x_0, 0, D) \setminus S |_0 < \delta$.
\item \label{(b)-Lem:shortLLLprrm} For any $y \in S$ there is a minimizing $\LL$-geodesic $\gamma : [0, \theta] \to M$ between $x_0$ and $y$, $\gamma(0) = x_0$, $\gamma(\theta) = y$, such that
\begin{enumerate}[label=(b\arabic*)]
\item \label{(b1)-Lem:shortLLLprrm} Its $\LL$-length can be estimated as
 \[ | 2 \sqrt{\theta} \LL (\gamma) - d_0^2 (x_0, y) | < \delta . \]
\item \label{(b2)-Lem:shortLLLprrm} We have
\[ \gamma(\tau) \in B(x_0, 0, D+\delta) \textQQqq{for all} \tau \in [0, \theta]. \]
\item \label{(b3)-Lem:shortLLLprrm} We have the integral curvature bound
\[ \int_0^\theta \rrm^{-1.5} (\gamma (\tau), - \tau) d\tau < C. \]
\end{enumerate}
\end{enumerate}
\end{Lemma}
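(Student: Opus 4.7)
My plan is to establish the three conclusions (b1)--(b3) separately, each holding outside an exceptional subset of $B(x_0,0,D)$ of $g_0$-measure $<\delta/3$, and then take $S$ to be the intersection of the three good sets with the full-measure subset $\mathcal{G}^\LL_\theta$ on which $\LL\exp_\theta$ is a diffeomorphism. A minimizing $\LL$-geodesic from $(x_0,0)$ to $(y,-\theta)$ exists automatically by compactness of $M$.

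For (b1), the lower bound $2\sqrt\theta\LL(\gamma)\geq d_0^2(x_0,y)-\delta/2$ for any competitor follows from Lemma~\ref{Lem:LLlowerbound} once $\theta$ is chosen small in terms of $\delta$. For the reverse inequality I would test the minimizer against the explicit curve $\gamma_\ast(\tau):=\sigma_y\bigl(d_0(x_0,y)\sqrt{\tau/\theta}\bigr)$, where $\sigma_y$ is a unit-speed $g_0$-minimizer from $x_0$ to $y$; this parametrization is the one that saturates the Cauchy--Schwarz behind Lemma~\ref{Lem:LLlowerbound}, so a direct computation gives $2\sqrt\theta\LL(\gamma_\ast)\leq d_0^2+C_D\theta\,r_0^{-2}+C\theta^2$ provided $\sigma_y$ stays in $\{\rrm(\cdot,0)>r_0\}$ --- which, via Proposition~\ref{Prop:Ricsmall}, controls $|\Ric|\leq Cr_0^{-2}$ and hence the metric distortion $g_{-\tau}$ vs.\ $g_0$ along $\sigma_y$. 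A segment-type inequality, combined with the volume bound on $\{\rrm<r_0\}$ supplied by assumption~\ref{(iii)-Lem:shortLLLprrm}, shows that for an appropriate choice of $r_0$ the exceptional set of $y$ whose $\sigma_y$ enters $\{\rrm<r_0\}$ significantly has measure $<\delta/3$. Assertion (b2) then follows from (b1) by estimating $\int_0^\tau|\gamma_y'|_{-\tau'}d\tau'\lesssim d_0(\tau/\theta)^{1/4}$ via Cauchy--Schwarz applied to the $\LL$-energy, together with Proposition~\ref{Prop:Distdistortion}.

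The main obstacle is (b3); I would handle it by bounding the double integral
\[ I:=\int_{B(x_0,0,D)}dg_0(y)\int_0^\theta \rrm^{-1.5}(\gamma_y(\tau),-\tau)\,d\tau \]
and invoking Markov's inequality. I split the $\tau$-integral at $\tau_\ast:=\alpha\theta$ with $\alpha=\alpha(\sigma_0,D)>0$ chosen small enough that the speed estimate from (b2) confines $\gamma_y(\tau)$ to $B(x_0,0,\eps\sigma_0)$ for $\tau\in[0,\tau_\ast]$; backward pseudolocality (Proposition~\ref{Prop:Pseudoloc}) then yields $\rrm\geq c\sigma_0$ on this segment and bounds its contribution to $I$ by $C(\sigma_0,D)\theta$. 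On $[\tau_\ast,\theta]$ I would swap the order of integration and change variables to $z=\gamma_y(\tau)=F_\tau(y)$ with $F_\tau:=\LL\exp_\tau\circ(\LL\exp_\theta)^{-1}$. Perelman's monotonicity of the reduced-volume density $\tau^{-n/2}e^{-l}J^\LL$ gives
\[ |\det DF_\tau|\geq (\tau/\theta)^{n/2}\exp\bigl(l(\gamma_y(\tau),\tau)-l(y,\theta)\bigr), \]
where the exponential factor is bounded below by a universal constant, since $l$ is approximately constant along a minimizing $\LL$-geodesic: the evolution identity $dl/d\tau=(R+|\gamma'|^2)/2-l/(2\tau)$ combined with the near-identity $|\gamma'|^2\approx l/\tau$ at a minimizer forces $dl/d\tau\approx R/2$, so $|l(\theta)-l(\tau)|\lesssim\theta$ for $\theta$ small. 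Thus $|\det DF_\tau|^{-1}\leq C(\alpha)$ on the good set, and the inner integral reduces to $\int_{B(x_0,-\tau,D')}\rrm^{-1.5}dg_{-\tau}\leq C(E,D)$, which follows from assumption~\ref{(iii)-Lem:shortLLLprrm} and $1.5<2$ via a covering argument. Integrating over $\tau$ yields $I\leq C(E,D,\sigma_0)\theta$, and Markov then produces the third good set on which the curvature integral in (b3) is bounded by a constant $C=C(A,E,D,\theta_0,\sigma_0,\delta)$. The delicate point throughout is the tight coupling between the upper bound on $l(y,\theta)$ extracted from the test-curve analysis and the Jacobian lower bound for $F_\tau$; their simultaneous verification on the good set is what constrains the admissible choices of $r_0$, $\alpha$, and $\theta$.
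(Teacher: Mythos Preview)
Your approach to (b3) is essentially the paper's: split at a small $\tau_*$, use pseudolocality near the basepoint, and use Perelman's Jacobian monotonicity together with Markov on the far range. One correction there: you do not need (and cannot yet prove) that $l$ is ``approximately constant'' along the minimizer --- the identity $\tau|\gamma'|^2\approx l$ is exactly what Lemma~\ref{Lem:LLcontrolledspeed} establishes \emph{after} this lemma, so invoking it here is circular. The crude bounds $l(\cdot,-\theta)\le (D^2+\delta)/(4\theta)$ (from (b1)) and $l(\cdot,-\tau)\ge -1$ (from $|R|\le1$) already give $e^{l(\tau)-l(\theta)}\ge c(D,\theta)>0$, which is all that is needed; the constant is not universal but depends on $D$ and $\theta$, and that is harmless.

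The real gap is in your treatment of (b1). Your upper bound on $\ov L(y,-\theta)$ comes from testing against the reparametrized $g_0$-geodesic $\sigma_y$, and for the metric distortion estimate you need $\sigma_y$ to stay in $\{\rrm(\cdot,0)>r_0\}$. You then appeal to a ``segment-type inequality'' to show that the set of $y$ whose $\sigma_y$ hits $\{\rrm<r_0\}$ is small. But no segment inequality is available for $(M,g_0)$ at this point in the argument: we only have $|R|\le1$, not a lower Ricci bound, so the Cheeger--Colding segment inequality does not apply, and the paper's own segment inequality (Lemma~\ref{Lem:gammaavoidatt}(f)) is proved only later, for small $\rho$, by passing to a Ricci-flat limit --- a limit whose construction relies on the present lemma. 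Even if you had such an inequality, your $r_0$ would have to be chosen in terms of $E$, and then your $\theta$ (which must satisfy $\theta\ll r_0^2$ for the distortion estimate) would depend on $E$, contradicting the stated dependence $\theta=\theta(A,D,\theta_0,\delta)$.

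The paper bypasses the test-curve construction entirely. For the upper bound in (b1) it uses Perelman's barrier inequality $\partial_t\ov L\ge\triangle\ov L-2n$ together with $\lim_{t\nearrow0}\ov L(\cdot,t)=d_0^2(x_0,\cdot)$: integrating against the heat kernel $K(z,0;\cdot,-\theta)$ gives $\int_M K(z,0;y,-\theta)\ov L(y,-\theta)\,dg_{-\theta}\le d_0^2(x_0,z)+2n\theta$, and the Gaussian heat-kernel bounds of Proposition~\ref{Prop:GaussianHKbounds} then yield an $L^1$ bound on $(\ov L(\cdot,-\theta)-d_0^2(x_0,\cdot))_+$ over $B(x_0,0,D)$, of order $\theta^{1/3}$. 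Markov's inequality produces the good set. This argument uses only $|R|\le1$ and the heat-kernel estimates --- no geodesic comparison, no segment inequality --- and delivers $\theta$ independent of $E$ and $\sigma_0$. Assertion (b2) is then obtained not via a length estimate but by applying Lemma~\ref{Lem:LLlowerbound} to the truncated curve $\gamma|_{[0,\tau_1]}$.
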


\begin{proof}
We will use the notation from subsection \ref{subsec:LLgeometry}.
Note that for any $(x,t) \in M \times [-2, 0]$
\begin{equation} \label{eq:lowerboundovL}
 \ov{L} (x,t) \geq - 2 \sqrt{-t} \int_{0}^{-t} \sqrt{-t'}  dt' = - \frac43 t^2  \geq - 2 t^2 .
\end{equation}

\begin{Claim}
There is a constant $\theta_1 = \theta_1 (A, D, \theta_0, \delta) \in (0, \theta_0)$ such that whenever $0 <\theta \leq \theta_1$ there is an open subset $S'_\theta \subset B(x_0, 0, D)$ such that
\[ | B(x_0, 0, D) \setminus S'_\theta |_0 < \delta / 2 \]
and such that for all $y \in S'_\theta$
\[   | \ov{L} (y, - \theta) -   d^2_0 (x_0, y)  | < \delta . \]
Here $\ov{L} = \ov{L}_{(x_0, 0)}$ denotes the $\ov{L}$-distance with respect to $(x_0, 0)$.
\end{Claim}

\begin{proof}
Assume that $0<\theta \leq \theta_1$, where $\theta_1$ will be determined in the course of the proof of this claim.
Due to Lemma \ref{Lem:LLlowerbound}, it suffices to verify the bound $\ov{L} (y, - \theta) < d_0^2 (x_0, y) + \delta$.
The idea behind the following proof is to utilize the following inequality from \cite[equation (7.15)]{PerelmanI}, which holds in the barrier sense
\[ \partial_t \ov{L} \geq \triangle \ov{L} - 2n . \]
Since $\lim_{t \to 0} \ov{L}(z, t) = d_0^2 (x_0, z)$ for all $z \in M$, we find that for all $z \in B(x_0, 0, D+1)$
\begin{equation} \label{eq:integralKovL}
 \int_M K(z, 0; y, - \theta) \ov{L} (y, - \theta) dg_{- \theta} (y) \leq d_0^2 (x_0,z ) + 2n\theta. 
\end{equation}
It follows that
\[ \int_M K(z, 0; y, - \theta) \big( \ov{L} (y, - \theta) - d_0^2 (x_0, z) \big) dg_{- \theta} (y) \leq  2n\theta. \]
So, using (\ref{eq:lowerboundovL}), Lemma \ref{Lem:LLlowerbound} and Corollary \ref{Cor:HKintegrals}, we get that for some $C_1 = C_1 (A, D), C_2 = C_2 (A, D) < \infty$ and for all $z \in B(x_0, 0, D+1)$
\begin{alignat*}{1}
 \int_{B(z,0,\sqrt{\theta})} & K(z,  0; y,  - \theta) \big( \ov{L} (y, - \theta)  - d_0^2 (x_0, z) \big)_+ dg_{- \theta} (y) \displaybreak[1] \\
  &\leq  2n\theta + \int_{M} K(z, 0; y, - \theta) \big( \ov{L} (y, - \theta)  - d_0^2 (x_0, z) \big)_- dg_{- \theta} (y) \displaybreak[1] \\
    &\leq 2n \theta +  \int_{B(x_0, 0, D+2)} K(z, 0; y, - \theta) \big( \big( \ov{L} (y, - \theta)  - d_0^2 (x_0, y) \big)_- \\
    &\qquad\qquad\qquad\qquad\qquad\qquad\qquad\qquad + \big| d_0^2 (x_0, z) - d^2_0 (x_0,y) \big| \big) dg_{- \theta} (y)  \\
    &\qquad\quad + \int_{M \setminus B(z, 0, 1)} K(z, 0; y, -\theta) \big( 2 \theta^2 + (D+1)^2 \big) dg_{-\theta} (y) \displaybreak[1] \\
    &\leq 2n \theta +  \int_{B(x_0, 0, D+2)} K(z, 0; y, - \theta) \big( C_1 \theta^{1/3} + 2 (D+2) d_0 (z,y) \big) dg_{- \theta} (y) \\
& \qquad\qquad\qquad\qquad\qquad\qquad\qquad +  \big( 2 \theta^2 + (D+1)^2 ) \cdot C_1 \exp \Big( { - \frac{1}{C_1 \theta} } \Big) \displaybreak[1] \\
    &\leq 2n \theta +  C_1 \theta^{1/3} + C_1 (D+2) \sqrt{\theta} + C_1 ((D+1)^2 + 2) \exp \Big( { - \frac{1}{C_1 \theta} } \Big) \\
&  \leq C_2 \theta^{1/3}.
\end{alignat*}
Using the lower heat kernel bound from Proposition \ref{Prop:GaussianHKbounds}, it follows that for some $C_3 = C_3 (A, D) < \infty$
\[ \theta^{-n/2} \int_{B(z,0, \sqrt{\theta})} \big( \ov{L} (y, - \theta)  - d_0^2 (x_0, z) \big)_+ dg_{0} (y) \leq C_3 \theta^{1/3}. \]
By the triangle inequality we have for all $y \in B(z, 0, \sqrt{\theta})$
\begin{multline*}
 |d^2_0 (x_0, y) - d^2_0 (x_0, z) | \leq (d_0 (x_0, y) + d_0 (x_0, z) ) \cdot | d_0 (x_0, y) - d_0  (x_0, z) | \\\leq 2 (D+1) \sqrt{\theta}.
\end{multline*}
So
\begin{multline*}
 \theta^{-n/2} \int_{B(z,0, \sqrt{\theta})} \big( \ov{L} (y, - \theta)  - d_0^2 (x_0, y) \big)_+ dg_{0} (y)  \\
 \leq  \theta^{-n/2} \int_{B(z,0, \sqrt{\theta})} \big( \big( \ov{L} (y, - \theta)  - d_0^2 (x_0, z) \big)_+ + 2 (D+1) \sqrt{\theta} \big) dg_{0} (y) \\
 \leq C_3 \theta^{1/3} + 2 (D+1) \sqrt{\theta} \leq C_4 \theta^{1/3}.
\end{multline*}
Letting $z$ vary over $B(x_0, 0, D+1)$ and using Fubini's Theorem and Proposition~\ref{Prop:VolumeBound} yields for some constants $C_5 = C_5 (A, D), C_6 = C_6 (A,D) < \infty$
\begin{alignat*}{1}
\int_{B(x_0, 0, D)} & \big( \ov{L} (y, - \theta)  - d_0^2 (x_0, y) \big)_+ dg_0 (y)  \\
& \leq C_5 \theta^{-n/2} \int_{B(x_0, 0, D)} \int_{B(y, 0 \sqrt{\theta})} \big( \ov{L} (y, - \theta)  - d_0^2 (x_0, y) \big)_+ dg_0 (z) dg_0 (y) \displaybreak[1] \\
&\leq C_5 \theta^{-n/2} \int_{B(x_0, 0, D+1)} \int_{B(z, 0, \sqrt{\theta})} \big( \ov{L} (y, - \theta)  - d_0^2 (x_0, y) \big)_+ dg_0 (y) dg_0 (z)\displaybreak[1] \\
&\leq C_5 C_4 \theta^{1/3} | B(x_0, 0, D+1)|_0  \leq C_6 \theta^{1/3}.
\end{alignat*}

We can now choose $\theta_1 = \theta_1 (A, D, \theta_0, \delta ) \in (0, \theta_0)$ uniformly in such a way that
\[ \int_{B(x_0, 0, D)}  \big( \ov{L} (y, - \theta)  - d_0^2 (x_0, y) \big)_+ dg_0 (y) < \delta^2/2. \]
Let $S'_\theta \subset B(x_0, 0, D)$ be the set of points $z \in B(x_0, 0, D)$ such that
\[ \ov{L} (z, -\theta) - d^2_0 (x_0, z) < \delta . \]
Then
\[ |B(x_0, 0, D) \setminus S'_\theta |_0 \cdot \delta \leq  \int_{B(x_0, 0, D)}  \big( \ov{L} (y, - \theta)  - d_0^2 (x_0, y) \big)_+ dg_0 (y) < \delta^2/2.  \]
It follows that
\[ |B(x_0, 0, D) \setminus S'_\theta |_0 < \delta/2. \]
This finishes the proof of the claim.
\end{proof}

The set $S$ will arise as a subset of $S'_\theta$ for sufficiently small $\theta \leq \theta_1$.
By the Claim, any subset of $S'_\theta$ already satisfies assertion \ref{(b1)-Lem:shortLLLprrm} of the lemma.
We will now show that for sufficiently small $\theta$, depending only on $A, D, \theta_0, \delta$, the set $S'_\theta$ also satisfies assertion \ref{(b2)-Lem:shortLLLprrm} in the sense that for any $y \in S'_\theta$ and \emph{any} minimizing $\LL$-geodesic $\gamma : [0, \theta] \to M$ between $x_0$ and $y$, $\gamma(0) = x_0$, $\gamma(\theta) = y$ we have the bound from assertion \ref{(b2)-Lem:shortLLLprrm}.
Consider such a minimizing $\LL$-geodesic $\gamma$.
Then for any $\tau_1 \in [0, \theta]$, we have
\begin{multline} \label{eq:tau1LL1}
2 \sqrt{\tau_1} \LL (\gamma|_{[0, \tau_1]} )
 \leq 2 \sqrt{\tau_1} \int_0^{\tau_1} \sqrt{\tau} \big( |\gamma'(\tau)|^2_{-\tau} + (R(\gamma(\tau), - \tau) + 1) \big) d\tau  \\
 \leq 2 \sqrt{\tau_1}  \int_0^{\theta} \sqrt{\tau} \big( |\gamma'(\tau)|^2_{-\tau} + (R(\gamma(\tau), - \tau) + 1) \big) d\tau \displaybreak[1] \\
 = 2 \sqrt{\tau_1} \bigg( \LL(\gamma) + \frac23 \theta^{3/2} \bigg) \leq 2 \sqrt{\theta} \LL (\gamma) + 4 \theta^2 .
\end{multline}
So, again by Lemma \ref{Lem:LLlowerbound}, we obtain for small enough $\theta$, depending on $\delta$, that
\begin{multline} \label{eq:tau1LL2}
 d_0^2(x_0, \gamma(\tau_1)) < 2 \sqrt{\tau_1} \LL (\gamma |_{[0, \tau_1]} ) + C \tau_1^{1/3} \\
< 2 \sqrt{\theta} \LL(\gamma) + 4 \theta^2 + C \theta^{1/3} 
< d^2(x_0, y) + 2 \delta < D^2 + 2\delta. 
\end{multline}
Note that in the third inequality we have applied assertion \ref{(b1)-Lem:shortLLLprrm}, which holds due to the Claim, assuming $y \in S \subset S'_\theta$.
The bound (\ref{eq:tau1LL2}) shows assertion \ref{(b2)-Lem:shortLLLprrm}, since $D > 1$.
We will fix $\theta$ from now on, set $S' := S'_\theta$ and focus on assertion \ref{(b3)-Lem:shortLLLprrm}.

For the rest of the proof let $C = C(A, D, E, \theta_0, \sigma_0, \delta) < \infty$ be a generic constant.
Using the estimates (\ref{eq:tau1LL1}) and (\ref{eq:tau1LL2}), we also obtain that for any such $\LL$-geodesic $\gamma : [0, \theta] \to M$ we have
\begin{multline*}
 d^2_0 (x_0, \gamma(\tau_1)) < 2 \sqrt{\tau_1} \LL (\gamma |_{[0, \tau_1]} ) + C \tau_1^{1/3} 
< 2 \sqrt{\tau_1} \bigg( \LL(\gamma) + \frac23 \theta^{3/2} \bigg) + C \tau_1^{1/3} \\
< 2 \sqrt{\tau_1} \bigg( \frac{D^2+1}{2 \sqrt{\theta}} + 1 \bigg) + C \tau_1^{1/3}. 
\end{multline*}
Since $\theta$ was determined in terms of $A$, $D$, $\theta_0$, $\delta$, this shows that we can choose a constant $0 < \tau_0 = \tau_0 (A, D, E, \theta_0, \sigma_0, \delta) < \min \{ (\eps \sigma_0)^2, \theta_0 \}$ such that
\begin{equation} \label{eq:closetox0easy}
 d_0 (x_0, \gamma(\tau)) < \eps \sigma_0 \textQQqq{for all} \tau \in [0, \tau_0]. 
\end{equation}
Here $\eps = \eps(A)$ is the constant from Proposition \ref{Prop:Pseudoloc}.
Note that this bound can also be derived using \cite[equation (26.8)]{MR2460872} and the upper bound on $\LL (\gamma)$.
See also Lemma \ref{Lem:LLnotleavparnbhd} below for a more precise result.

Consider now the $\LL$-exponential map at $x_0$
\[ \LL\exp_{\tau} = \LL\exp_{x_0, \tau} : T_{x_0} M \to M. \]
and the subsets $\mathcal{D}^\LL_\tau \subset T_{x_0} M$ and $\mathcal{G}^\LL_\tau \subset M$ as defined in subsection \ref{subsec:LLgeometry}.
Set moreover
\[ U := \LL\exp^{-1}_\theta (S') \cap \mathcal{D}^\LL_\theta. \]
Then
\[ \LL\exp_\theta (U) = S' \cap \mathcal{G}^\LL_\theta. \]

Define the function $f : M \times [0, \theta] \to [0, \infty)$ by
\[ f (x,\tau) := \big( \rrm (x, -\tau) \big)^{-1.5}. \]
Using assumption \ref{(iii)-Lem:shortLLLprrm}, Propositions~\ref{Prop:VolumeBound}, \ref{Prop:Distdistortion} and a covering argument we deduce that there is a constant $E^* = E^* (A, D, E) < \infty$ such that for all $t \in [-1,0]$ and $0 < s < 1$
\[ |\{ \rrm (\cdot, t) < s \} \cap B(x_0, 0, D+1) |_t \leq E^* s^2. \]
Using Fubini's Theorem and Proposition \ref{Prop:VolumeBound}, this implies that
\begin{multline} \label{eq:Fubini-computation}
\int_{B(x_0, 0, D+1)} f(x,\tau) dg_{-\tau}(x)
= \int_{B(x_0, 0, D+1)} \int_0^\infty \chi_{s < f(x, \tau)} ds dg_{-\tau}(x) \\
\leq  \int_{B(x_0, 0, D+1)} \int_1^\infty \chi_{s < f(x, \tau)} ds dg_{-\tau}(x) + |B(x_0, 0, D+1)|_{-\tau} \\
\leq \int_1^\infty |\{ s < \rrm^{-1.5}(\cdot, -\tau) \} \cap B(x_0, 0, D+1)|_{-\tau} ds + |B(x_0, 0, D+1)|_{-\tau} \\
\leq E^* \int_1^\infty (s^{-\frac{1}{1.5}})^2 ds + |B(x_0, 0, D+1)|_{-\tau}
< E^{**} = E^{**} (A, D, E) < \infty.
\end{multline}

Note that for any $v \in U$, the map $\tau \mapsto \LL\exp_{-\tau} ( v)$ describes a minimizing $\LL$-geodesic between $(x_0, 0)$ and $(\LL\exp_\theta (v), -\theta)$.
Since $\LL \exp_\theta (v) \in S'$, we know by Claim 1 that $\LL\exp_\tau (v) \in B(x_0, 0, D+1)$ for all $\tau \in (0, \theta]$.
In other words, $\LL\exp_\tau (U) \subset B(x_0, 0, D+1)$ for all $\tau \in (0, \theta]$.
So for any $\tau \in (0, \theta]$ we have
\begin{equation} \label{eq:intUfEss}
 \int_{U} f (\LL\exp_\tau (v), \tau) J^\LL (v,\tau) dv \leq \int_{B(x_0, 0, D+1)} f(x, \tau) dg_{-\tau} (x) <  E^{**}. 
\end{equation}
By the monotonicity of the quantity $\tau^{-n/2} e^{-l(\LL\exp_{\tau}(v), -\tau)} J^\LL (v, \tau)$, we find that for all $\tau \in [\tau_0, \theta]$
\begin{multline} \label{eq:JLLJLL}
 J^\LL (v, \theta) \leq \bigg( \frac{\theta}{\tau} \bigg)^{n/2} e^{l(\LL\exp_\tau(v), -\theta) -l(\LL\exp_\tau (v), -\tau)} J^\LL (v, \tau) \\
  \leq \bigg( \frac{\theta}{\tau_0} \bigg)^{n/2} \exp \bigg( \frac{D+1}{2 \sqrt{\theta}} + 1 \bigg) J^\LL (v,\tau). 
\end{multline}
Here we have used the lower bound
\[ l(\LL\exp_\tau (v), \tau) > - \frac1{2\sqrt{\tau}} \int_0^\tau \sqrt{\tau'} d\tau' > - 1. \]
Plugging (\ref{eq:JLLJLL}) back into (\ref{eq:intUfEss}) and integrating from $\tau_0$ to $\theta$ gives us
\begin{multline} \label{eq:tau0theta}
 \int_{\tau_0}^\theta  \int_{U} f (\LL\exp_\tau (v), \tau) J^\LL (v,\theta) dv d\tau \\
 \leq \bigg( \frac{\theta}{\tau_0} \bigg)^{n/2} \exp \bigg( \frac{D+1}{2 \sqrt{\theta}} + 1 \bigg) \int_{\tau_0}^\theta  \int_{U} f (\LL\exp_\tau (v), \tau) J^\LL (v,\tau) dv d\tau \leq C E^{**} . 
\end{multline}
On the other hand, recall from (\ref{eq:closetox0easy}) that for any $\tau \in (0, \tau_0]$ and $v \in U$ we have
\[ \LL\exp_{\tau} (v) \in B(x_0, 0, \eps \sigma_0) \]
Therefore, using Proposition \ref{Prop:Pseudoloc}), for any such $\tau$ and $v$
\[ f(\LL \exp_\tau (v), \tau) \leq ( \eps \sigma_0 )^{-1.5}. \]
It follows, using Proposition \ref{Prop:VolumeBound}, that
\begin{multline} \label{eq:0tau0}
 \int_0^{\tau_0} \int_{U} f (\LL\exp_\tau (v), \tau) J(v,\theta) dv d\tau  \leq ( \eps \sigma_0 )^{-1.5} \int_0^{\tau_0} \int_{U} J^\LL (v,\theta) dv d\tau \\
 = ( \eps \sigma_0 )^{-1.5} \tau_0 |B(x_0, 0, D)|_{-\theta} \leq  C. 
\end{multline}
Combining (\ref{eq:tau0theta}) and (\ref{eq:0tau0}) yields
\begin{equation} \label{eq:0thetaf}
 \int_{0}^\theta  \int_{U} f (\LL\exp_\tau (v), \tau) J^\LL (v,\theta) dv d\tau \leq C^* 
\end{equation}
for some $C^* = C^* (A,D,E,\theta_0, \sigma_0, \delta) < \infty$.

Define $h : U \to [0, \infty)$ by
\[ h(v) := \int_0^{\theta} f(\LL\exp_\tau (v), \tau ) d\tau. \]
Then, by Fubini's Theorem and (\ref{eq:0thetaf}),
\[ \int_U h(v) J^\LL (v, \theta) dv d\tau \leq C^*. \]
Let
\[ W := \big\{ v \in U \;\; : \;\; h(v) < 4\delta^{-1}C^* \big\} \]
and
\[ S := \LL\exp_\theta (W). \]
Then $S$ is open and $S \subset S'$.

Let us first check that $S$ satisfies assertion \ref{(a)-Lem:shortLLLprrm}.
To do this, observe that
\begin{multline*}
|S' \setminus S|_0 = |(S' \cap \mathcal{G}^\LL_\theta ) \setminus S |_0 \leq 2 | (S' \cap \mathcal{G}^\LL_\theta ) \setminus S |_{-\theta} 
= 2 |\LL\exp_\theta (U) \setminus \LL\exp_\theta (W) |_{-\theta} \\
\leq 2 \cdot \frac{\delta/4}{C^*} \int_{U \setminus W} h(v) J^\LL (v, \theta) dv  \leq \delta/2.
\end{multline*}
So
\[ |B(x_0, 0, D) \setminus S|_0 = |B(x_0, 0, D) \setminus S' |_0 + |S \setminus S'|_0 < \delta / 2 + \delta / 2 = \delta. \]

Next, we will check that $S$ satisfies assertion \ref{(b)-Lem:shortLLLprrm}.
Let $y \in S$ and $\gamma : [0, \theta] \to M$ be a minimizing $\LL$-geodesic between $x_0$ and $y$.
Since $y \in S \subset \mathcal{G}_\theta^\LL$, this $\LL$-geodesic is unique and there is a vector $v \in U \setminus W$ such that $\gamma (\tau) = \LL\exp_{\tau} (v)$.
As discussed earlier, assertions \ref{(b1)-Lem:shortLLLprrm}, \ref{(b2)-Lem:shortLLLprrm} hold.
For assertion \ref{(b3)-Lem:shortLLLprrm} observe that
\[ \int_0^\theta \rrm^{-1.5} (\gamma(\tau), -\tau) d\tau = \int_0^\theta f(\LL\exp_\tau (v), \tau) d\tau = h(v) < 4\delta^{-1} C^* =: C . \]
Note that the right-hand side only depends on $A, E, D, \theta_0, \sigma_0, \delta$.
\end{proof}

We will now use the integral curvature bound in Lemma \ref{Lem:shortLLLprrm}\ref{(b3)-Lem:shortLLLprrm} to integrate the $\LL$-geodesic equation, under the assumption that the scalar curvature is small.
This will then enable us to bound the speed $\gamma'(\tau)$ of any $\LL$-geodesic $\gamma : [0, \theta] \to M$ whose endpoint lies in $S$.

\begin{Lemma}[existence of $\LL$-geodesics with controlled speed] \label{Lem:LLcontrolledspeed}
For any $A, E, D < \infty$ and $ \sigma_0, \theta_0, \delta > 0$ there are constants $\theta = \theta (A, D, \theta_0, \delta) \in (0, \theta_0)$ and $\rho = \rho (A, E, D, \theta_0, \sigma_0, \delta) > 0$ such that the following holds:

Let $(M, (g_t)_{t \in [-2,0]} )$ be a Ricci flow on a compact, $n$-dimensional manifold $M$ and $x_0 \in M$ with the property that
\begin{enumerate}[label=(\roman*)]
\item $\nu [ g_{-2}, 4] \geq - A$.
\item $|R| \leq \rho$ on $M \times [-2,0]$.
\item For all $(x,t) \in M \times [-1,0]$ and $0 < r, s < 1$ we have
\[ | \{ \rrm (\cdot, t) < sr \} \cap B(x,t, r) |_t \leq E s^2 r^n. \]
\item $\rrm (x_0, 0) > \sigma_0$.
\end{enumerate}
Then there is an open subset $S \subset B(x_0, 0, D)$ such that
\begin{enumerate}[label=(\alph*)]
\item \label{(a)-Lem:LLcontrolledspeed} $|B(x_0, 0, D) \setminus S |_0 < \delta$.
\item \label{(b)-Lem:LLcontrolledspeed} For any $y \in S$ there is a minimizing $\LL$-geodesic $\gamma : [0, \theta] \to M$ between $x_0$ and $y$, $\gamma(0) = x_0$, $\gamma(\theta) = y$, such that the following is true:
\begin{enumerate}[label=(b\arabic*)]
\item \label{(b1)-Lem:LLcontrolledspeed} The $\LL$-length satisfies the bound
 \[ 2 \sqrt{\theta} \LL (\gamma) < d_0^2 (x_0, y) + \delta.  \]
 \item \label{(b2)-Lem:LLcontrolledspeed} We have
 \[ \gamma(\tau) \in B(x_0, 0, D+\delta) \textQQqq{for all} \tau \in [0, \theta]. \]
 \item \label{(b3)-Lem:LLcontrolledspeed} For any $\theta' \in (0, \theta]$ we have
\[ \frac{1}{2 \sqrt{\theta'}} \LL (\gamma |_{[0, \theta']} ) >  \frac{1}{2 \sqrt{\theta}} \LL (\gamma  ) - \delta. \]
\item \label{(b4)-Lem:LLcontrolledspeed} For all $\tau \in (0, \theta]$ we have
\[  \tau |\gamma'(\tau)|^2_{-\tau} < \frac{d^2_0(x_0,y) + \delta}{4\theta} . \]
\end{enumerate}
\end{enumerate}
\end{Lemma}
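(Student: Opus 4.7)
The plan is to deduce this lemma from Lemma \ref{Lem:shortLLLprrm} by integrating the $\LL$-geodesic equation along the curves produced there and exploiting the smallness of $\rho$. First apply Lemma \ref{Lem:shortLLLprrm} with parameters $A, E, D, \theta_0, \sigma_0$ and with tolerance $\delta/4$ in place of $\delta$. This furnishes a $\theta = \theta(A, D, \theta_0, \delta)$, a constant $C_\ast = C_\ast(A, E, D, \theta_0, \sigma_0, \delta)$, an open set $S \subset B(x_0, 0, D)$ with $|B(x_0, 0, D) \setminus S|_0 < \delta/4$, and, for every $y \in S$, a minimizing $\LL$-geodesic $\gamma : [0, \theta] \to M$ from $x_0$ to $y$ satisfying (b1) with the smaller tolerance $\delta/4$, (b2), and the integral bound $\int_0^\theta \rrm^{-1.5}(\gamma(\tau), -\tau)\, d\tau < C_\ast$. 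Properties (a), (b1), (b2) of Lemma \ref{Lem:LLcontrolledspeed} are then immediate; only (b3) and (b4) remain.

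Fix such a $y$ and $\gamma$. Applying Proposition \ref{Prop:Pseudoloc} at scale $\rrm(\gamma(\tau), -\tau)$ followed by Proposition \ref{Prop:Ricsmall} yields the pointwise bounds
\[
|\Ric|(\gamma(\tau), -\tau) \leq C \rho^{1/2}\rrm^{-1},\qquad |\nabla R|(\gamma(\tau), -\tau) \leq C \rho^{3/4}\rrm^{-1.5},
\]
with $C = C(A)$, so the ``curvature drag'' terms in the $\LL$-geodesic equation are suppressed by a positive power of $\rho$. A direct computation using (\ref{eq_LL_geod_eq}) together with $\partial_\tau g_{-\tau} = 2\Ric$ yields the identity
\[
\frac{d}{d\tau}\bigl(\tau|\gamma'(\tau)|^2_{-\tau}\bigr) = \tau\langle \nabla R, \gamma'\rangle - 2\tau\,\Ric(\gamma', \gamma').
\]
Set $M := \sup_{\tau \in (0, \theta]} \tau|\gamma'(\tau)|^2_{-\tau}$ and $v := \lim_{\tau \downarrow 0}\sqrt{\tau}\,\gamma'(\tau)$, so that $\tau|\gamma'|^2_{-\tau} \to |v|^2$ as $\tau \downarrow 0$. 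Using the curvature bounds above together with $\sqrt{\tau}|\gamma'| \leq \sqrt{M}$, $\tau|\gamma'|^2 \leq M$, the $L^{1.5}$ bound on $\rrm^{-1}$, and the H\"older bound $\int_0^\theta \rrm^{-1}\, d\tau \leq \theta^{1/3}C_\ast^{2/3}$, integration yields the oscillation estimate
\[
\bigl|\tau|\gamma'(\tau)|^2_{-\tau} - |v|^2\bigr| \leq \Omega(M) := C\rho^{3/4}M^{1/2}\sqrt{\theta}\,C_\ast + C\rho^{1/2}M\,\theta^{1/3}C_\ast^{2/3}
\]
for every $\tau \in (0, \theta]$.

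The oscillation estimate implies $\tau|\gamma'|^2 \geq M - 2\Omega(M)$, hence $\int_0^\theta \sqrt{\tau}|\gamma'|^2\, d\tau \geq 2(M - 2\Omega(M))\sqrt{\theta}$. On the other hand, (b1) with tolerance $\delta/4$ and $|R| \leq \rho$ give $\int_0^\theta \sqrt{\tau}|\gamma'|^2\, d\tau \leq (d_0^2(x_0, y) + \delta/4)/(2\sqrt{\theta}) + \tfrac{2}{3}\rho\theta^{3/2}$. Combining,
\[
M \leq \frac{d_0^2(x_0, y) + \delta/4}{4\theta} + 2\Omega(M) + \frac{\rho\theta}{3}.
\]
Since $d_0(x_0, y) < D$, this is an inequality of the form $(1 - b)M - a\sqrt{M} \leq \mathrm{const}$ with $a, b \to 0$ as $\rho \to 0$. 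A standard bootstrap (or quadratic) argument then shows that, for $\rho$ small enough in terms of $A, E, D, \theta_0, \sigma_0, \delta$, one obtains $M \leq (d_0^2(x_0, y) + \delta)/(4\theta)$, which is (b4). Assertion (b3) follows from the same oscillation estimate: both $\tfrac{1}{2\sqrt{\theta'}}\LL(\gamma|_{[0, \theta']})$ and $\tfrac{1}{2\sqrt{\theta}}\LL(\gamma)$ agree with $|v|^2$ up to $O(\Omega(M) + \rho\theta)$, so shrinking $\rho$ once more yields $\tfrac{1}{2\sqrt{\theta'}}\LL(\gamma|_{[0,\theta']}) > \tfrac{1}{2\sqrt{\theta}}\LL(\gamma) - \delta$.

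The main obstacle is the bootstrap in the last step: $\Omega(M)$ depends nonlinearly on the very quantity $M$ we are trying to bound, so $\rho$ must be chosen small in terms of the (potentially large) constant $C_\ast$ coming from Lemma \ref{Lem:shortLLLprrm}, which itself depends on $\sigma_0$ and $\delta$. This is precisely what forces the dependence $\rho = \rho(A, E, D, \theta_0, \sigma_0, \delta)$, while leaving $\theta$ to depend only on $A, D, \theta_0, \delta$ as inherited from Lemma \ref{Lem:shortLLLprrm}.
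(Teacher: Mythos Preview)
Your proof is correct and follows essentially the same strategy as the paper: invoke Lemma~\ref{Lem:shortLLLprrm} with tolerance $\delta/4$, then control the oscillation of $\tau|\gamma'(\tau)|^2_{-\tau}$ along the resulting $\LL$-geodesics by integrating the geodesic equation and using the $L^{1.5}$ bound on $\rrm^{-1}$ together with the smallness of $\rho$. The only organizational difference is that you work with the exact identity $\frac{d}{d\tau}(\tau|\gamma'|^2_{-\tau}) = \tau\langle\nabla R,\gamma'\rangle - 2\tau\Ric(\gamma',\gamma')$ and a bootstrap on $M=\sup_\tau \tau|\gamma'|^2_{-\tau}$, whereas the paper bounds $\frac{d}{d\tau}\log(1+\tau|\gamma'|^2_{-\tau})$ and then locates a single time $\tau_*$ at which $\tau_*|\gamma'(\tau_*)|^2$ is controlled by the energy before propagating; these are equivalent formulations of the same estimate.
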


\begin{proof}
Without loss of generality, we may assume that $0 < \delta, \theta_0 < 1$ and $D \geq 1$.
Use Lemma \ref{Lem:shortLLLprrm} to choose and fix $\theta = \theta (A,  D, \theta_0,  \delta / 4) \in (0, \theta_0)$ and $C = C (A, \linebreak[1] E, \linebreak[1] D, \linebreak[1] \theta_0, \linebreak[1] \sigma_0, \linebreak[1] \delta / 4) < \infty$.
We now claim that the subset $S \subset B(x_0, 0, D)$ from Lemma \ref{Lem:shortLLLprrm} satisfies the assertions of this lemma if we assume that $\rho$ is chosen sufficiently small, depending only on $A, E, D, \theta_0, \sigma_0, \delta$ and on $\theta, C$.
Obviously, $S$ satisfies assertion \ref{(a)-Lem:LLcontrolledspeed}.

In order to verify assertion \ref{(b)-Lem:LLcontrolledspeed} choose $y \in S$ and pick a minimizing $\LL$-geodesic $\gamma : [0, \theta] \to M$ between $x_0$ and $y$ such that
 \begin{equation} \label{eq:2tauLLanddsquared}
  | 2 \sqrt{\theta} \LL (\gamma) - d_0^2 (x_0, y) | < \delta / 4  
  \end{equation}
and
\[ \int_0^\theta \rrm^{-1.5} (\gamma (\tau), - \tau) d\tau < C. \]
Assuming $\rho < \delta  / 4$, it follows that
\begin{multline} \label{eq:energyboundLL}
 \bigg| \int_{0}^{\theta} \sqrt{\tau} | \gamma' (\tau ) |_{- \tau}^2 d\tau - \LL(\gamma) \bigg| 
 \leq \int_{0}^{\theta} \sqrt{\tau} |  R (\gamma (\tau), - \tau) | d\tau 
 \leq \frac23 \rho \theta^{3/2} < \delta \theta / 4. 
\end{multline}
We will now bound the oscillation of $\sqrt{\tau}$ times the integrand on the left-hand side.
Set $\rrmm (x,t) := \min \{ \rrm(x,t), 1 \}$.
Using the $\LL$-geodesic equation for $\gamma$ (see \cite[(7.2)]{PerelmanI}) and Proposition \ref{Prop:Ricsmall}, we get that there is a constant $C_* = C_* (A) < \infty$, which only depends on $A$, such that for all $\tau \in (0, \theta]$
\begin{align*} 
 \bigg| \nabla^{g_{-\tau}}_{\gamma'(\tau)} \gamma' (\tau) + \frac1{2\tau} &  \gamma' (\tau) + \Ric_{-\tau} (\gamma' (\tau)) \bigg|_{-\tau}  \\
 & = \bigg| \frac12 \nabla R (\gamma(\tau), -\tau) -  \Ric_{-\tau} (\gamma' (\tau)) \bigg|_{-\tau}  \\
 &\leq C_* \big( \rho^{1/2} \rrmm^{-1.5} (\gamma(\tau), - \tau) + \rho^{1/2} \rrmm^{-1} (\gamma (\tau), - \tau) |\gamma' (\tau)|_{-\tau} \big)  \\
 &\leq C_* \rho^{1/2}  \rrmm^{-1.5} (\gamma(\tau), -\tau) \big( 1 + |\gamma' (\tau)|_{-\tau} \big) . 
\end{align*}
So
\begin{align*} 
 \frac{d}{d\tau} \big( \tau  |\gamma' (\tau)|_{-\tau}^2 \big)
&= 2 \tau \big\langle \nabla^{g_{-\tau}}_{\gamma' (\tau)} \gamma' (\tau), \gamma' (\tau) \big\rangle_{-\tau} +  |\gamma' (\tau)|_{-\tau}^2 + 2 \tau \Ric_{-\tau} (\gamma'(\tau), \gamma' (\tau))  \\
&\leq 2 \tau \cdot C_* \rho^{1/2} \rrmm^{-1.5} (\gamma(\tau), -\tau) \big( 1 + |\gamma' (\tau)|_{-\tau} \big) \cdot  |\gamma' (\tau)|_{-\tau}  \\
&\leq 4 C_* \rho^{1/2} \rrmm^{-1.5} (\gamma(\tau), -\tau) \big( 1 + \tau |\gamma' (\tau)|^2_{-\tau} \big) . 
\end{align*}
This implies that
\[ \frac{d}{d\tau} \log \big( 1+ \tau |\gamma' (\tau)|_{-\tau}^2 \big) \leq 4 C_* \rho^{1/2} \rrmm^{-1.5} (\gamma(\tau), -\tau) . \]
Integrating this inequality yields that for any $\tau_1, \tau_2 \in (0, \theta]$
\begin{multline}
 \bigg| \log \bigg( \frac{1+ \tau_1 |\gamma' (\tau_1)|_{-\tau_1}^2}{1+ \tau_2 |\gamma' (\tau_2)|_{-\tau_2}^2} \bigg) \bigg| \leq 4 C_* \rho^{1/2} \int_0^\theta  \big(  \rrm^{-1.5} (\gamma(\tau), -\tau) +1  \big) d\tau \\
\leq 4C_* \rho^{1/2} (C + \theta). \label{eq:logtaugammaprime}
\end{multline}
By combining (\ref{eq:2tauLLanddsquared}), (\ref{eq:energyboundLL}) and (\ref{eq:logtaugammaprime}), we find that there is a $\tau_* \in (0, \theta]$ such that
\[ \tau_* |\gamma' (\tau_*)|^2_{-\tau_*} \leq \sqrt{\theta} \cdot \sqrt{\tau_*}|\gamma' (\tau_*)|^2_{-\tau_*} < C_{**} (D, A, C). \]
So by (\ref{eq:logtaugammaprime}) a similar upper bound holds for all $\tau \in (0, \theta]$.
Thus, again by (\ref{eq:logtaugammaprime}), we can find a constant $\rho > 0$ whose value only depends on $D, A, C$, and therefore only on $A, E, D, \theta_0, \rho_0, \delta$, such that for any $\tau_1, \tau_2 \in (0, \theta]$
\[ \big| \tau_1 |\gamma'(\tau_1)|^2_{-\tau_1} - \tau_2 |\gamma'(\tau_2)|^2_{-\tau_2} \big| < \delta / 4. \]
Then, by (\ref{eq:energyboundLL}) and (\ref{eq:2tauLLanddsquared}), we have for any $\tau_0 \in (0, \theta]$
\begin{multline}
 \tau_0 |\gamma' (\tau_0)|^2_{-\tau_0} = \frac1{2\sqrt{\theta}} \int_0^\theta \frac{\tau_0 |\gamma' (\tau_0)|^2_{-\tau_0} }{\sqrt{\tau}} d\tau < \frac1{2\sqrt{\theta}} \int_0^\theta \frac{\tau |\gamma' (\tau)|^2_{-\tau} + \delta / 4}{\sqrt{\tau}} d\tau \\
  < \frac1{2\sqrt{\theta}} \big( \LL (\gamma) + \delta \theta / 4 \big) + \delta / 4 
  < \frac1{2\sqrt{\theta}}  \LL (\gamma) + \delta / 2 \\
  < \frac{d_0^2(x_0, y) + \delta / 4}{4\theta} + \delta / 4
  < \frac{d_0^2(x_0, y) + \delta}{4\theta} . \label{eq:taugammaprimeintupper}
\end{multline}
This proves assertion \ref{(b4)-Lem:LLcontrolledspeed}.
For assertion \ref{(b1)-Lem:LLcontrolledspeed} observe that, similarly as in (\ref{eq:taugammaprimeintupper}), for any $\tau_0 \in (0, \theta]$,
\begin{multline*}
 \tau_0 |\gamma' (\tau_0)|^2_{-\tau_0} = \frac1{2\sqrt{\theta}} \int_0^\theta \frac{\tau_0 |\gamma' (\tau_0)|^2_{-\tau_0} }{\sqrt{\tau}} d\tau > \frac1{2\sqrt{\theta}} \int_0^\theta \frac{\tau |\gamma' (\tau)|^2_{-\tau} - \delta / 4 }{\sqrt{\tau}} d\tau \\
 \geq \frac1{2 \sqrt{\theta}} \LL (\gamma) - \frac1{2\sqrt{\theta}} \int_0^\theta \sqrt{\tau} \rho d\tau - \frac{\delta \theta / 4}{2 \sqrt{\theta}}  > \frac1{\sqrt{\theta}} \LL(\gamma) - \theta \rho- \delta/2.
\end{multline*}
So
\begin{multline*}
 \frac1{2 \sqrt{\theta'}} \LL (\gamma |_{[0, \theta']} ) 
 = \frac1{2 \sqrt{\theta'}} \int_0^{\theta'} \bigg( \frac{\tau |\gamma'(\tau)|^2_{-\tau}}{\sqrt{\tau}} + \sqrt{\tau} R(\gamma(\tau), - \tau) \bigg) d\tau \\
  >  \frac1{2 \sqrt{\theta'}} \int_0^{\theta'} \frac1{\sqrt{\tau}} \bigg( \frac1{2\sqrt{\theta}} \LL(\gamma) - \theta \rho - \delta/2  \bigg) d\tau - \theta' \rho  = \frac1{2\sqrt{\theta}} \LL(\gamma) - \delta/2 - 2 \theta \rho .  
\end{multline*}
This establishes assertion \ref{(b3)-Lem:LLcontrolledspeed} for $\rho < \delta/4$.
\end{proof}

Before we continue with our analysis of $\LL$-geodesics, we establish the following technical lemma, which we will later apply to $\LL$-geodesics of controlled speed.
This lemma can be seen as a generalization of (\ref{eq:closetox0easy}) in the proof of Lemma \ref{Lem:shortLLLprrm} or of \cite[equation (26.8)]{MR2460872}.

\begin{Lemma}[curve of controlled speed cannot leave parabolic neighborhood too soon] \label{Lem:LLnotleavparnbhd}
For any $A, D < \infty$ and $\sigma_0 > 0$ there is a constant $0 < \alpha_0(A, D, \sigma_0) < 1/2$ such that the following holds:

Let $(M, (g_t)_{t \in [-2,0]} )$ be a Ricci flow on a compact, $n$-dimensional manifold $M$ and $x_0 \in M$ with the property that
\begin{enumerate}[label=(\roman*)]
\item $\nu [ g_{-2}, 4] \geq - A$.
\item $|R| \leq 1$ on $M \times [-2,0]$.
\item $\rrm (x_0, 0) > \sigma_0$.
\end{enumerate}

Assume that $0 < \theta < 1$ and that $\gamma : (0, \theta] \to M$, $\lim_{\tau \to 0} \gamma(\tau) = x_0$ is a smooth curve such that
\[ \tau |\gamma'(\tau)|_{-\tau}^2 < \frac{D^2+1}{4 \theta}  \textQQqq{for all} \tau \in (0 , \theta]. \]
Then 
\[ \gamma(\tau) \in B(x_0, 0, \sigma_0 / 10) \textQQqq{for all} \tau \in [0, \alpha_0 \theta]. \]
\end{Lemma}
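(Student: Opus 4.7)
The plan is to extract a pointwise curvature bound near $(x_0,0)$ from backward pseudolocality, promote it via the Ricci flow equation to a two-sided comparison of $g_0$ and $g_{-\tau}$ along $\gamma$, use this comparison to control the time-$0$ arclength of $\gamma$ in terms of the given speed hypothesis, and finally close the argument by a standard continuity/bootstrap step that rules out an early exit from a small time-$0$ ball.

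More concretely, let $\eps=\eps(A)>0$ be the constant of Proposition~\ref{Prop:Pseudoloc} and set $r_0:=\min\{1,\sigma_0\}$. Since $\rrm(x_0,0)>\sigma_0\geq r_0$, Proposition~\ref{Prop:Pseudoloc} yields $\rrm>\eps r_0$, and in particular $|{\Rm}|<(\eps r_0)^{-2}$ and $|{\Ric}|<(n-1)(\eps r_0)^{-2}$, on the backward parabolic neighborhood $P(x_0,0,\eps r_0,-(\eps r_0)^2)$. The Ricci flow equation $\partial_t g=-2\Ric$ then gives $|\partial_t\log|v|^2_{g_t}|\leq 2(n-1)(\eps r_0)^{-2}$ for any tangent vector $v$ based at a point of $B(x_0,0,\eps r_0)$, so that as long as a spacetime curve stays in that spatial ball during a time interval of length at most $(\eps r_0)^2$, one has the comparison $|v|_{g_0}\leq e^{n-1}|v|_{g_{-\tau}}$.

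Choose the target radius $r_1:=\min\{\sigma_0/10,\eps r_0/2\}$ (the second entry is needed only when $\eps<1/10$, so that $r_1$ lies inside the pseudolocality region) and set
\[
 \alpha_0 \;:=\; \tfrac12\min\!\Big\{1,\;(\eps r_0)^2,\;\tfrac{r_1^2}{(D^2+1)\,e^{2(n-1)}}\Big\},
\]
which depends only on $A$, $D$, $\sigma_0$. Suppose for contradiction that $\gamma(\tau)\notin B(x_0,0,r_1)$ for some $\tau\in(0,\alpha_0\theta]$; since $\gamma(\tau)\to x_0$ as $\tau\to 0$, there is a smallest such $T^*$, and $d_0(x_0,\gamma(T^*))=r_1$ while $\gamma([0,T^*])\subset\ov{B(x_0,0,r_1)}\subset B(x_0,0,\eps r_0)$. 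Because $T^*\leq \alpha_0\theta<(\eps r_0)^2$, the metric comparison applies along $\gamma|_{[0,T^*]}$, and combined with the speed hypothesis $\tau|\gamma'(\tau)|_{-\tau}^2<(D^2+1)/(4\theta)$ it yields
\[
 r_1 \;=\; d_0(x_0,\gamma(T^*)) \;\leq\; e^{n-1}\!\int_0^{T^*}\!|\gamma'(\tau)|_{-\tau}\,d\tau
  \;\leq\; e^{n-1}\!\int_0^{T^*}\!\tfrac{\sqrt{D^2+1}}{2\sqrt{\theta\tau}}\,d\tau
  \;\leq\; e^{n-1}\sqrt{(D^2+1)\alpha_0},
\]
contradicting the choice of $\alpha_0$. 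Hence $\gamma([0,\alpha_0\theta])\subset B(x_0,0,r_1)\subset B(x_0,0,\sigma_0/10)$.

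There is no real obstacle here; the only delicate point is making sure the bootstrap ball $B(x_0,0,r_1)$ lies inside the pseudolocality region, which forces the second entry in the definition of $r_1$ whenever $\eps(A)$ happens to be smaller than $1/10$. Everything else is the routine conversion, via $\partial_t g=-2\Ric$, of a Riemann curvature bound into a distortion estimate for $g_t$.
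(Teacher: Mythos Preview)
Your proof is correct and follows essentially the same approach as the paper's: both invoke backward pseudolocality (Proposition~\ref{Prop:Pseudoloc}) to obtain a curvature bound on a parabolic neighborhood of $(x_0,0)$, convert this via $\partial_t g=-2\Ric$ into a metric-distortion estimate between $g_0$ and $g_{-\tau}$, and then integrate the speed hypothesis $\tau|\gamma'(\tau)|_{-\tau}^2<(D^2+1)/(4\theta)$ together with a first-exit/continuity argument to bound $d_0(x_0,\gamma(\tau))$. The only differences are cosmetic: you fix $\alpha_0$ explicitly and argue by contradiction, whereas the paper defines the maximal exit parameter $\alpha_0^*$ and bounds it from below, and you are slightly more careful in writing $r_0=\min\{1,\sigma_0\}$ to match the hypothesis of Proposition~\ref{Prop:Pseudoloc}.
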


\begin{proof}
By Proposition \ref{Prop:Pseudoloc} there is a constant $0 < \eps = \eps(A) < 1/10$ such that
\[ |{\Rm}| < (\eps \sigma_0)^{-2} \textQQqq{on} P := P(x_0, 0, \eps\sigma_0, - (\eps \sigma_0)^2). \]
By a simple distance distortion estimate this implies that for all $\tau \in (0, (\eps \sigma_0)^2]$
\[ |\gamma'(\tau)|_0 \leq 10 |\gamma'(\tau)|_{-\tau} \textQQqq{if} \gamma(\tau) \in B(x_0, 0, \eps \sigma_0). \]

Choose $\tau_0^* = \alpha_0^* \theta \in (0, \min \{ (\eps \sigma_0)^2, \theta \}]$ maximal with the property that $\gamma(\tau) \in B(x_0, \linebreak[1] 0, \linebreak[1] \eps \sigma_0)$ for all $\tau \in [0, \tau^*_0)$.
It suffices to derive a lower bound on $\alpha_0^*$ in terms of $A, D, \sigma_0$.
For any $\tau' \in [0, \tau_0^*)$
\begin{align*}
d_0 (x_0, \gamma(\tau')) &\leq \length_0 (\gamma |_{[0, \tau']} ) \leq \int_0^{\tau'} |\gamma' (\tau)|_0 d\tau 
\leq 10 \int_0^{\tau'} |\gamma'(\tau)|_{-\tau} d\tau \displaybreak[1] \\
&\leq 10 \int_0^{\tau'} \bigg( \frac{\tau |\gamma'(\tau)|^2_{-\tau}}{\tau} \bigg)^{1/2} d\tau
< 10 \int_0^{\tau'} \bigg( \frac{D^2  + 1}{4 \theta \tau} \bigg)^{1/2} d\tau \displaybreak[1] \\
&< 10 \int_0^{\tau_0^*} \frac{(D^2 + 1)^{1/2}}{\sqrt{\theta \tau}} d\tau 
< 20 \sqrt{\alpha_0^* \theta} \bigg(\frac{D^2+1}{\theta}  \bigg)^{1/2}  \\
&<  20 \sqrt{\alpha_0^*}\big(D^2 + 1 \big)^{1/2}
\end{align*}
It follows that $\tau^*_0 = \min \{ (\eps \sigma_0)^2, \theta \}$ or
\[ \eps \sigma_0 < 40 \sqrt{\alpha_0^*}\big( D^2 + 1 \big)^{1/2}. \]
In the second case we obtain a lower bound on $\alpha^*_0$ in terms of $A, D, \sigma_0$ immediately.
In the first case we have $\tau^*_0 = (\eps \sigma_0)^2$ or $\tau^*_0 = \theta$.
This implies that $\alpha^*_0 =  (\eps \sigma_0)^2 \theta^{-1} > (\eps \sigma_0)^2$ or $\alpha^*_0 = 1$.
So we still have a lower bound on $\alpha^*_0$ in terms of $A, D, \sigma_0$.
\end{proof}

Next, we improve the results of Lemmas \ref{Lem:shortLLLprrm} and \ref{Lem:LLcontrolledspeed} to find even more regular $\LL$-geodesics.
The main innovation of the following lemma, is that the integral curvature bound along short $\LL$-geodesics does not depend on the size of the time-interval $[0,\theta]$ on which these $\LL$-geodesics are defined or the size of the constant $\eta$ that governs the preciseness by which we can control the derivative of $\eta$.
So we will later be able to choose $\theta$ arbitrarily small, without deteriorating the integral curvature bound.
In order to achieve this independence, however, we have to assume that the scalar curvature is sufficiently small.

\begin{Lemma}[Existence of $\LL$-geodesics with controlled speed along which the curvature is bounded] \label{Lem:firstsetS}
For any $A, E, D < \infty$ and $\theta_0, \sigma_0, \delta, \eta > 0$ there are constants $C = C(A, E, D, \sigma_0, \delta) < \infty$, $\theta = \theta (A, E, D, \theta_0, \sigma_0, \delta, \eta) \in (0,\theta_0)$ and $\rho = \rho(A, \linebreak[1] E, \linebreak[1] D, \linebreak[1] \theta_0, \linebreak[1] \sigma_0, \linebreak[1] \delta, \linebreak[1] \eta) > 0$ such that the following holds:

Let $(M, (g_t)_{t \in [-2,0]} )$ be a Ricci flow on a compact, $n$-dimensional manifold $M$ and $x_0 \in M$ such that
\begin{enumerate}[label=(\roman*)]
\item \label{(i)-Lem:firstsetS} $\nu [ g_{-2}, 4] \geq - A$.
\item \label{(ii)-Lem:firstsetS} $|R| \leq \rho$ on $M \times [-2,0]$.
\item \label{(iii)-Lem:firstsetS} For all $(x,t) \in M \times [-1,0]$ and $0 < r, s < 1$ we have
\begin{equation} \label{eq:rrmlessELemfirstsetS}
 | \{ \rrm (\cdot, t) < sr \} \cap B(x,t, r) |_t \leq E s^{2} r^n. 
\end{equation}
\item $\rrm (x_0, 0) > \sigma_0$.
\end{enumerate}

Then there is a subset $S \subset B(x_0, 0, D)$ such that
\begin{enumerate}[label=(\alph*)]
\item \label{(a)-Lem:firstsetS} $|B(x_0, 0, D) \setminus S |_0 < \delta$.
\item \label{(b)-Lem:firstsetS} For any $y \in S$ there is a minimizing $\LL$-geodesic $\gamma : [0, \theta] \to M$ between $x_0$ and $y$, $\gamma(0) = x_0$, $\gamma(\theta) = y$ such that
\begin{enumerate}[label=(b\arabic*)]
\item \label{(b1)-Lem:firstsetS} Its $\LL$-length satisfies
\[ 2 \sqrt{\theta} \LL (\gamma) < d^2_0 (x_0, y) + \eta. \]
\item \label{(b2)-Lem:firstsetS} We have
 \[ \gamma(\tau) \in B(x_0, 0, D+\eta) \textQQqq{for all} \tau \in [0, \theta]. \]
\item \label{(b3)-Lem:firstsetS} For all $\tau \in (0, \theta]$ we have
\[  \tau |\gamma'(\tau)|^2_{-\tau} < \frac{d^2_0(x_0,y) + \eta}{4\theta}  . \]
\item \label{(b4)-Lem:firstsetS} We have
\[ \int_0^\theta \big( \rrm (\gamma(\tau), 0) \big)^{-1.5} d\tau < C \theta. \]
Note that here we take $\rrm$ at time $0$ and not at time $-\tau$ as in Lemma~\ref{Lem:shortLLLprrm}\ref{(b3)-Lem:shortLLLprrm}.
\end{enumerate}
\end{enumerate}
\end{Lemma}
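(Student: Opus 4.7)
The plan is to refine Lemma \ref{Lem:LLcontrolledspeed} by producing the integral curvature bound \ref{(b4)-Lem:firstsetS}, whose novelty is both the factor $\theta$ on the right-hand side and the fact that $C$ depends only on $A, E, D, \sigma_0, \delta$, independently of $\theta$ and $\eta$. First I would apply Lemma \ref{Lem:LLcontrolledspeed} with its parameter $\delta$ replaced by $\delta' := \min\{\delta/2, \eta, 1\}$, obtaining a scale $\theta$, a smallness threshold for $\rho$, and a subset $S_1 \subset B(x_0, 0, D)$ of measure $|S_1|_0 \geq |B(x_0, 0, D)|_0 - \delta/2$, together with, for each $y \in S_1$, a minimizing $\LL$-geodesic $\gamma_y : [0, \theta] \to M$ satisfying (b1)--(b4) of that lemma. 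Conditions \ref{(b1)-Lem:firstsetS}--\ref{(b3)-Lem:firstsetS} of the present statement are automatic, and the task reduces to finding a further subset $S \subset S_1$ on which \ref{(b4)-Lem:firstsetS} holds.

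The heart of the matter is the averaged inequality
\begin{equation*}
\int_{S_1} F(y)\, dg_0(y) \leq C\theta, \qquad F(y) := \int_0^\theta \rrm^{-1.5}(\gamma_y(\tau), 0)\, d\tau,
\end{equation*}
with $C = C(A, E, D, \sigma_0, \delta)$. I would split the $\tau$-integral into a short initial portion $[0, \alpha_0\theta]$ and a later portion $[\alpha_0\theta, \theta]$, where $\alpha_0 = \alpha_0(A, D, \sigma_0)$ comes from a slight strengthening of Lemma \ref{Lem:LLnotleavparnbhd} (whose proof goes through verbatim) forcing $\gamma_y(\tau) \in B(x_0, 0, \eps\sigma_0/2)$ for $\tau \in [0, \alpha_0\theta]$, where $\eps = \eps(A)$ is the pseudolocality constant. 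Because Proposition \ref{Prop:Pseudoloc} applied at $(x_0, 0)$ yields $\rrm(\cdot, 0) \geq \eps\sigma_0$ on $B(x_0, 0, \eps\sigma_0)$, the initial portion contributes at most $(\eps\sigma_0)^{-3/2}\alpha_0\theta |S_1|_0 \leq C(A, D, \sigma_0)\theta$.

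For the later portion I would use a Fubini argument. Fix $\tau \in [\alpha_0\theta, \theta]$ and set $U_1 := \LL\exp_\theta^{-1}(S_1) \cap \mathcal{D}^\LL_\theta$. Changing variables via $y = \LL\exp_\theta(v)$, $z = \gamma_y(\tau) = \LL\exp_\tau(v)$, and using $dg_0(y) \leq 2\, dg_{-\theta}(y) = 2 J^\LL(v,\theta)\, dv$ for $\rho$ small, gives
\begin{equation*}
\int_{S_1} \rrm^{-1.5}(\gamma_y(\tau), 0)\, dg_0(y) \leq 2 \int_{U_1} \rrm^{-1.5}(\LL\exp_\tau(v), 0)\, J^\LL(v, \theta)\, dv.
\end{equation*}
The key Jacobian estimate comes from Perelman's reduced-volume monotonicity together with item \ref{(b3)-Lem:LLcontrolledspeed} of Lemma \ref{Lem:LLcontrolledspeed}: along a minimizing $\LL$-geodesic,
\begin{equation*}
\frac{J^\LL(v, \theta)}{J^\LL(v, \tau)} \leq \Big(\frac{\theta}{\tau}\Big)^{n/2} \exp\bigl( l(\gamma_y(\theta), -\theta) - l(\gamma_y(\tau), -\tau) \bigr) \leq \alpha_0^{-n/2}\, e^{\delta'},
\end{equation*}
where the crucial point is that \ref{(b3)-Lem:LLcontrolledspeed}, rewritten as $l(\gamma_y(\tau), -\tau) \geq l(\gamma_y(\theta), -\theta) - \delta'$, bounds the exponent by $\delta'$ uniformly in $\theta$. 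Replacing $J^\LL(v, \theta)$ by $\alpha_0^{-n/2} e^{\delta'} J^\LL(v,\tau)$ in the integrand, converting $J^\LL(v, \tau)\, dv$ back to $dg_{-\tau}(z)$ on $\LL\exp_\tau(U_1)$, transferring to $dg_0$ via the volume-form distortion, and enlarging the domain to $B(x_0, 0, D+2\eta)$ using distance distortion, the quantity becomes bounded by $C\int_{B(x_0, 0, D+2\eta)} \rrm^{-1.5}(z, 0)\, dg_0(z)$; this is in turn bounded by a constant $C_* = C_*(A, E, D)$ via the same Fubini computation used in the proof of Lemma \ref{Lem:shortLLLprrm} (applying assumption \ref{(iii)-Lem:firstsetS} at time $0$). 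Integrating over $\tau \in [\alpha_0\theta, \theta]$ yields the contribution $C_*\theta$.

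Combining the two portions establishes the averaged bound. Markov's inequality then shows that the set $\{y \in S_1 : F(y) \geq C^*\theta\}$ has $g_0$-measure at most $\delta/2$ provided $C^* \geq 2C|B(x_0,0,D)|_0/\delta$; removing this set from $S_1$ produces the desired $S$, and the resulting $C^*$ depends only on $A, E, D, \sigma_0, \delta$ as required. The principal obstacle is the Jacobian estimate: without \ref{(b3)-Lem:LLcontrolledspeed} of Lemma \ref{Lem:LLcontrolledspeed}, the exponential factor in Perelman's monotonicity would grow at worst like $e^{D^2/\theta}$ and destroy the required $\theta$-scaling; the entire architecture of Lemmas \ref{Lem:shortLLLprrm}--\ref{Lem:LLcontrolledspeed} is designed precisely to furnish this control.
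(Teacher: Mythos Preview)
Your proposal is correct and follows essentially the same route as the paper: apply Lemma~\ref{Lem:LLcontrolledspeed} with $\delta$ replaced by $\min\{\delta/2,\eta\}$, split $[0,\theta]$ at $\alpha_0\theta$ via Lemma~\ref{Lem:LLnotleavparnbhd}, use Perelman's monotonicity together with assertion~\ref{(b3)-Lem:LLcontrolledspeed} to control $J^\LL(v,\theta)/J^\LL(v,\tau)$ uniformly in $\theta$ on the later portion, integrate against $\rrm^{-1.5}(\cdot,0)$ over $B(x_0,0,D+1)$, and finish with Markov. The paper handles the initial portion slightly differently---it uses the $1$-Lipschitz property of $\rrm(\cdot,0)$ (giving $\rrm>\sigma_0/2$ on $B(x_0,0,\sigma_0/10)$) rather than pseudolocality---but this is cosmetic; your identification of \ref{(b3)-Lem:LLcontrolledspeed} as the key input that makes the Jacobian bound $\theta$-independent is exactly the point.
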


Note that in this lemma the constant $C$ does not depend on $\eta$ or $\theta_0$.

\begin{proof}
We use a similar argument as in the proof of Lemma \ref{Lem:shortLLLprrm}.
The difference is that this time we have a better estimate on the Jacobian of the $\LL$-exponential map, due to assertion \ref{(b2)-Lem:LLcontrolledspeed} in Lemma \ref{Lem:LLcontrolledspeed}.
This fact will allow us to choose $C$ independently of $\theta$.

Assume without loss of generality that $0 < \theta_0, \sigma_0, \delta, \eta < 1$.
Let us first establish the following two bounds:

\begin{Claim}
There is a constant $E^* = E^* (A, E, D) < \infty$ such that
\begin{equation} \label{eq:intrrm-15}
 \int_{B(x_0, 0, D+1)} \rrm^{-1.5} (x,0) dg(x) < E^*. 
\end{equation}
and
\[ |B(x_0, 0, D)|_0 < E^*. \]
\end{Claim}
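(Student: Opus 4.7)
The plan is to prove both bounds by essentially repeating the Fubini computation that already appeared in the proof of Lemma~\ref{Lem:shortLLLprrm} (around equation~\eqref{eq:Fubini-computation}), with $\tau = 0$ in place of $\tau > 0$. The only modification is that assumption~\ref{(iii)-Lem:firstsetS} provides the curvature-radius decay bound on balls of radius $< 1$, whereas we need control on the ball $B(x_0, 0, D+1)$ of radius $D+1$.

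First, I would dispose of the volume bound: by the upper bound in Proposition~\ref{Prop:VolumeBound}, $|B(x_0, 0, D)|_0 \leq C(A) D^n e^{CD}$, which depends only on $A$ and $D$. This already gives the second inequality (after absorbing constants into $E^*$).

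Next, I would upgrade assumption~\ref{(iii)-Lem:firstsetS} to a statement on balls of radius up to $D+1$ at time $0$: for all $0 < s < 1$,
\[
 \big|\{ \rrm(\cdot, 0) < s \} \cap B(x_0, 0, D+1) \big|_0 \leq E^{**} s^2,
\]
for some $E^{**} = E^{**}(A, E, D) < \infty$. This follows from a standard covering argument: cover $B(x_0, 0, D+1)$ by a controlled number $N = N(A, D)$ of unit balls $B(y_j, 0, 1)$, which is possible due to the lower volume bound in Proposition~\ref{Prop:VolumeBound} (a greedy packing of balls of radius $\tfrac12$ whose total volume is bounded by $|B(x_0, 0, D+2)|_0$), then apply the hypothesis with $r = 1$ on each covering ball and sum.

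Finally, for the $L^{1.5}$ bound, I would apply the layer-cake representation,
\[
 \int_{B(x_0, 0, D+1)} \rrm^{-1.5}(x, 0) \, dg_0(x) = \int_0^\infty \big| \{ \rrm^{-1.5}(\cdot,0) > \lambda \} \cap B(x_0, 0, D+1) \big|_0 \, d\lambda,
\]
and split the $\lambda$-integral at $\lambda = 1$. For $\lambda \leq 1$, the integrand is bounded by $|B(x_0, 0, D+1)|_0 \leq C(A,D)$, contributing at most $C(A,D)$. For $\lambda > 1$, the set $\{ \rrm^{-1.5} > \lambda \}$ equals $\{ \rrm < \lambda^{-2/3} \}$, so the upgraded hypothesis gives measure $\leq E^{**} \lambda^{-4/3}$; since $4/3 > 1$, the tail integral $\int_1^\infty E^{**} \lambda^{-4/3} d\lambda$ converges to $3 E^{**}$. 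Taking $E^*$ to be the maximum of these two contributions and the volume bound completes the claim. There is no real obstacle here, since the crucial exponent $p = 2$ in assumption~\ref{(iii)-Lem:firstsetS} is precisely what makes $\rrm^{-1.5}$ integrable (any exponent strictly less than $2$ would suffice); the only mild care required is the covering step to transfer the hypothesis from unit balls to the ball of radius $D+1$.
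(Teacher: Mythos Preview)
Your proof is correct and follows essentially the same approach as the paper: both obtain the volume bound from Proposition~\ref{Prop:VolumeBound}, upgrade assumption~\ref{(iii)-Lem:firstsetS} from unit balls to $B(x_0,0,D+1)$ via a ball-packing/covering argument, and then invoke the layer-cake (Fubini) computation of \eqref{eq:Fubini-computation} to bound the $L^{1.5}$ integral.
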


\begin{proof}
The second bound is a direct consequence of Proposition~\ref{Prop:VolumeBound}.
For the first bound follows note that a ball packing argument combined with Proposition~\ref{Prop:VolumeBound} implies that $B(x_0, 0, D+1)$ can be covered by a bounded number of $1$-balls  (depending only on $A$, $D$).
So (\ref{eq:rrmlessELemfirstsetS}) in assumption \ref{(iii)-Lem:firstsetS} also holds for $x = p$ and $r = D+1$ if we replace $E$ by a constant depending only on $A, E, D$.
The bound (\ref{eq:intrrm-15}) now follows similarly as in (\ref{eq:Fubini-computation}).
\end{proof}

We can now choose the constants $C$, $\theta$ and $\rho$.
Let $\alpha_0 = \alpha_0 (A,D, \sigma_0)$ be the constant from Lemma \ref{Lem:LLnotleavparnbhd}.
Next, choose
\[ E^{**} = E^{**} (A, E, D, \sigma_0) := 2e \alpha_0^{-n/2} E^* + 2(\sigma_0/2)^{-1.5}E^*  \]
and
\[ C = C(A, E, D, \sigma_0, \delta) := 4 E^{**} \delta^{-1}. \]
Note that $C$ does not depend on the choice of $\theta_0$ or $\eta$.
Next, we choose
\[ \delta^* := \min \{ \delta/2, \eta \} \]
and we assume that $\rho < \min \{ \rho_{\ref{Lem:LLcontrolledspeed}} (A, E, D, \theta_0, \sigma_0, \delta^*), 1/10 \}$, where $\rho_{\ref{Lem:LLcontrolledspeed}}$ is the constant from Lemma \ref{Lem:LLcontrolledspeed}.

We now apply Lemma \ref{Lem:LLcontrolledspeed} with $A \leftarrow A$, $E \leftarrow E$, $D \leftarrow D$, $\theta_0 \leftarrow \theta_0$, $\sigma_0 \leftarrow \sigma_0$ and $\delta \leftarrow \delta^*$, assuming $\rho$ to be sufficiently small.
We obtain the constant $\theta = \theta(A, E, D, \theta_0, \sigma_0, \delta^*) \in (0, \theta_0)$ and a subset $S \subset B(x_0, 0, D)$, which we will denote henceforth by $S'$, that satisfies assertions \ref{(a)-Lem:LLcontrolledspeed}, \ref{(b)-Lem:LLcontrolledspeed} of that lemma.
The subset $S \subset B(x_0, 0, D)$, whose existence is claimed in this lemma, will arise as a subset of $S'$.

Our proof uses again $\LL$-geometry and the terminology recalled in subsection~\ref{subsec:LLgeometry}.
Define
\[ U := \LL\exp_{\theta}^{-1} (S') \cap \mathcal{D}^\LL_{\theta}. \]
Then
\[ \LL\exp_\theta (U) = S' \cap \mathcal{G}^\LL_{\theta}. \]
For simplicity, we set
\[ f(x) := (\rrm (x,0))^{-1.5} \chi_{B(x_0, 0, D+1)}(x), \]
where $\chi_{B(x_0, 0, D+1)}$ is the characteristic function of $B(x_0, 0, D+1)$.
Note that by the Claim, we have for all $\tau \in [0, \theta]$ (assuming $\rho < 1/10$)
\[ \int_M f(x) dg_{-\tau} (x) \leq e^{0.1 \tau} \int_M f(x) dg_0 (x) < 2 E^*. \]
So by the transformation formula, we have for all $\tau \in (0, \theta]$
\[ \int_U f \big( \LL\exp_{\tau} (v) \big) J^\LL (v,\tau) dv = \int_{\LL \exp_{x_0, \tau} (U)} f(x) dg_{-\tau} (x)  < 2E^* \]

Using assertion \ref{(b3)-Lem:LLcontrolledspeed} of Lemma \ref{Lem:LLcontrolledspeed} and the estimate $\delta^* < 1$, we have for all $v \in U$ and $\tau \in (0, \theta]$
\begin{equation} \label{eq:ltautheta}
 l (\LL\exp_\tau (v), -\tau) > l (\LL\exp_\tau(v), -\theta) - 1. 
\end{equation}
Moreover, by the monotonicity of $\tau^{-n/2} e^{-l(v, -\tau)} J(v, \tau)$ we have
\begin{equation} \label{eq:Perelmanmonotthetatau}
 \theta^{-n/2} e^{- l( \LL\exp_{\tau}(v), -\theta)} J^\LL (v, \theta) \leq \tau^{-n/2} e^{-l(\LL\exp_\tau(v),-\tau)} J^\LL (v,\tau). 
\end{equation}
Combining (\ref{eq:ltautheta}) with (\ref{eq:Perelmanmonotthetatau}) yields that for all $\tau \in [\alpha_0 \theta, \theta]$
\[  J^\LL (v, \theta) \leq e \Big( \frac{\theta}{\alpha_0 \theta} \Big)^{n/2} J^\LL (v, \tau) \leq e \alpha_0^{-n/2} J^\LL (v,\tau). \]
So for all $\tau \in [\alpha_0 \theta, \theta]$ we have
\begin{equation} \label{eq:integrallargetau}
  \int_U f \big( \LL\exp_{\tau} (v) \big) J^\LL (v,\theta) dv \leq e \alpha_0^{-n/2} \int_U f \big( \LL\exp_{\tau} (v) \big) J^\LL (v,\tau) dv \leq 2e \alpha_0^{-n/2} E^*.
\end{equation}

We now estimate the left-hand side of (\ref{eq:integrallargetau}) for all $\tau \in (0, \alpha_0 \theta]$.
Observe that by Lemma \ref{Lem:LLnotleavparnbhd}, we have $\LL \exp_{\tau} (v) \in B(x_0, 0, \sigma_0/10)$ for all $\tau \in (0, \alpha_0 \theta]$.
Since $\rrm(\cdot, 0)$ is $1$-Lipschitz with respect to $g_0$, we have the bound $f < (\sigma_0/2)^{-1.5}$ on $B(x_0, 0, \sigma_0/10)$.
So for all $\tau \in (0, \tau_0]$
\begin{multline} \label{eq:integralsmalltau}
 \int_U f \big( \LL\exp_{\tau} (v) \big) J^\LL (v,\theta) dv \leq (\sigma_0/2)^{-1.5} \int_U J^\LL (v,\tau) dv \\
  \leq (\sigma_0/2)^{-1.5} |B(x_0, 0, D)|_{-\theta} \leq 2(\sigma_0/2)^{-1.5} |B(x_0, 0, D)| \leq 2(\sigma_0/2)^{-1.5}E^*.
\end{multline}
Integrating (\ref{eq:integralsmalltau}) from $0$ to $\alpha_0\theta$ and (\ref{eq:integrallargetau}) from $\alpha_0 \theta$ to $\theta$ yields
\begin{equation} \label{eq:Estarstartintegral}
\int_0^\theta \int_U f \big( \LL\exp_{\tau} (v) \big) J^\LL (v,\theta) dv d\tau \leq \big(2e \alpha_0^{-n/2} E^* + 2(\sigma_0/2)^{-1.5}E^*\big) \theta = E^{**} \theta. 
\end{equation}
Define $h : U \to [0, \infty)$ by
\[ h(v) := \int_0^\theta f \big( \LL\exp_{\tau} (v) \big) d\tau. \]
Then, by Fubini's Theorem and (\ref{eq:Estarstartintegral}),
\[ \int_U h(v) J(v,\theta) dv d\tau \leq E^{**} \theta. \]
Let
\[ W := \{ v \in U \;\; : \;\; h(v) < C \theta \} \textQq{and} S := \LL\exp_{\theta} (W). \]
Then $S \subset \LL\exp_{\theta} (U) \subset S'$.
We claim that $S$ satisfies assertion \ref{(b)-Lem:firstsetS}.
For any $y = \LL\exp_\theta (v)$, $v \in W$ we choose the minimizing $\LL$-geodesic $\gamma (\tau) := \LL \exp_\tau (v)$.
As $y = \gamma (\theta) \in \mathcal{G}^\LL_\theta$, this $\LL$-geodesic is the only minimizing $\LL$-geodesic between $x_0$ and $y$.
So assertions \ref{(b1)-Lem:firstsetS}--\ref{(b3)-Lem:firstsetS} follow immediately from assertions \ref{(b1)-Lem:LLcontrolledspeed}, \ref{(b2)-Lem:LLcontrolledspeed} and \ref{(b4)-Lem:LLcontrolledspeed} of Lemma \ref{Lem:LLcontrolledspeed}.
For assertion \ref{(b4)-Lem:firstsetS} observe that
\[ \int_0^\theta \big( \rrm (\gamma(\tau), 0) \big)^{-1.5} d\tau = \int_0^\theta f (\LL\exp_\tau (v)) d\tau = h(v) < C \theta. \]
To see assertion \ref{(a)-Lem:firstsetS}, observe that
\begin{multline*}
|S' \setminus S|_0 = |(S' \cap \mathcal{G}^\LL_\theta) \setminus S|_0
\leq 2|\LL\exp_\theta (U) \setminus \LL\exp_\theta (W) |_{-\theta} \\
= 2\int_{U \setminus W} J(v, \theta) dv
\leq 2\int_{U \setminus W} (C \theta)^{-1} h(v) J(v, \theta) dv \\
\leq 2(C \theta)^{-1} E^{**} \theta
= 2C^{-1} E^{**}
= \delta/2.
\end{multline*}
So
\[ |B(x_0, 0,D) \setminus S|_0 \leq |B(x_0, 0, D) \setminus S'|_0 + |S' \setminus S|_0 < \delta^* + \delta/2 < \delta. \]
This finishes the proof.
\end{proof}

Finally, we can improve the curvature bound in Lemma \ref{Lem:firstsetS}\ref{(b4)-Lem:firstsetS} from an integral bound to a pointwise bound.
This bound will also enable us to show that the $\LL$-geodesics are almost minimizing with respect to the time-$0$ metric.

\begin{Proposition} \label{Prop:shortcurveuniformsigma}
For any $A, E, D < \infty$ and $\sigma_0 , \delta, \eta > 0$ there are constants $\sigma = \sigma (A, E, D, \sigma_0, \delta)$ and $\rho = \rho(A, E, D,\sigma_0, \delta, \eta) > 0$ such that the following holds:

Let $(M, (g_t)_{t \in [-2,0]} )$ be a Ricci flow on a compact, $n$-dimensional manifold $M$ and $x_0 \in M$ such that
\begin{enumerate}[label=(\roman*)]
\item \label{(i)-Prop:shortcurveuniformsigma} $\nu [ g_{-2}, 4] \geq - A$.
\item \label{(ii)-Prop:shortcurveuniformsigma} $|R| \leq \rho$ on $M \times [-2,0]$.
\item For all $(x,t) \in M \times [-1,0]$ and $0 < r, s < 1$ we have
\[ | \{ \rrm (\cdot, t) < sr \} \cap B(x,t, r) |_t \leq E s^{2} r^n. \]
\item \label{(iii)-Prop:shortcurveuniformsigma} $\rrm (x_0, 0) \geq \sigma_0$.
\end{enumerate}
Then there is an open subset $S \subset B(x_0, 0, D)$ such that the following holds:
\begin{enumerate}[label=(\alph*)]
\item \label{(a)-Prop:shortcurveuniformsigma} $|B(x_0, 0, D) \setminus S |_0 < \delta$.
\item \label{(b)-Prop:shortcurveuniformsigma} For any $y \in S$ there is a curve $\gamma : [0, 1] \to M$ between $x_0$ and $y$, $\gamma(0) = x_0$, $\gamma(1) = y$ such that the following holds:
\begin{enumerate}[label=(b\arabic*)]
\item \label{(b1)-Prop:shortcurveuniformsigma} Its time-$0$ length satisfies
 \[ \length_0 (\gamma) < d_0 (x_0,y) + \eta. \]
 \item \label{(b2)-Prop:shortcurveuniformsigma} We have
 \[  \rrm (\gamma(s), 0)  > \sigma \qquad \text{for all} \qquad s \in [0, 1]. \]
\end{enumerate}
\end{enumerate}
\end{Proposition}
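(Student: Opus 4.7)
The plan is to derive Proposition~\ref{Prop:shortcurveuniformsigma} from Lemma~\ref{Lem:firstsetS} by converting the integrated $L^{1.5}$-bound on $\rrm^{-1}$ along the $\LL$-geodesics into a pointwise lower bound along a curve obtained by a \emph{cut-and-replace} modification at time~$0$. The central observation is that the speed bound of Lemma~\ref{Lem:firstsetS}(b3) forces each individual bad arc (where $\rrm$ drops below a threshold) to be short in time-$0$ length, enabling the replacement of such arcs by short time-$0$ geodesics.

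I would apply Lemma~\ref{Lem:firstsetS} with $A, E, D, \sigma_0$ as given, with $\delta$ replaced by $\delta/2$, and with its parameters $\theta_0, \eta$ chosen sufficiently small (in terms of the $\eta$ of the present proposition and the $\sigma$ to be determined below); this produces a subset $S_0 \subset B(x_0, 0, D)$ with $|B(x_0, 0, D) \setminus S_0|_0 < \delta/2$ and minimizing $\LL$-geodesics $\tilde\gamma_y: [0, \theta] \to M$ for $y \in S_0$, satisfying the integrated bound $\int_0^\theta \rrm^{-1.5}(\tilde\gamma_y(\tau), 0)\,d\tau < C\theta$ with $C = C(A, E, D, \sigma_0, \delta)$ \emph{independent} of $\theta$ and $\eta$, together with the other conclusions of the lemma. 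Removing from $S_0$ the points $y$ with $\rrm(y, 0) < \sigma$ (a set of measure $\leq E' \sigma^2 \leq \delta/2$ by assumption~(iii), once $\sigma$ is small enough) gives a subset $S \subset S_0$ with $|B(x_0, 0, D) \setminus S|_0 < \delta$.

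For each $y \in S$, Cauchy--Schwarz applied to the $\LL$-length bound (b1) and speed bound (b3) of Lemma~\ref{Lem:firstsetS}, together with Proposition~\ref{Prop:Distdistortion} (for $\rho$ small), gives $\length_0(\tilde\gamma_y) \leq d_0(x_0, y) + \eta/2$. Next, Lemma~\ref{Lem:LLnotleavparnbhd} confines $\tilde\gamma_y(\tau)$ to $B(x_0, 0, \sigma_0/10)$ for $\tau \in [0, \alpha_0\theta]$, so by $1$-Lipschitz continuity of $\rrm(\cdot, 0)$ the bound $\rrm \geq 9\sigma_0/10 > \sigma$ holds throughout that initial interval (provided $\sigma < \sigma_0/2$), forcing any bad arcs into $[\alpha_0\theta, \theta]$. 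On this latter interval, the speed $|\tilde\gamma'_y(\tau)|_{-\tau}$ is uniformly bounded by $c_1/\theta$ with $c_1 = c_1(A, D, \sigma_0)$, so the $\tau$-time required to cross any bad arc of time-$0$ length $\ell$ is at least $\ell\theta/(2 c_1)$. Combined with the Markov bound $|\{\tau : \rrm(\tilde\gamma_y(\tau), 0) < \sigma\}| \leq C\theta\sigma^{1.5}$, this forces \emph{every single} maximal bad arc to satisfy $\ell \leq 4 c_1 C \sigma^{1.5}$.

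Now choose $\sigma = \sigma(A, E, D, \sigma_0, \delta) > 0$ small enough that $\sigma < \sigma_0/2$ and $4 c_1 C \sigma^{1/2} < 1/2$, so that every bad arc has time-$0$ length $< \sigma/2$. The curve $\gamma$ is obtained by reparametrizing $\tilde\gamma_y$ by time-$0$ arclength and replacing each maximal bad arc by the $g_0$-geodesic between its endpoints. Since each replacement geodesic is no longer than the arc it replaces, the length bound $\length_0(\gamma) < d_0(x_0, y) + \eta$ persists. At the endpoints of each bad arc one has $\rrm(\cdot, 0) = \sigma$ by continuity of $\rrm$ along $\tilde\gamma_y$, and the $1$-Lipschitz property of $\rrm$ forces $\rrm \geq \sigma - \ell > \sigma/2$ along each replacement geodesic of length $\ell < \sigma/2$. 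Thus $\gamma$ satisfies (b1) and (b2) with the output $\sigma$ of the proposition taken to be $\sigma/2$. The main obstacle was bridging the gap between the \emph{integrated} curvature bound of Lemma~\ref{Lem:firstsetS} and the \emph{pointwise} bound required here; this is resolved precisely by the uniform speed estimate on $[\alpha_0\theta, \theta]$, which forces each individual bad arc (and not merely their total) to be short.
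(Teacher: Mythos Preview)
Your cut-and-replace strategy has a genuine gap at the step where you convert the speed bound $|\tilde\gamma'_y(\tau)|_{-\tau} \leq c_1/\theta$ into a time-$0$ speed (or length) bound on the bad arcs. You invoke Proposition~\ref{Prop:Distdistortion} for this, but that result controls only \emph{distances} between pairs of points, not the pointwise metric tensor: under a scalar curvature bound alone there is no estimate of the form $|v|_0 \leq (1+o(1))|v|_{-\tau}$. Such an estimate requires control of $|{\Ric_s}|$ for $s \in [-\tau, 0]$ at the point in question, which in turn requires a lower bound on $\rrm$ there (via Propositions~\ref{Prop:Pseudoloc} and~\ref{Prop:Ricsmall}). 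On a bad arc you have $\rrm(\cdot, 0) < \sigma$ by definition, and $\rrm$ may in fact be arbitrarily small there; consequently neither the global claim $\length_0(\tilde\gamma_y) \leq d_0(x_0, y) + \eta/2$ nor the per-arc claim $\ell \leq 4c_1 C\sigma^{1.5}$ is justified. The same obstruction prevents bounding $d_0$ between the endpoints of a bad arc by integrating along the arc itself.

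The paper avoids this by never entering the bad region. It takes $\theta < (\eps\sigma)^2$ (with $\eps$ from pseudolocality) and argues by contradiction: let $\tau_1 = \alpha_1\theta$ be the \emph{first} parameter at which $\rrm(\gamma(\tau), 0)$ drops to $\sigma$. On $[0, \tau_1]$ one has $\rrm(\cdot, 0) \geq \sigma$, so pseudolocality plus Proposition~\ref{Prop:Ricsmall} give the metric distortion bound $|v|_0 \leq (1+\eta^*)|v|_{-\tau}$ along $\gamma|_{[0,\tau_1]}$, and (b3) translates into a time-$0$ Lipschitz bound there. From this one finds a subinterval $[(\alpha_1 - a\sigma)\theta, \alpha_1\theta]$ (still inside the good region) on which $\gamma$ stays within time-$0$ distance $\sigma$ of $\gamma(\tau_1)$; the $1$-Lipschitz property of $\rrm(\cdot, 0)$ then forces $\rrm < 2\sigma$ on that whole interval. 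The contribution of this interval to $\int_0^\theta \rrm^{-1.5}(\gamma(\tau),0)\,d\tau$ is at least $a\sigma\theta \cdot (2\sigma)^{-1.5} = \tfrac12 a(2\sigma)^{-0.5}\theta$, which exceeds $C\theta$ once $\sigma$ is chosen small enough (depending only on $a$ and $C$, hence only on $A, E, D, \sigma_0, \delta$). This contradicts (b4), so no such $\tau_1$ exists: the $\LL$-geodesic itself already satisfies $\rrm \geq \sigma$ throughout, and the length bound (b1) then follows from the distortion estimate on the whole curve. No cut-and-replace is needed.
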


Note that the constant $\sigma$ does not depend on the choice of $\eta$.

\begin{proof}
The idea of the proof will be to use the subset $S$ and the $\LL$-geodesics $\gamma$ from Lemma \ref{Lem:firstsetS}.
We will show that the function $\rrm (\gamma(\tau),0)$ does not oscillate too much and use assertion \ref{(b4)-Lem:firstsetS} in Lemma \ref{Lem:firstsetS} to establish a lower bound for $\rrm (\gamma(\tau),0)$.
Using this bound and assertion \ref{(b3)-Lem:firstsetS} of Lemma \ref{Lem:firstsetS}, we can then derive an upper bound on the time-$0$ length of $\gamma$.

We will first choose the relevant constants.
Let $\eps = \eps(A) > 0$ be the constant from Proposition \ref{Prop:Pseudoloc}.
Let $\alpha_0 = \alpha_0 (A, D, \sigma_0) < 1/2$ be the constant from Lemma~\ref{Lem:LLnotleavparnbhd} and set
\begin{equation} \label{eq:Defofa}
 a := \min \bigg\{  \alpha_0^{1/2} \big( 2(  D^2   + 1 )\big)^{-1/2}, \frac1{10} \bigg\}. 
\end{equation}
Note that $a$ depends only on $A, E, D, \sigma_0, \delta$.
Next, let $C = C(A, E, D, \sigma_0, \delta)$ be the constant from Lemma \ref{Lem:firstsetS} and choose $ \sigma> 0$ small enough such that
\begin{equation} \label{eq:choiceofsigma}
 \tfrac12 a (2\sigma)^{-0.5}  > C   , \quad a\sigma < \alpha_0 /2, \quad \sigma < \sigma_0 / 2 \textQq{and} \sigma <1/10. 
\end{equation}
Note that all constants occurring in (\ref{eq:choiceofsigma}) only depend on $A, E, D, \sigma_0, \delta$.
So $\sigma$ can be chosen depending only on these constants as well.

Next, choose and fix a constant $0 < \eta^* < 1$, depending only on $D, \eta$, such that
\[
(1+ \eta_*) \big( d^2 +  \eta^* \big)^{1/2} < d + \eta \textQQqq{for all} d \in [0,D).
\]

\begin{Claim}
There is a constant $\rho_1 = \rho_1 (A, E, D, \sigma_0, \delta, \eta ) > 0$ such that if $\rho < \rho_1$, then for all $x \in M$ with $\rrm (x, 0) \geq \sigma$, all $v \in T_x M$ and all $t \in [-(\eps \sigma)^2, 0]$ we have the distortion estimate
\[ |v|_{0} \leq  (1+ \eta^*) |v|_{t}  . \]
\end{Claim}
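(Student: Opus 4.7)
The plan is to derive the distortion estimate by combining the fact that when the scalar curvature $R$ is small, the full Ricci curvature is also small (in operator norm) at points where $|\Rm|$ is bounded, with the elementary Ricci flow evolution $\partial_t g_t = -2\Ric_{g_t}$ applied to a fixed vector $v \in T_x M$. The three main tools will be Proposition~\ref{Prop:Pseudoloc} (backward pseudolocality), Proposition~\ref{Prop:Ricsmall} (the $\Ric$-bound coming from a small scalar curvature and bounded $\Rm$), and a direct integration over the interval $[-(\eps\sigma)^2, 0]$.

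First, since $\sigma < 1/10 < 1$, the hypothesis $\rrm(x,0) \geq \sigma$ together with Proposition~\ref{Prop:Pseudoloc} will give $\rrm > \eps \sigma$, and hence $|\Rm| < (\eps\sigma)^{-2}$, throughout the parabolic neighborhood $P(x, 0, \eps \sigma, -(\eps\sigma)^2)$, where $\eps = \eps(A)$ is the constant of Proposition~\ref{Prop:Pseudoloc}. Second, Proposition~\ref{Prop:Ricsmall}, applied at $(x, t)$ for $t \in [-(\eps\sigma)^2, 0]$ with a scale comparable to $\eps\sigma$, will produce the pointwise bound
\[
|\Ric|(x, t) \leq C_1 \rho^{1/2} (\eps\sigma)^{-1}
\]
for a constant $C_1$ depending only on $A$. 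Third, for any fixed $v \in T_x M$, the evolution $\partial_t g_t = -2\Ric_{g_t}$ yields
\[
\Bigl| \frac{d}{dt} \log |v|_t^2 \Bigr| \leq 2 |\Ric|(x, t) \leq 2 C_1 \rho^{1/2} (\eps\sigma)^{-1},
\]
and integrating this over a time interval of length at most $(\eps\sigma)^2$ gives $|v|_0 / |v|_t \leq \exp(C_1 \rho^{1/2} \eps\sigma)$. Choosing $\rho_1 = \rho_1(A, E, D, \sigma_0, \delta, \eta)$ small enough so that $\exp(C_1 \rho_1^{1/2} \eps\sigma) \leq 1 + \eta^*$ will complete the proof, since $\sigma = \sigma(A, E, D, \sigma_0, \delta)$ and $\eta^* = \eta^*(D, \eta)$ were fixed before this claim.

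The main technical obstacle will be ensuring that the Ricci bound in the second step holds uniformly for all $t$ in the full closed interval $[-(\eps\sigma)^2, 0]$: Proposition~\ref{Prop:Ricsmall} requires a parabolic neighborhood extending strictly backward from $(x, t)$, but a single application of Proposition~\ref{Prop:Pseudoloc} at $(x, 0)$ only provides curvature control back to time $-(\eps\sigma)^2$, leaving no room for $t$ near this endpoint. I expect to handle this by iterating Proposition~\ref{Prop:Pseudoloc} at interior points of the initial neighborhood, which extends the region of curvature control slightly past $-(\eps\sigma)^2$ at the cost of a smaller effective scale (for instance $\eps^2\sigma$ in place of $\eps\sigma$); this only changes the value of $C_1$, which is absorbed into the final choice of $\rho_1$. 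Once this uniform Ricci bound is in place, the remaining integration is immediate.
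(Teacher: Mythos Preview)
Your proposal is correct and follows essentially the same approach as the paper, which simply cites Propositions~\ref{Prop:Pseudoloc} and \ref{Prop:Ricsmall} without further detail. In fact you supply more care than the paper does: your observation about the endpoint $t = -(\eps\sigma)^2$ and the fix via a second application of pseudolocality (yielding control at scale $\eps^2\sigma$) is exactly the right way to make the argument airtight, and the paper's one-line proof tacitly relies on this.
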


\begin{proof}
This fact follows from Propositions \ref{Prop:Pseudoloc} and \ref{Prop:Ricsmall}.
The constant $\rho_1$ can be chosen depending only on $\eps$, $\sigma$, which in turn only depend on $A$, $E$, $D$, $\sigma_0$, $\delta$, $\eta$.
\end{proof}

Assume in the following that $\rho < \rho_1$ and that $\rho < \rho_{\ref{Lem:firstsetS}} (A, E, D, \theta_0, (\eps \sigma)^2, \delta, \eta^*)$, where $\rho_{\ref{Lem:firstsetS}}$ is the constant from Lemma \ref{Lem:firstsetS}.
Let furthermore
\[ 0 < \theta = \theta_{\ref{Lem:firstsetS}} (A, E, D, (\eps \sigma)^2, \sigma_0, \delta, \eta^*) < (\eps \sigma)^2 \]
be the constant from Lemma \ref{Lem:firstsetS}.

We now apply Lemma \ref{Lem:firstsetS} for $A \leftarrow A$, $E \leftarrow E$, $D \leftarrow D$, $\theta_0 \leftarrow (\eps \sigma)^2$, $\sigma_0 \leftarrow \sigma_0$, $\delta \leftarrow \delta$ and $\eta \leftarrow \eta^*$.
We obtain a subset $S \subset B(x_0, 0, D)$ that satisfies assertions \ref{(a)-Lem:firstsetS}, \ref{(b)-Lem:firstsetS} of Lemma \ref{Lem:firstsetS}.
We claim that $S$ satisfies assertions \ref{(a)-Prop:shortcurveuniformsigma}, \ref{(b)-Prop:shortcurveuniformsigma} of this proposition as well.
Obviously, assertion \ref{(a)-Prop:shortcurveuniformsigma} holds.

It remains to check assertion \ref{(b)-Prop:shortcurveuniformsigma}.
Let $y \in S$ and let $\gamma : [0, \theta] \to M$ be the $\LL$-geodesic from Lemma~\ref{Lem:firstsetS}\ref{(b)-Lem:firstsetS}.
In the following, we will show that $\gamma$ satisfies properties \ref{(b1)-Prop:shortcurveuniformsigma}, \ref{(b2)-Prop:shortcurveuniformsigma}, after reparameterization.
By this we mean that $\length_0 (\gamma) < d_0(x_0 , y) + \eta$ and
\begin{equation} \label{eq:rrmmboundedfrombelow}
 \rrm \big( \gamma(\tau),0 \big) \geq \sigma  \textQQqq{for all} \tau \in [0, \theta].
\end{equation}
An important tool in our analysis will be Lemma \ref{Lem:firstsetS}\ref{(b4)-Lem:firstsetS}, which states that 
\begin{equation} \label{eq:sumNsqboundedrecall}
  \int_0^\theta \big( \rrm (\gamma(\tau), 0) \big)^{-1.5} d\tau < C \theta. 
\end{equation}

We first show that the bound (\ref{eq:rrmmboundedfrombelow}) holds whenever $\tau \in [0, \alpha_0 \theta]$.
Indeed, for all such $\tau$, we have $d_0 (x_0, \gamma(\tau)) < \sigma_0 / 10$ by Lemma \ref{Lem:LLnotleavparnbhd} and thus, by the fact that $\rrm$ is $1$-Lipschitz with respect to $g_0$, we have $\rrm (\gamma(\tau), 0) > \sigma_0 / 2 > \sigma$.

Next, we choose $\tau_1 = \alpha_1 \theta$ maximal such that 
\[ \rrm (\gamma(\tau), 0 ) \geq \sigma \textQQqq{for all} \tau \in [0, \tau_1]. \]
Then $\alpha_1 = 1$ or $\alpha_1 < 1$ and we have
\begin{equation} \label{eq:sigmaatalpha1}
 \rrm ( \gamma(\alpha_1 \theta), 0) = \sigma. 
\end{equation}
Moreover, by our previous discussion we have
\begin{equation} \label{eq:alph1biggeralph0}
 \alpha_1 > \alpha_0 
\end{equation}
We will find later that we indeed have $\alpha_1 = 1$, hence establishing assertion \ref{(b2)-Prop:shortcurveuniformsigma}.

Using the Claim, we find that
\[
 |\gamma'(\tau)|_0 \leq (1+\eta^*) |\gamma'(\tau)|_{-\tau} \textQQqq{for all} \tau \in [0, \tau_1].
\]
It follows that for any $0 \leq \tau'  \leq \alpha_1 \theta$ we have, using Lemma~\ref{Lem:firstsetS}\ref{(b3)-Lem:firstsetS},
\begin{multline} \label{eq:distancelinearu1u2}
 d_0 (\gamma(\tau'), \gamma(\alpha_1 \theta)) 
 \leq  \length_0 (\gamma |_{[\tau', \alpha_1 \theta]} ) 
 \leq  (1+\eta^*) \int_{\tau'}^{\alpha_1 \theta} |\gamma'(\tau)|_{-\tau} d\tau  \\
  = (1+\eta^*) \int_{\tau'}^{\alpha_1 \theta} \bigg( \frac{\tau |\gamma'(\tau)|^2_{-\tau}}{\tau} \bigg)^{1/2}  d\tau \\
  \leq (1+\eta^*) \bigg( \frac{d^2_0(x_0, y) + \eta^*}{4 \theta}  \bigg)^{1/2} \int_{\tau'}^{\alpha_1 \theta}   \tau^{-1/2} d\tau .
\end{multline} 
Assume that $\alpha_1 < 1$.
We now apply (\ref{eq:distancelinearu1u2}) for $\tau'  \in [(\alpha_1 - a \sigma) \theta, \tau_1]$ and note that by (\ref{eq:alph1biggeralph0}) and (\ref{eq:Defofa})
 \[ (\alpha_1 - a \sigma) \theta > (\alpha_0 - a \sigma) \theta > \tfrac12 \alpha_0 \theta. \]
We then obtain (using (\ref{eq:Defofa}), (\ref{eq:distancelinearu1u2}) and the crude estimates $\eta^* < 1$, $\theta \leq 1$)
\begin{multline*}
 d_0 (\gamma(\tau'), \gamma(\tau_1)) \leq 2 \bigg( \frac{D^2 + 1}{4 \theta}  \bigg)^{1/2} \int_{\tau'}^{\alpha_1 \theta}   \tau^{-1/2} d\tau \\
  \leq 2 \bigg( \frac{D^2 + 1}{4 \theta}  \bigg)^{1/2} \cdot a \sigma \theta \cdot \big( \tfrac12 \alpha_0 \theta \big)^{-1/2} \leq \alpha_0^{-1/2} \big( 2( D^2 +1 ) \big)^{1/2} a \cdot \sigma \leq \sigma.
\end{multline*}
So by the $1$-Lipschitz continuity of $\rrm (\cdot, 0 )$ and (\ref{eq:sigmaatalpha1}), we get that
\begin{equation} \label{eq:2sigmarrm}
 \rrm ( \gamma(\tau), 0 ) < \sigma + \sigma  = 2 \sigma \textQQqq{for all} \tau \in [(\alpha_1 - a\sigma)\theta, \alpha_1 \theta ]. 
\end{equation}
Using (\ref{eq:sumNsqboundedrecall}) and (\ref{eq:2sigmarrm}), it follows that
\[
 \tfrac12 a (2\sigma)^{-0.5} \theta = a \sigma \theta \cdot (2\sigma)^{-1.5} <   \int_0^\theta \big( \rrm (\gamma(\tau), 0) \big)^{-1.5} d\tau < C \theta.
\]
This inequality contradicts (\ref{eq:choiceofsigma}).
So we indeed have $\alpha_1 = 1$ and $\tau_1 = \theta$, which proves assertion \ref{(b2)-Prop:shortcurveuniformsigma}.

To see assertion \ref{(b1)-Prop:shortcurveuniformsigma}, we apply (\ref{eq:distancelinearu1u2}) again for $\tau' = 0$ and obtain (using $\theta < 1$ and $D > 1$)
\begin{multline*}
\length_0 (\gamma |_{[0, \theta]} )  \leq (1+\eta^*) \bigg( \frac{d^2_0(x_0, y) + \eta^*}{4 \theta}  \bigg)^{1/2} \cdot 2 \theta^{1/2} \\
\leq (1+\eta^*) \big( d^2_0(x_0, y) +  \eta^* \big)^{1/2} < d_0 (x_0, y) + \eta.
\end{multline*}
This shows assertion \ref{(b1)-Prop:shortcurveuniformsigma}.
\end{proof}

If we don't assume that the scalar curvature is small, we obtain a similar result as Proposition \ref{Prop:shortcurveuniformsigma}, but this time $\sigma$ depends on $\delta$.

\begin{Corollary} \label{Cor:shortcurveboundedrrm}
For any $A, E, D < \infty$ and $ \sigma_0, \delta > 0$ there is a constant $\sigma = \sigma (A, E, D, \sigma_0, \delta) > 0$ such that the following holds:

Let $(M, (g_t)_{t \in [-2,0]} )$ be a Ricci flow on a compact, $n$-dimensional manifold $M$ such that
\begin{enumerate}[label=(\roman*)]
\item \label{(i)-Cor:shortcurveboundedrrm} $\nu [ g_{-2}, 4] \geq - A$.
\item \label{(ii)-Cor:shortcurveboundedrrm} $|R| \leq 1$ on $M \times [-2,0]$.
\item \label{(iii)-Cor:shortcurveboundedrrm} For all $(x,t) \in M \times [-1,0]$ and $0 < r, s < 1$ we have
\[ | \{ \rrm (\cdot, t) < sr \} \cap B(x,t, r) |_t \leq E s^{2} r^n. \]
\end{enumerate}

Then for any $x, y \in M$ with $d_0 (x,y) < D$ and
\[ \rrm (x, 0), \; \rrm (y, 0) \geq \sigma_0 \]
there is a smooth curve $\gamma : [0, 1] \to M$ between $x$ and $y$, $\gamma(0) = x$, $\gamma(1) = y$ whose time-$0$ length satisfies
\[ \length_0 (\gamma) < d_0 (x_0,y) + \delta \]
and such that
\[  \rrm (\gamma(s), 0)  > \sigma \qquad \text{for all} \qquad s \in [0, 1]. \]
\end{Corollary}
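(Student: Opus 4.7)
The plan is to reduce to Proposition \ref{Prop:shortcurveuniformsigma} by parabolic rescaling, combined with a two-endpoint midpoint construction that upgrades the measure-theoretic reachability conclusion of that proposition to the existence of a good curve between the specific pair $x, y$. Set $\tilde g_s := \alpha g_{s/\alpha}$ for a large factor $\alpha = \alpha(A, E, D, \sigma_0, \delta) \geq 1$ to be fixed; this is a Ricci flow on $s \in [-2\alpha, 0]$ satisfying $|\tilde R| \leq \alpha^{-1}$, with rescaled distances $\tilde d = \sqrt{\alpha}\, d$ and curvature radii $\tilde{\rrm} = \sqrt{\alpha}\, \rrm$. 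By scale-invariance of $\mu$ and the logarithmic estimate from subsection~\ref{subsec:terminology}, one finds $\nu[\tilde g_{-2}, 4] = \nu[g_{-2/\alpha}, 4/\alpha] \geq -A - \tfrac{n}{2}\log 2 =: -A'$, and a direct computation using $\tilde r = \sqrt{\alpha}\, r$ (absorbing the factor of $\alpha^{n/2}$ in the volume against $\tilde r^n$) shows that hypothesis \ref{(iii)-Cor:shortcurveboundedrrm} transfers to the rescaled flow with the same constant $E$.

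Choose $\alpha$ large enough that $\alpha^{-1}$ is bounded above by the smallness constant $\rho_{\ref{Prop:shortcurveuniformsigma}}$ required below, and apply Proposition \ref{Prop:shortcurveuniformsigma} in $\tilde g$ twice --- once centered at $x$ and once at $y$, both with $\tilde D = \sqrt{\alpha}\, D$, $\tilde \sigma_0 = \sqrt{\alpha}\, \sigma_0$, and tolerances $\delta_*, \eta_* > 0$ to be fixed. This produces open subsets $\tilde S_x, \tilde S_y$ of the respective $\tilde D$-balls with $|\tilde B \setminus \tilde S_\cdot|_{\tilde 0} < \delta_*$ and the property that every $z \in \tilde S_x$ (resp. $\tilde S_y$) is reachable from $x$ (resp. from $y$) by a smooth curve of $\tilde g_0$-length at most $\tilde d(x,z) + \eta_*$ (resp. $\tilde d(z,y) + \eta_*$) running entirely inside $\{\tilde{\rrm} > \tilde \sigma\}$, where $\tilde \sigma = \tilde\sigma(A', E, \tilde D, \tilde \sigma_0, \delta_*)$ is furnished by the same proposition. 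To locate a common reachable $z$, let $m$ be the midpoint of a minimizing $\tilde g_0$-geodesic from $x$ to $y$, so that every $z \in \tilde B(m, 0, \beta)$ satisfies $\tilde d(x, z) + \tilde d(z, y) \leq \tilde d(x, y) + 2\beta$ by the triangle inequality. Proposition \ref{Prop:VolumeBound} gives $|\tilde B(m, 0, \beta)|_{\tilde 0} \geq c(A')(\min\{1, \beta\})^n$, so choosing $\delta_*$ strictly less than half of this lower bound forces $\tilde S_x \cap \tilde S_y \cap \tilde B(m, 0, \beta)$ to be nonempty, and we pick $z$ from it.

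Concatenating the two good curves at $z$ and smoothing the corner (possible because $\tilde{\rrm}(z, 0) > \tilde \sigma$ leaves a whole ball of regular points in which an arbitrarily-short perturbation can be performed) produces a smooth curve $\tilde \gamma$ from $x$ to $y$ lying in $\{\tilde{\rrm} > \tilde \sigma\}$ of $\tilde g_0$-length at most $\tilde d(x, y) + 2\beta + 2\eta_*$. Taking $\beta = \eta_* = \sqrt{\alpha}\,\delta/8$ and scaling back to $g$ yields $\length_0(\gamma) < d_0(x, y) + \delta$ together with $\rrm(\gamma(\cdot), 0) > \tilde \sigma / \sqrt{\alpha} =: \sigma$, where $\sigma$ depends only on $A, E, D, \sigma_0, \delta$ via the choice of $\alpha$. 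The main technical obstacle is the self-consistent choice of $\alpha$: the smallness required, $\alpha^{-1} \leq \rho_{\ref{Prop:shortcurveuniformsigma}}(A', E, \sqrt{\alpha}\, D, \sqrt{\alpha}\,\sigma_0, \delta_*, \eta_*)$, has $\alpha$ on both sides. One resolves this either by tracking the polynomial-type decay of $\rho_{\ref{Prop:shortcurveuniformsigma}}$ through the proofs of Lemmas \ref{Lem:shortLLLprrm}--\ref{Lem:firstsetS} to verify that a valid $\alpha$ indeed exists, or by localizing: subdivide an approximate time-$0$ geodesic from $x$ to $y$ into many short pieces whose endpoints inherit a lower bound on $\rrm$ from an intermediate application of Proposition \ref{Prop:shortcurveuniformsigma}, apply the rescaling-plus-midpoint construction on each piece (where $\tilde D$ remains uniformly bounded), and chain the resulting curves to build $\gamma$.
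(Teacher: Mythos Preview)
Your global-rescaling-plus-midpoint argument does not close, and you have correctly identified the obstruction: the scalar curvature smallness constant $\rho_{\ref{Prop:shortcurveuniformsigma}}$ depends on $D$ and $\sigma_0$, which after rescaling become $\sqrt{\alpha}\,D$ and $\sqrt{\alpha}\,\sigma_0$, so the requirement $\alpha^{-1} \le \rho_{\ref{Prop:shortcurveuniformsigma}}(A',E,\sqrt{\alpha}\,D,\sqrt{\alpha}\,\sigma_0,\delta_*,\eta_*)$ is circular. Your first proposed fix---``tracking the polynomial-type decay of $\rho$ through the proofs''---is not a proof: nothing in Lemmas~\ref{Lem:shortLLLprrm}--\ref{Lem:firstsetS} gives polynomial dependence of $\rho$ on $D$ (the distance-distortion and heat-kernel constants in Propositions~\ref{Prop:Distdistortion} and~\ref{Prop:GaussianHKbounds} are not quantified in $D$, and several steps compose them), so this route is not available without substantial new work.

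Your second fix---localizing to short pieces---is the correct idea and is exactly what the paper does, but your one-sentence description omits the essential step. Subdivision points $z_j$ on a minimizing time-$0$ geodesic from $x$ to $y$ have no a priori lower bound on $\rrm(z_j,0)$; they could lie in the high-curvature region. The paper resolves this by an inductive perturbation: having reached $z'_{j-1}$ with $\rrm(z'_{j-1},0)>\sigma_0^* r$, one applies the rescaled Proposition~\ref{Prop:shortcurveuniformsigma} at scale $r$ centered at $z'_{j-1}$ to obtain a set $S_{z'_{j-1},r}\subset B(z'_{j-1},0,r)$ of small complement, and then uses hypothesis~\ref{(iii)-Cor:shortcurveboundedrrm} together with the lower volume bound from Proposition~\ref{Prop:VolumeBound} to show that $S_{z'_{j-1},r}\cap B(z_j,0,\nu r)\cap\{\rrm(\cdot,0)\ge\sigma_0^* r\}$ is nonempty. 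One then picks $z'_j$ from this intersection. The crucial point is that at small scale $r$ the rescaled flow has $|R|\le r^2$ and the rescaled $D$ is $1$, so there is no circular dependence. Your phrase ``endpoints inherit a lower bound on $\rrm$ from an intermediate application'' conflates two distinct ingredients: the curve from Proposition~\ref{Prop:shortcurveuniformsigma} guarantees $\rrm>\sigma^* r$ \emph{along} it, but to launch the next step you need the stronger bound $\rrm(z'_j,0)>\sigma_0^* r$, and that comes from the volume argument, not from the curve.
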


\begin{proof}
The idea of the proof is to apply Proposition \ref{Prop:shortcurveuniformsigma} several times at small scales.
We therefore first derive the following claim.

\begin{Claim}
For any $\delta^*, \sigma_0^* > 0$ we can find constants $\sigma^* = \sigma^* (A, E, \sigma_0^*, \delta^*), r_0^* = r_0^* (A, \linebreak[1] E, \linebreak[1] \sigma^*_0, \linebreak[1] \delta^*) > 0$ such that the following holds:

For any $0 < r \leq r_0^*$ and any $z_0 \in M$ with $\rrm (z_0, 0) > \sigma_0^* r$ there is an open subset $S_{z_0, r} \subset B(z_0, 0, r)$ such that
\[ |B(z_0, 0, r) \setminus S_{z_0, r} |_0 < \delta^* r^n \]
and such that for any $z_1 \in S_{z_0, r}$ there is a smooth curve $\gamma^* : [0,1] \to M$ between $z_0$ and $z_1$, $\gamma(0) = z_0$, $\gamma (1) = z_1$ such that
\[ \length_0 (\gamma) < d_0 (z_0, z_1) + \delta^* r \]
and such that
\[ \rrm (\gamma(s), 0) > \sigma^* r \textQQqq{for all} s \in [0,1]. \]
\end{Claim}

\begin{proof}
Using the constants $\sigma$ and $\rho$ from Proposition \ref{Prop:shortcurveuniformsigma}, we define $\sigma^* (A, \linebreak[1] E, \linebreak[1] \sigma_0^*, \linebreak[1] \delta^*) := \sigma (A, E, 1, \sigma_0^*, \delta^*)$ and $r_0^* (A, E, \sigma_0^*, \delta^*) = \rho^{1/2} (A, E, 1, \sigma_0^*, \delta^*, \delta^*)$.
The claim then follows from Proposition \ref{Prop:shortcurveuniformsigma} for $D \leftarrow 1$ after rescaling by $r^{-1}$. 
\end{proof}

Before we continue with the proof, let us first choose all the constants.
Let 
\[ \nu := \min \bigg\{ \frac{\delta}{8D}, \frac1{10} \bigg\}. \]
Next, use Proposition \ref{Prop:VolumeBound} to find a $\kappa = \kappa (A) > 0$ such that $| B(z,0,r) |_0 \geq \kappa r^n$ for all $z \in M$ and $0 \leq r < 1$.
Choose $\sigma^*_0 > 0$ such that
\begin{equation} \label{eq:Esigmastar}
  E \Big( \frac{\sigma^*_0}{\nu} \Big)^2 < \frac12 \kappa  \textQQqq{and} \sigma^*_0 < \sigma_0
\end{equation}
and choose
\begin{equation} \label{eq:deltastart12kappa}
 \delta^* := \min \Big\{ \frac12 \kappa \nu^n, \nu \Big\}. 
\end{equation}
Choose $\sigma^* := \sigma^* (A, E, \sigma^*_0)$ and $r^*_0 := r^*_0 (A, E, \sigma^*_0, \delta^*)$ according to the Claim. 
Lastly choose $N \in \IN$ large enough such that
\begin{equation} \label{eq:2DN4DN}
\frac{2D}N < r^*_0 \textQQqq{and} \frac{4D}{N} \nu < \sigma_0. 
\end{equation}

Let $\ov\gamma : [0, 1] \to M$ be a time-$0$ minimizing geodesic between $x$ and $y$.
and choose
\[ z_j := \ov\gamma \Big( \frac{j}{N} \Big) \qquad \text{for} \qquad j = 0, \ldots, N. \]
Note that then
\[ d_0 (z_{j-1}, z_j) = \frac{d_0 (x,y)}{N} =: \frac12 r \qquad \text{for all} \qquad j = 1, \ldots, N, \]
where $r < \frac{2D}N < r^*_0$.
We will now inductively choose points $z'_0, \ldots, z'_{N} \in M$ and smooth curves $\gamma_1, \ldots, \gamma_{N} : [0,1] \to M$ such that $\gamma_j (0) = z'_{j-1}$ and $\gamma_j(1) = z'_j$ with the following properties:
\begin{subequations}
\begin{alignat}{1}
 z'_0 = z_0 = x \qquad& \label{eq:zp0z0x} \displaybreak[1] \\
 d_0 (z'_j, z_j ) < \nu r \qquad &\text{for all} \qquad j = 0, \ldots, N \label{eq:zpjzj} \displaybreak[1] \\
 \gamma_j (0) = z'_{j-1} \textQq{and} \gamma_j(1) = z'_j \qquad &\text{for all} \qquad j = 1, \ldots, N \displaybreak[1] \\
  \length_0 (\gamma_j) \leq  d_0 (z'_{j-1}, z'_j) + \delta^* r \qquad &\text{for all} \qquad j = 1, \ldots N \label{eq:dzpzpzz} \displaybreak[1] \\
 \rrm (z'_j) > \sigma^*_0 r  \qquad &\text{for all} \qquad j = 0, \ldots, N \displaybreak[1] \label{eq:zpje} \\
  \rrm (\gamma_j (s), 0) > \sigma^* r \qquad &\text{for all} \qquad s \in [0,1] \notag \\
  & \textQQq{and} j = 1, \ldots, N \label{eq:zpjf}
\end{alignat}
\end{subequations}
Let us first choose $z'_0 = x$.
Then (\ref{eq:zp0z0x}), (\ref{eq:zpjzj}) obviously hold for $j = 0$.
Next, assume that $j \in \{ 1, \ldots, N-1 \}$ and that $z'_0, \ldots, z'_{j-1}$ and $\gamma_1, \ldots, \gamma_{j-1}$ have already been constructed such that (\ref{eq:zp0z0x})--(\ref{eq:zpjf}) hold.
We will now construct $z'_j$.

For this apply the claim with our choices of $\delta^*, \sigma^*_0$ for $z_0 \leftarrow z'_{j-1}$ and $r \leftarrow r$.
This is possible due to (\ref{eq:zpje}).
We obtain a subset $S_{z'_{j-1}, r} \subset B(z'_{j-1}, r)$ such that
\[ | B(z'_{j-1}, 0, r) \setminus S_{z'_{j-1}, r}  |_0 < \delta r^n  \] 
We now claim that
\begin{equation} \label{eq:intersectionof3}
 S_{z'_{j-1},r} \cap B(z_j, 0, \nu r) \cap \{ \rrm (\cdot, 0) \geq \sigma^*_0 r \} \neq \emptyset. 
\end{equation}
To see this, note that $B(z_j, 0, \nu r) \subset B(z'_{j-1}, 0, r)$, because by (\ref{eq:zpjzj})
\[ d_0 (z'_{j-1}, z_j) \leq d_0 (z'_{j-1}, z_{j-1}) + d_0 ( z_{j-1}, z_j) < \nu r + \frac12 r \leq (1-\nu) r. \] 
So, if (\ref{eq:intersectionof3}) was false, then
\[ \{ \rrm (\cdot, 0) \geq \sigma_0^* r \} \cap B(z_j, 0, \nu r) \subset B(z'_{j-1}, 0, r) \setminus S_{z'_{j-1}, r}. \]
This would imply
\[ \big| \{ \rrm (\cdot, 0) \geq \sigma_0^* r \} \cap B(z_j, 0, \nu r) \big|_0 < \delta^* r^n \]
So
\[  \big| B(z_j, 0, \nu r) \big|_0 - \big| \{ \rrm (\cdot, 0) < \sigma_0^* r \} \cap B(z_j, 0, \nu r) \big|_0 < \delta^* r^n. \]
Using assumption \ref{(iii)-Cor:shortcurveboundedrrm}, this implies
\[ \kappa (\nu r)^n - E \Big( \frac{\sigma_0^*}{\nu} \Big)^2 (\nu r)^n < \delta^* r^n. \]
So
\[ \kappa \nu^n - E \Big( \frac{\sigma_0^*}{\nu} \Big)^2 \nu^n < \delta^*, \]
which contradicts (\ref{eq:Esigmastar}) and (\ref{eq:deltastart12kappa}).
So (\ref{eq:intersectionof3}) is indeed true and we can pick a $z'_j \in S_{z'_{j-1},r} \cap B(z_j, 0, \nu r)$ such that $\rrm (z'_j, 0) \geq \sigma^*_0 r$ as well as a curve $\gamma_j : [0,1] \to M$ such that (\ref{eq:zpjzj})--(\ref{eq:zpjf}) are satisfied.
This finishes the induction and shows that we can choose $z'_0, \ldots, z'_{N}$ and $\gamma_1, \ldots, \gamma_{N}$ such that (\ref{eq:zp0z0x})--(\ref{eq:zpjf}) hold.

Lastly, we choose a minimizing geodesic $\gamma_{N+1} : [0,1] \to M$ between $z'_N$ and $y$.
Note since $\rrm (y, 0) > \sigma_0 \geq 2 \nu r$ and since $\rrm (\cdot, 0)$ is $1$-Lipschitz with respect to $g_0$, we have, using (\ref{eq:2DN4DN}),
\[ \gamma_{N+1} (s) > \sigma_0 - \nu r  = \sigma_0 - \frac{2D}{N} \nu \geq \frac12 \sigma_0  \textQQqq{for all} s \in [0,1] \]
Joining the curves $\gamma_1, \ldots, \gamma_{N+1}$, and smoothing yields a smooth curve $\gamma : [0,1] \to M$ between $x$ and $y$ such that
\[ \rrm (\gamma(s), 0) > \min \Big\{ \sigma^*_0 r, \frac12 \sigma_0 \Big\} \textQQqq{for all} s \in [0,1]. \]
and such that
\begin{align*}
 \length_0 (\gamma) &< \big( d_0(z'_0, z'_1) + \delta^* r \big) + \ldots + \big( d_0 (z'_{N-1}, z'_N) + \delta^* r \big) + \nu r \\
 &\leq \big( d_0 (z_0, z_1) + 2\nu r + \delta^* r \big)  + \big( d_0 (z_1 + z_2 ) + 2 \nu r + \delta^* r ) + \ldots \\
 &\qquad\qquad\qquad\qquad\qquad + \big( d_0 (z_{N-1}, z_N) + 2 \nu r + \delta^* r \big) + \nu r \\
 &\leq d_0 (x,y) + (3N+1) \nu r \\
 & = d_0 (x,y) + (3N+1) \nu \frac{2D}N \\
 & \leq d_0 (x,y) + 8 D \nu \leq d_0 (x,y) + \delta.
\end{align*}
This finishes the proof.
\end{proof}

\subsection{Proof of the compactness result}
Using Proposition \ref{Prop:shortcurveuniformsigma} and Corollary~\ref{Cor:shortcurveboundedrrm}, we are now able to prove the main results of this section, Propositions~\ref{Prop:convtosingspaceLpasspt} and \ref{Prop:effectivelimitXX}.

\begin{proof}[Proof of Proposition \ref{Prop:convtosingspaceLpasspt}]
The proposition will essentially follow from \cite[Theorem 1.2]{Bamler-CGN}.
In order to apply this theorem, we have to verify that the sequence $(M_i, g^i_0, q_i)$ of pointed Riemannian manifolds satisfies properties (A)--(F), which are mentioned in \cite[subsec 1.2]{Bamler-CGN}.
Before doing so, we mention that in this paper we use a slightly different definition of the curvature radius $\rrm$ than in \cite{Bamler-CGN}.
The definition of the curvature radius in this paper (see Definition~\ref{Def:curvradius}) does not involve curvature derivatives, while the definition of $\rrm$ or $\td{r}_{\Rm}$ in \cite{Bamler-CGN} does.
However, this difference does not create any issues, since in our setting these three radii are comparable to one another:
By definition, the curvature radius $\rrm$ in our paper is bounded from below by $\rrm \geq \td{r}_{\Rm}$ from \cite{Bamler-CGN} and by Proposition \ref{Prop:Pseudoloc} and Shi's estimates, it is bounded from above by $C \td{r}_{\Rm} \leq C \rrm$ from \cite{Bamler-CGN}, where $C = C(A) < \infty$ only depends on $A$.

Let us now verify the properties from \cite[subsec 1.2]{Bamler-CGN}:
Property (A) is a direct consequence of Proposition \ref{Prop:VolumeBound}.
Note that the constant $A$ appearing in property (A) can be chosen depending only on the constant $A$ from assumption \ref{(i)-Prop:convtosingspaceLpasspt} of this proposition.
Also the constant $T$ appearing in this property can be chosen to be any $T < \limsup_{i \to \infty} T_i = T_\infty$.
Next, property (B) is a direct consequence of assumption \ref{(iii)-Prop:convtosingspaceLpasspt}.
Here, the constant $\mathbf{p}_0$ appearing in this property has to be chosen slightly smaller than the constant $\mathbf{p}_0$ in assumption \ref{(iii)-Prop:convtosingspaceLpasspt}.
Property (C) is a direct consequence of Corollary~\ref{Cor:shortcurveboundedrrm}.
Property (D) follows  from assumptions \ref{(i)-Prop:convtosingspaceLpasspt} and \ref{(ii)-Prop:convtosingspaceLpasspt} of this proposition, where the constant $A$ of this property depends on the constant $A$ of our proposition.
Property (E) follows from Proposition \ref{Prop:shortcurveuniformsigma} in the case in which $\rho_i \to 0$.
Finally, property (F) follows from \cite[Theorem 1.3]{Bamler-Zhang-Part1}.
The constants $A$ and $T$ appearing in this property can be chosen depending on the constants $A$ and $T_\infty$ of this proposition, respectively.

Now \cite[Theorem 1.2]{Bamler-CGN} implies that the pointed Riemannian manifolds $(M_i, g^i_0, q_i)$ converge to a pointed singular space $(\XX, q_\infty)$ with singularities of codimension $\mathbf{p}_0$.
Note here, that in the context of \cite{Bamler-CGN}, the Riemannian metric $g$ on the regular part $\RR$ of the singular space $\XX = (X, d, \RR, g)$ as well as the convergence of the metric were allowed to have regularity $C^3$ and in the present paper, we require the regularity to be $C^\infty$.
This extra regularity follows easily from Proposition \ref{Prop:Pseudoloc} and Shi's estimates.

The first part of assertion \ref{(b)-Prop:convtosingspaceLpasspt} of this proposition, namely the fact that $\Ric = 0$, follows from Propositions \ref{Prop:Pseudoloc}, \ref{Prop:Ricsmall} and the fact that $\rho_i \to 0$.
The statement about the mildness in assertion \ref{(b)-Prop:convtosingspaceLpasspt} follows from assertion (a) of \cite[Theorem 1.2]{Bamler-CGN} and the fact that property (E) of \cite{Bamler-CGN} holds if $\rho_i \to 0$.
The statement about the tameness in assertion \ref{(b)-Prop:convtosingspaceLpasspt} follows from assertion (b) of \cite[Theorem~1.2]{Bamler-CGN}.

Assertions \ref{(c)-Prop:convtosingspaceLpasspt} and \ref{(e)-Prop:convtosingspaceLpasspt} of this proposition follow from assertions (a) and (c) of \cite[Theorem 3.1]{Bamler-CGN}, which is the more detailed version of \cite[Theorem~1.2]{Bamler-CGN}.
Assertion (b) of \cite[Theorem 3.1]{Bamler-CGN} and our discussion of the different definitions of the curvature radius implies that there is a $C = C(A) < \infty$ such that for all $x \in \RR$
\begin{equation} \label{eq:rrmquasiconvergence}
 C^{-1} \limsup_{i \to \infty} \rrm^{M_i} (\Phi_i (x), 0 ) \leq \rrm^\infty (x) \leq C \liminf_{i \to \infty} \rrm^{M_i} (\Phi_i (x), 0). 
\end{equation}
We will now deduce assertion \ref{(d)-Prop:convtosingspaceLpasspt} of this proposition from this inequality.
For this let $x \in \RR$ and fix some $0 < r < \rrm^\infty (x)$.
Then $B^X (x,r) \subset \RR$ is relatively compact in $\RR$ and $|{\Rm}| < r^{-2}$ on $B^X (x,r)$.
By compactness, we have $B^X (x,r) \subset U_i$ for large $i$.
By lifting curves of length $0 < r' < r$ that start in $\Phi_i (x)$, we can conclude that for any $0 < r' < r$ and large $i$ we have $B^{M_i} (\Phi_i (x), r') \subset \Phi_i (B^X( x,r))$.
So, by the smooth convergence on $\RR$ we have
\begin{equation} \label{eq:rrmgeqrrminfty}
 \liminf_{i \to \infty} \rrm^{M_i} (\Phi_i (x), 0) \geq \rrm^\infty (x). 
\end{equation}
We now show the reverse inequality.
So let $x \in \RR$ and assume that $0 < r \leq \limsup_{i \to \infty} \rrm^{M_i} (\Phi_i (x), 0)$.
We want to show that then $\rrm^\infty (x) \geq r$.
To see this, observe that for any $0 < r' < r$ and $y \in B^X (x, r') \cap \RR$ we have $\lim_{i \to \infty} d^{M_i}_0 (\Phi_i (x), \Phi_i (y) ) = d^X (x,y) < r'$.
So, since the $\rrm^{M_i} (x, \cdot)$ are $1$-Lipschitz with respect to $g^i_0$, we get
\[ \limsup_{i \to \infty} \rrm^{M_i} (\Phi_i(y), 0) \geq \limsup_{i \to \infty} \rrm^{M_i} ( \Phi_i (x), 0) - r' \geq r - r'. \]
Using (\ref{eq:rrmquasiconvergence}), we find that for all $y \in B^X (x,r')$
\[ \rrm^\infty (y) \geq C^{-1} (r - r'). \]
It follows that $B^X (x, r') \subset \RR$ for all $r' < r$ and hence $B^X(x,r')$ is relatively compact in $\RR$ for all $r' < r$.
By smooth convergence we have $|{\Rm}| < r^{-2}$ on $B^X(x,r)$, which shows that $r \leq \rrm^\infty (x)$.
So
\begin{equation} \label{eq:rrmleqrrminfty}
\limsup_{i \to \infty} \rrm^{M_i} (\Phi_i (x), 0) \leq \rrm^\infty (x). 
\end{equation}
Combining (\ref{eq:rrmgeqrrminfty}) and (\ref{eq:rrmleqrrminfty}) yields assertion \ref{(d)-Prop:convtosingspaceLpasspt}.

Finally, assertion \ref{(a)-Prop:convtosingspaceLpasspt} is a consequence of assertion \ref{(d)-Prop:convtosingspaceLpasspt} and assumption \ref{(iii)-Prop:convtosingspaceLpasspt}.
\end{proof}

As a consequence, we obtain Proposition \ref{Prop:effectivelimitXX}.

\begin{proof}[Proof of Proposition \ref{Prop:effectivelimitXX}.]
First observe that by parabolic rescaling and rechoosing the constants $A, E$ it suffices to prove the proposition for $t = 0$.
Fix now $A, E, \eta$ and $\mathbf{p}_0$ and assume that the statement was wrong.
Choose a sequence $\rho_i \to 0$ and consider counterexamples $(M_i, (g^i_t)_{t \in [-2, 0]} )$ of Ricci flows that satisfy assumptions \ref{(i)-Prop:effectivelimitXX}--\ref{(iii)-Prop:effectivelimitXX} for $\rho$ replaced by $\rho_i$ and points $q_i \in M$ such that the conclusion does not hold.
Then we can use Proposition \ref{Prop:convtosingspaceLpasspt} to conclude that the $(M_i, g^i_0, q_i)$ converge to a pointed singular space $(X,d, \RR,g, q_\infty)$ with $\Ric \equiv 0$ on $\RR$ and mild singularities.
Let $(U_i, V_i, \Phi_i)$ be a convergence scheme for this convergence.
We claim that for sufficiently large $i$, the subsets $U = U_i' := U_i \cap B(x_i, \eta^{-1} - \eta / 2)$, $V = V'_i := \Phi_i (U'_i)$ and $\Phi = \Phi'_i := \Phi_i |_{U'_i}$ satisfy assertions \ref{(a)-Prop:effectivelimitXX}--\ref{(g)-Prop:effectivelimitXX}.

To see this, observe that by definition of a convergence scheme assertions \ref{(a)-Prop:effectivelimitXX}, \ref{(b)-Prop:effectivelimitXX} and \ref{(e)-Prop:effectivelimitXX} hold automatically for large $i$.
Assertions \ref{(c)-Prop:effectivelimitXX}, \ref{(d)-Prop:effectivelimitXX} are a direct consequence of Proposition \ref{Prop:convtosingspaceLpasspt}\ref{(e)-Prop:convtosingspaceLpasspt}.
Assertion \ref{(f)-Prop:effectivelimitXX} follows from Proposition \ref{Prop:convtosingspaceLpasspt}\ref{(c)-Prop:convtosingspaceLpasspt} and the fact that due to volume comparison on $\XX$ (see \cite[Proposition 4.1]{Bamler-CGN}) the map $r \mapsto |B^X (x,r) \cap \RR|$ is continuous.
Assertion \ref{(g)-Prop:effectivelimitXX} is a direct consequence of Proposition \ref{Prop:convtosingspaceLpasspt}\ref{(a)-Prop:convtosingspaceLpasspt}.
\end{proof}

\section{$\eps$-regularity Theorem} \label{sec:epsregularity}
\subsection{Statement of the results}
The main result of this section will be an $\eps$-regularity theorem for Ricci flows with small scalar curvature that satisfy an additional a priori $L^p$-curvature bound.
More specifically, we will prove that any ball with almost Euclidean volume has bounded curvature at its center, if the scalar curvature bound is small enough.
An important property of this $\eps$-regularity theorem is that the constants quantifying the curvature bound at the center of this ball and the degree to which the volume of the ball is almost Euclidean (namely $\eps$) are independent of the imposed a priori $L^p$-curvature bound.
Only the scalar curvature bound will depend on this a priori $L^p$-curvature bound.

Based on the $\eps$-regularity theorem and by passing to the limit we will furthermore show that the limiting singular space in Proposition \ref{Prop:convtosingspaceLpasspt} from section \ref{sec:firstconvsing} is $Y$-tame.
Here $Y$ only depends on a lower bound $A$ on Perelman's $\nu$-functional and not on the integral curvature bound $\mathbf{E}_{\mathbf{p}}$.

Let us now state the main results more precisely:

\begin{Proposition}[$\eps$-regularity Theorem] \label{Prop:epsreg}
For any $A, E < \infty$ and $\mathbf{p} > 3$ there are constants $\eps_0 = \eps_0 (A), \sigma_0 = \sigma_0 (A), \rho = \rho (A, E, \mathbf{p}) > 0$ such that the following holds:

Let $(M, (g_t)_{t \in [-2,0]} )$ be a Ricci flow on a compact, $n$-dimensional and orientable manifold $M$ and assume that
\begin{enumerate}[label=(\roman*)]
\item \label{(i)-Prop:epsreg} $\nu [ g_{-2}, 4] \geq - A$.
\item \label{(ii)-Prop:epsreg} $|R| \leq \rho$ on $M \times [-2,0]$.
\item \label{(iii)-Prop:epsreg} For all $(x,t) \in M \times [-1,0]$ and $0 < r, s < 1$ we have
\[ | \{ \rrm (\cdot, t) < sr \} \cap B(x,t, r) |_t \leq E s^{\mathbf{p}} r^n. \]
\end{enumerate}
Then for any $x_0 \in M$ and $0 < r_0 < 1$ for which
\[ |B(x_0, 0, r_0 )|_0 > ( \omega_n - \eps_0) r_0^n \]
we have $\rrm (x_0, 0) > \sigma_0 r_0$.
\end{Proposition}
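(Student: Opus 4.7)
The plan is to argue by contradiction and reduce to a volume-rigidity statement on the singular limit provided by Proposition \ref{Prop:convtosingspaceLpasspt}. First, parabolically rescale by $r_0^{-1}$ to arrange $r_0=1$: hypothesis (i) is scale invariant, (ii) improves to $|R| \leq \rho r_0^2 \leq \rho$, (iii) is preserved, and the time interval enlarges to $[-2r_0^{-2},0]\supset[-2,0]$. Suppose for contradiction that for some $A, E, \mathbf{p}$ no admissible choice of $\eps_0=\eps_0(A)$, $\sigma_0=\sigma_0(A)$, $\rho=\rho(A,E,\mathbf{p})$ exists. Taking $\rho_i\to 0$ and provisionally fixed $\eps_0, \sigma_0$ (to be read off from the rigidity step below), one obtains flows $(M_i,(g^i_t)_{t\in[-T_i,0]})$ with $T_i\geq 2$ satisfying (i), (iii) uniformly and (ii) with $\rho_i$, and basepoints $x_i\in M_i$ with
\[
|B(x_i,0,1)|_0 > \omega_n-\eps_0, \qquad \rrm(x_i,0)\leq \sigma_0.
\]

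Apply Proposition \ref{Prop:convtosingspaceLpasspt} with $q_i:=x_i$. After passing to a subsequence, $(M_i,g^i_0,x_i)$ converges to a pointed singular space $(\XX,q_\infty)=(X,d,\RR,g,q_\infty)$ which, by \ref{(b)-Prop:convtosingspaceLpasspt}, is Ricci flat on $\RR$, has mild singularities, and has singularities of codimension $\mathbf{p}>3$. Assertion \ref{(c)-Prop:convtosingspaceLpasspt}, combined with Colding-type volume convergence in the non-collapsed Ricci-flat singular setting of \cite{Bamler-CGN} and the continuity of $r\mapsto|B^X(\cdot,r)\cap\RR|$, transfers the almost-Euclidean volume bound to the limit:
\[
|B^X(q_\infty,1)\cap\RR| \geq \omega_n-\eps_0.
\]
The Bishop--Gromov inequality on $\XX$ together with volume-cone-implies-metric-cone rigidity from \cite{Bamler-CGN} then forces $\eps_0=\eps_0(A)$ to be chosen so small that $q_\infty\in\RR$ and $\rrm^\infty(q_\infty)\geq 2\sigma_0(A)$ for some $\sigma_0(A)>0$ (quantitatively, $B^X(q_\infty,1)$ becomes isometrically close enough to the Euclidean unit ball that Anderson-type $\eps$-regularity applies on the regular part). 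Finally, assertion \ref{(d)-Prop:convtosingspaceLpasspt}, combined with the $1$-Lipschitz dependence of $\rrm$ on $g^i_0$-distance and the fact that one may choose regular points $q_\infty^*\in\RR$ arbitrarily close to $q_\infty$, yields $\liminf_{i\to\infty}\rrm(x_i,0)\geq\rrm^\infty(q_\infty)\geq 2\sigma_0(A)$, contradicting $\rrm(x_i,0)\leq\sigma_0(A)$ for large $i$.

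The central obstacle is to ensure that $\eps_0, \sigma_0$ depend only on $A$, not on $E$ or $\mathbf{p}$. The Bishop--Gromov and rigidity constants on $\XX$ in \cite{Bamler-CGN} must be shown to be driven solely by the non-collapsing/entropy data coming from $\nu[g_{-2},4]\geq-A$, while the codimension hypothesis $\mathbf{p}>3$ enters only qualitatively, to guarantee that the singular set is negligible for the cut-off and integration arguments underlying volume convergence and rigidity. Any quantitative dependence on $E, \mathbf{p}$ is absorbed into the threshold $\rho(A,E,\mathbf{p})$ inherited from the compactness step in Proposition \ref{Prop:convtosingspaceLpasspt}. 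An alternative, more direct route (the one flagged in the introduction) would replace the limit argument by a Cheeger--Colding segment inequality adapted to Ricci flows with bounded scalar curvature, extracting the curvature bound at $x_0$ from integral estimates along families of almost-minimizing curves through $B(x_0,0,r_0)$; the orientability hypothesis in the statement suggests that this direct route uses harmonic/global analytic tools for which orientability is convenient.
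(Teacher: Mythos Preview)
Your contradiction/compactness setup is fine, and passing the almost-Euclidean volume to the limit $\XX$ via Proposition~\ref{Prop:convtosingspaceLpasspt}\ref{(c)-Prop:convtosingspaceLpasspt} is legitimate. The gap is in the next step: you assert that on the Ricci-flat singular limit $\XX$, almost-Euclidean volume of $B^X(q_\infty,1)$ forces $q_\infty\in\RR$ with $\rrm^\infty(q_\infty)\geq 2\sigma_0(A)$. That assertion is exactly $Y$-regularity of $\XX$ (Definition~\ref{Def:Yregularity}), and the paper proves it as Corollary~\ref{Cor:checkregularityasspt} --- \emph{as a consequence of} Proposition~\ref{Prop:epsreg}. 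So the argument is circular.

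You try to justify the step by invoking ``volume-cone-implies-metric-cone'' and ``Anderson-type $\eps$-regularity on the regular part''. But Anderson's theorem is for smooth Einstein manifolds; on $\XX$ you do not know a priori that $q_\infty$ is regular, and Gromov--Hausdorff closeness of $B^X(q_\infty,1)$ to the Euclidean unit ball does not by itself exclude a singularity at the center. Proposition~\ref{Prop:convtosingspaceLpasspt}\ref{(b)-Prop:convtosingspaceLpasspt} supplies only $Y$-\emph{tameness} of $\XX$, which is weaker than $Y$-regularity; the results of \cite{Bamler-CGN} that convert almost-maximal volume into a curvature-radius bound take $Y$-regularity as an input, not as an output. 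Your workaround via nearby regular points $q_\infty^*$ also collapses here: if $q_\infty\notin\RR$ then $\rrm^\infty(q_\infty^*)\to 0$ as $q_\infty^*\to q_\infty$, so assertion~\ref{(d)-Prop:convtosingspaceLpasspt} gives nothing.

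The paper breaks the circularity by proving $\eps$-regularity directly on the flow via Perelman's reduced volume. The almost-Euclidean volume hypothesis, fed through the segment inequality on a nearby singular model and the curve-avoidance estimates of Proposition~\ref{Prop:almostgeodesicalmostalways} (this is where $\mathbf{p}>3$ is actually used), yields $\td V_{(x,0)}(1/2)>1-\delta$ for all $x$ near $x_0$ (Lemma~\ref{Lem:redvolumelarge}); an independent gap theorem (Lemma~\ref{Lem:epsregredvolume}), proved by a point-picking blow-up, then gives $\rrm(x_0,0)>\sigma$. The constants $\delta,\sigma$ in Lemma~\ref{Lem:epsregredvolume} and $\nu$ in Lemma~\ref{Lem:redvolumelarge} depend only on $A$, which is why $\eps_0,\sigma_0$ depend only on $A$; the constants $E,\mathbf{p}$ enter only through the threshold $\rho$. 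Your final paragraph's ``alternative direct route'' is thus close in spirit to what the paper actually does, except that the curvature bound is extracted via the reduced volume rather than directly from the almost-geodesics.
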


Note that $\eps_0$ and $\sigma_0$ don't depend on $E$ or $\mathbf{p}$; only $\rho$ does.
This fact will be important for us in section \ref{sec:mainproof}, where a value for $E = E(A)$ will be determined based on $\sigma_0$ such that assumption \ref{(iii)-Prop:epsreg} always holds.

We also remark that a similar result was considered in \cite[Proposition~4.16]{Chen-Wang-II}.

Using Proposition \ref{Prop:epsreg}, we can refine the compactness results of section \ref{sec:firstconvsing}.

\begin{Corollary}[regularity of the limit] \label{Cor:checkregularityasspt}
For any $A < \infty$ there is a $Y = Y(A) < \infty$ such that the following holds:

Assume that we are in the setting of Proposition \ref{Prop:convtosingspaceLpasspt} and assume that $\rho_i \to 0$, $\mathbf{p}_0 > 3$ and that all $M_i$ are orientable.
Let $\XX$ be the limiting singular space and set $T_\infty := \limsup_{i \to \infty} T_i$.

Then $\XX$ is $Y$-regular at scale $\sqrt{T_\infty}$ (in the sense of Definition \ref{Def:Yregularity}).
\end{Corollary}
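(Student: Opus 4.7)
The plan is to derive the $Y$-regularity of the limit $\XX$ by transferring the volume hypothesis to the approximating Ricci flows via the convergence scheme and then invoking the $\eps$-regularity theorem (Proposition \ref{Prop:epsreg}). Let $\eps_0 = \eps_0(A)$ and $\sigma_0 = \sigma_0(A)$ be the constants from that proposition; I will set $Y$ depending only on $\eps_0$, $\sigma_0$ and a small auxiliary parameter $\delta = \delta(A)$, so that $Y$ depends only on $A$. Since Proposition \ref{Prop:epsreg} applies at centers lying in the regular part of the underlying flow, the candidate point $p$ (which may be singular) will first be approximated by a nearby regular point $p' \in \RR$, at the cost of only a small multiplicative loss in the volume ratio.

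Fix $p \in X$ and $0 < r < \sqrt{T_\infty}$ (up to a universal factor depending only on $A$, which can be absorbed into $Y$) with $|B^X(p,r) \cap \RR| > (\omega_n - Y^{-1}) r^n$. By density of $\RR$, choose $p' \in \RR$ with $d(p, p') < \delta r$. Volume monotonicity on the Ricci-flat singular space $\XX$, provided by the theory of \cite{Bamler-CGN}, together with the inclusion $B^X(p, (1-2\delta)r) \subset B^X(p', (1-\delta) r)$, yields
\[
 |B^X(p', (1-\delta) r) \cap \RR| \geq (1 - 2\delta)^n (\omega_n - Y^{-1}) r^n > \big(\omega_n - \tfrac12 \eps_0\big)\big((1-\delta)r\big)^n,
\]
for $\delta$ universally small and $Y$ universally large. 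Push this forward by the convergence scheme $\Phi_i$: by Proposition \ref{Prop:convtosingspaceLpasspt}\ref{(c)-Prop:convtosingspaceLpasspt}, for $p'_i := \Phi_i(p')$ and sufficiently large $i$ one has $|B^{M_i}(p'_i, 0, (1-\delta)r)|_0 > (\omega_n - \eps_0)((1-\delta)r)^n$. Parabolically rescale $(M_i, g^i_t)$ by $((1-\delta)r)^{-1}$: the $L^{\mathbf{p}_0}$-bound in hypothesis \ref{(iii)-Prop:convtosingspaceLpasspt} is scale-invariant, the scalar curvature becomes $\leq ((1-\delta)r)^2 \rho_i \to 0$, and the $\nu$-entropy bound on the new time interval $[-2,0]$ persists with the same constant $-A$ (using the combined $\tau$- and $t$-monotonicity of $\nu$ recorded in Section \ref{sec:terminologyandconventions}, together with the fact that $((1-\delta)r)^2 \leq T_i/2$ for large $i$, which is ensured by taking $r$ below a suitable multiple of $\sqrt{T_\infty}$). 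Proposition \ref{Prop:epsreg} then yields $\rrm^{M_i}(p'_i,0) > \sigma_0(1-\delta)r$.

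Finally, pass the curvature bound back to $\XX$ via Proposition \ref{Prop:convtosingspaceLpasspt}\ref{(d)-Prop:convtosingspaceLpasspt}: $\rrm^\infty(p') \geq \sigma_0(1-\delta)r$. By the very definition of the curvature radius on a singular space one has $B^X(p', \rrm^\infty(p')) \subset \RR$, and since $d(p,p') < \delta r < \sigma_0(1-\delta)r$ whenever $\delta < \sigma_0/(1+\sigma_0)$, the point $p$ itself lies in $\RR$; the $1$-Lipschitz property of $\rrm^\infty$ on $\RR$ then gives $\rrm^\infty(p) \geq (\sigma_0(1-\delta) - \delta) r \geq \tfrac12 \sigma_0 r$, so that setting $Y := \max(2\eps_0^{-1}, 2\sigma_0^{-1})$ (after fixing $\delta$ universally) completes the proof. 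I expect the main technical obstacle to be the bookkeeping for the parabolic rescaling of the $\nu$-entropy—ensuring that the rescaled flow remains in the admissible regime for Proposition \ref{Prop:epsreg} uniformly in $i$—together with the justification that volume monotonicity on $\XX$, as developed in \cite{Bamler-CGN}, is robust enough to allow the trade from $p$ to the nearby regular $p'$ with only a multiplicative error $(1-2\delta)^n$ in the volume.
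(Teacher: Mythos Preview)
Your proposal is correct and follows essentially the same route as the paper. The paper's proof is extremely terse---it simply cites Proposition~\ref{Prop:epsreg} together with assertions \ref{(c)-Prop:convtosingspaceLpasspt} and \ref{(d)-Prop:convtosingspaceLpasspt} of Proposition~\ref{Prop:convtosingspaceLpasspt} (and, as an alternative, packages Proposition~\ref{Prop:epsreg} as property~(G) of \cite{Bamler-CGN} and invokes \cite[Theorem~1.2(c)]{Bamler-CGN})---while you have written out the natural details: the approximation of a possibly singular $p$ by a nearby $p'\in\RR$, the use of Bishop--Gromov on $\XX$ (via \cite[Proposition~4.1]{Bamler-CGN}) to preserve the volume ratio, the transfer of the volume lower bound to the $M_i$ via \ref{(c)-Prop:convtosingspaceLpasspt}, the parabolic rescaling needed to feed into Proposition~\ref{Prop:epsreg}, and the transfer of the curvature-radius bound back via \ref{(d)-Prop:convtosingspaceLpasspt}.
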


\subsection{Existence of almost geodesics that stay away from high curvature regions}
The main result of this subsection, Proposition \ref{Prop:almostgeodesicalmostalways}, states that, as long as the scalar curvature bound $\rho$ is chosen small enough, we can find an almost geodesic between almost every pair of points with the following property:
All points on this almost geodesic have bounded curvature and small Ricci curvature at all times of the time-interval $[-1/2,0]$.

In order to show Proposition \ref{Prop:almostgeodesicalmostalways}, we will first show that we can find such almost geodesics on which the curvature is bounded at a single time.

\begin{Lemma} \label{Lem:gammaavoidatt}
For any $A, E, D < \infty$, $\mathbf{p} > 2$ and $\delta > 0$ there are constants $0 < \sigma_* = \sigma_* (A, E, D, \mathbf{p}, \delta) < 1$, $C_* = C_* (A, E, D) < \infty$ and $\rho = \rho (A, E, D, \mathbf{p}, \delta) > 0$ such that the following holds:

Let $(M, (g_t)_{t \in [-2,0]} )$ be a Ricci flow on a compact, $n$-dimensional manifold $M$ with the property that
\begin{enumerate}[label=(\roman*)]
\item \label{(i)-Lem:gammaavoidatt} $\nu [ g_{-2}, 4] \geq - A$.
\item \label{(ii)-Lem:gammaavoidatt} $|R| \leq \rho$ on $M \times [-2,0]$.
\item \label{(iii)-Lem:gammaavoidatt} For all $(x,t) \in M \times [-1,0]$ and $0 < r, s < 1$ we have
\[ | \{ \rrm (\cdot, t) < sr \} \cap B(x,t, r) |_t \leq E s^{\mathbf{p}} r^n. \]
\end{enumerate}
Let $x_0 \in M$ and $t_0 \in [-\frac12,0]$.
Then there is a closed subset
\[ S' \subset B(x_0, t_0, D) \times B(x_0, t_0, D), \] 
a smooth function
\[ l :  \big( (B(x_0, t_0, D) \times B(x_0, t_0, D)) \setminus S' , \qquad (y_1, y_2) \longmapsto l_{y_1, y_2} \]
and a smooth family of curves
\[ \gamma_{y_1, y_2}: [0, l_{y_1, y_2}] \longrightarrow M, \qquad (y_1, y_2) \in  \big( (B(x_0, t_0, D) \times B(x_0, t_0, D) \big) \setminus S' \]
with the following properties:
\begin{enumerate}[label=(\alph*)]
\item \label{(a)-Lem:gammaavoidatt} $|S'|_{t_0} < \delta / 2$ with respect to the product measure $dg_{t_0} \otimes dg_{t_0}$.
\item \label{(b)-Lem:gammaavoidatt} For any  $(y_1, y_2) \in (B(x_0, t_0, D) \times B(x_0, t_0, D)) \setminus S'$ we have $\gamma_{y_1, y_2} (0) = y_1$, $\gamma_{y_1, y_2} (l_{y_1,y_2}) = y_2$ and
\[ 1- \delta <  |\gamma'_{y_1, y_2} (s) |_{t_0} < 1 + \delta \textQQqq{for all} s \in [0,l_{y_1, y_2}]. \]
\item \label{(c)-Lem:gammaavoidatt} For any such pair $(y_1, y_2)$ we have $\rrm (\gamma_{y_1, y_2} (s), t_0) > \sigma_*$ for all $s \in [0,l_{y_1, y_2}]$.
\item \label{(d)-Lem:gammaavoidatt} For any such pair $(y_1, y_2)$ we have
\[ | \length_{t_0} (\gamma_{y_1, y_2}) - d_{t_0} (y_1, y_2) | < \delta. \]
\item \label{(e)-Lem:gammaavoidatt} For any such pair $(y_1, y_2)$ we have
\[ l_{y_1, y_2} > \sigma_*. \]
\item \label{(f)-Lem:gammaavoidatt} We have the segment inequality
\begin{multline*}
\qquad\quad \int_{(B(x_0, t_0, D) \times B(x_0, t_0, D)) \setminus S'} \int_0^{l_{y_1, y_2}} f(\gamma_{y_1, y_2} (s)) ds dg_{t_0} (y_1) dg_{t_0} (y_2) \\ \leq C_* \int_{B(x_0, t_0, 10D)} f dg_{t_0} 
\end{multline*}
for any non-negative, bounded and Borel measurable function $f : M \to [0, \infty)$.
\end{enumerate}
\end{Lemma}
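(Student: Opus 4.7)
First I would reduce to $t_0 = 0$ by parabolic time-translation, which preserves hypotheses \ref{(i)-Lem:gammaavoidatt}--\ref{(iii)-Lem:gammaavoidatt}. Pick a threshold $\sigma_1 = \sigma_1(A, E, D, \mathbf{p}, \delta) > 0$ small enough that hypothesis \ref{(iii)-Lem:gammaavoidatt} gives $|B(x_0, 0, D) \setminus G|_0 < \delta/8$, where $G := \{y \in B(x_0, 0, D) : \rrm(y, 0) > \sigma_1\}$. For each $y_1 \in G$, apply Proposition \ref{Prop:shortcurveuniformsigma} with base point $y_1$, distance $10D$, initial threshold $\sigma_1$, and a sufficiently small tolerance, to obtain a subset $S_{y_1} \subset B(y_1, 0, 10D)$ of almost full measure together with a smooth, almost-minimizing curve $\gamma_{y_1, y_2}$ to each $y_2 \in S_{y_1}$, along which $\rrm(\cdot, 0) > \sigma_*$. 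Reparameterize by time-$0$ arclength. Define $S'$ to be the union of pairs with $y_1 \notin G$, with $y_2 \notin S_{y_1}$, or with $d_0(y_1, y_2) < 2\sigma_*$; a Fubini argument, combined with the volume bound of Proposition \ref{Prop:VolumeBound} and assumption \ref{(iii)-Lem:gammaavoidatt}, yields $|S'|_0 < \delta/2$. Properties \ref{(a)-Lem:gammaavoidatt}--\ref{(e)-Lem:gammaavoidatt} are then immediate.

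The substantive step is the segment inequality \ref{(f)-Lem:gammaavoidatt}. The curves from Proposition \ref{Prop:shortcurveuniformsigma} are obtained as reparameterizations of projections of $\LL$-geodesics based at $(y_1, 0)$; thus for fixed $y_1 \in G$ they are indexed by their initial $\LL$-tangent vector $v \in T_{y_1}M$ via $\gamma_{y_1, y_2}(\tau) = \LL\exp_\tau(v)$ on some interval $[0, \theta]$. Changing variables $y_2 \mapsto v$ on the subset $W \subset T_{y_1}M$ supplied by Lemma \ref{Lem:firstsetS}, and changing parameter from arclength $s$ back to $\tau$ with Jacobian $|\gamma'(\tau)|_{-\tau}^{-1}$ controlled by Lemma \ref{Lem:LLcontrolledspeed}\ref{(b4)-Lem:LLcontrolledspeed}, the left-hand side of \ref{(f)-Lem:gammaavoidatt} is dominated by
$$ C \int_{G} \int_0^\theta \int_W f\bigl(\LL\exp_\tau(v)\bigr) \, J^\LL(v,\theta) \, dv \, d\tau \, dg_0(y_1). $$
Perelman's monotonicity of $\tau^{-n/2} e^{-l} J^\LL$---applied exactly as in the estimates \eqref{eq:integrallargetau}--\eqref{eq:integralsmalltau}---then replaces $J^\LL(v,\theta)$ by $C \cdot J^\LL(v,\tau)$, and the inner integral in $v$ becomes $\int_M f \, dg_{-\tau}$, which is bounded by a constant multiple of $\int_{B(x_0, 0, 10D)} f \, dg_0$ via Proposition \ref{Prop:Distdistortion}. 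Integrating in $\tau$ and $y_1$ produces \ref{(f)-Lem:gammaavoidatt}.

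The main obstacle is the matching between the time-$0$ arclength parameterization and the $\LL$-parameterization: the Jacobian factor $|\gamma'(\tau)|_{-\tau}^{-1}$ is singular as $\tau \to 0$. The lower bound $d_0(y_1, y_2) > 2\sigma_*$ built into $S'$ forces the relevant $\tau$-interval to be bounded away from $0$, but one must carefully split off the contribution near $\tau = 0$---where Lemma \ref{Lem:LLnotleavparnbhd} and Proposition \ref{Prop:Pseudoloc} give curvature control depending only on $A$---from the main portion, and verify that the resulting constant depends only on $A$, $E$, $D$ and not on $\mathbf{p}$ or $\delta$, as this independence is what ultimately lets $C_*(A,E,D)$ be named. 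The smallness of $\rho$ enters precisely to guarantee, through Proposition \ref{Prop:shortcurveuniformsigma} and Lemma \ref{Lem:firstsetS}, that the $\LL$-geodesic speeds are tightly controlled by $d_0(y_1, y_2)$ and so the reparameterization factor is integrable.
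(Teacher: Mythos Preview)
Your approach differs substantially from the paper's, and the segment inequality step contains a genuine gap.

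The paper does \emph{not} build the curves $\gamma_{y_1,y_2}$ from $\mathcal{L}$-geodesics at all. Instead it invokes Proposition~\ref{Prop:effectivelimitXX} to compare $(M, g_{t_0})$ near $x_0$ with a Ricci-flat singular space $(\XX, q_\infty)$ via a diffeomorphism $\Phi : U \to V$. On $\XX$ the Cheeger--Colding segment inequality holds (this is \cite[Proposition~7.4]{Bamler-CGN}) with a constant $C = C(D)$ depending only on $D$, because $\XX$ is Ricci flat with mild singularities. The curves $\gamma_{y_1,y_2}$ are defined as the $\Phi$-images of unique minimizing geodesics $\gamma^*_{z_1,z_2}$ on $\XX$, and the set $S'$ is the $\Phi$-image of a bad set $S^* = S^*_1 \cup S^*_2 \cup S^*_3$ (non-unique pairs, too-close pairs, pairs whose geodesic meets $\{\rrm^\infty \leq 2\sigma_*\}$). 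The segment inequality \ref{(f)-Lem:gammaavoidatt} is then literally the pullback of the segment inequality on $\XX$ through $\Phi$; this is exactly why $C_*$ depends only on $A, E, D$.

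Your $\mathcal{L}$-geodesic route cannot deliver \ref{(f)-Lem:gammaavoidatt}. After your changes of variable the integrand carries the factor $|\gamma'(\tau)|_0 \cdot J^\LL(v,\theta)$. Perelman's monotonicity combined with Lemma~\ref{Lem:LLcontrolledspeed}\ref{(b3)-Lem:LLcontrolledspeed} gives $J^\LL(v,\theta) \leq e (\theta/\tau)^{n/2} J^\LL(v,\tau)$, and \ref{(b4)-Lem:LLcontrolledspeed} gives $|\gamma'(\tau)| \lesssim (\theta\tau)^{-1/2}$; the resulting $\tau$-integral $\int_0^\theta \theta^{(n-1)/2} \tau^{-(n+1)/2} d\tau$ diverges at $\tau = 0$ for every $n \geq 1$. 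Your proposed fix---confining the curve to $B(y_1,0,\sigma_0/10)$ for small $\tau$ via Lemma~\ref{Lem:LLnotleavparnbhd}---worked in Lemma~\ref{Lem:firstsetS} only because there $f = \rrm^{-1.5}$ was \emph{pointwise bounded} near $y_1$; for an arbitrary $f$ it yields a $\sup f$ bound, not $\int f$. The standard Riemannian cure is to split the geodesic at its midpoint and estimate each half from its own endpoint, but $\mathcal{L}$-geodesics are asymmetric in their endpoints (one lives at time $0$, the other at time $-\theta$), so this trick is unavailable. Even on the range $[\alpha_0\theta,\theta]$ where the integral is finite, the constant contains $\alpha_0(A,D,\sigma_0)^{-(n-1)/2}$, and your $\sigma_0 = \sigma_1(A,E,D,\mathbf{p},\delta)$ forces $C_*$ to depend on $\mathbf{p}$ and $\delta$, contrary to the statement.

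A secondary issue: Proposition~\ref{Prop:shortcurveuniformsigma} furnishes, for each fixed $y_1$, a curve to each $y_2$ in some set $S_{y_1}$, but it gives no smooth dependence on $y_1$, and the set $S_{y_1}$ arises from a measure-theoretic cutoff in the proof of Lemma~\ref{Lem:firstsetS} that need not vary regularly with $y_1$. The lemma requires a smooth family in both endpoints; in the paper this comes for free from the smooth dependence of minimizing geodesics on endpoints inside the open set $\mathcal{G}^* \subset \RR \times \RR$.
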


\begin{proof}
Without loss of generality, we may assume in the following that $D > 1$.
Let $0 < \eta < \min \{  (10 D)^{-1}, \sqrt{\delta/4} \}$ be a constant whose value we will determine in the course of this proof, depending only on $A, E, D, \delta$ and choose $\rho = \rho (A, E, \eta) > 0$ according to Proposition \ref{Prop:effectivelimitXX}.
Then, by Proposition \ref{Prop:effectivelimitXX} for $(q,t) \leftarrow (x_0, t_0)$, we can find a pointed singular space $(\XX, q_\infty) = (X, d, \RR, g, q_\infty)$ with $\Ric = 0$ on $\RR$ and mild singularities, subsets $U \subset \RR$, $V \subset M$ and a diffeomorphism $\Phi : U \to V$ such that assertions \ref{(a)-Prop:effectivelimitXX}--\ref{(g)-Prop:effectivelimitXX} of Proposition~\ref{Prop:effectivelimitXX} hold.
Moreover, by assertion \ref{(g)-Prop:effectivelimitXX} of Proposition~\ref{Prop:effectivelimitXX} and a covering argument involving volume comparison on $\XX$ (see \cite[Proposition 4.1]{Bamler-CGN}), we find a constant $E^* = E^* (D, E) < \infty$ such that for all $0 < s < 1$
\begin{equation} \label{eq:rrmsEstarspp}
 \big| \{ \rrm^\infty < s \} \cap B^X (q_\infty, 10 D) \cap \RR \big| \leq E^* s^{\mathbf{p}}. 
\end{equation}
Here $\rrm^\infty$ denotes the curvature radius on $\XX$.

By \cite[Proposition 7.4]{Bamler-CGN} we have a segment inequality on $\XX$.
More specifically, we can find an open subset $\mathcal{G}^* \subset \RR \times \RR$ such that $(\RR \times \RR) \setminus \mathcal{G}^*$ has measure zero and such that for any $(z_1, z_2) \in \mathcal{G}^*$ there is a unique minimizing arclength geodesic
\[ \gamma^*_{z_1, z_2} : [0, d^X (z_1, z_2)] \to \RR. \]
and $d^X (z_1, z_2)$ as well as $\gamma^*$ depends smoothly on $z_1, z_2$.
Then the segment inequality on $\XX$ can be expressed as
\begin{equation} \label{eq:segmentonXXinproof}
 \int_{(B^X(q_\infty, 2D)  \times B^X (q_\infty, 2D)) \cap \mathcal{G}^*} \int_0^{d^X (z_1, z_2)} f(\gamma^*_{z_1, z_2} (s)) ds dg(z_1) dg(z_2) \leq C \int_{\RR} f dg 
\end{equation}
for any non-negative, bounded and Borel measurable function $f : \RR \to [0,\infty)$.
Here $C = C(D) < \infty$ denotes a uniform constant (the constant $C$ depends only on $D$ due to volume comparison on $\XX$, see \cite[Proposition 4.1]{Bamler-CGN}).
Pick $0 < \sigma_* = \sigma_* (D, \delta) < 1$ such that
\begin{equation} \label{eq:choiceofsigmastar}
 C E^* 4^{\mathbf{p}} \sigma_*^{\mathbf{p}-1} < \delta/32 \textQQqq{and}  \omega_n \sigma_*^n, \, \omega_n^2 (2D)^n \sigma_*^n < \delta / 32. 
\end{equation}
Here $\omega_n$ denotes the volume of the unit ball in Euclidean space.

We will now construct a subset $S^* \subset \RR \times \RR$ which will be used to construct $S'$.
Define first
\[ S^*_1 := \big( \big( B^X (q_\infty, 2D) \cap \RR \big) \times \big( B^X (q_\infty, 2D) \cap \RR \big)  \big) \setminus \mathcal{G}^*. \]
Note that $S^*_1$ is closed in $( B^X (q_\infty, 2D) \cap \RR ) \times ( B^X (q_\infty, 2D) \cap \RR )$ and has measure zero.
Next, define
\begin{multline*}
 S_2^* := \big\{ (z_1, z_2) \in \big( B^X (q_\infty, 2D) \cap \RR \big) \times \big( B^X(q_\infty, 2D) \cap \RR \big) \;\; : \;\; d^X (z_1, z_2) \leq \sigma_* \big\} \\
 = \bigcup_{z_1 \in B^X (q_\infty, 2D) \cap \RR} \{ z_1 \} \times \big( \big( B^X (q_\infty, 2D) \cap \RR \big) \cap \overline{B^X (z_1, \sigma_*)} \big).
\end{multline*}
Then $S^*_2$ is closed in $( B^X (q_\infty, 2D) \cap \RR ) \times ( B^X (q_\infty, 2D) \cap \RR )$.
By volume comparison (see \cite[Proposition 4.1]{Bamler-CGN}) and Fubini's Theorem, we have
\[ |S^*_2| \leq \big| B^X (q_\infty, 2D) \cap \RR \big| \cdot \omega_n \sigma_*^n \leq \omega_n^2 (2D)^n \sigma_*^n < \delta / 32. \]
Lastly, set
\begin{multline*}
 S^*_3 := \big\{ (z_1, z_2) \in \big( B^X (q_\infty, 2D) \cap \RR \big) \times \big( B^X(q_\infty, 2D) \cap \RR \big) \cap \mathcal{G}^* \;\; : \\
  \;\; \rrm^\infty (\gamma_{z_1, z_2}^* (s)) \leq 2\sigma_* \textQq{for some} s \in [0, d^X (z_1, z_2)] \big\}. 
\end{multline*}
Then $S^*_3$ is closed in $(( B^X (q_\infty, 2D) \cap \RR ) \times ( B^X (q_\infty, 2D) \cap \RR )) \cap \mathcal{G}^*$.
We will now bound its measure. 
For this, let
\[ W := \{ \rrm^\infty < 4\sigma_* \} \cap B(q_\infty, 4D) \cap \RR. \]
Note that, since $\rrm^\infty$ is $1$-Lipschitz, we have for any $(z_1, z_2) \in S^*_3 \setminus S^*_2$
\[ |\{ s \in [0, d(z_1, z_2)] \;\; : \;\; \gamma^*_{z_1, z_2} (s) \in W \}| \geq \sigma_*. \]
So setting $f := \chi_W$ in (\ref{eq:segmentonXXinproof}) yields, in view of (\ref{eq:rrmsEstarspp}) and (\ref{eq:choiceofsigmastar}),
\[ |S^*_3 \setminus S^*_2| \cdot \sigma_* \leq C E^* (4\sigma_*)^{\mathbf{p}} < (\delta / 32) \sigma_*, \]
which implies
\[ \big|  S^*_3 \setminus S^*_2 \big| < \delta / 32. \]
Set now
\[ S^* := S^*_1 \cup S^*_2 \cup S^*_3. \]
Then
\[ |S^* | = | S^*_1 \cup S^*_2 \cup (S^*_3 \setminus S^*_2) | < \delta / 16. \]

We will now construct $S' \subset M$.
For this note that
\[
 \big(  B^X (q_\infty, 2D)  \times B^X (q_\infty, 2D) \big)  
 \setminus S^* 
 \subset \{ \rrm^\infty > \sigma_* \} \times \{ \rrm^\infty > \sigma_* \},
\]
because any pair of points that belongs to the set on the left-hand side can be connected by a unique minimizing geodesic inside $\{ \rrm^\infty > 2\sigma_* \}$ (since $S^*_3 \subset S^*$).
Therefore, we also have $\rrm^\infty > 2\sigma_* > \sigma_*$ at its endpoints of such a geodesic.
So, assuming $\eta < \sigma_*$ and using Proposition \ref{Prop:effectivelimitXX}\ref{(c)-Prop:effectivelimitXX}, we have
\[ \big( B^X (q_\infty, 2D)  \times B^X (q_\infty, 2D)  \big)  
 \setminus S^* 
 \subset U \times U. \]
Let now $S'$ be defined as follows
\[ S' := B(x_0, t_0, D) \times B(x_0, t, D) \setminus ( \Phi \times \Phi ) \Big(  \big( B^X (q_\infty, 2D)  \times B^X (q_\infty, 2D)\big)  
 \setminus S^* \Big). \]
Then $S'$ is closed in $B(x_0, t_0, D) \times B(x_0, t, D)$ and can be expressed as the union of
\[ S'_1 := (\Phi \times \Phi) (  S^* ) \]
with
\[  S'_2 := (B(x_0, t_0, D) \setminus V) \times (B(x_0, t, D) \setminus V).  \]
For sufficiently small $\eta$ (in a uniform way) we may assume that Jacobian of $\Phi$ is bounded from above by $2$ (see Proposition \ref{Prop:effectivelimitXX}\ref{(b)-Prop:effectivelimitXX}) and hence
\[ |S'_1|_{t_0} \leq 4 | S^* | < \delta / 4. \]
Furthermore, by Proposition \ref{Prop:effectivelimitXX}\ref{(d)-Prop:effectivelimitXX}
\[ |S'_2 |_{t_0} \leq |B(x_0, t_0, D) \setminus V|^2_{t_0} < \eta^2 < \delta / 4. \]
So
\[ |S'|_{t_0} \leq |S'_1 |_{t_0} + |S'_2 |_{t_0} < \delta / 2, \]
which implies assertion \ref{(a)-Lem:gammaavoidatt}.

Before we verify the remaining assertions, let us summarize our construction:
For any $(y_1, y_2) \in (B(x_0, t_0, D) \times B(x_0, t_0, D)) \setminus S' \subset V \times V$, we have
\[ (\Phi^{-1} (y_1), \Phi^{-1} (y_2)) \in  \big(  B^X (q_\infty, 2D)  \times B^X (q_\infty, 2D)  \big)   \setminus S^*.  \]
As $S^*_3 \subset S^*$, this implies that for all $s \in [0, d^X (\Phi^{-1} (y_1), \Phi^{-1} (y_2))]$ we have
\[ \rrm^\infty (\gamma^*_{\Phi^{-1} (y_1), \Phi^{-1} (y_2)} (s) ) > 2 \sigma_*. \]
So, since we assumed $\eta < \sigma_*$, we conclude
\[ \gamma^*_{\Phi^{-1} (y_1), \Phi^{-1} (y_2)} (s)  \in U \]
for all such $s$.
Hence it is possible to make the following definition: 
for any $(y_1, y_2) \in (B(x_0, t_0, D) \times B(x_0, t_0, D)) \setminus S' \subset V \times V$ let
\[ l_{y_1, y_2} := d^X (\Phi^{-1} (y_1), \Phi^{-1} (y_2)) \]
and
\[ \gamma_{y_1, y_2} (s) := \Phi \big( \gamma^*_{\Phi^{-1} (y_1), \Phi^{-1} (y_2)} (s) \big), \qquad s \in [0, l_{y_1, y_2}]. \]
Assertions \ref{(b)-Lem:gammaavoidatt}, \ref{(d)-Lem:gammaavoidatt}, \ref{(e)-Lem:gammaavoidatt} now follow immediately for small enough $\eta$, depending on $\delta$.
For assertion \ref{(c)-Lem:gammaavoidatt} observe that for small enough $\eta$, we have
\[ \rrm (\gamma_{y_1, y_2} (s) , t_0) > \tfrac12 \rrm^\infty (\Phi^{-1} (\gamma_{y_1, y_2} (s))) = \tfrac12 \rrm^\infty (\gamma^*_{\Phi^{-1} (y_1), \Phi^{-1} (y_2)} (s))) > \sigma_*.  \]
Assertion \ref{(f)-Lem:gammaavoidatt} follows from (\ref{eq:segmentonXXinproof}) by replacing $f : M \to [0, \infty)$ by $(f \circ \Phi) \chi_U : \RR \to [0, \infty)$.
\end{proof}

We can now prove the main result of this subsection.
Similarly to the previous lemma, Lemma~\ref{Lem:gammaavoidatt}, the following proposition asserts the existence of almost geodesic curves between almost every pair of points at a given time $t_0$.
However, in contrast to Lemma~\ref{Lem:gammaavoidatt}, Proposition~\ref{Prop:almostgeodesicalmostalways} asserts additionally that along any such curve we have a lower bound on $\rrm$ and a small upper bound on $|{\Ric}|$ \emph{at all times $t \in [-1,0]$} --- not only at time $t_0$.

In a vague sense, Proposition~\ref{Prop:almostgeodesicalmostalways} follows by analyzing the set of points $x \in M$ where $\rrm (x, t) \leq 2\sigma$ for some $t \in [-1,0]$, where $\sigma$ is a small constant.
We will find that the volume of this set is roughly bounded by $C \sigma^{\mathbf{p} - 2}$.
On the other hand, we will show that any almost geodesic on which $\rrm (\cdot, t) \leq \sigma$ for some $t \in [-1,0]$ must intersect this set of points in a curve of length $\gtrsim \sigma$.
Using the segment inequality from assertion \ref{(f)-Lem:gammaavoidatt} of Lemma~\ref{Lem:gammaavoidatt} and a length distortion bound, it will follow that this set of almost geodesics has measure $< C \sigma^{\mathbf{p} -3}$.
So, since we assumed that $\mathbf{p} > 3$, this measure can be made arbitrarily small.
Therefore, the set of almost geodesics on which $\rrm (x, t) \leq \sigma$ for some $t \in [-1,0]$ can be discarded.

\begin{Proposition} \label{Prop:almostgeodesicalmostalways}
For any $A, E, D < \infty$, $\mathbf{p} > 3$ and $\delta > 0$ there are constants $\rho = \rho (A, \linebreak[2] E, \linebreak[2] D, \linebreak[2] \mathbf{p}, \linebreak[2] \delta), \linebreak[1]\sigma (A, E, D, \mathbf{p}, \delta) > 0$ such that the following holds:

Let $(M, (g_t)_{t \in [-2,0]} )$ be a Ricci flow on a compact, $n$-dimensional manifold $M$ with the property that
\begin{enumerate}[label=(\roman*)]
\item \label{(i)-Prop:almostgeodesicalmostalways} $\nu [ g_{-2}, 4] \geq - A$.
\item \label{(ii)-Prop:almostgeodesicalmostalways} $|R| \leq \rho$ on $M \times [-2,0]$.
\item \label{(iii)-Prop:almostgeodesicalmostalways} For all $(x,t) \in M \times [-1,0]$ and $0 < r, s < 1$ we have
\[ | \{ \rrm (\cdot, t) < sr \} \cap B(x,t, r) |_t \leq E s^{\mathbf{p}} r^n. \]
\end{enumerate}
Let $x_0 \in M$ and $t_0 \in [-\frac12,0]$.
Then there is a subset
\[ S \subset B(x_0, t_0, D) \times  B(x_0, t_0, D) \]
such that the following holds:
\begin{enumerate}[label=(\alph*)]
\item \label{(a)-Prop:almostgeodesicalmostalways} We have
\[ |S|_{t_0} < \delta \]
with respect to the product measure $dg_{t_0} \otimes dg_{t_0}$
\item \label{(b)-Prop:almostgeodesicalmostalways} For any
\[ (y_1, y_2) \in \big(B(x_0, t_0 , D)  \times B(x_0, t_0, D) \big) \setminus S \]
there is a smooth curve $\gamma_{y_1, y_2} : [0,1] \to M$ with $\gamma_{y_1, y_2} (0) = y_1$, $\gamma_{y_1, y_2} (1) = y_2$ such that
\[ | \length_{t_0} (\gamma_{y_1, y_2}) - d_{t_0} (y_1, y_2) | < \delta \]
and such that
\[ \qquad \rrm (\gamma_{y_1, y_2} (s), t) > \sigma \textQQqq{for all} s \in [0,1] \textQq{and} t \in [-1,0] \]
and
\[ \qquad |{\Ric}| (\gamma_{y_1, y_2} (s), t) < \delta \textQQqq{for all} s \in [0,1] \textQq{and} t \in [-1,0]. \]
\end{enumerate}
\end{Proposition}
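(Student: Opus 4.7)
The plan is to begin with the single-time almost-geodesics produced by Lemma~\ref{Lem:gammaavoidatt} applied at time $t_0$, and then to discard a small additional family of pairs $(y_1,y_2)$ for which the associated curve $\gamma_{y_1,y_2}$ enters a spacetime region of small curvature radius at some other $t\in[-1,0]$. The hypothesis $\mathbf{p}>3$ is exactly what will make the measure of this additional family $o(1)$ as the parameter $\sigma$ tends to zero. First I would apply Lemma~\ref{Lem:gammaavoidatt} with parameter $\delta/2$ to obtain $S'\subset B(x_0,t_0,D)^2$ with $|S'|_{t_0}<\delta/4$, constants $\sigma_*=\sigma_*(A,E,D,\mathbf{p},\delta)>0$ and $C_*=C_*(A,E,D)<\infty$, and a smooth family of almost-geodesics $\gamma_{y_1,y_2}$ satisfying $|\gamma'|_{t_0}\leq 1+\delta/2$, $\rrm(\gamma(\cdot),t_0)>\sigma_*$, and the segment inequality.

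Next, fixing a parameter $\sigma\in(0,\sigma_*/100)$ to be determined below, and letting $\varepsilon=\varepsilon(A)$ be the pseudolocality constant of Proposition~\ref{Prop:Pseudoloc}, I would partition $[-1,0]$ into $N\leq(\varepsilon\sigma)^{-2}+1$ points $\tau_j$ with spacing at most $(\varepsilon\sigma)^2$. Pseudolocality shows that whenever $\rrm(x,t)\leq 2\sigma$ for some $t\in[-1,0]$, one has $\rrm(x,\tau_j)\leq C_0\sigma$ at the closest $\tau_j$ for some $C_0=C_0(A)$. Defining
\[
B^*_\sigma := \bigcup_{j=0}^{N}\bigl\{x\in B(x_0,t_0,3D) : \rrm(x,\tau_j)\leq C_0\sigma\bigr\}\subset M,
\]
the combination of assumption~\ref{(iii)-Prop:almostgeodesicalmostalways} with Propositions~\ref{Prop:VolumeBound} and~\ref{Prop:Distdistortion} and the trivial volume distortion factor $e^{\int R\,dt}\leq e$ at each $\tau_j$-slice gives $|B^*_\sigma|_{t_0}\leq N\cdot C\sigma^{\mathbf{p}}\leq C'\sigma^{\mathbf{p}-2}$ for some $C'=C'(A,E,D,\mathbf{p})$. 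For each pair $(y_1,y_2)\notin S'$ whose curve $\gamma$ meets $B^*_\sigma$ at some parameter $s_0$ with $\rrm(\gamma(s_0),\tau_j)\leq C_0\sigma$, the $1$-Lipschitz continuity of $\rrm(\cdot,\tau_j)$ at time $\tau_j$ together with a length distortion estimate along $\gamma$ will force $\gamma(s)\in B^*_\sigma$ for $s$ in an interval of length at least $c\sigma$ around $s_0$, with $c=c(A,E,D,\mathbf{p},\delta)>0$.

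Feeding this into the segment inequality of Lemma~\ref{Lem:gammaavoidatt}\ref{(f)-Lem:gammaavoidatt} with $f=\chi_{B^*_\sigma}$ yields $c\sigma\cdot|S''|_{t_0}\leq C_*C'\sigma^{\mathbf{p}-2}$, so $|S''|_{t_0}\leq C''\sigma^{\mathbf{p}-3}$, where $S''$ is the set of bad pairs. Since $\mathbf{p}>3$, one then chooses $\sigma=\sigma(A,E,D,\mathbf{p},\delta)>0$ small enough that $|S''|_{t_0}<\delta/2$ and sets $S:=S'\cup S''$, so that $|S|_{t_0}<\delta$. For $(y_1,y_2)\notin S$, the curve $\gamma_{y_1,y_2}$ satisfies $\rrm(\gamma(s),t)>2\sigma$ for all $s$ and all $t\in[-1,0]$; a further application of pseudolocality and Proposition~\ref{Prop:Ricsmall} yields $|{\Ric}|(\gamma(s),t)\leq C\rho^{1/2}/(\varepsilon\sigma)$, which is $<\delta$ once $\rho$ is sufficiently small in terms of $\sigma$, and an affine reparameterization from $[0,l_{y_1,y_2}]$ to $[0,1]$ concludes the proof.

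The hard part will be the length distortion bound used in the second paragraph, especially for discrete times $\tau_j$ that lie far from $t_0$ in $[-1,0]$, since pseudolocality applied at $t_0$ yields only a parabolic tube of size $\sim\sigma_*^2$ in time whereas $|\tau_j-t_0|$ may be up to $1$. For $\tau_j$ within this pseudolocality window I would combine Propositions~\ref{Prop:Pseudoloc}, \ref{Prop:Ricsmall} and~\ref{Prop:Distdistortion} to transfer $d_{t_0}$-arclength bounds to multiplicative $d_{\tau_j}$-distance bounds along the tube $\{\rrm(\cdot,t_0)>\sigma_*/2\}$; for the remaining times a more delicate iteration is needed, using that the pairs for which the naive argument fails can themselves be absorbed into $S''$ at the cost of an absolute constant in the final exponent estimate.
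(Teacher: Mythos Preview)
Your overall strategy coincides with the paper's: apply Lemma~\ref{Lem:gammaavoidatt} at time $t_0$, discretize $[-1,0]$ into $N\sim\sigma^{-2}$ times $\tau_j$, feed the sum of the indicator functions of $\{\rrm(\cdot,\tau_j)<\sigma_0\}$ into the segment inequality, and use $\mathbf{p}>3$ to make the exceptional set of pairs small. You also correctly locate the crux: to conclude that a curve hitting the bad set at some $\tau_j$ must spend parameter length $\gtrsim\sigma$ there, you need a speed bound $|\gamma'|_{\tau_j}\lesssim 1$, but pseudolocality from $t_0$ only controls a time window of size $(\eps\sigma_*)^2\ll 1$.

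The gap is in your proposed resolution of this circularity. Absorbing further pairs into $S''$ ``at the cost of an absolute constant in the final exponent estimate'' is not what is needed and would not work: the exponent $\mathbf{p}-3$ is fixed by the counting, and there is no mechanism to recover the speed bound at far-away times by simply discarding more pairs. The paper instead breaks the circularity with a single induction on the discrete times, propagating outward from $t_0$. The key arithmetic is the coupling $(\eps\sigma_0/2)^2=1/N$: if $\rrm(\gamma(s),-i/N)>\sigma_0/2$ at one discrete time, pseudolocality gives $\rrm(\gamma(s),t)>\eps\sigma_0/2$ on the adjacent interval of length $1/N$, whence $|{\Ric}|<\delta$ there (Proposition~\ref{Prop:Ricsmall} with small $\rho$), whence $|\gamma'|_t<2$, whence $s\mapsto\rrm(\gamma(s),-i_0/N)$ is $2$-Lipschitz at the next discrete time $-i_0/N$. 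For pairs outside $S''$ one has, for \emph{every} $i$, the bound $\int_0^{l}\chi_{W_i}(\gamma(s))\,ds<\sigma_0/4$ (this is precisely where $\mathbf{p}>3$ enters, via $N\sigma_0^{\mathbf{p}-1}\ll 1$); combined with the $2$-Lipschitz bound and $l_{y_1,y_2}>\sigma_*>\sigma_0/4$ this forces $\rrm(\gamma(s),-i_0/N)>\sigma_0/2$, closing the induction. Thus $S''$ is defined once, and the length distortion is bootstrapped time-slice by time-slice rather than paid for by enlarging the exceptional set.
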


\begin{proof}
Fix $A, E, D < \infty$, $\mathbf{p} > 3$ and $\delta > 0$.
Assume without loss of generality that $\delta < 0.1$.
Assume that $\rho$ is chosen small enough such that we can apply Lemma~\ref{Lem:gammaavoidatt} at $(x_0, t_0)$ and let $0 < \sigma_* = \sigma_* (A, E, D, \mathbf{p}, \delta) < 1$  and $C_* (A, E, D) < \infty$ be the constants from this lemma.
Next, using Proposition \ref{Prop:Distdistortion}, we may choose a constant $D^* = D^* (A, D) < \infty$ such that
\[ B(x_0, t_0, D) \subset B(x_0, t, D^*) \textQQqq{for all} t \in [-1,0]. \]
And using assumption \ref{(iii)-Prop:almostgeodesicalmostalways}, a covering argument and Proposition \ref{Prop:VolumeBound}, we can find a constant $E^* = E^* (A, D, E) < \infty$ such that for all $0 < s < 1$ and $t \in [-1,0]$
\begin{equation} \label{eq:Estarboundgeodesicintime}
 | \{ \rrm (\cdot, t) < s \} \cap B(x_0, t_0, D) |_t < E^* s^{\mathbf{p}}. 
\end{equation}
Let $\eps = \eps (A) > 0$ be the constant from the Backwards and Forward Pseudolocality Theorem, Proposition \ref{Prop:Pseudoloc} and assume without loss of generality that $\eps < 0.1$.
Moreover, choose an integer $N = N (A, E, D, \mathbf{p}, \delta) < \infty$ such that
\[ 8 \delta^{-1}  N C_* E^* \bigg( \frac{2}{N^{1/2} \eps} \bigg)^{\mathbf{p}-1}  < 1 \textQQqq{and} \frac{10}{N^{1/2} \eps} < \eps\sigma_* < 1 \]
(this is possible since $\mathbf{p} > 3$) and determine $\sigma_0 = \sigma_0 (A, E, D, \mathbf{p}, \delta) > 0$ by
\[ \sigma_0 := \frac2{N^{1/2} \eps}. \]
Then
\begin{equation} \label{eq:recallconstrelation}
 \sigma_0 < \eps \sigma_*, \quad 8 \delta^{-1} N C_* E^* \sigma_0^{\mathbf{p}-1} < 1, \quad \frac{100}N < (\eps \sigma_*)^2 \textQq{and}  (\eps \sigma_0/2)^2 = \frac1N.  
\end{equation}
Define $\sigma = \sigma(A, E, D, \mathbf{p}, \delta) > 0$ by
\[ \sigma := \eps \sigma_0 / 2 = \frac1{\sqrt{N}}. \]
We will assume in the following that $\rho = \rho (A, E, D, \mathbf{p}, \delta) > 0$ is chosen small enough such that, using Proposition \ref{Prop:Ricsmall}, we can conclude that $|{\Ric}|(y,t) < \delta$ at any $(y,t) \in M \times [-1,0]$ at which $\rrm (y,t) > \sigma$.

Set for $i = 0, \ldots, N-1$
\[ W_i := \bigg\{ \rrm \Big( \cdot, - \frac{i}{N} \Big) < \sigma_0 \bigg\} \subset M \]
and define
\[ f(x) := \sum_{i=0}^{N-1} \chi_{W_i}, \]
where $\chi_{W_i}$ denotes the characteristic function of $W_i$.
Then, by (\ref{eq:Estarboundgeodesicintime}),
\begin{equation} \label{eq:fintegralNE}
 \int_{B(x_0, t_0, D^*)} f dg_{t_0} < N E^* \sigma^{\mathbf{p}}_0 . 
\end{equation}
Consider now the subset $S' \subset B(x_0, t_0, D) \times B(x_0, t_0, D)$ from Lemma \ref{Lem:gammaavoidatt} and the family of curves
\[ \gamma_{y_1, y_2}: [0, l_{y_1, y_2}] \longrightarrow M, \qquad (y_1, y_2) \in  \big( (B(x_0, t_0, D) \times B(x_0, t_0, D) \big) \setminus S' \]
Set
\begin{multline*}
 S'' := \bigg\{ (y_1, y_2) \in \big( B(x_0, t_0, D) \times B(x_0, t_0, D) \big) \setminus S' \;\; : \\
  \;\;  \int_0^{l_{y_1, y_2}} f(\gamma_{y_1, y_2} (s)) ds \geq 2 \delta^{-1} N C_* E^* \sigma_0^{\mathbf{p}}  \bigg\}.
\end{multline*}
Then, using the segment inequality in assertion \ref{(e)-Lem:gammaavoidatt} of Lemma \ref{Lem:gammaavoidatt} combined with (\ref{eq:fintegralNE}), we find
\[ |S''|_{t_0} \cdot 2 \delta^{-1} N C_* E^* \sigma_0^{\mathbf{p}} < C_* NE^* \sigma_0^{\mathbf{p}}. \]
This implies $|S''|_{t_0} < \delta / 2$, so if we set $S := S' \cup S''$, then assertion \ref{(a)-Prop:almostgeodesicalmostalways} holds.

We will now verify assertion \ref{(b)-Prop:almostgeodesicalmostalways}.
To do this, observe that, by (\ref{eq:recallconstrelation}), for any $(y_1, y_2) \in (B(x_0, t_0, D) \times B(x_0, t_0, D)) \setminus S$ we have for any $i = 0, \ldots, N-1$
\begin{equation} \label{eq:1dintsmallW}
 \int_0^{l_{y_1, y_2}} \chi_{W_i} (\gamma_{y_1, y_2} (s)) ds < 2 \delta^{-1} N C_* E^* \sigma_0^{\mathbf{p}} <  \sigma_0/4.
\end{equation} 
We will use this bound to show that for all $i = 0, \ldots, N-1$ we have
\begin{equation} \label{eq:wantforalli}
 \rrm \Big( \gamma_{y_1, y_2} (s), - \frac{i}{N} \Big) > \sigma_0 / 2 \textQQqq{for all} s \in [0,l_{y_1, y_2}] . 
\end{equation}

To see that (\ref{eq:wantforalli}) holds for all $i =0, \ldots, N - 1$, recall first that by assertion \ref{(c)-Lem:gammaavoidatt} of Lemma \ref{Lem:gammaavoidatt} we have $\rrm (\gamma_{y_1, y_2} (s), t_0) > \sigma_*$ for all $s \in [0,l_{y_1, y_2}]$.
It follows by Proposition \ref{Prop:Pseudoloc} and (\ref{eq:recallconstrelation}) that for all $t \in [-1,0]$ with $|t - t_0| \leq (\eps\sigma_*)^2$ we have
\[
 \rrm (\gamma_{y_1, y_2} (s), t) > \eps\sigma_* > \sigma_0 / 2 \textQQqq{for all} s \in [0,l_{y_1, y_2}]. 
\]
So since $(\eps \sigma_*)^2 > \frac{100}N$ (see (\ref{eq:recallconstrelation})), there is at least one $i$ for which (\ref{eq:wantforalli}) holds.
More specifically, there are $i_1, i_2 \in \{ 0, \ldots, N-1 \}$ such that $- \frac{i_2}N < t_0 < - \frac{i_1}N$ and such that (\ref{eq:wantforalli}) holds for all $i = i_1, \ldots, i_2$.
Assume now that (\ref{eq:wantforalli}) does not hold for all $i = 0, \ldots, N-1$ and pick $i_0$ such that (\ref{eq:wantforalli}) does not hold for $i = i_0$ and such that $|t_0 + \frac{i_0}{N} |$ is minimal with this property.
In other words, (\ref{eq:wantforalli}) holds for all $i = 0, \ldots, N-1$ for which $|t_0 + \frac{i}{N} | < |t_0 + \frac{i_0}{N} |$ but not for $i = i_0$.
We will now derive a contradiction to this assumption.
Recall from (\ref{eq:recallconstrelation}) that $(\eps \sigma_0 / 2)^2 = \frac1N$.
So, by Proposition \ref{Prop:Pseudoloc}, we have
\begin{equation} \label{eq:biggerthansigmat}
  \rrm (\gamma_{y_1, y_2} (s), t) > \eps\sigma_0 / 2 = \sigma \textQQqq{for all} s \in [0,l_{y_1, y_2}] 
\end{equation}
for all $t \in [- \frac{i_0}{N},  t_0]$ or $t \in [t_0 , - \frac{i_0}{N}]$, depending on whether $i_0 > i_2$ or $i_0 < i_1$.
As discussed before, by our small choice of $\rho$, this implies that for all such $t$ we have
\begin{equation} \label{eq:Ricboundbetween}
 |{\Ric}| (\gamma_{y_1, y_2} (s), t) < \delta \textQQqq{for all} s \in [0, l_{y_1, y_2}]. 
\end{equation}
So, since $| \gamma'_{y_1, y_2} (s) |_{t_0} < 1 + \delta$ and $\delta < 0.1$, a distance distortion estimate yields that $| \gamma'_{y_1, y_2} (s) |_{t} < 2$ for all $t \in [- \frac{i_0}{N},  t_0]$ or $t \in [t_0 , - \frac{i_0}{N}]$.
So the function $s \mapsto \rrm (\gamma_{y_1, y_2} (s), - \frac{i_0}N)$ is $2$-Lipschitz.
By the choice of $i_0$, there is an $s_0 \in [0, l_{y_1, y_2}]$ such that
\[ \rrm \Big( \gamma_{y_1, y_2} (s_0), - \frac{i_0}N \Big) \leq \sigma_0 / 2 \]
Using the $2$-Lipschitz property, we conclude that for all $s \in [0, l_{y_1, y_2} ]$ with $|s - s_0| < \sigma_0 / 4$ we have
\[ \rrm \Big( \gamma_{y_1, y_2} (s), - \frac{i_0}N \Big) < \sigma_0.  \]
In other words, $\gamma_{y_1, y_2} (s) \in W_{i_0}$ for all $s \in [0, l_{y_1, y_2} ]$ with $|s - s_0| < \sigma_0 / 4$.
Since, by Lemma \ref{Lem:gammaavoidatt}\ref{(e)-Lem:gammaavoidatt}, $l_{y_1, y_2} > \sigma_* > \sigma_0 / 4$, this contradicts (\ref{eq:1dintsmallW}).

So (\ref{eq:wantforalli}) holds for all $i = 0, \ldots, N-1$.
As before, it follows that (\ref{eq:biggerthansigmat}) and (\ref{eq:Ricboundbetween}) hold for all $t \in [-1,0]$.
This finishes the proof of assertion \ref{(b)-Prop:almostgeodesicalmostalways}.
\end{proof}

\subsection{Existence of almost optimal $\LL$-geodesics and a reduced volume bound}
In this subsection, we will use the existence of almost geodesics that avoid regions of high curvature to construct short $\LL$-geodesics between a given basepoint and a set of points of large measure.
Based on this construction we will eventually derive a good lower bound on the reduced volume at that basepoint.

\begin{Lemma} \label{Lem:LLalmostperfect}
For any $A, E, D < \infty$, $\mathbf{p} > 3$, $0 < \tau_0 \leq 1/2$ and $\delta > 0$ there is a constant $\rho = \rho (A, \linebreak[2] E, \linebreak[2] D, \linebreak[2] \mathbf{p}, \linebreak[2] \tau_0, \linebreak[2] \delta) > 0$ such that the following holds:

Let $(M, (g_t)_{t \in [-2,0]} )$ be a Ricci flow on a compact, $n$-dimensional and orientable manifold $M$ with the property that
\begin{enumerate}[label=(\roman*)]
\item \label{(i)-Lem:LLalmostperfect} $\nu [ g_{-2}, 4] \geq - A$.
\item \label{(ii)-Lem:LLalmostperfect} $|R| \leq \rho$ on $M \times [-2,0]$.
\item \label{(iii)-Lem:LLalmostperfect} For all $(x,t) \in M \times [-1,0]$ and $0 < r, s < 1$ we have
\[ | \{ \rrm (\cdot, t) < sr \} \cap B(x,t, r) |_t \leq E s^{\mathbf{p}} r^n. \]
\end{enumerate}
Consider a point $x_0 \in M$.
Then there is a subset
\[ S \subset B(x_0, t_0, D) \]
such that the following holds:
\begin{enumerate}[label=(\alph*)]
\item \label{(a)-Lem:LLalmostperfect} We have
\[ |S|_{0} < \delta. \]
\item \label{(b)-Lem:LLalmostperfect} For any
\[ z \in B(x_0, 0 , D)  \setminus S \]
we have
\[ \ov{L}_{(x_0, 0)} (z, - \tau_0) = 2 \sqrt{\tau_0} L_{(x_0,0)} (z,- \tau_0) < d^2_0 (x_0, z) + \delta. \]
Here $L_{(x_0, 0)}$ denotes the $\LL$-distance based at $(x_0, 0)$.
\end{enumerate}
Moreover, the reduced volume at $(x_0, 0)$ satisfies
\begin{equation} \label{eq:reducedVdelta}
 \td{V}_{(x_0, 0)} (\tau_0) >  \int_{B(x_0, 0, D)} (4\pi \tau_0)^{-n/2} \exp \Big({- \frac{d^2_0 (x_0, z)}{4 \tau_0} } \Big) dg_0 (z) - \delta. 
\end{equation}
\end{Lemma}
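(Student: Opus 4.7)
The plan is to construct, for almost every $z \in B(x_0, 0, D)$, an explicit admissible $\mathcal{L}$-curve from $(x_0, 0)$ to $(z, -\tau_0)$ whose $\mathcal{L}$-length is close to $d_0^2(x_0, z)/(2\sqrt{\tau_0})$, the value one would get in a static flow. The key observation is the standard optimal reparametrization: if $\sigma : [0,1] \to M$ is a smooth curve parametrized with constant $g_0$-speed $\ell_0$ along which $|{\Ric}|$ is uniformly bounded by $\delta_1$, then setting $\eta(\tau) := \sigma(\sqrt{\tau/\tau_0})$ yields an $\mathcal{L}$-curve with $\mathcal{L}$-length at most $(1 + C\delta_1 \tau_0) \ell_0^2/(2\sqrt{\tau_0}) + O(\rho \tau_0^{3/2})$, so $\ov{L}(z, -\tau_0) \leq \ell_0^2 + \text{error}$.

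Such a spatial curve $\sigma$ between $x_0$ and $z$ along which $|{\Ric}| < \delta_1$ at every time in $[-1, 0]$ is provided, for \emph{most pairs of points}, by Proposition \ref{Prop:almostgeodesicalmostalways}. However, that proposition gives curves between pairs rather than from a specified basepoint, so I would apply it at $(x_0, 0)$ with a much smaller parameter $\delta_1$ and use a Fubini argument: pick a pivot $y_1 \in B(x_0, 0, \sqrt{\eps})$, for a small auxiliary $\eps$ to be chosen, such that the slice $S_{y_1} := \{z : (y_1, z) \text{ is bad}\}$ has $g_0$-measure less than $\delta/2$. Since Proposition~\ref{Prop:VolumeBound} gives $|B(x_0, 0, \sqrt{\eps})|_0 \gtrsim \eps^{n/2}$, this is possible provided $\delta_1$ is chosen sufficiently small compared to $\delta \eps^{n/2}$.

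For $z \notin S_{y_1}$ I would then construct an admissible $\mathcal{L}$-curve from $(x_0, 0)$ to $(z, -\tau_0)$ by concatenating two pieces. On $[\eps, \tau_0]$, apply the shifted reparametrization $\eta_2(\tau) = \gamma_{y_1, z}\bigl((\sqrt{\tau} - \sqrt{\eps})/(\sqrt{\tau_0} - \sqrt{\eps})\bigr)$ of the almost geodesic, which by the above computation has $\mathcal{L}(\eta_2) \leq (1 + C\delta_1) \ell_0^2/(2(\sqrt{\tau_0} - \sqrt{\eps})) + O(\rho)$ with $\ell_0 \leq d_0(y_1, z) + \delta_1 \leq d_0(x_0, z) + \sqrt{\eps} + \delta_1$. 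On $[0, \eps]$, take $\eta_1$ to be a minimizing $\mathcal{L}$-geodesic from $(x_0, 0)$ to $(y_1, -\eps)$; to bound its $\mathcal{L}$-length I would use Perelman's differential inequality $\partial_t \ov{L} \geq \triangle \ov{L} - 2n$ from (\ref{eq_ov_L_equation}) combined with the Gaussian heat kernel bounds (Proposition \ref{Prop:GaussianHKbounds}) to obtain
\[ \int_M K(x_0, 0; y, -\eps) \ov{L}(y, -\eps) dg_{-\eps}(y) \leq 2n\eps, \]
so that the set of $y_1 \in B(x_0, 0, \sqrt{\eps})$ with $\ov{L}(y_1, -\eps) \leq C n \eps$ has nearly full relative measure. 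Intersecting this set with the Fubini-good set from the previous paragraph yields a pivot $y_1$ for which $\mathcal{L}(\eta_1) = L(y_1, -\eps) \leq C\sqrt{\eps}$; combining the two estimates and multiplying by $2\sqrt{\tau_0}$ gives $\ov{L}(z, -\tau_0) \leq d_0^2(x_0, z) + C(\sqrt{\eps/\tau_0} + \delta_1 + \rho\tau_0) D^2$, so assertion \ref{(b)-Lem:LLalmostperfect} follows with $S := S_{y_1}$ on choosing $\eps, \delta_1, \rho$ sufficiently small in terms of $\tau_0, \delta, D$.

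For the reduced volume bound (\ref{eq:reducedVdelta}), I would use $l(y, -\tau_0) = \ov{L}(y, -\tau_0)/(4\tau_0)$; the pointwise upper bound on $\ov{L}$ on $B(x_0, 0, D) \setminus S$ together with $|R| \leq \rho$ gives $e^{-l(z, -\tau_0)} dg_{-\tau_0} \geq (1 - \delta'')e^{-d_0^2(x_0, z)/(4\tau_0)} dg_0$ for an auxiliary $\delta''$, so integrating over $B(x_0, 0, D) \setminus S$ and bounding the contribution of $S$ using $|S|_0 < \delta/2$ absorbs all errors into $\delta$. The main obstacle is the careful coordination of the many interdependent small parameters, in particular ensuring that the Fubini-chosen pivot $y_1$ simultaneously has small $\ov{L}(y_1, -\eps)$ and small slice measure $|S_{y_1}|_0$; both conditions are satisfied on subsets of $B(x_0, 0, \sqrt{\eps})$ of nearly full relative measure, so their intersection is nonempty for an appropriate choice of the auxiliary parameters $\delta_1, \eps,$ and $\rho$.
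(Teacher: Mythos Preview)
Your proposal is correct and follows essentially the same approach as the paper's proof. The paper likewise chooses a small intermediate time $\theta$ (your $\eps$), uses the barrier inequality $\partial_t \ov{L} \geq \triangle \ov{L} - 2n$ together with the heat kernel bounds to find a subset $U \subset B(x_0,0,\sqrt{\theta})$ of measure $\gtrsim \theta^{n/2}$ on which $\ov{L}(\cdot,-\theta) \leq C\theta$, applies Proposition~\ref{Prop:almostgeodesicalmostalways} with a small parameter and uses Fubini to locate a pivot $y\in U$ whose bad slice has measure $<\delta$, and then concatenates a minimizing $\LL$-geodesic on $[0,\theta]$ with the square-root reparametrization of the almost geodesic on $[\theta,\tau_0]$ --- exactly your two-piece construction with the same parameter coordination.
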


\begin{proof}
Let us first construct $S$ such that assertions \ref{(a)-Lem:LLalmostperfect} and \ref{(b)-Lem:LLalmostperfect} hold.
The bound (\ref{eq:reducedVdelta}) will then follow easily.

In order to construct $S$, we first introduce a constant $0 < \theta  \leq \tau_0$ that will be chosen small enough in the course of the proof depending only on $A, E, D, \tau_0$ and $\delta$.
The constant $\rho$ will be chosen small enough depending on $A, E, D, \tau_0, \delta$ and $\theta$ in the course of this proof.
In the following we will construct curves with bounded $\LL$-length between $(x_0, 0)$ and a large set of points $(z, -\tau_0)$.
These curves will arise as a concatenation of a short $\LL$-geodesic between $(x_0, 0)$ and some point $(y, -\theta)$ and a reparameterization of the curves from Proposition \ref{Prop:almostgeodesicalmostalways} to the interval $[\theta, \tau_0]$.

Let us first construct sufficiently many short $\LL$-geodesics based at $(x_0,0)$.
For this, we will argue similarly as in the Claim of Lemma \ref{Lem:shortLLLprrm}.
As explained in the proof of this claim we obtain the following special case of (\ref{eq:integralKovL}) for $z \leftarrow x_0$
\[ \int_M K(x_0, 0; y, - \theta) \ov{L}_{(x_0,0)} (y, - \theta) dg_{- \theta} (y) \leq 2n \theta. \]
Next, assuming $\rho < 1$,
\[ \ov{L}_{(x_0,0)} (y, - \theta) > - 2 \sqrt{\theta} \int_0^{\theta} \sqrt{\tau} d\tau = - \frac43 \theta^2 > - 2\theta^2. \]
Consider now the integral
\[ \int_M K(x_0, 0; y, - \theta) \big( \ov{L}_{(x_0,0)} (y, - \theta) + 2 \theta^2 \big) dg_{- \theta} (y) \leq 2n \theta + 2\theta^2. \]
The integrand of this integral is positive everywhere.
So, by volume distortion estimates, the lower bound on the heat kernel (see Proposition \ref{Prop:GaussianHKbounds}) and the fact that $|B(x_0, 0, \sqrt{\theta})|_0 > c \theta^{n/2}$ for some uniform $c = c(A) > 0$ (see Proposition \ref{Prop:VolumeBound}), we can find a constant $C = C(A) < \infty$ such that
\[ \fint_{B(x_0, 0, \sqrt\theta)} \ov{L}_{(x_0,0)} (y, - \theta) dg_0(y) \leq \fint_{B(x_0, 0, \sqrt\theta)} \big( \ov{L}_{(x_0,0)}  (y, - \theta) + 2 \theta^2 \big) dg_0(y) \leq C \theta. \]
It follows that if we set
\[ U := \big\{ y \in B(x_0, 0, \sqrt{\theta} ) \;\; : \;\;  \ov{L}_{(x_0,0)} (y, -\theta) < 2C \theta \big\}, \]
then 
\[ |U|_0 >  \frac12 \big| B(x_0, 0, \sqrt{\theta}) \big|_0  > \frac{c}2 \theta^{n/2}. \]

We now fix the constant $\theta = \theta (A, E, D, \mathbf{p}, \tau_0, \delta) > 0$ small enough such that $0 < \theta < \tau_0$ and such that the following holds:
\begin{equation} \label{eq:thetachoice}
 2C \sqrt{\theta} + e^\theta \frac{\sqrt{\tau_0}}{\sqrt{\tau_0} - \sqrt{\theta}} \big( d + 2 \sqrt{\theta}  \big)^2 + \theta < d^2 + \delta \textQQqq{for all} 0 \leq d < D.
\end{equation}
Based on this choice, we pick $\delta' = \delta' (A, E, D, \mathbf{p}, \theta, \delta) > 0$ such that
\[ \delta' < \theta \textQQqq{and} \delta' \Big( \frac{c}2 \theta^{n/2} \Big)^{-1} < \delta. \]
Apply now Proposition \ref{Prop:almostgeodesicalmostalways} with $A \leftarrow A$, $E \leftarrow E$, $D \leftarrow D$, $\mathbf{p} \leftarrow \mathbf{p}$, $\delta \leftarrow \delta'$, $(x_0, t_0) \leftarrow (x_0, 0)$ for sufficiently small $\rho$ (depending on $A, E, D, \tau_0, \delta$) and let $S' \subset B(x_0, \linebreak[1] 0, \linebreak[1] D) \times B(x_0, 0, D)$ be the subset that is denoted by $S$ in that proposition.
For any $z \in B(x_0, 0, D)$ let $m (z) := | S' \cap (\{ z \} \times B(x_0, 0, D)) |_0$ be the time-$0$ measure of the section through $z$.
By Fubini's Theorem
\[ \int_{B(x_0, 0, D)} m(y) dg_0 (y) = |S'|_0 < \delta'. \]
It follows that there is a point $y \in U$ such that
\[ m(y) < \delta' \Big( \frac{c}2 \theta^{n/2} \Big)^{-1} < \delta. \]
Let now $S \subset B(x_0, 0, D)$ be the subset for which
\[ \{ y \} \times S = S' \cap \big( \{ y \} \times B(x_0, 0, D) \big). \]
Then $S$ satisfies assertion \ref{(a)-Lem:LLalmostperfect}.

Next, we show assertion \ref{(b)-Lem:LLalmostperfect}.
Choose an $\LL$-geodesic $\gamma^* : [0, \theta] \to M$ with $\gamma(0) = x_0$, $\gamma(\theta) = y$ and
\begin{equation} \label{eq:LLgammastar}
 \LL_0 (\gamma^*) = L_{(x_0, 0)} (y, - \theta) = \frac1{2 \sqrt\theta} \ov{L}_{(x_0,0)} (y, -\theta) < C \sqrt\theta. 
\end{equation}
Let $z \in S$ and recall that $(y,z) \in S'$.
Let $\gamma : [\sqrt{\theta},\sqrt{\tau_0}] \to M$ be a constant speed parameterization of the curve $\gamma_{z, y} : [0,1] \to M$ obtained in Proposition \ref{Prop:almostgeodesicalmostalways}\ref{(b)-Prop:almostgeodesicalmostalways} and define $\ov\gamma : [\theta, \tau_0] \to M$ by $\ov\gamma (\tau) := \gamma(\sqrt{\tau})$.
So $\ov\gamma' (\tau) = \frac1{2 \sqrt{\tau}} \gamma' (\sqrt{\tau})$.
Thus, since by Proposition \ref{Prop:almostgeodesicalmostalways}, we have $|{\Ric}|(\ov{\gamma} (s), t) < \delta' < \theta$ for all $s \in [\theta, \tau_0]$ and $t \in [-1,0]$, we have
\begin{align}
2 \sqrt{\tau_0} \int_{\theta}^{\tau_0} \sqrt{\tau} & |\ov{\gamma}' (\tau) |^2_{-\tau} d\tau \leq 2 \sqrt{\tau_0} \cdot e^{\delta'} \int_{\theta}^{\tau_0} \sqrt{\tau} |\ov{\gamma}' (\tau) |^2_{0} d\tau \notag \\
&\leq 2 \sqrt{\tau_0} \cdot e^{\theta} \int_{\theta}^{\tau_0} \frac1{4\sqrt{\tau}} |\gamma' (\sqrt\tau) |^2_{0} d\tau =  e^\theta \sqrt{\tau_0} \int_{\sqrt{\theta}}^{\sqrt{\tau_0}} |\gamma' (s)|_0^2 ds \notag \displaybreak[1] \\
&=  e^\theta \sqrt{\tau_0}\big( \sqrt{\tau_0} - \sqrt{\theta} \big) \bigg( \frac{ \length_0 (\gamma) }{\sqrt{\tau_0} - \sqrt{\theta}} \bigg)^2 \notag \displaybreak[1] \\
& \leq   e^\theta \frac{\sqrt{\tau_0}}{\sqrt{\tau_0} - \sqrt{\theta}} \big( d_0(y, z) + \delta' \big)^2 \notag \displaybreak[1] \\
& \leq   e^\theta \frac{\sqrt{\tau_0}}{\sqrt{\tau_0} - \sqrt{\theta}} \big( d_0(x_0,z) +  \sqrt{\theta} + \delta' \big)^2 \notag \\
&<   e^\theta \frac{\sqrt{\tau_0}}{\sqrt{\tau_0} - \sqrt{\theta}} \big( d_0(x_0,z) + 2 \sqrt{\theta}  \big)^2 . \label{eq:LLgammabar1}
\end{align}
Moreover, for sufficiently small $\rho$ (depending on $\theta$)
\begin{equation} \label{eq:LLgammabar2}
 \int_\theta^{\tau_0} \sqrt{\tau} R(\ov\gamma(\tau), -\tau) d\tau \leq \frac23  \rho \tau_0^{3/2} < \rho < \theta / 2. 
\end{equation}
Let now $\gamma^{**}$ be the concatenation of $\gamma^*$ and $\ov\gamma$.
Then by (\ref{eq:LLgammastar}), (\ref{eq:LLgammabar1}), (\ref{eq:LLgammabar2}) and (\ref{eq:thetachoice})
\begin{multline*}
 \ov{L}_{(x_0, 0)} (z,-\tau_0) \leq 2 \sqrt{\tau_0} \LL_0 (\gamma^{**}) \\ < 2 C \sqrt{\theta} +e^\theta \frac{\sqrt{\tau_0}}{\sqrt{\tau_0} - \sqrt{\theta}} \big( d_0(x_0,z) + 2 \sqrt{\theta}  \big)^2  + \theta 
 < d^2_0 (x_0, z) + \delta.
\end{multline*}
This proves assertion \ref{(b)-Lem:LLalmostperfect}.

We will now show how the first part of the lemma implies (\ref{eq:reducedVdelta}).
For this, observe that
\begin{align*}
 \td{V}_{(x_0, 0)} (\tau_0) &= \int_M (4\pi \tau_0)^{-n/2} e^{-l (z, - \tau)} dg_{-\tau_0} (z) \\
&> e^{-\rho} \int_{B(x_0, t_0, D) \setminus S } (4\pi \tau_0)^{-n/2} \exp \bigg({- \frac{\ov{L}_{(x_0,0)} (z, - \tau_0)}{4 \tau_0} }\bigg) dg_{0} (z) \displaybreak[1] \\
&> e^{-\rho} \int_{B(x_0, t_0, D) \setminus S } (4\pi \tau_0)^{-n/2} \exp \bigg({- \frac{d^2_0(x_0,z) + \delta}{4 \tau_0} }\bigg) dg_{0} (z) \displaybreak[1] \\
&> e^{-\rho} \int_{B(x_0, t_0, D) } (4\pi \tau_0)^{-n/2} \exp \bigg({- \frac{d^2_0(x_0,z) + \delta}{4 \tau_0} }\bigg) dg_{0} (z) \\
&\qquad\qquad\qquad\qquad\qquad\qquad\qquad\qquad\qquad - e^{-\rho + \delta / 4 \tau_0} (4\pi \tau_0)^{-n/2} |S|_0
\end{align*}
Note that $|B(x_0, t_0, D)|_0$ is bounded from above by a constant that only depends on $A$ and $D$ (see Proposition \ref{Prop:VolumeBound}).
So choosing $\rho$ and $\delta$ small enough implies (\ref{eq:reducedVdelta}).
\end{proof}

\begin{Lemma} \label{Lem:redvolumelarge}
For any $A, E < \infty$, $\mathbf{p} > 3$ and $\delta > 0$ there are constants $0 < \nu = \nu (  \delta), \rho = \rho (A, E, \mathbf{p}, \delta) < 1$ such that the following holds:

Let $(M, (g_t)_{t \in [-2,0]} )$ be a Ricci flow on a compact, $n$-dimensional and orientable manifold $M$ and $x_0 \in M$ and assume that
\begin{enumerate}[label=(\roman*)]
\item $\nu [ g_{-2}, 4] \geq - A$.
\item $|R| \leq \rho$ on $M \times [-2,0]$.
\item For all $(x,t) \in M \times [-1,0]$ and $0 < r, s < 1$ we have
\[ | \{ \rrm (\cdot, t) < sr \} \cap B(x,t, r) |_t \leq E s^{3.9} r^n. \]
\item We have
\[ |B(x_0, 0, \nu^{-1} )|_0 > ( \omega_n - \nu) (\nu^{-1})^n. \]
\end{enumerate}
Then for all $x \in B(x_0, 0, 1)$, the reduced volume satisfies.
\[ \td{V}_{(x,0)} (1/2) > 1 - \delta. \]
\end{Lemma}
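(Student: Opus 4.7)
The plan is to argue by contradiction and use the compactness result Proposition~\ref{Prop:convtosingspaceLpasspt} together with the lower bound for the reduced volume supplied by Lemma~\ref{Lem:LLalmostperfect}. Suppose the claim fails for some $\delta>0$; then there exist sequences $\nu_i\to 0$, $\rho_i\to 0$, Ricci flows $(M_i,(g_t^i)_{t\in[-2,0]})$ satisfying the hypotheses with parameters $\nu_i,\rho_i$, basepoints $x_0^i$ with $|B(x_0^i,0,\nu_i^{-1})|_0>(\omega_n-\nu_i)(\nu_i^{-1})^n$, and points $x_i\in B(x_0^i,0,1)$ with $\td{V}_{(x_i,0)}(1/2)\leq 1-\delta$. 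First I would choose $D=D(\delta)$ large enough that the Euclidean Gaussian mass in $\IR^n\setminus B(0,D)$ is less than $\delta/4$, and apply Lemma~\ref{Lem:LLalmostperfect} at $x_i$ with $\tau_0=1/2$ and error parameter $\delta/4$; for $i$ large this is legitimate since $\rho_i\to 0$, and it reduces the contradiction hypothesis to
\[ \int_{B(x_i,0,D)} (2\pi)^{-n/2} e^{-d_0^2(x_i,z)/2}\, dg_0^i(z) < 1-\delta/2. \]

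Next I would invoke Proposition~\ref{Prop:convtosingspaceLpasspt} applied to $(M_i,g_0^i,x_0^i)$ to extract, after passing to a subsequence, convergence to a pointed singular space $(\XX,q_0)=(X,d,\RR,g,q_0)$ that is Ricci-flat on $\RR$ and has mild singularities; after a further subsequence $x_i\to q_\infty\in X$ with $d^X(q_0,q_\infty)\leq 1$. The crux is to argue that $\XX$ is isometric to Euclidean $\IR^n$. The Bishop--Gromov volume comparison on singular spaces in \cite[Proposition~4.1]{Bamler-CGN} guarantees that $r\mapsto |B^X(q_0,r)\cap\RR|/(\omega_n r^n)$ is non-increasing and bounded above by $1$, with limit as $r\to\infty$ equal to the asymptotic volume ratio at $q_0$. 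The almost-Euclidean bound on $B(x_0^i,0,\nu_i^{-1})$ at the diverging scales $\nu_i^{-1}\to\infty$, combined with the volume convergence statement in Proposition~\ref{Prop:convtosingspaceLpasspt}, forces this asymptotic volume ratio to equal $1$; then by monotonicity the ratio is identically $1$, and the rigidity built into the volume comparison on singular spaces forces $\XX\cong\IR^n$.

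Finally, once $\XX$ is Euclidean, the smooth convergence on compact subsets of $\RR$ afforded by the convergence scheme yields, for each fixed $D$,
\[ \int_{B(x_i,0,D)} (2\pi)^{-n/2} e^{-d_0^2(x_i,z)/2}\, dg_0^i(z) \longrightarrow \int_{|v|<D} (2\pi)^{-n/2} e^{-|v|^2/2}\, dv, \]
and the right-hand side tends to $1$ as $D\to\infty$. Choosing $D$ and then $i$ both large contradicts the estimate of the first paragraph. The hard part of the argument will be the step asserting $\XX\cong\IR^n$: the pre-limit Ricci flows carry no Ricci lower bound and so do not themselves enjoy Bishop--Gromov monotonicity, while the hypothesis supplies almost-Euclidean information only at the diverging scales $\nu_i^{-1}$, whereas Proposition~\ref{Prop:convtosingspaceLpasspt} transfers volume information only at fixed scales to the limit. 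Bridging these two regimes will require a careful use of the monotonicity that does hold on $\XX$ combined with a diagonal selection of scales, producing arbitrarily large almost-Euclidean balls in $\XX$ around $q_0$ and hence asymptotic volume ratio equal to $1$.
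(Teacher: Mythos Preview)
Your overall strategy---reduce via Lemma~\ref{Lem:LLalmostperfect} to a Gaussian volume integral, compare with a Ricci-flat singular space, and use Bishop--Gromov there---matches the paper's, but your contradiction setup creates the very obstruction you flag and do not resolve. By sending $\nu_i\to 0$ you place the almost-Euclidean hypothesis at the diverging scale $\nu_i^{-1}\to\infty$, whereas Proposition~\ref{Prop:convtosingspaceLpasspt}\ref{(c)-Prop:convtosingspaceLpasspt} only compares volumes of balls of \emph{fixed} radius as $i\to\infty$. There is no mechanism on the $M_i$ side to propagate the bound from scale $\nu_i^{-1}$ down to a fixed scale (the $M_i$ carry no Bishop--Gromov), and no mechanism on the $\XX$ side to receive information at infinite scale. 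A ``diagonal selection of scales'' cannot bridge this: for any fixed $R$ you can make $|B^{M_i}(x_0^i,0,R)|_0$ close to $|B^X(q_0,R)\cap\RR|$, but you have no lower bound on the former beyond non-collapsing, so you learn nothing new about the latter.

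The remedy---and this is exactly what the paper does---is to choose $\nu=\nu(\delta)$ \emph{once and for all}, small enough that the Euclidean Gaussian tail beyond radius $\nu^{-1}$ is $\ll\delta$, and then work directly via the effective comparison Proposition~\ref{Prop:effectivelimitXX} applied at $(x,0)$ with error parameter $\eta\ll\nu$. Assertion~\ref{(f)-Prop:effectivelimitXX} there compares volumes at all radii $r<\eta^{-1}$, hence in particular at the \emph{fixed} radius $\nu^{-1}+1$; one transfers the almost-Euclidean bound to the comparison space $\XX$, uses Bishop--Gromov on $\XX$ to obtain $|B^X(q_\infty,r)\cap\RR|>(\omega_n-O(\nu))r^n$ for all $r\le\nu^{-1}+1$, transfers back to $M$ via~\ref{(f)-Prop:effectivelimitXX}, and finishes with Lemma~\ref{Lem:LLalmostperfect}. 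You never need $\XX\cong\IR^n$: almost-Euclidean volume up to scale $\nu^{-1}$ already makes the Gaussian integral within $O(\nu)$ of~$1$. If you insist on arguing by contradiction, fix $\nu=\nu(\delta)$ as above and send only $\rho_i\to 0$; then the relevant scale is fixed, Proposition~\ref{Prop:convtosingspaceLpasspt}\ref{(c)-Prop:convtosingspaceLpasspt} transfers the volume bound to the limit, and the rest of your outline goes through.
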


Note that $\nu$ does not depend on $A$, $E$ or $\mathbf{p}$.

\begin{proof}
Let us first fix the constants.
Choose $\nu = \nu(\delta) > 0$ small enough such that
\begin{multline*}
 \int_{\IR^n \setminus B(0^n, \nu^{-1} )} (2 \pi )^{-n/2} e^{- |z|^2 / 2} dz < \delta / 3, \qquad \nu < 0.01, \qquad 4 \nu < \delta / 3, \\
 \text{and} \qquad (\omega_n - 2 \nu) (\nu^{-1})^n   > (\omega_n - 3 \nu) (\nu^{-1} + 1)^n.
\end{multline*}
Next, choose $\eta = \eta ( \delta) > 0$ small such that
\begin{multline*}
 \eta < \nu/2, \qquad  \frac{1}{1+ \eta}(\omega_n - \nu) (\nu^{-1})^n - \frac{\eta}{1+\eta} > (\omega_n - 2 \nu) (\nu^{-1})^n, \\
 \qquad (1- \eta) (\omega_n - 3\nu) > \omega_n - 4 \nu  \textQQqq{and}  (2\pi)^{-n/2} (\nu^{-2} +1) \eta + \eta < \delta / 3. 
 \end{multline*}

We will now use Proposition \ref{Prop:effectivelimitXX} with $(q,t) \leftarrow (x, 0)$, assuming $\rho$ to be sufficiently small, depending on $A, E, \eta$.
Proposition \ref{Prop:effectivelimitXX} yields a pointed singular space $(\XX, q_\infty) = (X, d, \RR, g, q_\infty)$ with mild singularities for which $\Ric \equiv 0$ on $\RR$, subsets $U \subset \RR$ and $V \subset M$ and a diffeomorphism $\Phi : U \to V$ such that assertions \ref{(a)-Prop:effectivelimitXX}--\ref{(g)-Prop:effectivelimitXX} of this proposition hold.
Note that assertion \ref{(a)-Prop:effectivelimitXX} states that $q_\infty \in U$ and $d_{0} (\Phi(q_\infty), x) < \eta$.
Then, by assertion \ref{(f)-Prop:effectivelimitXX},
\begin{align*}
 \big| B^X (q_\infty, \nu^{-1} + 1 ) \cap \RR \big| &> \frac{1}{1+ \eta} \big| B^M(x, 0, \nu^{-1} + 1) \big|_0 - \frac{\eta}{1+\eta} \displaybreak[1] \\
 &> \frac{1}{1+ \eta} \big| B^M(x_0, 0, \nu^{-1} ) \big|_0 - \frac{\eta}{1+\eta} \displaybreak[1] \\
 &> \frac{1}{1+ \eta}(\omega_n - \nu) (\nu^{-1})^n - \frac{\eta}{1+\eta} \displaybreak[1] \\
&> (\omega_n - 2 \nu) (\nu^{-1})^n
> (\omega_n - 3 \nu ) (\nu^{-1} + 1)^n.
\end{align*}
Since $\XX$ has mild singularities and satisfies $\Ric = 0$ on $\RR$, we can apply Bishop-Gromov volume comparison on $\XX$ (see \cite[Proposition 4.1]{Bamler-CGN}) and obtain that
\[ \big| B^X (q_\infty, r) \cap \RR \big| > (\omega_n - 3 \nu) r^n \textQQqq{for all} 0 < r \leq \nu^{-1} + 1. \]
Using assertion \ref{(f)-Prop:effectivelimitXX} of Proposition \ref{Prop:effectivelimitXX} again, we get
\begin{multline*}
 \big| B^M (x, 0, r) \big|_0 > (1 - \eta) ( \omega_n - 3 \nu) r^n - \eta > (\omega_n - 4 \nu) r^n - \eta \\\textQQqq{for all} 0 < r < \nu^{-1} + 1. 
\end{multline*}
So by Lemma \ref{Lem:LLalmostperfect}, for $\delta \leftarrow \eta$, $\tau_0 \leftarrow 1/2$, $x_0 \leftarrow x$ and sufficiently small $\rho$,
\begin{align*}
 \td{V}_{(x,0)} (1/2) &> \int_{B^M (x, 0, \nu^{-1})} \big( 4 \pi \cdot \tfrac12 \big)^{-n/2} \exp \Big({-\frac{d_0^2 (x, y)}{4 \cdot \frac12}}\Big) dg_0 (y) - \eta \displaybreak[1] \\
& = \int_{B^M (x, 0, \nu^{-1})} \bigg( - \int_{d_0(x,y)}^{\nu^{-1}} \bigg( - (2 \pi )^{-n/2}  r \exp \Big({-\frac{r^2}{2}}\Big)  \bigg) dr \\
&\qquad\qquad\qquad\qquad + (2 \pi )^{-n/2} \exp \Big({-\frac{(\nu^{-1})^2}{2}}\Big) \bigg) dg_0 (y) - \eta \displaybreak[1]  \\
 &= \int_0^{\nu^{-1}}  (2 \pi )^{-n/2}  r e^{-r^2/2} \big| B^M (x, 0, r) \big|_0 dr \\
 &\qquad\qquad\qquad\qquad + (2 \pi)^{-n/2} e^{-(\nu^{-1})^2/2} \big| B^M(x, 0, \nu^{-1}) \big|_0 - \eta \displaybreak[1] \\
& > (\omega_n - 4 \nu) \int_0^{\nu^{-1}} (2 \pi)^{-n/2} r^{n+1}  e^{-  r^2 / 2} dr + (2 \pi)^{-n/2}  (\nu^{-1} )^n e^{- (\nu^{-1})^2/2} \\
&\qquad\qquad\qquad\qquad - \eta \int_0^{\nu^{-1}} (2 \pi)^{-n/2} r e^{- r^2 / 2} dr - (2 \pi)^{-n/2} e^{- (\nu^{-1})^2 / 2} \eta - \eta \displaybreak[1] \\
 &> (\omega_n - 4 \nu) \int_{B(0^n, \nu^{-1} ) \subset \IR^n} (2 \pi)^{-n/2} e^{- |z|^2/2} dz -  (2\pi)^{-n/2} (\nu^{-2} +1) \eta - \eta  \displaybreak[1] \\
 &> 1 - \int_{\IR^n \setminus B(0^n, \nu^{-1} )} (2 \pi)^{-n/2} e^{- |z|^2/2} dz  \\
 &\qquad\quad- 4 \nu  \int_{B(0^n, \nu^{-1} ) \subset \IR^n} (2 \pi)^{-n/2} e^{- |z|^2/2} dz -  (2\pi)^{-n/2} (\nu^{-2} +1) \eta - \eta \displaybreak[1] \\
& > 1 - \delta / 3 - 4 \nu  - \delta / 3 \\
 & > 1 - \delta.
\end{align*}
This finishes the proof.
\end{proof}

\subsection{Proof of the regularity theorems}
We first need to establish the following gap theorem for the reduced volume:

\begin{Lemma} \label{Lem:epsregredvolume}
For any $A < \infty$ there are constants $\delta = \delta(A), \sigma = \sigma (A) > 0$ such that the following holds:

Let $(M, (g_t)_{t \in [-2,0]} )$ be a Ricci flow on a compact, $n$-dimensional and orientable manifold $M$ and $x_0 \in M$ and assume that
\begin{enumerate}[label=(\roman*)]
\item \label{(i)-Lem:epsregredvolume} $\nu [ g_{-2}, 4] \geq - A$.
\item \label{(ii)-Lem:epsregredvolume} $|R| \leq 1$ on $M \times [-2,0]$.
\item \label{(iii)-Lem:epsregredvolume} For all $x \in B(x_0, 0,1)$ the reduced volume satisfies
\[ \td{V}_{(x,0)} (1/2) > 1 - \delta. \]
\end{enumerate}
Then $\rrm (x_0, 0) > \sigma$.
\end{Lemma}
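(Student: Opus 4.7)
The plan is to argue by contradiction via a blow-up and point-selection analysis in the spirit of Perelman's proof of pseudolocality. Suppose the conclusion fails. Then for each $i \in \IN$ one obtains a Ricci flow $(M_i, (g^i_t)_{t \in [-2,0]})$ satisfying (i)--(iii) with $\delta_i := 1/i$ and a basepoint $x_0^i \in M_i$ such that $Q_i := \rrm(x_0^i, 0) \to 0$. I would parabolically rescale by setting $\td{g}^i_t := Q_i^{-2} g^i_{Q_i^2 t}$; on the rescaled flow $\td{\rrm}(x_0^i, 0) = 1$, the scalar curvature satisfies $|\td{R}^i| \leq Q_i^2 \to 0$, and the entropy lower bound persists by scale invariance of $\nu$. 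Since reduced volume transforms as $\td{V}^{\td{g}^i}_{(x,0)}(\tau) = \td{V}^{g^i}_{(x,0)}(Q_i^2 \tau)$, Perelman's $\tau$-monotonicity combined with hypothesis (iii) gives $\td{V}^{\td{g}^i}_{(x,0)}(\tau) \geq 1 - 1/i$ for every fixed $\tau > 0$ and every $x$ in an arbitrarily large rescaled ball around $x_0^i$, once $i$ is sufficiently large.

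Next I would perform a standard point-selection argument of the type used in the proof of Proposition \ref{Prop:Pseudoloc}: replacing $x_0^i$ by a suitable nearby point inside the original unit ball, I may arrange that $\td{\rrm}(x_0^i, 0) \leq 1$ while $\td{\rrm}(\cdot, 0) \geq 1/2$ on $\td{B}(x_0^i, 0, A_i)$ for some sequence $A_i \to \infty$. Backwards pseudolocality (Proposition \ref{Prop:Pseudoloc}) and Shi-type estimates then yield uniform $C^k$-bounds on $\td{B}(x_0^i, 0, A_i/2) \times [-c, 0]$ for some $c = c(A) > 0$, together with non-collapsing from Proposition \ref{Prop:VolumeBound}. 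Hamilton's compactness theorem thus produces, along a subsequence, a smooth, complete pointed Cheeger-Gromov limit $(\td{M}_\infty, (\td{g}^\infty_t)_{t \in [-c, 0]}, x_0^\infty)$. Since $|\td{R}^i| \to 0$ uniformly, $R^\infty \equiv 0$ on the limit, and then the evolution equation $\partial_t R = \triangle R + 2|{\Ric}|^2$ forces $\Ric^\infty \equiv 0$.

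The third step is to pass the reduced-volume bound to the limit. Using the Gaussian heat kernel upper bound (Proposition \ref{Prop:GaussianHKbounds} and Corollary \ref{Cor:HKintegrals}) to control the tails of $\int (4\pi\tau)^{-n/2} e^{-l} \, dg_{-\tau}$ outside large balls, together with smooth convergence of the $\LL$-exponential map on the region of convergence, I would deduce $\td{V}^\infty_{(x_0^\infty, 0)}(\tau) \geq 1$ for every $\tau \in (0, c)$. Since Perelman's bound gives $\td{V}^\infty \leq 1$, equality must hold, and the rigidity case of Perelman's reduced-volume monotonicity identifies $(\td{M}_\infty, \td{g}^\infty_0)$ with flat Euclidean space $(\IR^n, g_{\eucl})$. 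On the other hand, by maximality in the definition of $\rrm$, for each $i$ there is a point $y_i \in \td{B}(x_0^i, 0, 1 + 1/i)$ with $|\td{\Rm}(y_i, 0)| \geq (1 + 1/i)^{-2}$; after extraction, $y_i \to y_\infty \in \td{M}_\infty$, and smooth convergence forces $|{\Rm}^\infty(y_\infty, 0)| \geq 1$, contradicting the flatness of the limit.

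The hardest step will be the passage of the reduced volume through the limit: since $\td{V}$ is a global integral involving $\LL$-geodesics that reach back in time by $\tau$, and the smooth convergence only holds on a parabolic neighborhood of scale $c(A)$, one must verify that minimizing $\LL$-geodesics from the basepoint stay within (or asymptotically close to) this neighborhood for the relevant range of $\tau$, and that the Gaussian tail estimates of Proposition \ref{Prop:GaussianHKbounds} allow the exterior contribution to be absorbed in the error. The point-selection step is also delicate, since the enlarged rescaled balls $\td{B}(x_0^i, 0, A_i)$ must be arranged to sit inside $\td{B}(x_0^i, 0, Q_i^{-1})$ where the reduced-volume hypothesis is available; this is possible because $Q_i^{-1} \to \infty$ much faster than any fixed $A_i$ once $x_0^i$ is re-selected within the original unit ball.
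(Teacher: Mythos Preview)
Your proposal is correct and follows essentially the same contradiction/blow-up strategy as the paper: point-pick inside the unit ball, rescale to normalize the curvature, use backward pseudolocality and Hamilton compactness to extract a Ricci-flat limit, pass the reduced-volume bound to the limit to force $\td V^\infty\equiv 1$, and conclude that the limit is Euclidean, contradicting the curvature normalization. The only notable deviation is in the tail control when passing $\td V$ to the limit: the paper writes the reduced volume through the $\LL$-exponential map and uses Perelman's pointwise monotonicity of $\tau^{-n/2}e^{-l}J^{\LL}$ to bound the contribution from $T_xM\setminus B(0,D)$ by the Euclidean Gaussian tail $\int_{\IR^n\setminus B(0,D)}(4\pi)^{-n/2}e^{-|v|^2/4}\,dv$ (uniformly in $i$), rather than invoking the heat-kernel Gaussian bounds via Perelman's Harnack as you suggest; both routes work.
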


\begin{proof}
Note that by the monotonicity of the reduced volume, we have
\[ 1 \geq \td{V}_{(x,0)} (\tau) > 1 - \delta \textQQqq{for all} \tau \in (0, 1/2]. \]

Assume now that the statement of the Lemma was false and pick arbitrary sequences $\delta_i, \sigma_i \to 0$.
Then we can find a sequence of flows $(M_i, (g^i_t)_{t \in [-2, 0]})$ and basepoints $x_0^i \in M_i$ such that conditions \ref{(i)-Lem:epsregredvolume}--\ref{(iii)-Lem:epsregredvolume} of the lemma hold, but for which $\rrm^{M_i} (x^i_0, 0) \leq \sigma_i$.
Choose $y_i \in B^{M_i} (x^i_0, 0, 1/2)$ such that
\[ a_i := |{\Rm}| (y_i, 0) \big( \tfrac12 - d_0^{M_i} (y_i, x^i_0) \big)^2 \]
is maximal and set $Q_i := |{\Rm}| (y_i, 0)$.
Then $a_i, Q_i \to \infty$ and $|{\Rm}| (\cdot, 0) \leq 4 Q_i$ on $B^{M_i} (y_i, 0, \frac12 (\frac12 - d_0^{M_i} (y_i, x^i_0))$. 
Let $( M_i, (g^{\prime i}_t)_{t \in [-2 Q_i, 0]})$ be the flows that arise from parabolic rescaling of $(M_i, (g^i_t)_{t \in [-2, 0]})$ by $Q_i$.
Then, in these rescaled flows we have $|{\Rm'}| (y_i, 0) = 1$ and $|{\Rm'}| (\cdot, 0) \leq 4$ on $B'(y_i, 0, \frac12 \sqrt{a_i})$ and $\td{V}_{(x, 0)}^{M_\infty, (g^{\prime i}_t)} (\tau) > 1- \delta$ for all $x \in B'(y_i, 0, \frac12 \sqrt{a_i})$ and $0 < \tau \leq Q_i / 2$.
Moreover, by Proposition \ref{Prop:Pseudoloc}, we have 
\begin{equation} \label{eq:Rmprime4}
|{\Rm'}| \leq 4 \eps^{-2} \textQQqq{on} P'_i := P' \big( y_i, 0, \tfrac12 \sqrt{a_i}, - \eps^2 / 4 \big)
\end{equation}for some uniform $\eps = \eps (A) > 0$.
Lastly, due to Proposition \ref{Prop:VolumeBound}, we have $|B' (x,0,1)|_0 > \kappa$ for all $x \in B'(y_i, 0, \frac14 \sqrt{a_i})$, where $\kappa = \kappa (A) > 0$ is some uniform constant.
So by passing to a subsequence, we may assume that the pointed flows $(M_i, (g^{\prime i}_t)_{t \in (-\eps^2 / 4, 0]}, (y_i, 0))$ smoothly converge to some Ricci flow $(M_\infty, (g^\infty_t)_{t \in (-\eps^2 / 4, 0]}, (y_\infty, 0))$ that has complete time-slices and bounded curvature on compact time-intervals.
Note that $|{\Rm}| (y_\infty, 0) = 1$.
Since we took a blow-up sequence, we have $R \equiv 0$ and hence $\Ric \equiv 0$ on $M_\infty \times (- \eps^2 / 4, 0]$.

We now claim that for any $x_\infty \in M_\infty$ and $0 < \tau < \eps^2 / 4$ we have
\begin{equation} \label{eq:tdVlargeinlimit}
  \td{V}^{M_\infty}_{(x_\infty, 0)} (\tau) = 1. 
\end{equation}
To see this, fix $x_\infty \in M_\infty$ and $0 < \tau < \eps^2 / 4$ and consider a sequence $x_i \in M_i$ that converges to $x_\infty$ under the smooth convergence of Ricci flows.
Note that $x_i \in B^{M_i} (x_0, 0, 1)$ for large $i$.

Consider the $\LL$-exponential maps on $(M_i, (g^{\prime i}_t)_{t \in [-2 Q_i, 0]})$ based at $(x_i, 0)$ for the parameter $\tau$
\[ \LL\exp_{(x_i, 0), \tau} : T_{x_i} M_i \longrightarrow M_i, \]
their Jacobians $J^\LL_{(x_i, 0)} (\cdot, \tau) : T_{x_i} M_i \to \IR$ and the subsets $\mathcal{D}^{\LL}_{(x_i, 0), \tau} \subset T_{x_i} M_i$, $\mathcal{G}_{(x_i, 0), \tau}^{\LL} \subset M_i$ as defined in subsection \ref{subsec:LLgeometry}.
Fix some constant $D < \infty$ and define the subsets
\[ S_{D,i} := \LL\exp_{(x_i, 0), \tau} \big( T_{x_i} M_i \setminus B ( 0_{x_i}, D) \big) \subset M_i. \]
Due to the curvature bound (\ref{eq:Rmprime4}), we find that for sufficiently large $i$ (depending on $D$) the following is true:
for all  $v \in B(0_{x_i}, D) \subset T_{x_i} M_i$, the image of the $\LL$-geodesic
\[ \gamma_{v} :  [0, \tau] \to M_i, \qquad \tau' \mapsto \LL\exp_{(x_i, 0), \tau'} (v) \]
lies in $B'(y_i, 0, \frac12 \sqrt{a_i})$.
So the maps $\LL\exp_{(x_i, 0), \tau}$ smoothly converge to $\LL\exp_{(x_\infty, 0), \tau}$ on $B(0_{x_i}, D) \subset T_{x_i} M_i$.
Moreover, there is some constant $D^* < \infty$, which does not depend on $i$, such that $M_i \setminus S_{D,i} \subset B' (x_i, 0, D^*)$ for all $i$.
Note that, if $i$ is large, then for all $z \in M_i \setminus S_{D,i}$, the $\LL$-distance $L_{(x_i, 0)} (z, -\tau)$ is given by the $\LL$-length of an $\LL$-geodesic $\gamma_v$ for some $v \in B(0_{z_i}, D) \subset T_{x_i} M_i$.
So for any $z \in M_\infty$ and $z_i \in M_i$ with $z_i \to z_\infty$ for which $z_i \in M_i \setminus S_{D,i}$ for infinitely many $i$, we have
\[ l_{(x_\infty, 0)} (z, -\tau) \leq \liminf_{i \to \infty} l_{(x_i, 0)} (z_i, - \tau). \]
It follows that
\begin{align*}
 \td{V}^{M_\infty}_{(x_\infty, 0)} (\tau) 
&= \int_M (4 \pi \tau)^{-n/2} e^{-l_{(x_\infty,0)} (z,- \tau)} dg^\infty_{-\tau} (z) \\
&\geq \liminf_{i \to \infty} \int_{M_i \setminus S_{D,i}} (4 \pi \tau)^{-n/2} e^{-l_{(x_i,0)} (z, - \tau)} dg^\infty_{-\tau} (z)  \displaybreak[1] \\
&= \liminf_{i \to \infty} \bigg( \td{V}^{M_i, (g^{\prime i}_t)} (\tau) - \int_{M_i \setminus S_{D,i}} (4 \pi \tau)^{-n/2} e^{-l_{(x_i,0)} (z, -\tau)} dg^\infty_{-\tau} (z) \bigg) \displaybreak[1] \\ 
&\geq  \liminf_{i \to \infty} \bigg( 1 - \delta_i - \int_{\mathcal{D}^\LL_{(x_i, 0), \tau} \setminus B(0_{x_i}, D)} (4 \pi \tau)^{-n/2}  \\
&\qquad\qquad\qquad\qquad\qquad\qquad \cdot \exp \big( {-l_{(x_i,0)} ( \LL\exp_{(x_i, 0), \tau} (v), -\tau)} \big) dv \bigg) \\ 
&\geq 1  - \limsup_{i \to \infty}  \int_{T_{x_i}M \setminus B(0_{x_i}, D)} (4 \pi \tau)^{-n/2} \displaybreak[1] \\
&\qquad\qquad\qquad\qquad\qquad\qquad \cdot \exp \big( {-l_{(x_i,0)} ( \LL\exp_{(x_i, 0), \tau} (v), -\tau)} \big) dv  \displaybreak[1] \\
&\geq 1 - \int_{\IR^n \setminus B(0^n, D)} (4 \pi)^{-n/2} e^{|v|^2 / 4} dv.
\end{align*}
Letting $D \to \infty$ yields that the left-hand side of (\ref{eq:tdVlargeinlimit}) is not smaller than the right-hand side.
The reverse inequality is always true by default.

Since $(M_\infty, (g^\infty_t)_{t \in (-\eps^2 / 4, 0]})$ is Ricci flat, we have
\[ L_{(x_\infty, 0)} (x, - \tau) = \frac{\big( d_0^{M_\infty} (x_0,x) \big)^2}{2 \sqrt{\tau}}. \]
So (\ref{eq:tdVlargeinlimit}) implies that for all $x \in M_\infty$ and $0 < \tau \leq \eps^2 / 4$
\[ \int_{M_\infty} (4 \pi \tau)^{-n/2} \exp \Big({- \frac{d_0^2 (x, z)}{4 \tau}}\Big) dg^\infty_0 (z) = \td{V}_{(x,0)} (\tau) = 1 . \]
So, by volume comparison
\begin{multline*}
 1 = \int_0^\infty (4 \pi \tau)^{-n/2} \frac{r}{2 \tau} e^{- r^2 / 4 \tau} \big| B^{M_\infty} (x, 0, r) \big|_0 dr \\ \leq \int_0^\infty (4 \pi \tau)^{-n/2} \frac{r}{2 \tau} e^{- r^2 / 4 \tau} \cdot \omega_n r^n dr = 1. 
\end{multline*}
It follows that $| B^{M_\infty} (x, 0, r) |_0 = \omega_n r^n$ for all $r > 0$ and hence that $(M_\infty, g^\infty_0)$ is isometric to Euclidean space.
This, however, contradicts $|{\Rm}|(y_\infty, 0) = 1$.
\end{proof}

We can finally prove Proposition \ref{Prop:epsreg} and Corollary \ref{Cor:checkregularityasspt}.

\begin{proof}[Proof of Proposition \ref{Prop:epsreg}]
Let $\delta = \delta(A), \sigma = \sigma (A) > 0$ be the constants from Lemma \ref{Lem:epsregredvolume} and determine $\nu = \nu (\delta)$ from Lemma \ref{Lem:redvolumelarge}.
Set $\eps_0 = \eps_0 (A) := \nu$.
By combining those lemmas we conclude that whenever $\rho$ is sufficiently small, depending on $A, E, \mathbf{p}$, and
\[ |B(x_0, 0, \nu^{-1} ) |_0 > (\omega_n - \nu) ( \nu^{-1} )^n, \]
then $\rrm (x_0, 0) > \sigma$.
The proposition now follows via parabolic rescaling by $(r_0 \nu)^2$.
\end{proof}

\begin{proof}[Proof of Corollary \ref{Cor:checkregularityasspt}]
The $Y$-regularity of the limit follows immediately from Proposition \ref{Prop:epsreg} and assertions \ref{(c)-Prop:convtosingspaceLpasspt} and \ref{(d)-Prop:convtosingspaceLpasspt} of Proposition \ref{Prop:convtosingspaceLpasspt}.
The constant $Y$ can be chosen only depending on $\eps_0$ and $\sigma_0$ of Proposition \ref{Prop:epsreg}, which in turn only depend on $A$.

Alternatively, observe that Proposition \ref{Prop:epsreg} implies property (G) from \cite[subsec 1.2]{Bamler-CGN} for an $A$ that only depends on the $A$ from Proposition~\ref{Prop:convtosingspaceLpasspt}.
The corollary follows now using \cite[Theorem 1.2(c)]{Bamler-CGN}.
\end{proof}

\section{Proof of the main theorems} \label{sec:mainproof}

\subsection{Proof of the integral curvature bound}
We first prove the following covering lemma, whose result we will iterate later.

\begin{Lemma} \label{Lem:rrmsmallcoverbyballs}
For any $A < \infty$ and $0 < \mathbf{p} < 4$ there is a constant $H = H(A, \mathbf{p}) < \infty$ such that:

For any $E' < \infty$ and $0 < \lambda < 1$ there is a constant $0 < \ov{r} = \ov{r} (A, \mathbf{p}, E', \lambda) < 1$ such that the following holds:

Let $(M, (g_t)_{t \in [-2,0]} )$ be a Ricci flow on a compact, $n$-dimensional and orientable manifold $M$ and $0 < r_0 \leq \ov{r}$ a scale with the property that
\begin{enumerate}[label=(\roman*)]
\item \label{(i)-Lem:rrmsmallcoverbyballs} $\nu [ g_{-2}, 4] \geq - A$.
\item \label{(ii)-Lem:rrmsmallcoverbyballs} $|R| \leq 1$ on $M \times [-2,0]$.
\item \label{(iii)-Lem:rrmsmallcoverbyballs} For all $(x,t) \in M \times [-1,0]$ and $0 < r < r_0$ and $0 < s < 1$ we have
\[ | \{ \rrm (\cdot, t) < sr \} \cap B(x,t, r) |_t \leq E' s^{3.1} r^n. \]
\end{enumerate}
Then for any $x \in M$ and $0 < r \leq 10 r_0$ we can find at most $H \lambda^{\mathbf{p} - n}$ many points $y_1, \ldots, y_m \in M$, $m < H \lambda^{ \mathbf{p} - n}$ such that
\begin{equation} \label{eq:lambdaballscover}
 \{ \rrm (\cdot, 0) < \lambda r \} \cap B(x,0,r) \subset \bigcup_{j =1}^m B(y_j, 0, \lambda r). 
\end{equation}
\end{Lemma}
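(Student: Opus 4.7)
The plan is to argue by contradiction using the compactness result Proposition~\ref{Prop:convtosingspaceLpasspt} together with the $L^{\mathbf{p}' < 4}$-bound on $(\rrm^\infty)^{-1}$ for singular spaces from \cite{Bamler-CGN}. The crucial feature of the statement is that $H$ must be independent of $E'$: a direct Vitali-packing argument from assumption~(iii) alone would only yield $H = H(A, \mathbf{p}, E')$. Passing to a tangent-singular-space limit at scale $r_0 \to 0$ removes the $E'$-dependence, because the limit's $L^{\mathbf{p}'}$-bound is governed only by a $Y$-regularity constant that itself depends only on $A$. Preliminarily, for $\lambda$ bounded below by a small $c_0 > 0$ the conclusion follows directly from Vitali packing and the volume comparison of Proposition~\ref{Prop:VolumeBound}, so I may freely assume $\lambda$ is as small as needed.

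Suppose the statement fails. Then fixed $E' < \infty$ and $\lambda \in (0, 1)$, sequences $r_0^k \to 0$, Ricci flows $(M_k, (g^k_t)_{t \in [-2,0]})$ satisfying the hypotheses with $r_0 = r_0^k$, points $x_k \in M_k$, and radii $r_k \in (0, 10 r_0^k]$ may be chosen such that no covering of $\{\rrm(\cdot, 0) < \lambda r_k\} \cap B(x_k, 0, r_k)$ by $k \lambda^{\mathbf{p} - n}$ balls of radius $\lambda r_k$ exists. Parabolically rescale by $r_0^k$: on the rescaled flows the hypotheses of Proposition~\ref{Prop:convtosingspaceLpasspt} hold with $T_k = 2(r_0^k)^{-2} \to \infty$, uniform entropy bound, $\rho_k = (r_0^k)^2 \to 0$, and the integral bound now holding at all scales $r < 1$ with $\mathbf{p}_0 = 3.1$. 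After a subsequence, $r'_k := r_k / r_0^k \to r'_\infty \in (0, 10]$, and Proposition~\ref{Prop:convtosingspaceLpasspt} yields convergence to a pointed singular space $(\XX, q_\infty) = (X, d, \RR, g, q_\infty)$ that is Ricci-flat on $\RR$, has mild singularities, and has singularities of codimension $3.1$. Since the $M_k$ are assumed orientable, Corollary~\ref{Cor:checkregularityasspt} applies and gives that $\XX$ is $Y$-regular at all scales for some $Y = Y(A)$.

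On such an $\XX$, the theory of \cite{Bamler-CGN} produces, for any $\mathbf{p}' \in (\mathbf{p}, 4)$, an $L^{\mathbf{p}'}$-type bound of the form $|\{\rrm^\infty < s\} \cap B^X(y, r) \cap \RR| \leq C(A, \mathbf{p}') s^{\mathbf{p}'} r^{n - \mathbf{p}'}$ with constant depending only on $A$ and $\mathbf{p}'$. A Vitali packing combined with the volume comparison on $\XX$ from \cite[Proposition~4.1]{Bamler-CGN} then gives a covering of $\{\rrm^\infty < \lambda r'_\infty\} \cap B^X(q_\infty, r'_\infty) \cap \RR$ by at most $H_0 \lambda^{\mathbf{p}' - n}$ many balls of radius $\lambda r'_\infty / 2$, where $H_0 = H_0(A, \mathbf{p}')$. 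This covering is then transported back to the rescaled flows through the convergence scheme $\{(U_k, V_k, \Phi_k)\}$: for large $k$, the $\Phi_k$-images (slightly enlarged to absorb the small distance distortion built into Definition~\ref{Def:convergencescheme}) cover the portion of $\{\rrm(\cdot, 0) < \lambda r'_k\} \cap B(x_k, 0, r'_k)$ inside $V_k$, while the residual set outside $V_k$ has arbitrarily small curvature radius by part~(e) of Proposition~\ref{Prop:convtosingspaceLpasspt} and therefore accumulates onto the singular set $X \setminus \RR$, which is already captured by the lifted covering after a further slight enlargement. Since $\lambda^{\mathbf{p}' - n} \leq \lambda^{\mathbf{p} - n}$ for $\mathbf{p}' > \mathbf{p}$ and small $\lambda$, the total number of covering balls is bounded by $H(A, \mathbf{p}) \lambda^{\mathbf{p} - n}$ uniformly, contradicting $k \lambda^{\mathbf{p} - n} \to \infty$. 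Undoing the rescaling then produces the desired covering. The principal obstacle lies in this last transfer step: absorbing the residual ``near-singular'' set outside $V_k$ into the lifted covering without inflating the count, which requires combining the codim-$3.1$ Minkowski bound on $X \setminus \RR$ with the slack afforded by the choice $\mathbf{p}' > \mathbf{p}$ and a careful application of part~(e) of Proposition~\ref{Prop:convtosingspaceLpasspt}.
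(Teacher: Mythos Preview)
Your blow-up strategy and your use of Corollary~\ref{Cor:checkregularityasspt} together with \cite[Theorem~1.6]{Bamler-CGN} to get an $L^{\mathbf p'}$-bound on the limit depending only on $A$ are exactly what the paper does. The difference lies in how the information is pulled back to the manifold. You construct a covering on the limit $\XX$ and then try to lift it through the convergence scheme; the paper instead first transfers only the \emph{volume} estimate
\[
|\{ \rrm (\cdot, 0) < 2\lambda r \} \cap B(x, 0, 2r)|_0 < 2E(2\lambda)^{\mathbf p}(2r)^n, \qquad E = E(A,\mathbf p),
\]
to the flow by a contradiction/blow-up argument, and only \emph{then} runs a Vitali packing directly on $M$. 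Once this volume bound is available on $M$, a maximal disjoint family of $\lambda r/2$-balls centered in $\{\rrm(\cdot,0)<\lambda r\}\cap B(x,0,r)$ sits inside $\{\rrm(\cdot,0)<2\lambda r\}\cap B(x,0,2r)$, and comparing volumes yields $m < H\lambda^{\mathbf p-n}$ immediately. This completely sidesteps the ``residual'' problem you single out.

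Two concrete issues with your version. First, you rescale by $r_0^k$ rather than by $r_k$, so $r'_k := r_k/r_0^k \in (0,10]$ may tend to $0$ and your limiting statement degenerates; the paper rescales by $r_k/10$, giving the target ball fixed radius $20$ and ensuring the rescaled assumption~(iii) holds for all $r<1$. Second, your lifting step genuinely needs a \emph{uniform} (not merely pointwise) comparison between $\rrm^{M'_k}(\Phi_k(\cdot),0)$ and $\rrm^\infty$ on compact subsets of $\RR$, because the preimage $y = \Phi_k^{-1}(p)$ of a bad point varies with $k$; Proposition~\ref{Prop:convtosingspaceLpasspt}\ref{(d)-Prop:convtosingspaceLpasspt} is stated pointwise. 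Such uniformity can be extracted from the $1$-Lipschitz property of $\rrm^\infty$ together with smooth convergence on compact subsets of $\RR$, but it is additional work that the paper's volume-first route avoids entirely.
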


\begin{proof}
Fix the constants $A$ and $\mathbf{p}$ for the rest of the proof and determine $Y = Y(A)$ as the maximum of the corresponding constants from Proposition \ref{Prop:convtosingspaceLpasspt}\ref{(b)-Prop:convtosingspaceLpasspt} and Corollary \ref{Cor:checkregularityasspt}.
So every blow-up limit $\XX$ of Ricci flows $(M_i, (g^i_t)_{t \in [-2,0]})$ that satisfy assumptions \ref{(i)-Lem:rrmsmallcoverbyballs}--\ref{(iii)-Lem:rrmsmallcoverbyballs}, as obtained via Proposition \ref{Prop:convtosingspaceLpasspt}, is a singular space $\XX = (X, d, \RR, g)$ with mild singularities of codimension $3.1$ that satisfies $\Ric \equiv 0$ on $\RR$, that is $Y$-tame and $Y$-regular at all scales.
Consider such a blow-up limit $\XX$ for a moment.
We can now apply \cite[Theorem 1.6]{Bamler-CGN} to $\XX$ and we obtain a constant $E = E(\mathbf{p}, Y(A)) < \infty$, which only depends on $\mathbf{p}$ and $Y$ and hence on $A$, such that $\XX$ satisfies the bound
\begin{equation} \label{eq:blowuplimitE}
 |\{ \rrm^\XX < s r \} \cap B^X (x_\infty , r) \cap \RR | \leq E s^{\mathbf{p}} r^n 
\end{equation}
for all $x_\infty \in X$, $r > 0$ and $0 < s < 1$.
Fix this constant $E = E(\mathbf{p}, Y(A)) $ for the rest of the proof and remember that $E$ only depends on $A$ and $\mathbf{p}$.

Next, we show that for any $E' < \infty$ and $0 < \lambda < 1$ there is a constant $\ov{r} = \ov{r} (A, \mathbf{p}, E', \lambda) > 0$ such that for any $x \in M$ and $0 < r \leq \ov{r}$
\begin{equation} \label{eq:rrmbound2Elambda}
 |\{ \rrm (\cdot, 0) < 2 \lambda r \} \cap B (x, 0, 2r) |_0 < 2 E (2\lambda)^{\mathbf{p}} (2r)^n. 
\end{equation}
Assume that there was no such $\ov{r}$ for some fixed $E', \lambda$.
Then we can find a sequence of Ricci flows $(M_i, (g^i_t)_{t \in [-2, 0]})$ that satisfy assumptions \ref{(i)-Lem:rrmsmallcoverbyballs}--\ref{(iii)-Lem:rrmsmallcoverbyballs} and points $x_i \in M_i$, $0 < r_i <10 r_{0,i}$ with $r_{0,i}, r_i \to 0$ such that for all $i$
\[ \big| \big\{ \rrm^{M_i} (\cdot, 0) < 2\lambda r_i \big\} \cap B^{M_i} (x_i, 0, 2r_i) \big|_0 \geq 2 E (2\lambda)^{\mathbf{p}} (2r_i)^n\]
Let $(M'_i = M_i, (g^{\prime i}_t)_{t \in [-2 (r_i/10)^{-2}, 0]})$ be the flows that arise from $(M_i, (g^i_t)_{t \in [-2, 0]})$ by parabolic rescaling by $(r_i/10)^{-2}$.
After this rescaling, the previous bound becomes
\begin{equation} \label{eq:rrmMprime2Elambda}
 \big| \big\{ \rrm^{M'_i} (\cdot, 0) < 20\lambda  \big\} \cap B^{M'_i} (x_i, 0, 20) \big|_0 \geq 2 E (2\lambda)^{\mathbf{p}} 20^n
\end{equation}
Note that $|R| \leq r_i^2 \to 0$ on $M'_i \times [- (r_i/10)^{-2}, 0]$.
So we can apply Proposition~\ref{Prop:convtosingspaceLpasspt}\ref{(b)-Prop:convtosingspaceLpasspt} to conclude that, after passing to a subsequence, we have convergence of the pointed Riemannian manifolds $(M'_i, g^{\prime i}_0, x_i)$ to some singular space $(\XX, x_\infty)$ with mild singularities of codimension $3.1$ that satisfies $\Ric \equiv 0$ on $\RR$ and that is $Y$-tame and $Y$-regular at all scales.
The convergence can be described by a convergence scheme $\{ (U_i, V_i, \Phi_i) \}_{i=1}^\infty$.
Moreover, the limit space $\XX$ satisfies (\ref{eq:blowuplimitE}).

We will now derive a contradiction by passing (\ref{eq:rrmMprime2Elambda}) to the limit.
Fix some $\eps > 0$ for the moment and observe that we have by Proposition \ref{Prop:convtosingspaceLpasspt}\ref{(e)-Prop:convtosingspaceLpasspt} for large $i$
\begin{multline} \label{eq_inclusions_lemma_covering}
 B^{M'_i} (x_i, 0, 20) \setminus \Phi_i \big( U_i \cap B^X (x_\infty, 20+ \eps)  \big) \subset B^{M'_i} (x_i, 0, 20) \setminus V_i  \\
 \subset \big\{ \rrm^{M'_i} (\cdot, 0) < \eps  \big\} \cap B^{M'_i} (x_i, 0, 20). 
 \end{multline}
Next note, that by a standard ball-packing argument there is a uniform constant $C < \infty$ such that every $20$-ball in $(M_i, g^{\prime i}_0)$ can be covered by at most $C$ many $1$-balls.
Combining this with (\ref{eq_inclusions_lemma_covering}) and the parabolically rescaled assumption \ref{(iii)-Lem:rrmsmallcoverbyballs} we get for large $i$
\[ \big| B^{M'_i} (x_0, 0, 20) \setminus \Phi_i \big( U_i \cap B^X (x_\infty, 20+ \eps) \big)  \big|_0 < CE' \eps^{3.1} 20^n . \]
Combining this with (\ref{eq:rrmMprime2Elambda}) and Proposition \ref{Prop:convtosingspaceLpasspt}\ref{(d)-Prop:convtosingspaceLpasspt}, we find
\[ \big| \{ \rrm^\XX (\cdot, 0) < 20\lambda + \eps \} \cap B^X(x_\infty, 20+\eps) \cap \RR \big| > 2 E \lambda^{\mathbf{p}} - CE' \eps^{3.1} 20^n. \]
For sufficiently small $\eps$ this contradicts (\ref{eq:blowuplimitE}) and hence shows (\ref{eq:rrmbound2Elambda}).

We will now use (\ref{eq:rrmbound2Elambda}) to show (\ref{eq:lambdaballscover}).
To do this, choose $m \in \IN$ maximal such that we can find points $y_1, \ldots, y_m \in \{ \rrm (\cdot, 0) < \lambda r \} \cap B(x, 0, r)$ with the property that the balls $B(y_1, 0, \lambda r / 2), \ldots, B(y_m, 0, \lambda r/2)$ are pairwise disjoint.
Then
\[ B(y_1, 0, \lambda r/2), \ldots, B(y_m, 0, \lambda r/2) \subset \{ \rrm (\cdot, 0) < 2 \lambda r \} \cap B(x, 0, 2r). \]
By Proposition \ref{Prop:VolumeBound} there is a constant $c = c(A) > 0$, which only depends on $A$, such that
\[ |B(y_j, 0, \lambda r / 2) |_0 > c (\lambda r / 2)^n. \]
It follows using (\ref{eq:rrmbound2Elambda}) that
\[ m < \frac{2 E (2 \lambda)^{\mathbf{p}} (2r)^n}{c (\lambda r / 2)^n} = \frac{2 E \cdot 2^{\mathbf{p}+n}}{c (\lambda  / 2)^n} \lambda^{\mathbf{p} - n} =: H \lambda^{\mathbf{p} - n}. \]
Note that $H = H(A, \mathbf{p})$ only depends on $A$ and $\mathbf{p}$.
By the maximal choice of $m$ we conclude (\ref{eq:lambdaballscover}), which finishes the proof.
\end{proof}

Applying Lemma \ref{Lem:rrmsmallcoverbyballs} successively for sufficiently small $\lambda$ yields:

\begin{Lemma} \label{Lem:rrmEpboundonecondition}
For any $A < \infty$ and $0 < \mathbf{p} < 4$ there is a constant $\mathbf{E}_{\mathbf{p}} (A) < \infty$ such that:

For any $E' < \infty$ there is a constant $0 < \ov{r} = \ov{r} ( A, \mathbf{p}, E') < 1$ such that the following holds:

Let $(M, (g_t)_{t \in [-2,0]} )$ be a Ricci flow on a compact, $n$-dimensional and orientable manifold $M$ and $0 < r_0 \leq \ov{r}$ a scale with the property that
\begin{enumerate}[label=(\roman*)]
\item \label{(i)-Lem:rrmEpboundonecondition} $\nu [ g_{-2}, 4] \geq - A$.
\item \label{(ii)-Lem:rrmEpboundonecondition} $|R| \leq 1$ on $M \times [-2,0]$.
\item \label{(iii)-Lem:rrmEpboundonecondition} For all $(x,t) \in M \times [-1,0]$ and $0 < r < r_0$ and $0 < s < 1$ we have
\[ | \{ \rrm (\cdot, t) < sr \} \cap B(x,t, r) |_t \leq E' s^{3.1} r^n. \]
\end{enumerate}
Then for any $x \in M$ and $0 < r \leq 10 r_0$ and $0 < s < 1$ we have
\begin{equation} \label{eq:rrmEpboundunderonecondition1}
 |\{ \rrm (\cdot, 0) < s r \} \cap B(x, 0, r) |_0 < \mathbf{E}_{\mathbf{p}} s^{\mathbf{p}} r^n. 
\end{equation}
\end{Lemma}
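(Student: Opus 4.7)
\medskip

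\noindent\textbf{Proof proposal.} The natural strategy is to iterate Lemma \ref{Lem:rrmsmallcoverbyballs} over a geometric sequence of scales. The point is that Lemma \ref{Lem:rrmsmallcoverbyballs} is not restricted to the largest scale $\sim r_0$: once we have fixed $\lambda$ and chosen $\ov{r}$ small enough, the hypotheses are stable under passing to any smaller ball whose radius is still $\leq 10 r_0$, so the same covering conclusion can be re-applied on each subball.

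To make this concrete, I would first fix an auxiliary exponent $\mathbf{p}'\in(\mathbf{p},4)$ and apply Lemma \ref{Lem:rrmsmallcoverbyballs} with exponent $\mathbf{p}'$ to obtain the constant $H=H(A,\mathbf{p}')$, which depends only on $A$ and $\mathbf{p}$. Then I would choose $\lambda=\lambda(A,\mathbf{p})\in(0,1)$ small enough that
\[
 H\,\lambda^{\mathbf{p}'-\mathbf{p}}\leq 1,
\]
and finally set $\ov{r}:=\ov{r}_{\ref{Lem:rrmsmallcoverbyballs}}(A,\mathbf{p}',E',\lambda)$, which depends only on $A,\mathbf{p},E'$. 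Fix $x\in M$ and $0<r\leq 10 r_0$, and let $0<s<1$. Choose $k\in\mathbb{N}_0$ so that $\lambda^{k+1}\leq s<\lambda^{k}$; the case $k=0$ is trivial from Proposition \ref{Prop:VolumeBound}, so assume $k\geq 1$.

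The iteration is the standard covering induction: by $k$ successive applications of Lemma \ref{Lem:rrmsmallcoverbyballs}, first to $B(x,0,r)$ and then to each of the resulting subballs (each of whose radius $\lambda^{j}r\leq 10 r_0$), I obtain a cover
\[
 \{\rrm(\cdot,0)<\lambda^{k}r\}\cap B(x,0,r)\;\subset\;\bigcup_{i=1}^{N_k}B(y_i,0,\lambda^{k}r),\qquad N_k\leq (H\lambda^{\mathbf{p}'-n})^k.
\]
Since $\{\rrm<sr\}\subset\{\rrm<\lambda^{k}r\}$, combining this with the upper volume bound $|B(y_i,0,\lambda^{k}r)|_0\leq C(A)(\lambda^{k}r)^n$ from Proposition \ref{Prop:VolumeBound} gives
\[
 |\{\rrm(\cdot,0)<sr\}\cap B(x,0,r)|_0\;\leq\;C(A)\,H^k\,\lambda^{k\mathbf{p}'}\,r^n.
\]
Using $\lambda^{k}\leq s/\lambda$ and $H\lambda^{\mathbf{p}'-\mathbf{p}}\leq 1$, the right-hand side is bounded by
\[
 C(A)\,\lambda^{-\mathbf{p}}\,(H\lambda^{\mathbf{p}'-\mathbf{p}})^{k}\,s^{\mathbf{p}}r^{n}\;\leq\;\mathbf{E}_{\mathbf{p}}(A)\,s^{\mathbf{p}}r^{n},
\]
with $\mathbf{E}_{\mathbf{p}}(A):=C(A)\lambda(A,\mathbf{p})^{-\mathbf{p}}$, proving (\ref{eq:rrmEpboundunderonecondition1}).

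The whole argument is essentially bookkeeping on a covering doubling estimate, so I do not anticipate any genuine obstacle; the only care needed is to verify that the iteration is uniform, i.e.\ that the choice of $\lambda$ (and hence of $\ov r$) does not depend on the iteration depth $k$. This is automatic because Lemma \ref{Lem:rrmsmallcoverbyballs} is applied with the same target exponent $\mathbf{p}'$ at every step, and its hypothesis on the $L^{3.1}$-distribution of $\rrm^{-1}$ at scales $<r_0$ is unchanged under passing to subballs of radius $\leq 10r_0$. The dependence of $\mathbf{E}_{\mathbf{p}}$ on $A$ and $\mathbf{p}$ only, and of $\ov r$ on $A,\mathbf{p},E'$, therefore follows.
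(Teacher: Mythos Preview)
Your proposal is correct and essentially identical to the paper's own proof: fix an auxiliary exponent $\mathbf{p}'\in(\mathbf{p},4)$, pick $\lambda$ so that $H\lambda^{\mathbf{p}'-\mathbf{p}}<1$, iterate Lemma~\ref{Lem:rrmsmallcoverbyballs} $k$ times to obtain at most $(H\lambda^{\mathbf{p}'-n})^k$ balls of radius $\lambda^k r$, and bound their total volume via Proposition~\ref{Prop:VolumeBound}. The paper stops at the bound for $s=\lambda^k$ and remarks that this implies the general case, whereas you spell out the interpolation $\lambda^{k+1}\le s<\lambda^k$ explicitly; otherwise the arguments and the tracking of constant dependencies coincide.
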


\begin{proof}
Fix $A < \infty$ and $0 < \mathbf{p} < 4$ for the rest of the proof and choose some $\mathbf{p}' = \mathbf{p} (\mathbf{p}')$ such that $\mathbf{p} < \mathbf{p}' < 4$.
Let $H = H(A, \mathbf{p}')$ be the constant from Lemma~\ref{Lem:rrmsmallcoverbyballs}.
Based on this constant choose $0 < \lambda = \lambda (A, \mathbf{p}, \mathbf{p'}) < 1$ small enough such that
\[ H \lambda^{\mathbf{p}' - \mathbf{p}} < 1. \]

Consider now the constant $E'$ and choose $\ov{r} = \ov{r} (A, \mathbf{p}', E', \lambda)$ according to Lemma \ref{Lem:rrmsmallcoverbyballs}.
Applying Lemma \ref{Lem:rrmsmallcoverbyballs} multiple times yields that for any integer $k \geq 1$ there are at most $(H \lambda^{\mathbf{p}' - n} )^k$ many points $y_1, \ldots, y_m \in M$ such that
\[ \{ \rrm (\cdot, 0) < \lambda^k r \} \cap B(x, 0, r) \subset \bigcup_{j=1}^m B(y_j, 0, \lambda^k r). \]
By Proposition \ref{Prop:VolumeBound} there is a constant $C = C(A) < \infty$, which only depends on $A$, such that the time-$0$ volume of the balls $B(y_j, 0, \lambda^k r)$ is bounded from above by $C (\lambda^k r)^n$.
Thus
\begin{multline*}
 |\{ \rrm (\cdot, 0) < \lambda^k r \} \cap B(x, 0, r) |_0 < C(\lambda^k r)^n (H \lambda^{\mathbf{p}' - n} )^k \\
 = C \lambda^{\mathbf{p} k} (H \lambda^{\mathbf{p}' - \mathbf{p} } )^k r^n < C \lambda^{\mathbf{p} k}  r^n . 
\end{multline*}
As $\lambda$ and $C$ only depended on $A$ and $\mathbf{p}$, this bound implies (\ref{eq:rrmEpboundunderonecondition1}) for some suitable $\mathbf{E}_{\mathbf{p}} = \mathbf{E}_{\mathbf{p}} (A) < \infty$.
\end{proof}

Next, we show that assumption \ref{(iii)-Lem:rrmEpboundonecondition} in Lemma \ref{Lem:rrmEpboundonecondition} always holds for a suitable $E' = E'(A)$.

\begin{Lemma} \label{Lem:findEprime}
For any $A < \infty$ there are constants $E' = E' (A) < \infty$ and $\ov{r} = \ov{r} (A)$ such that:

Let $(M, (g_t)_{t \in [-2,0]} )$ be a Ricci flow on a compact, $n$-dimensional and orientable manifold $M$ with the property that
\begin{enumerate}[label=(\roman*)]
\item \label{(i)-Lem:findEprime} $\nu [ g_{-2}, 4] \geq - A$.
\item \label{(ii)-Lem:findEprime} $|R| \leq 1$ on $M \times [-2,0]$.
\end{enumerate}
Then for any $(x,t) \in M \times [-1,0]$ and $0 < r < \ov{r}$ and $0 < s < 1$ we have
\begin{equation} \label{eq:rrmEpboundunderonecondition}
 |\{ \rrm (\cdot, t) < s r \} \cap B(x, t, r) |_t < E' s^{3.1} r^n. 
\end{equation}
\end{Lemma}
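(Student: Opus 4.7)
The plan is to iterate Lemma~\ref{Lem:rrmEpboundonecondition} across scales. I would set $\mathbf{p} := 3.1$ in Lemma~\ref{Lem:rrmEpboundonecondition}, define $E'(A) := \mathbf{E}_{3.1}(A)$, and $\overline{r}(A) := \overline{r}_{\ref{Lem:rrmEpboundonecondition}}(A, 3.1, E'(A))$. This choice is crucial: making the output constant of Lemma~\ref{Lem:rrmEpboundonecondition} equal to its input constant renders the argument self-bootstrapping, so that the conclusion at scales $\leq 10 r_0$ directly reproduces the hypothesis at scales $< 10 r_0$ with the same constant, enabling a scale doubling (or rather decupling).

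Next I introduce $\mathcal{A}$, the set of those $r > 0$ such that the bound~(\ref{eq:rrmEpboundunderonecondition}) holds with $E'(A)$ for all $(x,t) \in M \times [-1,0]$, $0 < r' < r$, and $0 < s < 1$; this set is automatically downward-closed. For the base case, the function $\rrm$ is positive and continuous on the compact set $M \times [-1, 0]$, hence attains a positive minimum $r_* > 0$; for $0 < r' < r_*$ and $s \in (0, 1)$ the set $\{ \rrm(\cdot, t) < s r' \}$ is empty, making the bound vacuous, so $r_* \in \mathcal{A}$. In particular $R := \sup \mathcal{A} > 0$.

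The induction step is: if $r_0 \in \mathcal{A}$ and $r_0 \leq \overline{r}(A)$, then $10 r_0 \in \mathcal{A}$. Applying Lemma~\ref{Lem:rrmEpboundonecondition} with $\mathbf{p} = 3.1$ directly gives the bound at time $t = 0$ for scales $\leq 10 r_0$ with constant $\mathbf{E}_{3.1}(A) = E'(A)$. To obtain the bound at arbitrary $t^* \in [-1, 0]$, I would rerun the underlying blow-up argument from Lemmas~\ref{Lem:rrmsmallcoverbyballs} and~\ref{Lem:rrmEpboundonecondition} with basepoints placed in the time slice $\{ t = t^* \}$. Since $t^* \geq -1$ while the flow is defined on $[-2, 0]$, there is at least one unit of past history available at $t^*$, so after any blow-up rescaling by $Q_i \to \infty$ the available rescaled past time $Q_i^2 (t^* + 2)$ exceeds $2$ for large $i$, allowing Proposition~\ref{Prop:convtosingspaceLpasspt} to apply. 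Monotonicity of $\nu$ along Ricci flow, combined with the rescaling identity $\mu[g, \tau_2] \geq \mu[g, \tau_1] + \frac{n}{2}\log(\tau_1/\tau_2)$ recalled in subsection~\ref{subsec:terminology}, propagates $\nu[g_{-2}, 4] \geq -A$ to a uniform lower bound on $\nu[g_{t^*}, \tau]$ for $\tau$ of order $1$, which is all that the compactness argument requires.

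With base case and induction step in hand, I would argue that $R \geq \overline{r}(A)$: otherwise any $r_0 \in \mathcal{A}$ with $R/10 < r_0 \leq R$ (which exists since $R = \sup \mathcal{A}$) would satisfy $r_0 \leq \overline{r}(A)$, and the induction step would yield $10 r_0 \in \mathcal{A}$ with $10 r_0 > R$, contradicting $R = \sup \mathcal{A}$. This proves the lemma. The principal technical obstacle is the time-shift step: although the blow-up machinery is essentially time-translation covariant modulo entropy monotonicity, verifying rigorously that the constants $E'(A)$ and $\overline{r}(A)$ produced by Lemma~\ref{Lem:rrmEpboundonecondition} at time $0$ are the correct ones at an arbitrary $t^* \in [-1, 0]$ requires careful bookkeeping of how the entropy bound and the parabolic rescaling interact, and is the only nontrivial analytical content beyond what is already contained in the earlier lemmas.
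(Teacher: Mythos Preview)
Your strategy --- set $E' := \mathbf{E}_{3.1}(A)$, $\ov r := \ov r_{\ref{Lem:rrmEpboundonecondition}}(A,3.1,E')$ so that Lemma~\ref{Lem:rrmEpboundonecondition} becomes self-improving, and iterate across scales from a base case supplied by smoothness --- is exactly the paper's. The difficulty is precisely where you say it is, in the time-shift step, and your proposed resolution contains an error rather than just missing bookkeeping.

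When you recentre the blow-up at $t^* \in [-1,0]$, you observe that the \emph{flow} has past history $t^* + 2 \geq 1$, compute rescaled past time $Q_i^2(t^* + 2)$, and conclude that Proposition~\ref{Prop:convtosingspaceLpasspt} applies. But Proposition~\ref{Prop:convtosingspaceLpasspt} requires its hypothesis~(iii) --- the a priori curvature-radius bound --- on the whole interval $[-T_i,0]$, not merely that the flow exists there. Your induction hypothesis furnishes this bound only on the original interval $[-1,0]$; after recentring at $t^*$ the available past time \emph{carrying the bound} is $Q_i^2(1+t^*)$, not $Q_i^2(t^*+2)$. This vanishes at $t^* = -1$ and gives no uniform positive lower bound on $\ov r(t^*)$ as $t^* \to -1$, so your induction step does not close over all of $[-1,0]$.

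The paper fills this gap by coupling the time-shift to a parabolic rescaling. For $0 < r_0 \leq 1$ and $t_0 \in [-2+2r_0^2,0]$, the rescaled flow $g'_t := r_0^{-2} g_{r_0^2 t + t_0}$ lives on $[-2,0]$, and a direct computation using monotonicity of $\nu$ shows $\nu[g'_{-2},4] \geq \nu[g_{-2},4] \geq -A$ with the \emph{same} constant $A$. Applying Lemma~\ref{Lem:rrmEpboundonecondition} to $g'$ and undoing the rescaling gives: if~(\ref{eq:rrmEpboundunderonecondition}) holds on $M \times [t_0 - r_0^2, t_0]$ at scales $< \ov r\, r_0$, then it holds at the single time $t_0$ at scales $\leq 10\,\ov r\, r_0$. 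The paper then runs a \emph{descent} rather than your ascent: from a putative counterexample $(x_1,t_1,r_1,s_1)$ with $t_1 \in [-1,0]$ and $r_1 < \ov r$, the contrapositive (with $r_0 = \tfrac1{10}\ov r^{-1} r_1$) produces another counterexample with $r_2 < r_1/10$ and $|t_2 - t_1| \leq 2(\tfrac1{10}\ov r^{-1} r_1)^2$; iterating, $r_k \to 0$ geometrically while $t_k$ drifts by a summable amount and stays in $[-1.5,0]$, eventually contradicting smoothness. Because the hypothesis at each step lives on the shrinking interval $[t_k - r_0^2, t_k]$ rather than on all of $[-1,0]$, the boundary case $t^* = -1$ never arises and no uniformity-in-$t^*$ is needed.
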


\begin{proof}
Fix $A$ and choose $E' := \mathbf{E}_{3.1} (A) < \infty$ according to Lemma \ref{Lem:rrmEpboundonecondition}.
Next choose $0 < \ov{r} = \ov{r} (A, 3.1, E') < 1$ according to Lemma \ref{Lem:rrmEpboundonecondition}.
It follows that whenever for all $(x,t) \in M \times [-1,0]$ and $0 < r < \ov{r}$ and $0 < s < 1$ the bound (\ref{eq:rrmEpboundunderonecondition}) holds, then it also holds for all $(x,t) \in M \times \{ 0 \}$ and $0 < r \leq 10 \ov{r}$ and $0 < s < 1$.

Let us now apply this conclusions to parabolic rescalings of $(g_t)_{t \in [-2,0]}$.
Let $0 < r_0 \leq 1$ and $t_0 \in [-2 + 2 r_0^2, 0]$ and consider the parabolic rescaling $g'_{t} := r_0^{-2} g_{r_0^{2} (t + t_0)}$, which is defined for at least all times $t \in [-2, 0]$.
The time-interval $[-2,0]$ for $(g'_t)_{t \in [-2, 0]}$ corresponds to the time-interval $[t_0 - 2 r_0^2 , t_0]$ for $(g_t)_{t \in [-2, 0]}$.
By the monotonicity of the $\nu$-functional we have
\begin{multline*}
 \nu [ g'_{-2} , 4] = \nu \big[ g_{t_0 - 2r_0^2}, 4 a^2 \big] \geq \nu [ g_{-2}, 4 r_0^2 + (t_0 - 2r_0^2  + 2) ] \\
= \nu [ g_{-2}, 2 r_0^2 + t_0 +2 ] \geq \nu [ g_{-2}, 4] \geq - A. 
\end{multline*}
So the rescaled flow $(g')_{t \in [-2, 0]}$ still satisfies assumptions \ref{(i)-Lem:findEprime} and \ref{(ii)-Lem:findEprime}.
Hence, applying our previous conclusion to $(g')_{t \in [-2, 0]}$ and rescaling back shows the following:

{\it Whenever $0 < r_0 \leq 1$ and $t_0 \in [ - 2 + 2 r_0^2, 0]$ and whenever the bound (\ref{eq:rrmEpboundunderonecondition}) holds for all $(x,t) \in M \times [ t_0 - r_0^2, t_0]$ and $0 < r < \ov{r} r_0$ and $0 < s < 1$, then it also holds for all $x \in M$ and $0 < r \leq 10 \ov{r} r_0$ and $0 < s < 1$.}

Assume now that the conclusion of the Lemma was wrong with our choice of $E'$.
Then we can find some $(x_1,t_1) \in M \times [-1, 0]$, $0 < r_1 < \ov{r}$ and $0 < s_1 < 1$ such that (\ref{eq:rrmEpboundunderonecondition}) fails for $x \leftarrow x_1, t \leftarrow t_1$, $r \leftarrow r_1$ and $s \leftarrow s_1$.
By the contrapositive of our previous conclusion for $t_0 \leftarrow t_1$, $r_0 \leftarrow \frac1{10} \ov{r}^{-1} r_1$ this implies that we can find some $x_2 \in M$, $t_2 \in [t_1 - 2 (\frac1{10} \ov{r}^{-1} r_1)^2, t_1]$, $0 < r_2 < \frac1{10}  r_1$ and $0 < s_2 < 1$ such that (\ref{eq:rrmEpboundunderonecondition}) fails for $x \leftarrow x_2, t \leftarrow t_2$, $r \leftarrow r_2$ and $s \leftarrow s_2$.
Repeating this argument yields a sequence $(x_1, t_1, r_1, s_1), (x_2, t_2, r_2, s_2), \ldots$ such that $r_{k+1} \leq \frac1{10} r_k$ and $| t_{k+1} - t_k | \leq 2 (\frac1{10} \ov{r}^{-1} r_k )^2$ and such that (\ref{eq:rrmEpboundunderonecondition}) fails for $x \leftarrow x_k, t \leftarrow t_k$, $r \leftarrow r_k$ and $s \leftarrow s_k$  for all $k = 1, 2, \ldots$.
As $r_k < (\frac1{10})^{k-1} \ov{r}$, we find that $t_k$ stays within $[-1.5, 0]$.
So the process can be continued indefinitely.
However, by the smoothness of $(g_t)_{t \in [-2,0]}$ there is some large $k$ for which the left-hand sided of (\ref{eq:rrmEpboundunderonecondition}) is zero for $x \leftarrow x_k, t \leftarrow t_k$, $r \leftarrow r_k$ and $s \leftarrow s_k$.
This gives us the necessary contradiction.
\end{proof}

We can finally state our most general bound on the sublevel sets of $\rrm$.

\begin{Proposition} \label{Prop:Mainsublevelrrmbound}
For any $A < \infty$ and $0 < \mathbf{p} < 4$ there is a constant $\mathbf{E} = \mathbf{E}_{\mathbf{p}} (A) < \infty$ such that:

Let $(M, (g_t)_{t \in [-2,0]} )$ be a Ricci flow on a compact, $n$-dimensional manifold $M$ with the property that
\begin{enumerate}[label=(\roman*)]
\item \label{(i)-Prop:Mainsublevelrrmbound} $\nu [ g_{-2}, 4] \geq - A$.
\item \label{(ii)-Prop:Mainsublevelrrmbound} $|R| \leq 1$ on $M \times [-2,0]$.
\end{enumerate}
Then for any $(x,t) \in M \times [-1,0]$ and $0 < r, s <1$ we have
\[
 |\{ \rrm (\cdot, t) < s r \} \cap B(x, t, r) |_t < \mathbf{E}_{\mathbf{p}} s^{\mathbf{p}} r^n. 
\]
\end{Proposition}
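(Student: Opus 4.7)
The plan is to combine the preceding lemmas with two additional ingredients: a reduction to the orientable case and a covering argument that extends the small-scale bound to all scales $0 < r < 1$.

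First, to remove the orientability hypothesis present in Lemmas \ref{Lem:findEprime}, \ref{Lem:rrmEpboundonecondition}, and \ref{Lem:rrmsmallcoverbyballs} (inherited from Proposition \ref{Prop:epsreg}), I would pass to the orientable double cover $\pi : \widetilde M \to M$ whenever $M$ is non-orientable. Since $\pi$ is a local isometry, the Ricci flow and scalar curvature bound lift trivially; for the $\nu$-entropy, a standard symmetrization argument (averaging test functions over the deck involution and adjusting normalizations by $\log 2$) yields $\nu[\widetilde g_{-2}, 4] \geq -A - \log 2 =: -A'$. For any lift $\widetilde x$ of $x \in M$, path-lifting gives $\pi(B^{\widetilde M}(\widetilde x, t, r)) = B^M(x, t, r)$, and the local isometry property implies $\rrm^{\widetilde M}(\widetilde y, t) = \rrm^M(\pi(\widetilde y), t)$. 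Combining these, $\pi$ maps $\{\rrm^{\widetilde M}(\cdot, t) < sr\} \cap B^{\widetilde M}(\widetilde x, t, r)$ surjectively onto $\{\rrm^M(\cdot, t) < sr\} \cap B^M(x, t, r)$, so a bound on $\widetilde M$ (with constant depending on $A'$) implies the corresponding bound on $M$.

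Assuming $M$ orientable (either originally or after this reduction), I would chain Lemma \ref{Lem:findEprime} into Lemma \ref{Lem:rrmEpboundonecondition}: the former supplies constants $E'(A), \overline r(A)$ so that the assumption \ref{(iii)-Lem:rrmEpboundonecondition} of the latter holds at all scales below $\overline r(A)$, and the latter then produces, for any $\mathbf p \in (0,4)$, a constant $\mathbf E_{\mathbf p}'(A) < \infty$ and a scale $\overline r_1(A, \mathbf p) > 0$ with
\begin{equation*}
 |\{\rrm(\cdot, t) < sr\} \cap B(x, t, r)|_t < \mathbf E_{\mathbf p}'(A) s^{\mathbf p} r^n
\end{equation*}
for all $(x, t) \in M \times [-1, 0]$, $0 < r \leq 10 \overline r_1$, and $0 < s < 1$.

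Finally, to extend this to the full range $0 < r < 1$, I would argue by covering. By Proposition \ref{Prop:VolumeBound} and a ball-packing estimate, $B(x, t, r)$ is covered by at most $N = N(A)$ balls $B(y_j, t, \overline r_1)$ for any $r \in [\overline r_1, 1)$. When $sr \leq \overline r_1$, summing the small-scale bound over the $B(y_j, t, \overline r_1)$'s yields a bound of the form $N \mathbf E_{\mathbf p}'(A)(sr/\overline r_1)^{\mathbf p} \overline r_1^n$, which absorbs into $s^{\mathbf p} r^n$ up to a constant depending only on $A$ and $\mathbf p$. When $sr > \overline r_1$, the crude estimate $|B(x, t, r)|_t \leq C(A) r^n$ suffices, since $s^{\mathbf p} > \overline r_1^{\mathbf p}$ is bounded below. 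Combining the two regimes produces the universal constant $\mathbf E_{\mathbf p}(A)$. The main technical point is the orientability reduction --- specifically, verifying that the $\nu$-functional degrades by at most an additive constant under the $2$-fold orientation cover; everything else is a direct bookkeeping assembly of the preceding results.
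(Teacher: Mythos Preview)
Your proposal is correct and follows essentially the same route as the paper's proof, which reads in its entirety: ``This follows from Lemmas \ref{Lem:rrmEpboundonecondition}, \ref{Lem:findEprime} and a covering argument using Proposition \ref{Prop:VolumeBound}. In the non-orientable case, we need to pass to the orientable double cover.'' You have supplied considerably more detail than the paper --- in particular on the entropy behavior under the double cover and on the two regimes of the covering argument --- but the architecture (double-cover reduction, chaining Lemma~\ref{Lem:findEprime} into Lemma~\ref{Lem:rrmEpboundonecondition}, then a ball-packing/volume-bound covering to reach all scales $r<1$) is identical.
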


\begin{proof}
This follows from Lemmas \ref{Lem:rrmEpboundonecondition}, \ref{Lem:findEprime} and a covering argument using Proposition \ref{Prop:VolumeBound}.
In the non-orientable case, we need to pass to the orientable double cover.
\end{proof}

As a consequence, we obtain Theorem \ref{Thm:mainrrmbound}.

\begin{proof}[Proof of Theorem \ref{Thm:mainrrmbound}.]
The theorem is a consequence of Proposition~\ref{Prop:Mainsublevelrrmbound}.
More specifically, note that by the maximum principle applied to the evolution equation for the scalar curvature
\[ \partial_t R = \triangle R + 2 |{\Ric}|^2 \geq \triangle R + \frac2n R^2 \]
we have $R \geq - \frac{n}{2(2+t)}$ on $M \times (-2, 0]$.
So by assumption \ref{(i)-Prop:Mainsublevelrrmbound} we have $- \frac{n}{2} \leq R < A$ on $M \times [-1,0]$.
By parabolic rescaling and with the help of a covering argument we may therefore reduce our proof to the case $|R| \leq 1$ on $M \times [-2,0]$.
See also the discussion in subsection~\ref{subsec:terminology} on how parabolic rescaling affects the bound of the $\nu$-functional in assumption \ref{(ii)-Thm:mainrrmbound}.

We can now use Proposition~\ref{Prop:Mainsublevelrrmbound} for $\mathbf{p} = 4 - \eps$ to conclude
\begin{align*}
 \int_{B(x,t,r)} \big( &\rrm (\cdot, t) \big)^{-4+2\eps} dg_t \\
&= \int_{B(x,t,r)} \int_{r^{-4+2\eps}}^\infty \chi_{s < \rrm^{-4+2\eps} (\cdot, t)} ds dg_t + r^{-4+2\eps} |B(x,t,r)|_t \\
&= \int_{r^{-4+2\eps}}^\infty \big| \big\{ \rrm (\cdot, t) < s^{- \frac{1}{4 - 2 \eps}} \big\} \cap B(x,t,r) \big|_t ds + C(A) r^{n-4+2\eps} \\
&\leq \int_{r^{-4+2\eps}}^\infty \mathbf{E}_{\mathbf{p}} s^{- \frac{4 - \eps}{4 - 2\eps}} r^{n-4+\eps} ds+ C(A) r^{n-4+2\eps}
\leq C(A, \mathbf{E}_{\mathbf{p}}, \eps) r^{n-4+2\eps}.
\end{align*}
This finishes the proof.
\end{proof}

\subsection{Convergence of the flow away from a subset of codimension 4}
Theorem \ref{Thm:maincompactness}, except for the assertion involving the uniform convergence in the case $\rho_i \to 0$, now follows immediately:

\begin{proof}[Proof of Theorem \ref{Thm:maincompactness}.]
The theorem follows from Propositions \ref{Prop:convtosingspaceLpasspt} and \ref{Prop:Mainsublevelrrmbound}
\end{proof}

Next we present the proof of Theorems \ref{Thm:AnswerQuestion1} and \ref{Thm:AnswerQuestion2}.

\begin{proof}[Proof of Theorem \ref{Thm:AnswerQuestion1}.]
By \cite[Corollary 1.2]{Bamler-Zhang-Part2}, we have uniform convergence $d_t \to d_T$ for $t \nearrow T$.
The function $d_T : M \times M \to [0, \infty)$ is a pseudometric.
Write $x \sim y$ if $d(x, y) = 0$.
Then $(M, d_T)$ descends to a metric space $(M^* := M / \sim, d^*_T)$.

Let $q \in M$.
By the first part of Theorem \ref{Thm:maincompactness} (or Propositions \ref{Prop:convtosingspaceLpasspt} and \ref{Prop:Mainsublevelrrmbound}), we obtain that there is a sequence $t_i \nearrow T$ such that $(M, g_{t_i}, q)$ converges to a pointed singular space $(X, d, \RR^*, g, q_\infty)$ with codimension 4 singularities, according to some convergence scheme $\{ (U_i, V_i, \Phi_i) \}_{i = 1}^\infty$.
It follows that $(X,d)$ is isometric to $(M^*, d^*_T)$.
So identify in the following $(X,d)$ with $(M^*, d^*_T)$ and view $\RR^* \subset M^*$.
Let $\RR \subset M$ be the preimage of $\RR^*$ under the canonical projection $\pi : M \to M^* = M / \sim$.

By looking at the construction of the convergence scheme $\{ (U_i, V_i, \Phi_i) \}_{i = 1}^\infty$, we may assume that for all $x \in \RR^*$ we have $\Phi_i (\pi (x)) \to x$ as $i \to \infty$.
Thus, by Proposition \ref{Prop:convtosingspaceLpasspt}\ref{(d)-Prop:convtosingspaceLpasspt} we have $\lim_{i \to \infty} \rrm (x,t_i) = \rrm^\infty (x)$, where $\rrm^\infty$ denotes the curvature radius on $\XX$.
Using forward pseudolocality, Proposition \ref{Prop:Pseudoloc}, we obtain that the metric $g_t$ smoothly converges to some metric $g_T$ on $\RR$ as $t \nearrow T$ and $\rrm (x, T) := \lim_{t \nearrow T} \rrm (x,t)$ exists for all $x \in M$.

Assume now that $x \in \RR$ and $y \in M$ such that $x \sim y$.
Then $x = y$, because for $t$ sufficiently close to $T$ we have $d_t (x,y) < \rrm (x) / 2$ and the convergence $g_t \to g_T$ is smooth on $B(x, T, \rrm (x, T) / 2)$.
So $\pi |_{\RR} : \RR \to \RR^*$ is a bijective map and $\pi^* g = g_T$.
This finishes the proof of the theorem.
\end{proof}

\begin{proof}[Proof of Theorem \ref{Thm:AnswerQuestion2}.]
Recall the basepoint $q \in M$, choose a sequence $t^*_j \nearrow T$ and consider the kernels
\[ K_j (x,t) := K(q, t^*_j ; x,t) \]
to the conjugate heat operator $\partial_t + \triangle - R$.
So for every fixed $t \in [0, t^*_j)$ we have
\begin{equation} \label{eq:Kitotalmass1}
 \int_M K_j (x, t) dg_t (x) = 1 \textQQqq{and} - \partial_t K_j = \triangle K_j - R K_j. 
\end{equation}

By Perelman's Harnack inequality for the conjugate heat equation (cf \cite[9.5]{PerelmanI}), we have
\begin{equation} \label{eq_Ki_LL_geometry_Harnack}
 K_j (x,t) \geq \frac1{(4\pi (t^*_j - t))^{n/2}} \exp \big({ - l_{(q,t^*_j)} (x,t) }\big). 
\end{equation}
By comparison with the constant curve, we obtain the bound
\begin{multline*}
  l_{(q,t^*_j)} (q,t)
 \leq \frac1{2 \sqrt{t^*_j-t}} \int_0^{t^*_j-t} \sqrt{\tau} R(q, t^*_j - \tau) d\tau \\
 \leq \frac1{2 \sqrt{t^*_j-t}} \int_0^{t^*_j-t} \sqrt{\tau} \cdot \frac{C}{T- (t^*_j - \tau)} d\tau 
 \leq \frac1{2 \sqrt{t^*_j-t}} \int_0^{t^*_j-t} \frac{C}{\sqrt{\tau}} d\tau \leq C.
\end{multline*}
Combining this with (\ref{eq_Ki_LL_geometry_Harnack}) yields the following bound at $q$ for some $c_0 > 0$
\begin{equation} \label{eq_Ki_q_lower_bound}
 K_j (q, t) \geq \frac1{(4\pi (t^*_j - t))^{n/2}} e^{-C} \geq \frac{c_0}{(t^*_j - t)^{n/2}}. 
\end{equation}

Next, we use the reproduction formula
\begin{equation} \label{eq_reproduction_formula_Thm_proof}
  K_j(x,t) = \int_M K_j(y, \tfrac12 (T+t))  K(y, \tfrac12 (T + t) ; x,t) dg_{\frac12 (T+t)}(y),
\end{equation}
to derive further bounds on $K_j(x,t)$ for large $j$.
To see why (\ref{eq_reproduction_formula_Thm_proof}) holds, recall that both $K_j (x,t)$ and
\[ \td{K} (x, t') := \int_M K_j(y, \tfrac12 (T+t))  K(y, \tfrac12 (T + t) ; x,t') dg_{\frac12 (T+t)}(y) \]
satisfy the conjugate heat equation and agree for $t = t' = \tfrac12 (T + t)$.

We first note that we have the scalar curvature bound $R \leq 2C (T -t)^{-1}$ on $M \times [0, \frac12 (T+t)]$.
So by Proposition~\ref{Prop:GaussianHKbounds} we have an upper bound of the form
\begin{equation} \label{eq_upper_HK_bound_Thm_proof}
 K (y, \tfrac12 (T+t); x,t) \leq \frac{C'}{(T-t)^{n/2}} \exp \bigg({ - \frac{d^2_{\frac12 (T+t)} (x,y)}{C' (T - t)} }\bigg) \leq \frac{C'}{(T-t)^{n/2}}
\end{equation}
for some uniform $C' < \infty$.
Combining this with (\ref{eq:Kitotalmass1}) applied at time $\frac12 (T+t)$ yields that for large $j$
\begin{equation} \label{eq_Proof_Thm_Ki_upper_bound}
 K_j (x, t) \leq \frac{C'}{(T-t)^{n/2}}. 
\end{equation}

In order to deduce a lower bound on $K_j (\cdot, t)$, we observe that by (\ref{eq_upper_HK_bound_Thm_proof}) there is some uniform $D < \infty$ such that for
\[ B_{\frac12(T+t)} := B \big( q,\tfrac12 (T+t), D \sqrt{T-t} \big) \]
we have
\[ K (\cdot, \tfrac12 (T+t); q,t) \leq \frac{c_0 / 2}{(T-t)^{n/2}}  \textQQqq{on} M \setminus B_{\frac12(T+t)}, \]
where $c_0 > 0$ denotes the constant from (\ref{eq_Ki_q_lower_bound}).
So by (\ref{eq_Ki_q_lower_bound}), (\ref{eq_upper_HK_bound_Thm_proof}) and (\ref{eq:Kitotalmass1}) we obtain that for large $j$
\begin{align*}
 \frac{c_0}{(T - t)^{n/2}} &\leq \frac{c_0}{(t^*_j - t)^{n/2}} \leq K_j (q, t) \\
&\leq \int_{B_{\frac12 (T+t)}} K_j (y, \tfrac12 (T+t)) \cdot \frac{C'}{ (T-t)^{n/2}} dg_{\frac12 (T+t)} (y)  \\
&\qquad + \int_{M \setminus B_{\frac12 (T+t)}} K_j (y, \tfrac12 (T+t)) \cdot \frac{c_0/2}{(t^*_j - t)^{n/2}} dg_{\frac12 (T+t)} (y)  \\
&\leq \frac{C'}{ (T-t)^{n/2}} \int_{B_{\frac12 (T+t)}} K_j (y, \tfrac12 (T+t)) dg_{\frac12 (T+t)} (y) + \frac{c_0/2}{(T - t)^{n/2}}. 
\end{align*}
Thus
\[ \int_{B_{\frac12 (T+t)}} K_j (y, \tfrac12 (T+t)) dg_{\frac12 (T+t)} (y) \geq \frac{c_0}{2C'}. \]
So if we restrict the domain of the integral in (\ref{eq_reproduction_formula_Thm_proof}) to $B_{\frac12 (T+t)}$ and use the lower Gaussian bound on $K(y, \frac12 (T+t); x,t)$ from Proposition~\ref{Prop:GaussianHKbounds} and the distance distortion bound from Proposition~\ref{Prop:Distdistortion}, then we obtain that for large $j$
\[ K_j (x,t) \geq \frac{c_1}{(T-t)^{n/2}} \exp \bigg({ - \frac{d^2_t (q,x)}{c_1 (T-t)} } \bigg), \]
for some uniform $c_1 > 0$.

By (\ref{eq_Proof_Thm_Ki_upper_bound}) and local parabolic regularity, we obtain local bounds on higher covariant derivatives of $K_j$, which may depend on space and time, but which are independent of $j$.
So after passing to a subsequence, we have smooth convergence $K_j \to u \in C^\infty (M \times [0,T))$ on compact time-intervals.
The limit $u$ is a positive solution to the conjugate heat equation $- \partial_t u = \triangle u - R u$ and for all $t \in [0,T)$ we have
\[ \int_M u(x, t) dg_t (x) = 1 \]
and
\begin{equation} \label{eq:upperlowerubound}
 \frac{c_1}{(T-t)^{n/2}} \exp \bigg({ - \frac{d^2_t (q,x)}{c_1 (T-t)} } \bigg) \leq u (x, t) \leq \frac{C'}{(T-t)^{n/2}}. 
\end{equation}

Write now
\[ K_j (x,t) = (4 \pi (t^*_j - t))^{-n/2} e^{-f_j (x,t)} \textQq{and} u(x,t) = (4 \pi (T - t))^{-n/2} e^{-f(x,t)}. \]
Then $f_j \to f$ smoothly on compact time-intervals.
For any $t \in [0,T)$ we have
\[ w(t) := \mathcal{W} [ g_t , f(\cdot, t), T - t] = \lim_{j \to \infty} \mathcal{W} [ g_t , f_j(\cdot, t), t^*_j - t] \leq 0. \]
Recall also that $w (t)$ is non-decreasing in $t$.
More specifically,
\[ w'(t) = \int_M 2 (T-t) \Big| \Ric (\cdot, t) + \nabla^2 f (\cdot, t) - \frac1{2(T-t)} g_t \Big|^2 u(\cdot, t) dg_t. \]
So, given any sequence $t_i \nearrow T$, for any $\theta > 0$ we have
\begin{equation} \label{eq:inttisoliton}
 \lim_{i \to \infty} \int_{ t_i - \theta (T - t_i)}^{t_i} \int_M 2 (T-t) \Big| \Ric (\cdot, t) + \nabla^2 f (\cdot, t) - \frac1{2(T-t)} g_t \Big|^2 u(\cdot, t) dg_t dt = 0. 
\end{equation}

We now use Theorem \ref{Thm:maincompactness} (or Propositions \ref{Prop:convtosingspaceLpasspt} and \ref{Prop:Mainsublevelrrmbound}) to conclude that $(M, (T-t_i)^{-1} g_{t_i}, q)$ converges to a pointed singular space $(\XX, q_\infty) = (X, d, \RR, g, q_\infty)$ whose singularities have codimension $4$.
Let $\{ (U_i, V_i, \Phi_i ) \}_{i = 1}^\infty$ be a scheme for this convergence.
Consider now the functions $f^*_i (x) := f (\Phi_i (x), t_i)$, $f^*_i \in C^\infty (U_i)$.
We will show that, after passing to a subsequence, these functions limit to a smooth function $f_\infty \in C^\infty (\RR)$ that satisfies the gradient Ricci soliton equation.
\begin{equation} \label{eq:solitoninproof}
 \Ric_{g} + \nabla^2 f_\infty - \tfrac12 g = 0. 
\end{equation}
This question can be analyzed locally.
So let $x_0 \in \RR$ and choose $0 < 2 \sigma < \rrm^\infty (x)$.
Then for large $i$ we have $\rrm (\Phi_i (x_0), t_i) > \sigma (T - t_i)^{1/2}$.
Using backward and forward pseudolocality, we find that $\rrm (\Phi_i (x_0), t_i) > \eps \sigma (T - t_i)^{1/2}$ on $P(\Phi_i (x_0), t_i, \eps \sigma (T - t_i)^{1/2}, \pm \eps^2 \sigma^2 (T- t_i))$ for some uniform $\eps > 0$.
Using local parabolic derivative estimates and (\ref{eq:upperlowerubound}), we obtain that 
\begin{equation} \label{eq:nabmfuniformCm}
 |\nabla^m f | < C_m (T - t_i)^{-m/2} \textQQqq{for} m = 0, 1, \ldots 
\end{equation}
on $P(\Phi_i (x_0), t_i, \frac12 \eps \sigma (T - t_i)^{1/2}, \pm \frac12 \eps^2 \sigma^2 (T- t_i))$ for some uniform constants $C_m < \infty$.
Thus, after passing to a subsequence, these functions $f_\infty^*$ smoothly converge to a smooth function $f_\infty$ on $B^X (x_0, \frac14 \eps \sigma)$.
If $f_\infty$ did not satisfy the gradient Ricci soliton equation (\ref{eq:solitoninproof}) at $x_0$, then by the smooth convergence, for large $i$ we would have
\[  (T-t)^2 \Big| \Ric (\cdot, t) + \nabla^2 f (\cdot, t) - \frac1{2(T-t)} g_t \Big|^2 > c''  \]
at $\Phi_i(x_0)$ and $t = t_i$ for some uniform $c'' > 0$.
Due to the local derivative estimates (\ref{eq:nabmfuniformCm}) and Shi's derivative estimates on the curvature, this would imply that the same bound, possibly for a smaller $c''$, holds on $P(\Phi_i(x_0), t_i, \eps' (T- t_i)^{1/2}, - \eps' (T - t_i))$ for some uniform $\eps' > 0$.
This would however contradict (\ref{eq:inttisoliton}).
So $f_\infty$ satisfies indeed (\ref{eq:solitoninproof}).
\end{proof}

The Hamilton-Tian Conjecture follows now immediately.

\begin{proof}[Proof of Corollary \ref{Cor:AnswerHT}.]
The Hamilton-Tian Conjecture follows from Theorem \ref{Thm:AnswerQuestion2} and the upper bounds on the scalar curvature and the diameter as derived in \cite{Sesum-Tian:2008}.
\end{proof}

\bibliography{RF-bounded-scal}{}
\bibliographystyle{amsalpha}
\end{document}